\numberwithin{equation}{section}
\setlist[enumerate,1]{label=\upshape{(\roman*)},ref=\roman*}
\newcommand{\FR}[1]{{\color{green!60!black}\ #1}}
\newcommand{\TM}[1]{{\color{purple}\ #1}}
\newtheorem{theorem}{Theorem}[section]
\newtheorem{proposition}[theorem]{Proposition}
\newtheorem{corollary}[theorem]{Corollary}
\newtheorem{lemma}[theorem]{Lemma}
\newtheorem{conjecture}[theorem]{Conjecture}
\theoremstyle{definition}
\newtheorem*{acknowledgements}{Acknowledgements}
\theoremstyle{remark}
\newtheorem{remark}[theorem]{Remark}
\newtheorem{example}[theorem]{Example}
\DeclareMathOperator{\sech}{sech}
\newcommand{\R}{\mathbf{R}}
\newcommand{\N}{\mathbf{N}}
\newcommand{\Z}{\mathbf{Z}}
\DeclareMathOperator{\Span}{\textup{span}} 
\title[Shortening and straightening complete curves]{A new energy method for shortening and straightening complete curves} 
\author[T.~Miura]{Tatsuya Miura}
\address[T.~Miura]{Department of Mathematics, Graduate School of Science, Kyoto University, Kitashirakawa Oiwake-cho, Sakyo-ku, Kyoto 606-8502, Japan}
\email{tatsuya.miura@math.kyoto-u.ac.jp}
\author[F.~Rupp]{Fabian Rupp}
\address[F.~Rupp]{Faculty of Mathematics, University of Vienna, Oskar-Morgenstern-Platz 1, 1090 Vienna, Austria.}
\email{fabian.rupp@univie.ac.at}
\date{\today}
\keywords{Curve shortening flow, elastic flow, surface diffusion flow, Chen's flow, energy method, asymptotic behavior}
\subjclass[2020]{53E10, 53E40 (primary), 35B40, 53A04 (secondary)}
\begin{document}

\begin{abstract}
We introduce a novel energy method that reinterprets ``curve shortening'' as ``tangent aligning''. This conceptual shift enables the variational study of infinite-length curves evolving by the curve shortening flow, as well as higher order flows such as the elastic flow, which involves not only the curve shortening but also the curve straightening effect. For the curve shortening flow, we prove convergence to a straight line under mild assumptions on the ends of the initial curve. For the elastic flow, we establish a global well-posedness theory, and investigate the precise long-time behavior of solutions. In fact, our method applies to a more general class of geometric evolution equations including the surface diffusion flow, Chen's flow, and the free elastic flow.
\end{abstract}

\maketitle
\setcounter{tocdepth}{1}
% \tableofcontents

\section{Introduction}

The \emph{curve shortening flow} is one of the most classical geometric flows (see, e.g., \cite{Chou_Zhu_2001_book,Andrews_etal_2020_book}), defined by a one-parameter family of immersed curves $\gamma=\gamma(t,x):[0,T)\times I\to\R^n$, for $n\geq2$ and an interval $I\subset\R$, satisfying the evolution equation
\begin{equation}\label{eq:CSF}
    \partial_t\gamma = \kappa, \tag{CSF}
\end{equation}
where $\kappa:=\partial_s^2\gamma$ denotes the curvature vector and $\partial_s:=|\partial_x\gamma|^{-1}\partial_x$ is the arclength derivative along $\gamma$.
The flow is referred to as ``curve shortening'' because it arises as the $L^2(ds)$-gradient flow of the length functional 
\[
L[\gamma]:=\int_\gamma ds,
\]
where $ds \vcentcolon = |\partial_x\gamma|dx$.
In particular, if the initial curve has finite length, then $L$ monotonically decreases along the flow, and this fact is essential in energy methods for asymptotic analysis. However, such methods clearly break down for curves of infinite length.

Identifying gradient flow structures is a fundamental technique in the analysis of parabolic PDEs. Such an underlying variational structure is however not unique---the same equation can be a gradient flow for different energies, possibly with respect to different metrics, as observed, for instance, in the seminal work of Jordan--Kinderlehrer--Otto \cite{MR1617171}. In this spirit, this work
develops a conceptually new energy method for the curve shortening flow, based on the direction energy, see \eqref{eq:direction_energy} below. 
Our approach opens the possibility to apply energy methods to study the long-time behavior of curve-shortening-type flows for non-compact complete curves, avoiding the use of maximum principles, and thus provides a reliable and robust tool also in higher codimensions.

We first apply this method to the curve shortening flow.
While the well-estab\-lished theory based on maximum principles, monotonicity properties, and the classification of solitons provides some global existence results even for curves of infinite length, the asymptotic analysis remains challenging.
As a direct application of our energy method, we obtain a new result on the convergence of the curve shortening flow to a straight line.
Notably, this result applies to initial curves that are neither necessarily planar nor confined to a slab, which are typically difficult to handle using maximum principles.

More importantly, our method is directly applicable to higher-order flows, due to its independence from maximum principles.
For higher-order flows, various energy methods have been well developed in the compact case, but the non-compact case remains significantly less explored due to the absence of a canonical finite energy.

Here we apply our direction energy method to a class of fourth-order flows of curve shortening type, including the classical \emph{surface diffusion flow} \cite{Mullins_1957},
\begin{equation}\label{eq:SDF}
    \partial_t \gamma = -\nabla_s^2 \kappa, \tag{SDF}
\end{equation}
as well as \emph{Chen's flow} introduced in \cite{Bernard-Wheeler-Wheeler_2019_Chen},
\begin{equation}\label{eq:CF}
    \partial_t \gamma = -\partial_s^4 \gamma = -\nabla_s^2 \kappa + |\kappa|^2 \kappa + 3\langle \kappa, \partial_s \kappa \rangle \partial_s \gamma, \tag{CF}
\end{equation}
to obtain results analogous to those for the curve shortening flow.
Here $\nabla_s$ denotes the normal derivative along $\gamma$ defined by $\nabla_sf:=(\partial_sf)^\perp=\partial_sf-\langle\partial_sf,\partial_s\gamma\rangle\partial_s\gamma$. 

Besides the direction energy method, we also develop a new technical tool for controlling higher-order derivatives, extending the interpolation method of Dziuk--Kuwert--Sch\"atzle \cite{Dziuk-Kuwert-Schatzle_2002} from the compact to the non-compact setting.
This tool is not only necessary for the analysis of curve-shortening-type flows but is particularly well-suited for analyzing curve-straightening-type flows, which decrease quantities involving the bending energy
\begin{equation}
    B[\gamma] := \frac{1}{2} \int_\gamma |\kappa|^2 ds.
\end{equation}

In this paper, we also address the \( L^2(ds) \)-gradient flow of the elastic energy \( B + \lambda L \) for \( \lambda \geq 0 \), the so-called \emph{\( \lambda \)-elastic flow},
\begin{equation}\label{eq:lambda-EF}
    \partial_t \gamma = -\nabla_s^2 \kappa - \frac{1}{2} |\kappa|^2 \kappa + \lambda \kappa, \tag{$\lambda$-EF}
\end{equation}
which was introduced in \cite{Polden1996,Dziuk-Kuwert-Schatzle_2002} and has since been extensively studied in the compact case (see also the survey \cite{Mantegazza_Pluda_Pozzetta_21_survey}).
We establish a foundational global existence theory for this flow in the non-compact case and then investigate the precise long-time behavior of solutions.
In the purely straightening case $\lambda=0$, we show that the flow always converges to a line.
On the other hand, when \( \lambda > 0 \), the interaction between the curve shortening and straightening effects leads to much more complex asymptotic behavior.

Our results also yield new rigidity theorems for solitons as direct corollaries.

This paper is organized as follows:
In \Cref{sec:main_results}, we provide a more detailed explanation of our main results described above, while comparing them with previous works.
\Cref{sec:preliminaries} presents preliminaries, particularly basic properties of the direction energy.
\Cref{sec:energy_decay} provides the key interpretation of the flows as gradient flows involving the direction energy.
In \Cref{sec:interpolation} we prove the crucial weighted interpolation estimate, and then in \Cref{sec:curvature_control} apply it to deduce global curvature bounds of any order along the flows.
Using these estimates, in \Cref{sec:long-time_behavior}, we obtain the main blow-up and convergence results for each flow.
Finally, \Cref{sec:soliton} discusses rigidity results for solitons. 
The paper is complemented by an appendix which includes a discussion of local well-posedness in the non-compact case (\Cref{sec:local_wellposedness}).

\begin{acknowledgements}
    The authors would like to thank Ben Andrews for discussions about the curve shortening flow.
    The first author is supported by JSPS KAKENHI Grant Numbers JP21H00990, JP23H00085, and JP24K00532.
    The second author is funded in whole, or in part, by the Austrian Science Fund (FWF), grant number \href{https://doi.org/10.55776/ESP557}{10.55776/ESP557}. 
    Part of this work was done when the second author was visiting Kyoto University supported by a Mobility Fellowship of the Strategic Partnership Program between the University of Vienna and Kyoto University.
\end{acknowledgements}

\section{Main results}\label{sec:main_results}

\subsection{Direction energy and interpolation method}

For a curve $\gamma$ immersed in $\R^n$ we define the \emph{direction energy} (with respect to the direction $e_1$) as
\begin{equation}\label{eq:direction_energy}
    D[\gamma]:=\frac{1}{2}\int_\gamma |\partial_s\gamma(s)-e_1|^2ds,
\end{equation}
where, here and in the sequel, $\{e_j\}_{j=1}^n$ denotes the canonical basis of $\R^n$.
Of course, the preferred tangent direction $e_1$ can be replaced by any other unit vector, after suitable rotation.
This energy can be equivalently represented by
\begin{equation}
    D[\gamma]=\int_\gamma \big(1-\langle\partial_s\gamma(s),e_1\rangle\big)ds,
\end{equation}
and hence, if $L[\gamma]<\infty$, is given by
\begin{equation}
    D[\gamma] = L[\gamma] - \int_\gamma \partial_s\langle\gamma(s),e_1\rangle ds.
\end{equation}
In particular, if $\gamma$ is closed, then we simply have $D = L$ as the last integral vanishes by integration by parts.
In contrast, for non-closed curves, the two functionals may differ.

However, an important observation is that the integrand $\langle \partial_s\gamma(s), e_1 \rangle$ is a \emph{null Lagrangian}.
As a result, the gradient flow of the direction energy still produces the curve shortening flow.
This perspective offers the new geometrical interpretation:
\begin{center}
    ``curve shortening $\Longleftrightarrow$ tangent aligning''.
\end{center}

Besides its conceptual interest, this idea is particularly useful in the case of non-compact complete curves, since $L=\infty$ whereas $D$ can remain finite.

\begin{remark}[Admissible ends]\label{rem:examples_finite_direction}
    The class of curves with finite direction energy comprises complete curves whose ends suitably converge to two parallel semi-lines with opposite directions. Not only that, this class also allows for slowly diverging ends with or without oscillation (see \Cref{fig:ends}), such as the graphs of 
    $u(x) = x^\alpha$ or $u(x)=x^\alpha\sin\log{x}$ as $x\to\infty$, where $\alpha\in(0,\frac{1}{2})$, as well as spatial spiraling-out ends like $u(x)=(x^\alpha\sin\log{x},x^\alpha\cos\log{x})$.
    In fact, the finiteness of $D$ is equivalent to having graphical ends with suitably integrable gradients (see \Cref{lem:direction_graphical}).
\end{remark}

\begin{figure}[htbp]
    \centering
    \includegraphics[width=0.7\linewidth]{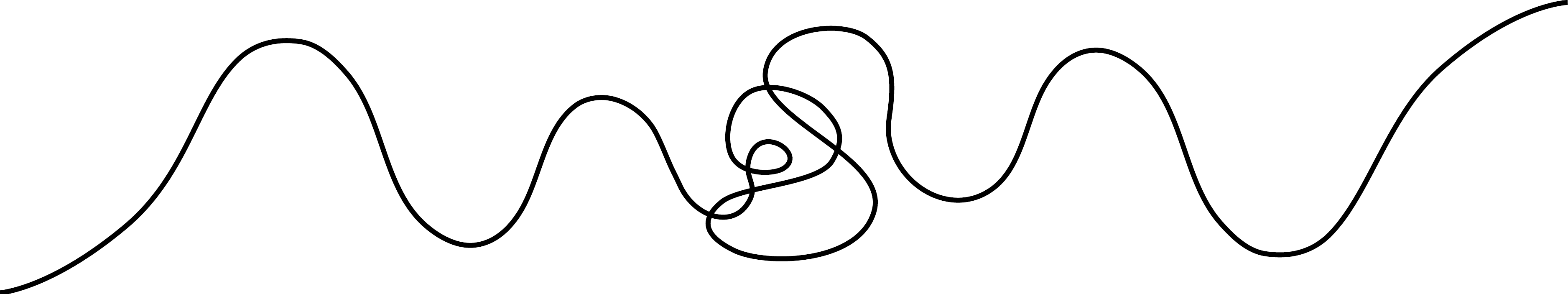}
    \caption{An immersed planar curve with admissible ends.}
    \label{fig:ends}
\end{figure}

The key idea of the present work is to replace \( L \) with \( D \) to produce finite monotone quantities even for non-compact curves.
This idea is inspired by previous works by the first author \cite{Miura20} and in collaboration with Wheeler \cite{miura2024uniqueness} on the analysis of elastica, a stationary problem involving \( B \) and \( L \).
In those papers, the direction energy is used to variationally handle the so-called \emph{borderline elastica} (see \Cref{fig:borderline}), which has infinite length and horizontal ends. Our work here is the first to apply this idea to gradient flows. 

\begin{figure}[htbp]
    \centering
    \includegraphics[width=0.6\linewidth]{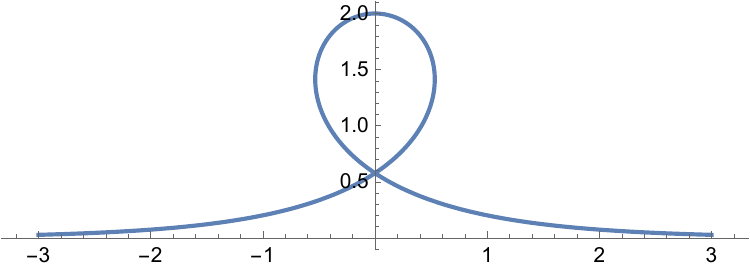}
    \caption{The borderline elastica, parametrized as in \eqref{eq:borderline}.}
    \label{fig:borderline}
\end{figure}

After establishing the energy decay, we also need to handle higher-order quantities through interpolation estimates.
In \cite{Dziuk-Kuwert-Schatzle_2002}, Dziuk--Kuwert--Sch\"atzle established the key Gagliardo--Nirenberg-type interpolation inequalities to apply energy-type methods to evolutions of closed curves.
%This has since been applied to a variety of flows and boundary conditions in the compact case, (see e.g.\ \cite{DallAcqua-Pozzi_2014_Willmore-Helfrich,Wheeler_13,Parkins_Wheeler_16,Cooper_Wheeler_Wheeler_23,miura2024asymptotic,MR3726834,MR4048466,MR2911840,MR3351503,MR4153655,dallacqua2025dimensionreductionwillmoreflows}),
This has since been applied to a variety of flows in the compact case (even of more than fourth-order, e.g., \cite{Parkins_Wheeler_16,MR4153655,langer2024dynamicselasticwirespreserving}),
but does not directly extend to the non-compact case, since the inequalities involve the total length---which is infinite here.
In this paper, we develop, for the first time, a variant of this interpolation method which is suitable to treat the non-compact situation.
To that end, inspired by the localization procedure in \cite{MR1900754}, we first derive an interpolation estimate with a power-type weight function (\Cref{prop:interpolation_P}).
This approach is necessary, since we do not assume any a priori integrability and it is of particular importance to keep track of the relation between the exponents and the order of derivatives. 
We then establish higher-order estimates under localization, which are finally globalized in both time and space by a time cut-off argument in the spirit of Kuwert--Sch\"atzle's work \cite{KSSI} (\Cref{sec:curvature_control}).

In what follows we discuss concrete geometric flows.
Note that we only consider \emph{classical solutions} in this work. 
However, using parabolic smoothing, our results easily transfer to appropriate weak solutions, e.g., in suitable Sobolev classes.

\subsection{Curve shortening flow}

%After Gage \cite{Gage_1983_isoperimetric,Gage_1984_circular}, 
The celebrated works of Gage--Hamilton \cite{Gage_Hamilton_1986_shrinking} and Grayson \cite{Grayson_1987_round} proved that the curve shortening flow of closed planar embedded curves must shrink to a round point in finite time. %(see also \cite{Huisken_1998_distance,Andrews_Bryan_2011_comparison}).

The case of non-compact complete curves is more delicate.
After Polden \cite{Polden_1991_evolving_curve} and Huisken \cite{Huisken_1998_distance}, Chou--Zhu \cite{Chou_Zhu_1998_complete} established global-in-time existence for a wide class of complete embedded planar initial curves.
Polden \cite{Polden_1991_evolving_curve} and Huisken \cite[Theorem 2.5]{Huisken_1998_distance} also proved convergence to self-similar solutions, assuming that the ends of the initial curve are asymptotic to semi-lines.
The limit curve depends on the angle formed by the asymptotic semi-lines; in particular, if the angle is \( \pi \), then the solution locally smoothly converges to a straight line parallel to the asymptotic semi-lines.
Their proof relies on the fact that the solution remains inside an initial slab
(see also \Cref{rem:convergence_to_line_comparison}).
Recently, Choi--Choi--Daskalopoulos \cite{Choi_Choi_Daskalopoulos_2021_translating} obtained a refined convergence to the grim reaper under convexity.
See also \cite{Nara-Taniguchi_2006_DCDS_stability,Nara_Taniguchi_2007_convergence_line,Wang-Wo_2011_stability_line_grim,Wang-Wo_2013_stability_line} for the planar stability of lines.

The above works strongly rely on several types of maximum principles, thus are based on planarity, since, in higher codimensions, some important maximum principles are not available; for example, embeddedness may be lost \cite{MR1131441}.
This makes the analysis much more complicated, and indeed the authors are not aware of any relevant convergence results for infinite-length curves if $n\geq3$.

Our main result here extends Polden and Huisken's result on convergence to the straight line, not only to more general ends (possibly not contained in any slab) but also to general codimensions by a purely energy-based approach.
We first formulate our result in the form of the following general dichotomy theorem.
Hereafter, we use the notation $\dot{C}^\infty$ for the space of smooth functions with bounded derivatives, as defined in \Cref{subsec:notation}; this regularity assumption on the initial datum is required in our proof of local well-posedness, see \Cref{sec:local_wellposedness} for a detailed discussion.

\begin{theorem}\label{thm:main_CSF_dichotomy}
    Let $\gamma:[0,T)\times\R\to\R^n$ be a curve shortening flow \eqref{eq:CSF} with initial datum $\gamma_0\in \dot{C}^\infty(\R;\R^n)$ such that $\inf_{\R}|\partial_x\gamma_0|>0$ and maximal existence time $T\in(0,\infty]$.
    Suppose that $D[\gamma_0]<\infty$.
    Then the direction energy $D[\gamma(t,\cdot)]$ continuously decreases in $t\geq0$.
    In addition, the following dichotomy holds:
    \begin{enumerate}
        \item\label{item:main_CSF_blow-up} If $T<\infty$, then
        \begin{equation}
            \liminf_{t\to T} (T-t)^{1/2}\int_{\gamma(t,\cdot)}|\kappa|^2ds>0.
        \end{equation}
        \item\label{item:main_CSF_convergence}
        If $T=\infty$, then
        \begin{equation}
            \lim_{t\to\infty}\int_{\gamma(t,\cdot)}|\kappa|^2ds =0,
        \end{equation} 
        and moreover,
        \begin{equation}\label{eq:0220-1}
            \lim_{t\to\infty}\sup_{\R}|\partial_s\gamma(t,\cdot)-e_1|=0, \quad  \lim_{t\to\infty}\sup_{\R}|\partial_s^m\kappa(t,\cdot)|=0,
        \end{equation}
        for all $m\in\N_0$.
        In particular, after reparametrization by arclength and translation, the solution locally smoothly converges to a horizontal line.
    \end{enumerate}
\end{theorem}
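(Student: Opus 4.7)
The plan is to proceed in three stages: monotonicity of $D$, the blow-up lower bound, and the long-time convergence. First, I would verify the energy identity. Differentiating $D[\gamma(t,\cdot)] = \int(1 - \langle \partial_s\gamma, e_1\rangle)\,ds$ along \eqref{eq:CSF} and using the normal-flow formulas $\partial_t\,ds = -|\kappa|^2\,ds$ and $\partial_t \partial_s\gamma = \nabla_s\kappa$, then integrating by parts in $s$, I expect
\begin{equation*}
    \frac{d}{dt}D[\gamma(t,\cdot)] = -\int_{\gamma(t,\cdot)} |\kappa|^2\,ds.
\end{equation*}
The null-Lagrangian structure from \Cref{sec:energy_decay} is what makes the boundary contribution at infinity vanish under the standing assumption $D[\gamma_0]<\infty$. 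This already yields absolute continuity and monotone decrease of $t \mapsto D[\gamma(t,\cdot)]$, together with the spacetime bound $\int_0^T\!\!\int|\kappa|^2\,ds\,dt \le D[\gamma_0]$.

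For the blow-up alternative \ref{item:main_CSF_blow-up}, I would argue by contradiction. If $\liminf_{t\to T}(T-t)^{1/2}\int|\kappa|^2\,ds = 0$, then along some $t_n \to T$ the $L^2$-curvature is so small that the weighted interpolation of \Cref{sec:interpolation} feeds into the higher-order bootstrap of \Cref{sec:curvature_control}, yielding $\sup_{[t_n,T)}\|\nabla_s^m \kappa\|_{L^\infty}<\infty$ for every $m\in\N_0$. Combined with the propagation of $\inf_\R|\partial_x\gamma|>0$, this keeps $\gamma(t,\cdot)$ bounded in $\dot{C}^\infty$ up to $t = T$, contradicting maximality via the local well-posedness result of \Cref{sec:local_wellposedness}.

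For the convergence alternative \ref{item:main_CSF_convergence}, suppose $T = \infty$. Integrability of $t \mapsto \int|\kappa|^2\,ds$ together with a uniform bound on its time-derivative—available from the evolution equation for $|\kappa|^2$ once the higher-order estimates of \Cref{sec:curvature_control} are in hand—forces $\int|\kappa|^2\,ds \to 0$. The same estimates provide uniform-in-time bounds on $\int|\nabla_s^m\kappa|^2\,ds$, and non-compact Gagliardo--Nirenberg interpolation then upgrades $L^2$-smallness to $\sup|\nabla_s^m \kappa| \to 0$ for every $m$, proving the second half of \eqref{eq:0220-1}. For the first half I would set $f := |\partial_s\gamma - e_1|^2$ and use the one-dimensional bound $\sup f^2 \le \|f\|_{L^1} \|f'\|_{L^\infty}$, which follows by integrating the Lipschitz lower envelope at the maximum. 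Since $\|f\|_{L^1} = 2D[\gamma(t,\cdot)] \le 2D[\gamma_0]$ stays bounded and $\|f'\|_{L^\infty} \le 4\|\kappa\|_{L^\infty} \to 0$, one obtains $\sup_\R|\partial_s\gamma - e_1| \to 0$. Local smooth convergence to a horizontal line then follows after arclength reparametrization and translating the base point so that one fixed arclength parameter is mapped to the origin.

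The principal obstacle is entirely technical and lives in the non-compact curvature control: the Dziuk--Kuwert--Sch\"atzle interpolation machinery must be rebuilt with the power-type weights of \Cref{sec:interpolation} and then localized in time in the Kuwert--Sch\"atzle spirit to produce uniform-in-time, global-in-space bounds on $\nabla_s^m\kappa$. This is what simultaneously drives the blow-up lower bound and lets $L^2$-smallness in stage three be promoted to the uniform smallness required in \eqref{eq:0220-1}; once this is available, the direction-energy monotonicity and the elementary Sobolev estimate for $f$ assemble the conclusion without further ingredients.
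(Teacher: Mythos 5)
Your argument has two genuine gaps, both concerning the forward-in-time propagation of the bending energy bound. In the blow-up case~\eqref{item:main_CSF_blow-up}, you argue that if $\liminf_{t\to T}(T-t)^{1/2}B(t)=0$ then along some $t_n\to T$ the $L^2$-curvature is ``so small'' that the bootstrap produces uniform bounds on $[t_n,T)$; but this is not a consequence of the hypothesis as stated. The condition $(T-t_n)^{1/2}B(t_n)\to 0$ allows $B(t_n)\to\infty$ (e.g.\ $B(t_n)\sim(T-t_n)^{-1/4}$), and even a genuinely small $B(t_n)$ at one instant does not by itself control $B$ on a terminal interval. What is missing is the integral differential inequality
\begin{equation}
B[\gamma(t_2)] \leq B[\gamma(t_1)] + C_0\int_{t_1}^{t_2}\bigl(B[\gamma(t)]^3+B[\gamma(t)]^2\bigr)\,dt,
\end{equation}
which follows from the localized evolution of $B$ and the weighted Gagliardo--Nirenberg estimate; this, fed into an ODE comparison (the paper's Lemma~A.1), is what converts ``$\limsup_{t\to T}B=\infty$'' into the sharp rate~\eqref{eq:blow-up-rate}. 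Without such a comparison lemma the contrapositive route you propose does not close. The second gap is a circularity in step~(a) of your convergence argument: the ``uniform bound on $dB/dt$'' available ``once the higher-order estimates are in hand'' presupposes $\sup_t B[\gamma(t)]<\infty$, because the global-in-time higher-order curvature bounds (Lemma~6.12 of the paper) are derived under precisely that hypothesis, and boundedness of $B$ is what you are trying to prove. The paper avoids this by running the same ODE lemma in its convergence mode: $B\in L^1(0,\infty)$ gives a sequence $t_j$ with bounded gaps and $B(t_j)\to 0$, the integral inequality above has time-independent constants, and Lemma~A.1(ii) then yields $B(t)\to 0$ without any \emph{a priori} uniform bound.

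Once these two points are repaired (in both cases by the same ODE comparison lemma), the remainder of your argument is correct and in part takes a genuinely different, cleaner route than the paper for the uniform convergence $\sup_\R|\partial_s\gamma-e_1|\to 0$. Your elementary 1D bound $\sup f^2\le\|f\|_{L^1}\|f'\|_\infty$ for $f=|\partial_s\gamma-e_1|^2$, combined with $\|f\|_{L^1}=2D[\gamma(t)]\le 2D[\gamma_0]$ and $|f'|\le 4\|\kappa\|_\infty\to 0$, is a direct and quantitative replacement for the paper's compactness-and-Fatou identification of the limit line; similarly, promoting $L^2$-smallness of $\kappa$ to $L^\infty$-smallness of $\nabla_s^m\kappa$ via interpolation (using the weighted multiplicative inequality with $\zeta\nearrow 1$ and the uniform-in-$t$ $L^2$ bounds of higher order) is a valid alternative to the paper's subsequential convergence argument, and has the advantage of producing an explicit rate once $B(t)\to 0$ is controlled.
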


\begin{remark}
    Here the translation is with respect to any base points, see
    \eqref{eq:main_CSF_convergence} for the precise statement of convergence.
    % , i.e., for any choice of $p_t\in\gamma(t,\R)$ the translated image $\gamma(t,\R)-p_t$ converges to the $e_1$-axis.
    The convergence of derivatives is global in space.
    In particular, this directly ensures that every global-in-time solution is eventually graphical; this is in stark contrast to the elastic flow discussed below.
    Even for the curve shortening flow, the curve itself may not converge uniformly (see \Cref{ex:escaping_CSF}).
    Also, translation is essentially needed, because the solution curve can keep oscillating or even escape to infinity (see \Cref{ex:oscillating_CSF,ex:escaping_CSF}).
\end{remark}

\begin{remark}
    The $L^\infty$ blowup rate $\liminf_{t\to T}(T-t)^{1/2}\|\kappa(t,\cdot)\|_\infty>0$ is well known at least in the compact case and easily extended to the non-compact case, since the proof is based on pointwise estimates.
    Integral-type estimates are also known in the compact case; in particular, Angenent \cite{MR1078266} obtained the sharp $L^p$ blowup rate $\liminf_{t\to T}(T-t)^{(p-1)/2}\int_{\gamma(t,\cdot)}|\kappa|^pds>0$ for all $p\in(1,\infty)$ in the case of closed planar curves.
    Our result seems the first integral-type blowup estimate in the non-compact case.
\end{remark}

Now we recall previous work providing sufficient conditions for global existence.
There are two typical cases: planar embedded curves, and (possibly non-planar) graphical curves.
We say that an immersed curve $\gamma:\R\to\R^n$ is \emph{graphical} if     
\begin{equation}
    \inf_{\R}\langle \partial_s\gamma,e_1 \rangle >0.
\end{equation}
This together with $\inf_\R|\partial_x\gamma|>0$ implies that $\gamma$ may be represented by the graph of a function $u:\R\to\R^{n-1}$ over the $e_1$-axis.

\begin{corollary}\label{cor:CSF_convergence}
    Let $\gamma_0\in\dot{C}^\infty(\R;\R^n)$ with $\inf_{\R}|\partial_x\gamma_0|>0$ and $D[\gamma_0]<\infty$.
    Suppose that $\gamma_0$ is planar and embedded (resp.\ $\gamma_0$ is graphical).
    Then the unique curve shortening flow \eqref{eq:CSF} starting from $\gamma_0$ remains planar and embedded (resp.\ graphical), exists globally in time, and thus converges as in Theorem \ref{thm:main_CSF_dichotomy} \eqref{item:main_CSF_convergence}.
\end{corollary}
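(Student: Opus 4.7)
The strategy is to reduce everything to Theorem \ref{thm:main_CSF_dichotomy}: its alternative \eqref{item:main_CSF_convergence} is already the convergence we seek, so it suffices to establish, in each case, preservation of the geometric hypothesis together with global existence ($T=\infty$), and then invoke Theorem \ref{thm:main_CSF_dichotomy} \eqref{item:main_CSF_convergence} directly. The monotonicity of $D$ supplied by Theorem \ref{thm:main_CSF_dichotomy} is available throughout.

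For the planar embedded case, planarity is immediate from $\partial_t\gamma=\kappa=\partial_s^2\gamma$, which is a linear combination of $\partial_x\gamma$ and $\partial_x^2\gamma$ and hence keeps $\gamma(t,\cdot)$ inside the affine plane containing $\gamma_0$; I may therefore reduce to $n=2$. In the planar setting, preservation of embeddedness for complete curves and global existence under the standing regularity assumption $\inf|\partial_x\gamma_0|>0$ are by now classical, and I would invoke the results of Chou--Zhu \cite{Chou_Zhu_1998_complete}. Finite direction energy forces $\gamma_0$ to have asymptotically graphical ends (Remark \ref{rem:examples_finite_direction}, \Cref{lem:direction_graphical}), which comfortably places $\gamma_0$ within their admissible class. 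Thus $T=\infty$ and Theorem \ref{thm:main_CSF_dichotomy} \eqref{item:main_CSF_convergence} concludes.

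For the graphical case in general codimension, I would pass to the graphical gauge $\gamma(t,x)=(x,u(t,x))$, in which the geometric flow is equivalent---modulo tangential reparametrization---to a quasilinear scalar system for $u\colon[0,T)\times\R\to\R^{n-1}$, uniformly parabolic so long as $|\partial_x u|$ remains bounded. The key step is to propagate a positive lower bound on $v\vcentcolon=\langle\partial_s\gamma,e_1\rangle=(1+|\partial_x u|^2)^{-1/2}$. A direct computation yields an evolution equation of the schematic form
\begin{equation}
    \partial_t v = \partial_s^2 v + |\kappa|^2 v,
\end{equation}
and since $v\in(0,1]$ is uniformly positive at $t=0$, a maximum principle argument for this scalar parabolic equation on the non-compact curve (either via Omori--Yau-type test functions, or equivalently via a barrier/comparison argument for the PDE satisfied by $u$ on $\R$) yields $\inf_\R v(t,\cdot)\geq \inf_\R v(0,\cdot)>0$ throughout $[0,T)$. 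With graphicality thus preserved, the equation for $u$ stays uniformly parabolic and standard bootstrap arguments give smooth bounds on all derivatives of $u$ on compact time intervals.

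The main obstacle is then to bridge this essentially local regularity of $u$ with the \emph{global} blow-up criterion \eqref{item:main_CSF_blow-up}, which is phrased in terms of the total integral $\int_{\gamma(t,\cdot)}|\kappa|^2ds$ over the entire non-compact curve. I plan to close this gap by combining the monotonicity of $D[\gamma(t,\cdot)]$ from Theorem \ref{thm:main_CSF_dichotomy} with the characterization of finite direction energy via integrable graphical gradient (\Cref{lem:direction_graphical}): the latter provides $L^2$-integrable control of $|\partial_x u|$, hence of the curvature, near infinity, while the uniformly parabolic estimates control $|\kappa|^2$ on any bounded region. Together these force $\int|\kappa|^2ds$ to remain bounded as $t\to T^-$, contradicting \eqref{item:main_CSF_blow-up} and giving $T=\infty$, at which point Theorem \ref{thm:main_CSF_dichotomy} \eqref{item:main_CSF_convergence} finishes the proof.
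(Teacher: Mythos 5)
The planar embedded part of your argument, while taking a different route (Chou--Zhu's global existence rather than Huisken's distance comparison), is plausible and roughly parallel in flavor to the paper's; both hinge on the observation that finite direction energy forces graphical ends and hence rules out degeneracies near infinity. Your in-plane invariance remark for reducing to $n=2$ is fine. The preservation equation $\partial_t v = \partial_s^2 v + |\kappa|^2 v$ for $v=\langle\partial_s\gamma,e_1\rangle$ is correct and the non-compact maximum principle for it is indeed the right tool; the paper's graphical case runs through Altschuler--Grayson with precisely that ingredient.

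The genuine gap is in your final paragraph for the graphical case. You try to rule out finite-time blow-up by arguing that the bending energy $\int_{\gamma(t)}|\kappa|^2\,ds$ stays bounded, deducing this near infinity from the monotonicity of $D$ and the $L^2$-gradient characterization of finite direction energy (\Cref{lem:direction_graphical}~(i)). But $D[\gamma(t)]<\infty$ only yields $u'\in L^2$ in the graphical gauge; it says nothing about $u''$, and curvature is essentially $u''$ for nearly flat graphs. In fact, control of the curvature near infinity is precisely the statement $B[\gamma(t)]<\infty$ with a uniform bound, which is equivalent to $u'\in W^{1,2}$ (\Cref{lem:direction_graphical}~(ii)) and is exactly what is at stake when $T<\infty$. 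So direction energy alone cannot prevent the bending energy from diverging, and your proposed contradiction with \Cref{thm:main_CSF_dichotomy}~\eqref{item:main_CSF_blow-up} does not close. The paper instead invokes the long-time curvature estimates of Altschuler--Grayson for graphical curve shortening (extended to the non-periodic, codimension-$(n-1)$ setting), which establish global existence directly via maximum-principle bounds for the graph equation rather than via energy monotonicity. To repair your argument you would need to replace the final step by proving uniform-in-time a priori curvature bounds in the graphical gauge, e.g.\ by propagating bounds on $\|\kappa\|_\infty$ or $\|u''\|_\infty$ from the parabolic graph equation together with the preserved gradient bound, which is precisely the Altschuler--Grayson route.
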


\begin{proof}
    The planar embedded case directly follows by Huisken's distance comparison argument for \cite[Theorem 2.5]{Huisken_1998_distance} since $D[\gamma_0]<\infty$ implies that $\gamma_0$ has appropriate graphical ends (\Cref{lem:direction_graphical}) so that the extrinsic/intrinsic distance ratio is bounded away from zero, also near infinity.
    The graphical case follows since the arguments of Altschuler--Grayson \cite[Theorem 1.13, Theorem 2.6 (1), (2)]{Altschuler_Grayson_1992_spacecurve} for periodic graphical curves in $\R^3$ directly extend to non-periodic graphical curves in $\R^n$ (see also \cite{Hattenschweiler_2015_CSF_higherdim,MR2156947}), where we need a maximum principle on the whole line as in \cite[Proposition 52.4]{Quittner_Souplet_2019_book}.
\end{proof}

We can also give a new, simple blow-up criterion in the planar case.

\begin{corollary}\label{cor:CSF_blowup}
    Let $\gamma_0\in\dot{C}^\infty(\R;\R^2)$ with $\inf_{\R}|\partial_x\gamma_0|>0$ and $D[\gamma_0]<\infty$.
    Suppose that $\gamma_0$ has nonzero rotation number.
    Then the unique curve shortening flow \eqref{eq:CSF} starting from $\gamma_0$ blows up in finite time as in \Cref{thm:main_CSF_dichotomy} \eqref{item:main_CSF_blow-up}.
\end{corollary}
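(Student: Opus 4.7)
The plan is to argue by contradiction using the dichotomy in \Cref{thm:main_CSF_dichotomy}. Suppose $T = \infty$. Then \Cref{thm:main_CSF_dichotomy} \eqref{item:main_CSF_convergence} gives $\sup_\R |\partial_s \gamma(t, \cdot) - e_1| \to 0$ as $t\to\infty$, so for some sufficiently large $t^* > 0$ I have $|\partial_s \gamma(t^*, x) - e_1| < \tfrac{1}{2}$ uniformly in $x$. Since $\partial_s \gamma(t^*, \cdot)$ then stays in a small open arc of $S^1$ around $e_1$ (corresponding to $\cos\theta > 7/8$), any continuous lift of the tangent angle $\theta(t^*, \cdot)$ must lie in a single branch $2\pi n + (-\pi/3, \pi/3)$ for some fixed integer $n$, so the total turning $\theta(t^*, +\infty) - \theta(t^*, -\infty) \in (-2\pi/3, 2\pi/3)$. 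Moreover, since $D$ decreases along the flow we have $D[\gamma(t^*, \cdot)] \le D[\gamma_0] < \infty$, and hence \Cref{rem:examples_finite_direction} (via \Cref{lem:direction_graphical}) ensures graphical ends, so the total turning must actually be an integer multiple of $2\pi$. Combining these, the rotation number of $\gamma(t^*, \cdot)$ must be zero.

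The second step is to show that the rotation number is preserved along the flow, so that its initial nonzero value $m$ persists and contradicts the preceding conclusion. My plan is to compute directly: for planar CSF one has $\partial_t k = \partial_s^2 k + k^3$ and $\partial_t ds = -k^2\, ds$, so
\begin{equation*}
    \frac{d}{dt}\int_{\gamma(t,\cdot)} k\,ds = \int \partial_s^2 k\,ds = [\partial_s k]_{-\infty}^{+\infty}.
\end{equation*}
I would then argue that the boundary terms at spatial infinity vanish for every $t \in (0,T)$, using the graphical-end structure provided by $D[\gamma(t,\cdot)] < \infty$ together with the full spatial smoothness of the classical solution and the higher-order estimates from \Cref{sec:curvature_control}. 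This would yield $\int_{\gamma(t,\cdot)} k\,ds = 2\pi m$ for every $t$, contradicting the zero rotation number of $\gamma(t^*, \cdot)$, and thereby forcing $T < \infty$ so that \Cref{thm:main_CSF_dichotomy} \eqref{item:main_CSF_blow-up} delivers the blow-up rate.

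The hard part will be rigorously extracting pointwise decay of $\partial_s k$ at $\pm\infty$ from the merely integral control on $\partial_s\gamma - e_1$ that $D < \infty$ provides. A natural alternative, should the direct PDE argument fall short, is a topological route: since $D[\gamma(t,\cdot)] \le D[\gamma_0] < \infty$ ensures that each $\gamma(t,\cdot)$ has graphical ends and therefore a well-defined integer rotation number $m(t) \in \Z$, and since the flow depends smoothly on $t$ in $C^1_{\mathrm{loc}}$, the integer-valued function $t\mapsto m(t)$ should be continuous and hence constant. This reduces the difficulty to showing that the graphical-end structure is continuous in $t$, which is plausible from the monotone decay of $D$; either way the contradiction $0 = m(t^*) \ne m(0) = m$ completes the proof.
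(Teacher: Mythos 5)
Your overall structure is the same as the paper's: argue by contradiction using the dichotomy of \Cref{thm:main_CSF_dichotomy}, and derive the contradiction from the preservation of the rotation number. The paper phrases the contradiction more directly, as ``nonzero rotation number forces a leftward tangent somewhere at all times, which is incompatible with $\sup_\R|\partial_s\gamma(t,\cdot)-e_1|\to 0$,'' whereas you pass through the intermediate step that at some $t^*$ the total turning is both less than $2\pi$ in absolute value and an integer multiple of $2\pi$; the two phrasings are essentially equivalent.

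The one place where you have a real gap is your primary argument for preservation of rotation number. Differentiating $N[\gamma(t)]=\frac{1}{2\pi}\lim_{R\to\infty}\int_{-R}^R k\,ds$ and obtaining the boundary terms $[\partial_s k]_{-\infty}^{+\infty}$ requires commuting the time derivative with the improper (not absolutely convergent: only $k\in L^2(ds)$, not $L^1$) integral, and therefore needs \emph{uniform-in-time} decay of $\partial_s k$ at $x\to\pm\infty$, not merely pointwise-in-$t$ decay. The $L^2$-bounds from \Cref{sec:curvature_control} do give $\partial_s k(t,\cdot)\to 0$ at spatial infinity for each fixed $t>0$, but uniformizing in $t$ would require a tightness argument that you have not supplied. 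Your fallback topological route is the correct and clean one, and it is in fact exactly how the paper handles this: \Cref{lem:rotation_number_preservation} proves that $t\mapsto N[\gamma(t)]\in\Z$ is continuous (and thus constant) by a cut-off argument reducing to the compact case, using that the ends stay uniformly graphical on short time intervals by $C^1$-continuity of the flow. So your proposal is correct if you commit to that route.
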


\begin{proof}
    Since the rotation number is preserved (see \Cref{lem:rotation_number_preservation}), the flow keeps having a leftward tangent somewhere.
    If the flow exists globally, this contradicts the uniform convergence of the tangent vector in \Cref{thm:main_CSF_dichotomy} \eqref{item:main_CSF_convergence}.
\end{proof}

\subsection{Fourth-order flows of curve shortening type}

As mentioned in the introduction, our argument also works even for a class of higher-order flows including the surface diffusion flow \eqref{eq:SDF} and Chen's flow \eqref{eq:CF}.
These flows also decrease the length in the compact case, and some energy methods are already developed in previous work, see, e.g., \cite{Elliott_Garcke_1997_SDF,Dziuk-Kuwert-Schatzle_2002,Chou_2003_SDF,Wheeler_13,Miura_Okabe_2021_SDF} for the surface diffusion flow and \cite{Cooper_Wheeler_Wheeler_23} for Chen's flow.

For these flows, we obtain quite analogous results to \eqref{eq:CSF}, see \Cref{thm:main_SDF_Chen} for dichotomy and \Cref{cor:SDF_Chen_blowup} for blow-up.
To the authors' knowledge, the latter is the first rigorous blow-up result for the non-compact surface diffusion as well as Chen's flow.
Although no results corresponding to \Cref{cor:CSF_convergence} are known due to the lack of maximum principles, global existence holds at least for suitable small initial data (cf.\ \cite{Koch-Lamm_2012}).

In addition, the above results directly imply new classification results for solitons to \eqref{eq:CSF}, \eqref{eq:SDF}, and \eqref{eq:CF} in a unified manner.
For \eqref{eq:CSF}, comprehensive classification results for solitons have been obtained in $\R^2$ \cite{Halldorsson_2012_CSFsoliton} as well as in $\R^n$ \cite{AAAW_2013_CSFzoo}, while for the higher-order flows, the analysis is much more complicated and such classification results are not available.
However, some rigidity (i.e., partial classification) results are already known for \eqref{eq:SDF} in the planar case \cite{EGMWW15,Rybka-Wheeler_2024_classification}.
Here we obtain a rigidity theorem (\Cref{cor:CSF_SDF_CF_soliton}), which is not only new in $\R^2$ but also the first rigidity result in higher codimensions $\R^n$ with $n\geq3$.

\subsection{Fourth-order flows of curve straightening type}

Now we turn to flows involving the curve straightening effect, namely \eqref{eq:lambda-EF} with $\lambda\geq0$.
Up to rescaling, without loss of generality we may only consider the two cases $\lambda=0,1$.

We first address the case $\lambda=0$, called the \emph{free elastic flow},
\begin{equation}\label{eq:FEF}
    \partial_t\gamma = -\nabla_s^2\kappa - \frac{1}{2}|\kappa|^2\kappa, \tag{FEF}
\end{equation}
which is also equivalent to the Willmore flow with initial surfaces $\Sigma_0$ invariant under translation, i.e., $\Sigma_0=\gamma_0(\R)\times\R\subset\R^{n+1}$.
In the compact case, thanks to the decay property of the bending energy, the flow always exists globally in time only assuming finiteness of the bending energy \cite{Dziuk-Kuwert-Schatzle_2002}, and also some asymptotic properties have recently been obtained \cite{Wheeler-Wheeler_2024_parallel,Miura_Wheeler_2024_free}.
Here we establish the non-compact counterpart for the first time.
The limit curve is again a straight line, but the direction is not determined due to the absence of the tangent aligning effect.

\begin{theorem}\label{thm:main_FEF}
    Let $\gamma_0\in \dot{C}^\infty(\R;\R^n)$ be a properly immersed curve with $\inf_{\R}|\partial_x\gamma_0|>0$.
    Suppose that $B[\gamma_0]<\infty$.
    Then there exists a unique, smooth, properly immersed, global-in-time solution $\gamma:[0,\infty)\times\R\to\R^n$ to the free elastic flow \eqref{eq:FEF} with initial datum $\gamma_0$.
    Along the flow the energy $B[\gamma(t,\cdot)]$ continuously decreases in $t\geq0$.
    
    In addition,
    \begin{equation}
        \lim_{t\to\infty}\sup_\R|\partial_s^m\kappa(t,\cdot)| = 0
    \end{equation}
    holds for all $m\in\N_0$.
    In particular, after reparametrization by arclength, translation and rotation, the solution locally smoothly converges to a straight line.
\end{theorem}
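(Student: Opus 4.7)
The plan is to combine the local well-posedness theory of the appendix with the bending-energy decay of \eqref{eq:FEF} and the weighted interpolation machinery of \Cref{sec:interpolation,sec:curvature_control} to first establish global existence, and then to exploit the gradient flow structure together with a blow-up / rigidity argument to extract uniform decay of all covariant derivatives of the curvature.

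First, I would invoke local well-posedness for \eqref{eq:lambda-EF} with $\lambda=0$ from \Cref{sec:local_wellposedness} to obtain a unique, smooth, properly immersed solution $\gamma\colon[0,T)\times\R\to\R^n$ on a maximal time interval, preserving the non-degeneracy $\inf_\R|\partial_x\gamma(t,\cdot)|>0$ locally in $t$. The gradient flow identity from \Cref{sec:energy_decay}, specialised to $\lambda=0$, yields
\[
\frac{d}{dt}B[\gamma(t,\cdot)] = -\int_{\gamma(t,\cdot)}\bigl|\nabla_s^2\kappa + \tfrac12|\kappa|^2\kappa\bigr|^2\,ds \leq 0,
\]
so $B$ is continuously non-increasing, $\|\kappa(t,\cdot)\|_{L^2(ds)}^2\leq 2B[\gamma_0]$ uniformly in $t$, and $\int_0^T\|\partial_t\gamma(t,\cdot)\|_{L^2(ds)}^2\,dt\leq B[\gamma_0]$.

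Second, to preclude finite-time blow-up and obtain uniform higher-order control, I would apply the weighted interpolation inequality \Cref{prop:interpolation_P} together with the higher-order curvature estimates of \Cref{sec:curvature_control}. The uniform bound on $\int|\kappa|^2\,ds$ feeds into the non-compact analogue of the Dziuk--Kuwert--Sch\"atzle scheme, and a Kuwert--Sch\"atzle-type time cut-off globalises the resulting local-in-space bounds, yielding $\sup_\R|\nabla_s^m\kappa(t,\cdot)|\leq C_m$ on $[\tau,T)$ for every $\tau>0$ and $m\in\N_0$, as well as a uniform positive lower bound on $|\partial_x\gamma|$. This excludes blow-up of any higher-order norm, so $T=\infty$, while preserving proper immersion.

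Third, the integrability $\int_0^\infty\|\partial_t\gamma(t,\cdot)\|_{L^2(ds)}^2\,dt<\infty$ combined with a uniform bound on $\frac{d}{dt}\|\partial_t\gamma(t,\cdot)\|_{L^2(ds)}^2$---obtained by differentiating under the integral and estimating using the higher-order bounds---forces $\|\partial_t\gamma(t,\cdot)\|_{L^2(ds)}\to 0$ as $t\to\infty$. To upgrade this to $\sup_\R|\kappa(t,\cdot)|\to 0$, I would argue by contradiction: if not, pick $t_k\to\infty$ and $x_k\in\R$ with $|\kappa(t_k,x_k)|\geq\delta>0$; translating by $-\gamma(t_k,x_k)$, reparametrising by arclength around $x_k$ and shifting time by $t_k$, the uniform higher-order bounds provide $C^\infty_{\mathrm{loc}}$-subconvergence to a smooth entire space-time limit $\gamma_\infty$, which by the vanishing of the dissipation satisfies $\partial_t\gamma_\infty\equiv 0$, i.e.\ is a stationary free elastica with finite bending energy and $|\kappa_\infty(0)|\geq\delta$. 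The classification of entire free elastica with finite bending energy (cf.\ the rigidity discussion in \Cref{sec:soliton}) then forces $\gamma_\infty$ to be a straight line, a contradiction. Once $\sup_\R|\kappa|\to 0$ is established, interpolation against the uniform higher-order bounds gives $\sup_\R|\nabla_s^m\kappa|\to 0$ for every $m\in\N_0$. Local smooth convergence to a straight line then follows after arclength reparametrisation, translation by $-\gamma(t,0)$ and rotation aligning $\partial_s\gamma(t,0)$ with $e_1$.

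The principal obstacle I expect is the non-compact Gagliardo--Nirenberg step underpinning the higher-order control. Unlike the compact case, the length-based interpolation inequalities diverge and the only global integral bound genuinely at hand is $\int|\kappa|^2\,ds\leq 2B[\gamma_0]$; one must rely on the power-weighted version of \Cref{prop:interpolation_P} and carefully track the interplay between weight exponents, derivative orders, and the time cut-off so that the constants remain uniform as $t\to\infty$. A secondary subtlety, absent in the \eqref{eq:CSF} case, is that \eqref{eq:FEF} lacks any tangent-aligning effect, so the direction of the limiting line is genuinely free---hence the additional rotation in the final statement and the need for the rigidity input to identify the blow-down limit.
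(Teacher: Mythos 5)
Your proposal follows essentially the same route as the paper: global existence via the bending-energy decay plus the weighted interpolation and time-cut-off machinery (as in \Cref{thm:global_lambda_elastic_flow,thm:blow-up_rate_4th_order}), then subconvergence of the dissipation rate to zero, compactness to a stationary $0$-elastica limit, and the classification of free elasticae with finite bending energy (line versus periodic, with $B=\infty$ in the latter case) to identify the limit as a line, exactly as in the paper's appeal to \Cref{thm:convergence_lambda_elastic_flow}~\eqref{item:convergence_lambdaEF_3},~\eqref{item:convergence_lambdaEF_4} and the elastica classification. The only cosmetic difference is that you run a contradiction argument for $\sup_\R|\kappa|$ and then interpolate to higher orders, whereas the paper notes that the same blow-down argument applied along arbitrary sequences $(t_j,s_j)$ directly gives $\sup_\R|\partial_s^m\kappa(t,\cdot)|\to 0$ for every $m$ without an extra interpolation step.
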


Finally, we discuss the case $\lambda=1$, which we simply call the \emph{elastic flow},
\begin{equation}\label{eq:EF}
    \partial_t\gamma = -\nabla_s^2\kappa - \frac{1}{2}|\kappa|^2\kappa + \kappa. \tag{EF}
\end{equation}
There are a number of known results about global existence and convergence in the compact case, see, e.g.,
\cite{Polden1996,Dziuk-Kuwert-Schatzle_2002,DallAcqua-Pozzi_2014_Willmore-Helfrich,MR4048466,MR4160436,MR2911840,Mantegazza-Pozzetta_2021_LS,Novaga-Okabe_2017_conv,Novaga_Okabe_2014_infinite,Diana_2024_EF_FB,Lin-Schwetlick-Tran_2022_spline}.

To the authors' knowledge, the only known result for the infinite-length elastic flow is a pioneering work by Novaga--Okabe \cite{Novaga_Okabe_2014_infinite}.
Roughly speaking, they show that if the initial curve is planar and if the ends are asymptotic to the $e_1$-axis and represented by graphs of class \( W^{m,2} \) for all \( m \geq 0 \), then there exists a global-in-time elastic flow, and the solution locally smoothly sub-converges to either a line or a borderline elastica (\Cref{fig:borderline}).
Here and in the sequel, sub-convergence means that, for any time sequence $t_j\to\infty$, there exists a subsequence $t_{j'}\to\infty$ along which the flow converges (in a suitable sense).
Their result does not include any uniqueness because their construction is based on approximating an infinite-length solution by finite-length solutions using the Arzel\`a--Ascoli theorem.

In this paper, we apply our energy method to improve the result of Novaga--Okabe in several aspects:
we prove uniqueness, extend the analysis to general codimension, cover more general ends (possibly unbounded), and ensure the horizontality of the convergence limits.
To this end, we apply our direction energy method and regard the elastic flow as the gradient flow of the adapted energy
\begin{equation}\label{eq:energy_E=B+D}
    E[\gamma] \vcentcolon= B[\gamma]+D[\gamma] = \int_\gamma \left( \frac{1}{2}|\kappa|^2+\frac{1}{2}|\partial_s\gamma-e_1|^2 \right) ds.
\end{equation}
The finiteness of $E$ is equivalent to having graphical ends $u$ with $u'\in W^{1,2}$ (see \Cref{lem:direction_graphical}), so the diverging examples as in \Cref{rem:examples_finite_direction} are still admissible.

Our main result can be summarized as follows.

\begin{theorem}[\Cref{thm:global_lambda_elastic_flow,thm:convergence_lambda_elastic_flow,thm:convergence_elastic_flow}]\label{thm:main_EF_global}
    Let $\gamma_0\in \dot{C}^\infty(\R;\R^n)$ with $\inf_{\R}|\partial_x\gamma_0|>0$.
    Suppose that $E[\gamma_0]<\infty$.
    Then there exists a unique, smooth, properly immersed, global-in-time solution $\gamma:[0,\infty)\times\R\to\R^n$ to the elastic flow \eqref{eq:EF} with initial datum $\gamma_0$.
    Along the flow the energy $E[\gamma(t,\cdot)]$ continuously decreases in $t\geq0$.
    
    In addition, after any reparametrization by arclength and some translation, the solution sub-converges to an elastica $\gamma_\infty$ locally smoothly.
    The limit curve $\gamma_\infty$ is either a straight line or a (planar) borderline elastica, and in each case the curve $\gamma_\infty$ has asymptotically horizontal ends: $\partial_s\gamma_\infty(s)\to e_1$ as $s\to\pm\infty$.
\end{theorem}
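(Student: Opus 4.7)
The plan is to follow the classical energy-method strategy for fourth-order flows, but built around the adapted energy $E=B+D$ from \eqref{eq:energy_E=B+D} instead of $B+\lambda L$. Local well-posedness is provided by the appendix (\Cref{sec:local_wellposedness}); this yields a smooth solution on $[0,T)$ with $T\in(0,\infty]$, preserving proper immersion and the lower bound on $|\partial_x\gamma|$ on compact time intervals. The first key point, established in \Cref{sec:energy_decay}, is that although $\int \langle\partial_s\gamma,e_1\rangle ds$ is only a null Lagrangian formally, the finiteness of $E[\gamma_0]$ forces graphical ends with $u'\in W^{1,2}$ (\Cref{lem:direction_graphical}), which in turn ensures the boundary terms arising in $\frac{d}{dt}D[\gamma]$ vanish. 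Combining with the standard computation for $\frac{d}{dt}B[\gamma]$ yields the identity $\frac{d}{dt}E[\gamma(t,\cdot)] = -\int_{\gamma(t,\cdot)}|\partial_t\gamma|^2 ds$, so $E$ is continuously non-increasing. In particular $B$ and $D$ remain uniformly bounded along the flow.

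Global existence then comes from a continuation argument. The weighted interpolation estimate of \Cref{sec:interpolation}, combined with the boundedness of $B$ and $D$, furnishes global-in-space bounds on $\int|\nabla_s^m\kappa|^2 ds$ for every $m\in\N_0$ on $[0,T)$, via the time cut-off machinery of \Cref{sec:curvature_control} patterned on Kuwert–Schätzle. These together with the lower bound $\inf|\partial_x\gamma|>0$ (which is preserved by the PDE for the tangential component, obtainable from the standard ODE computation for $\partial_t|\partial_x\gamma|$) rule out any finite-time singularity, hence $T=\infty$. Uniqueness and properness are inherited from local well-posedness and persist because all derivatives remain bounded. This gives all assertions of the theorem up through continuous decay of $E$.

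For the long-time analysis I would proceed as follows. From $\int_0^\infty \int|\partial_t\gamma|^2 ds\, dt \le E[\gamma_0] < \infty$ and the uniform higher-order bounds, one can pick sequences $t_j\to\infty$ along which $\int|\partial_t\gamma(t_j,\cdot)|^2 ds \to 0$. After arclength reparametrization and translating by, say, $\gamma(t_j,0)$, the uniform bounds on $\partial_s^m\kappa$ yield, by Arzelà–Ascoli on compact subsets and a diagonal argument, locally smooth sub-convergence to a complete properly immersed limit curve $\gamma_\infty:\R\to\R^n$. The vanishing of $\partial_t\gamma$ in the limit, i.e.\ $-\nabla_s^2\kappa-\tfrac12|\kappa|^2\kappa+\kappa=0$, identifies $\gamma_\infty$ as an elastica. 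Lower semicontinuity of $E$ under local smooth convergence gives $E[\gamma_\infty]\le E[\gamma_0]<\infty$; in particular $D[\gamma_\infty]<\infty$, so $\gamma_\infty$ has graphical ends with $u'\in W^{1,2}$, and hence $\partial_s\gamma_\infty(s)\to e_1$ as $s\to\pm\infty$. Applying the rigidity classification for elasticae with such horizontal ends (following the work of Miura and Miura–Wheeler cited above) then forces $\gamma_\infty$ to be either a straight line or a planar borderline elastica.

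I expect the subtlest step to be the globalization of the higher-order curvature bounds: the interpolation estimates of \Cref{sec:interpolation} are weighted and local in space, and one must carefully choose exponents and cut-offs so that the resulting ODEs for $\int|\nabla_s^m\kappa|^2 ds$ close without requiring any a priori integrability of length, relying only on the finiteness of $E$. A secondary technical point is verifying that the null-Lagrangian identity underlying the monotonicity of $D$ really has no boundary contributions; this uses in an essential way that $E[\gamma(t,\cdot)]<\infty$ implies the tangent vector converges to $\pm e_1$ at the two ends in an $L^2$-sense, which suffices to kill the ends after integration by parts.
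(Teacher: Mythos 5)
Your overall strategy matches the paper's: energy decay for $E=B+D$ via a spatial cut-off argument, uniform $B$-bound closing the continuation criterion from the blow-up rate, and then rigidity classification of finite-energy elasticae to identify the limit. The decay identity, the role of \Cref{lem:direction_graphical}, the interpolation machinery of \Cref{sec:interpolation,sec:curvature_control}, and the Fatou/semicontinuity steps are all correctly placed.

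There is one genuine gap in the convergence step. You only produce a \emph{specific} sequence $t_j\to\infty$ along which $\|V(t_j)\|_{L^2(ds)}\to0$ (coming from $\int_0^\infty\int|V|^2\,ds\,dt<\infty$), and then pass to a further convergent subsequence. This shows the limit along \emph{those} times is an elastica. But the theorem asserts sub-convergence in the paper's sense: for \emph{any} time sequence $t_j\to\infty$ there is a subsequence converging to an elastica. With your argument, a sub-limit along an arbitrary time sequence need not satisfy $V_\infty=0$, because on that sequence you have no information that $\|V\|_{L^2}\to0$. The paper closes this by upgrading $\|V\|_{L^2(ds)}\in L^2(0,\infty)$ to the full limit $\lim_{t\to\infty}\|V\|_{L^2(ds)}(t)=0$, via a global Lipschitz estimate for $t\mapsto\|V\|_{L^2(ds)}^2(t)$ on $[1,\infty)$; the Lipschitz bound is extracted from the parabolic smoothing estimates (\Cref{lem:LTE_L2_bounds,lem:EF_smooth_bound}) together with the evolution equation for $|V|^2$ (see the proof of \Cref{thm:convergence_lambda_elastic_flow}). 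You should incorporate this step, since without it the statement is strictly weaker than claimed.
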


\begin{remark}
    In stark contrast to the previous flows, global-in-space convergence does not hold, in general.
    Indeed, we can construct a peculiar example with loops ``escaping'' to infinity (see \Cref{ex:two_loop} and \Cref{fig:twoloop}).
\end{remark}

\begin{figure}[htbp]
    \centering
    \includegraphics[width=0.6\linewidth]{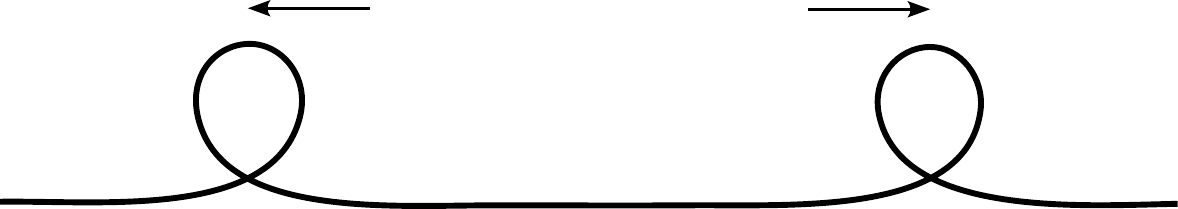}
    \caption{Two loops moving apart along the elastic flow.}
    \label{fig:twoloop}
\end{figure}

Furthermore, we find several conditions for initial data such that we can identify the type of the limit elastica, which in particular implies full (locally smooth) convergence \emph{without} taking a time subsequence.
Among other results (see \Cref{cor:EF_full_convergence}), we obtain the optimal energy threshold below which the same convergence holds as in the case of the curve shortening flow.

% Here is a brief list of results (see \Cref{cor:EF_full_convergence} for details):
% \begin{itemize}
%     \item If $\gamma_0$ has point symmetry, then the solution fully converges to a line.
%     \item If $\gamma_0$ has reflection symmetry in the $e_1$-direction, then full convergence holds.
%     The limit depends on the direction $\partial_s\gamma_0$ at the reflection point.
%     \item If $E[\gamma_0]<8$, then the solution fully converges to a line.
%     The energy threshold is optimal.
%     \item If $\gamma_0$ is planar and has nonzero rotation number, then the solution fully converges to a borderline elastica after suitable reparametrization.
% \end{itemize}

\begin{corollary}\label{cor:EF_convergence_line_<8}
    Let $\gamma_0\in \dot{C}^\infty(\R;\R^n)$ with $\inf_{\R}|\partial_x\gamma_0|>0$.
    Suppose that
    \begin{equation}
        E[\gamma_0]<8.
    \end{equation}
    Then the elastic flow \eqref{eq:EF} starting from $\gamma_0$ satisfies
    \begin{equation}
        \lim_{t\to\infty}\sup_{\R}|\partial_s\gamma(t,\cdot)-e_1|=0, \quad \lim_{t\to\infty}\sup_{\R}|\partial_s^m\kappa(t,\cdot)|=0,
    \end{equation}
    for all $m\in\N_0$.
    In particular, after reparametrization by arclength and translation, the solution locally smoothly converges to a horizontal line.
\end{corollary}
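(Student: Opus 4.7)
The strategy is to combine the sub-convergence from Theorem~\ref{thm:main_EF_global} with the sharp energy threshold $E[\gamma^*]=8$ of the borderline elastica. Since the hypothesis $E[\gamma_0] < 8$ strictly beats this threshold, the borderline is ruled out as a sub-sequential limit, leaving only a horizontal line. I then upgrade local smooth sub-convergence to the uniform convergence claimed in the statement via a translation/contradiction argument.

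For the first step, I would record the energy of the borderline elastica. In arclength parametrization its scalar curvature is $k(s)=2\sech(s)$, arising from the vanishing-constant first integral $(k')^2 = k^2 - \tfrac{1}{4}k^4$ of the planar elastica ODE, which gives $B[\gamma^*]=\tfrac{1}{2}\int_\R 4\sech^2(s)\,ds = 4$. Orienting the borderline so that its ends are asymptotic to $e_1$, the tangent angle $\theta$ satisfies $1-\cos\theta(s)=2\sech^2(s)$ by the standard hyperbolic identities, so $D[\gamma^*]=4$, and therefore $E[\gamma^*]=8$. Monotonicity of $E$ along the flow (Theorem~\ref{thm:main_EF_global}) together with Fatou-type lower semicontinuity under local smooth convergence then yield $E[\gamma_\infty]\leq E[\gamma_0]<8$ for any sub-sequential limit from Theorem~\ref{thm:main_EF_global}. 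Since the only straight lines with finite $E$ are horizontal (any other line has $D=\infty$), $\gamma_\infty$ must be a horizontal line.

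For the second step, I would argue by contradiction. Suppose $\sup_\R|\partial_s\gamma(t,\cdot) - e_1|\not\to 0$; then there exist $\varepsilon>0$, times $t_j \to \infty$, and arclength parameters $s_j \in \R$ such that $|\partial_s\gamma(t_j,s_j) - e_1| \geq \varepsilon$. Set $\tilde\gamma_j(s) := \gamma(t_j, s_j+s) - \gamma(t_j,s_j)$. The global higher-order curvature estimates from \Cref{sec:curvature_control}, combined with the uniform bound $E[\tilde\gamma_j] = E[\gamma(t_j,\cdot)] < 8$, give local smooth precompactness of the family $\tilde\gamma_j$; invoking the sub-convergence machinery underlying Theorem~\ref{thm:main_EF_global} with this new basepoint sequence extracts a limit elastica $\tilde\gamma_\infty$ with $E[\tilde\gamma_\infty]<8$, hence a horizontal line by the previous step. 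But local smooth convergence forces $\partial_s\tilde\gamma_j(0) = \partial_s\gamma(t_j,s_j) \to e_1$, contradicting $|\partial_s\gamma(t_j,s_j)-e_1|\geq\varepsilon$. The analogous argument, with $\partial_s^m\kappa$ in place of $\partial_s\gamma - e_1$ and using that $\partial_s^m\kappa\equiv 0$ on any line, handles each derivative $m\in\N_0$. Local smooth convergence of the centered curves $\gamma(t,\cdot)-\gamma(t,0)$ to a horizontal line then follows.

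The main technical point I expect to verify carefully is that the compactness underlying the sub-convergence in Theorem~\ref{thm:main_EF_global} may be applied along arbitrary basepoint sequences $\gamma(t_j, s_j)$, which in turn relies on the fact that the global curvature bounds of \Cref{sec:curvature_control} are uniform in the spatial variable and independent of any specific translation.
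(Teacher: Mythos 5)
Your proof is correct and follows essentially the same route as the paper: combine the sub\nobreakdash-convergence statement (which already allows arbitrary basepoint sequences), the energy monotonicity and Fatou lower semicontinuity, the classification of limits as lines or borderline elasticae, and the sharp threshold $E[\gamma_b]=8$ to force the limit to be the horizontal line; then invoke the arbitrariness of $t_j,s_j$ to upgrade local sub-convergence to the global uniform statement. The paper compresses the last step into ``the arbitrariness of $t_j,s_j$ yields the assertion,'' whereas you make the same logic explicit via a contradiction argument, and you split the energy of the borderline as $B=4$ and $D=4$ rather than computing $E$ in one integral as in \eqref{eq:energy_8_borderline}. Both are fine; your worry about whether the compactness can be applied along arbitrary basepoint sequences is already resolved by the statement of \Cref{thm:convergence_lambda_elastic_flow}~\eqref{item:convergence_lambdaEF_3}, which is formulated precisely for arbitrary $\{s_j\}$.
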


\begin{remark}
    The energy threshold is optimal.
    Indeed, a borderline elastica $\gamma_0$ with $E[\gamma_0]=8$ is a stationary solution.
\end{remark}

We finally mention that, although the curve shortening flow preserves many positivity properties such as planar embeddedness or graphicality (cf.\ \Cref{cor:CSF_convergence}), a wide class of higher-order flows does not due to the lack of maximum principles \cite{Blatt2010,ElliottMaier-Paape2001}. %(see, e.g., \cite{Blatt2010,ElliottMaier-Paape2001} and, for more on elastic flows, \cite{Kemmochi_Miura_2024_migration,miura2024migrating}).
Recently, optimal energy thresholds for preserving embeddedness of elastic flows of closed curves have been obtained in \cite{Miura_Muller_Rupp_2025_optimal}.
Our new energy method is also useful in this direction; we can obtain optimal thresholds in terms of $E$ for preserving planar embeddedness (resp.\ graphicality) in the non-compact case.
This will be addressed in our future work.

\section{Preliminaries}\label{sec:preliminaries}

\subsection{Notation}\label{subsec:notation}

Let $\N$ denote the set of positive integers and $\N_0:=\N\cup\{0\}$.
For $k\in\N_0$ and $\alpha\in (0,1]$, we denote by $C^{k,\alpha}(\R;\R^n)$ the usual space of $k$-times differentiable functions $u\colon\R\to\R^n$ with finite norm
\begin{align}
    \Vert u \Vert_{C^{k,\alpha}(\R;\R^n)} \vcentcolon = \sum_{i=0}^k \Vert \partial_x^i u\Vert_\infty + [\partial_x^i u]_\alpha,
\end{align}
where $[\cdot]_\alpha$ denotes the $\alpha$-H\"older seminorm if $\alpha<1$ and the best Lipschitz constant for $\alpha=1$.
Since we work with non-compact curves, we consider, for $k\in\N$ and $\alpha\in (0,1]$, the function spaces
\begin{align}
    \dot C^{k,\alpha}(\R;\R^n) \vcentcolon = \{u \colon \R\to\R^n \mid \partial_x u \in C^{k-1,\alpha}(\R;\R^n)\}.
\end{align}
In similar fashion, we define $\dot{C}^k(\R;\R^n)$ for $k\geq 1$.
We also set
\begin{align}
    \dot C^\infty(\R;\R^n) \vcentcolon = \bigcap_{k=1}^\infty \dot{C}^{k}(\R;\R^n).
\end{align}
If the codomain is clear from the context, we also just write $C^{k,\alpha}(\R)$ or $\dot{C}^{k,\alpha}(\R)$.

\subsection{Basic properties of the direction energy}\label{subsec:direction}

The bending energy $B$ is invariant with respect to isometries of $\R^n$, and also any reparametrization.
On the other hand, the direction energy $D$ has less invariances; translation, rotation around the $e_1$-axis, reflection in directions orthogonal to $e_1$, and orientation-preserving reparametrization.

\begin{lemma}\label{lem:direction_horizontal}
	Let $\gamma:\R\to\R^n$ be a smooth immersion with $\inf_\R|\partial_x\gamma|>0$ and $D[\gamma] <\infty$. Then
    \[
    \lim_{x\to\pm\infty}\langle\gamma(x),e_1\rangle = \pm\infty, 
    \]
    and hence $\gamma$ is proper.
    If in addition $\gamma\in \dot{C}^{1,1}(\R;\R^n)$, then
    \[
    \lim_{x\to\pm\infty} \partial_s\gamma(x) = e_1.
    \]
\end{lemma}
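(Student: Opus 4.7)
The plan is to exploit the non-negativity of the integrand in the direction energy together with the lower bound on $|\partial_x\gamma|$. Since $|\partial_s\gamma|\equiv 1$, we have $1-\langle\partial_s\gamma,e_1\rangle\geq 0$, and in fact
\[
1-\langle\partial_s\gamma,e_1\rangle = \tfrac{1}{2}|\partial_s\gamma-e_1|^2,
\]
so the finiteness of $D[\gamma]$ is the statement
\[
\int_\R \bigl(1-\langle\partial_s\gamma(x),e_1\rangle\bigr)\,|\partial_x\gamma(x)|\,dx <\infty.
\]

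For the first claim, fix a base point $x_0\in\R$ and write
\[
\langle\gamma(x)-\gamma(x_0),e_1\rangle = \int_{x_0}^x \langle\partial_s\gamma,e_1\rangle\,|\partial_x\gamma|\,d\xi = \mathcal{L}(x_0,x) - \int_{x_0}^x\bigl(1-\langle\partial_s\gamma,e_1\rangle\bigr)|\partial_x\gamma|\,d\xi,
\]
where $\mathcal{L}(x_0,x)$ is the arclength between $x_0$ and $x$. By assumption $\inf_\R|\partial_x\gamma|=:c_0>0$, hence $\mathcal{L}(x_0,x)\to\pm\infty$ as $x\to\pm\infty$, while the remainder integral is bounded in absolute value by $D[\gamma]<\infty$. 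This proves $\langle\gamma(x),e_1\rangle\to\pm\infty$. Properness is then immediate: on $\{|x|\geq R\}$, $|\gamma(x)|\geq|\langle\gamma(x),e_1\rangle|$ tends to infinity, so preimages of compact sets are bounded (and closed by continuity).

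For the second claim, since $|\partial_x\gamma|\geq c_0$, the finiteness of $D[\gamma]$ upgrades to
\[
\int_\R \bigl(1-\langle\partial_s\gamma(x),e_1\rangle\bigr)\,dx <\infty.
\]
The plan is now to apply Barb\u{a}lat's lemma to the non-negative function $g(x):=1-\langle\partial_s\gamma(x),e_1\rangle$, for which it suffices to verify uniform continuity of $g$. The $\dot{C}^{1,1}$-hypothesis means that $\partial_x\gamma$ is globally Lipschitz; in particular $|\partial_x\gamma|$ is bounded above and, combined with the lower bound $c_0>0$, Lipschitz and bounded away from zero, so $|\partial_x\gamma|^{-1}$ is Lipschitz. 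Therefore $\partial_s\gamma=\partial_x\gamma/|\partial_x\gamma|$ is Lipschitz on $\R$, and so is $g$. Barb\u{a}lat then yields $g(x)\to 0$ as $x\to\pm\infty$, which via the identity $|\partial_s\gamma-e_1|^2 = 2g$ gives the desired $\partial_s\gamma(x)\to e_1$.

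I do not expect a real obstacle here; the only point requiring care is to confirm that $\dot{C}^{1,1}$-regularity (together with $\inf|\partial_x\gamma|>0$) really does hand us global Lipschitz regularity of $\partial_s\gamma$, which is the hypothesis for Barb\u{a}lat. If one wanted to avoid citing Barb\u{a}lat, the same conclusion follows directly: if $g(x_k)\geq\delta>0$ along some $x_k\to+\infty$, then the Lipschitz bound forces $g\geq\delta/2$ on intervals of uniform positive length around each $x_k$, contradicting integrability of $g$.
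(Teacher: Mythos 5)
Your proof is correct and follows essentially the same route as the paper: the first assertion is obtained by the identical rearrangement of $\langle\gamma(x),e_1\rangle$ as arclength minus a $D[\gamma]$-bounded remainder, and the second assertion invokes Barb\u{a}lat's lemma (the paper applies it to $|\partial_x\gamma|-\langle\partial_x\gamma,e_1\rangle$, which is Lipschitz directly from $\dot C^{1,1}$, whereas you apply it to the normalized quantity $1-\langle\partial_s\gamma,e_1\rangle$, needing the additional but available lower bound $\inf|\partial_x\gamma|>0$ to keep it Lipschitz -- an immaterial variation).
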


\begin{proof} 
    The first assertion follows by taking the limits $x\to\pm\infty$ in the identity
    \begin{align}
        \int_{0}^x |\partial_x\gamma(x')|dx'-\langle\gamma(x),e_1\rangle = \int_0^x\big( |\partial_x\gamma(x')| - \langle\partial_x\gamma(x'),e_1\rangle \big)dx' -\langle \gamma(0),e_1\rangle
    \end{align}
    since the right hand side is bounded thanks to the assumption $D[\gamma]<\infty$.
    
    The second assertion follows since the integrability of $|\partial_x\gamma| - \langle\partial_x\gamma,e_1\rangle$ and its Lipschitz continuity imply that $|\partial_x\gamma| - \langle\partial_x\gamma,e_1\rangle \to 0$ as $x\to\pm\infty$.
\end{proof}

\begin{lemma}\label{lem:direction_graphical}
    Let $\gamma\in \dot C^{1,1}(\R;\R^n)$ be a smooth immersion with $\inf_\R|\partial_x\gamma|>0$.
    \begin{enumerate}
        \item We have $D[\gamma]<\infty$ if and only if outside a slab $S_R:=\{ p\in\R^n \mid -R \leq \langle p,e_1\rangle \leq R\}$ the curve $\gamma$ is represented by a graph curve $\{(x,u(x))\in\R^n\mid x\in\R\}$ of $u\in \dot{C}^{1,1}(\R;\R^{n-1})$ with $u'\in L^2(\R;\R^{n-1})$.
        \item We have $B[\gamma]+D[\gamma]<\infty$ if and only if outside a slab $S_R$ the curve $\gamma$ is represented by a graph curve of $u\in \dot{C}^{1,1}(\R;\R^{n-1})$ with $u'\in W^{1,2}(\R;\R^{n-1})$.
    \end{enumerate}
\end{lemma}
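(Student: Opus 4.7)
The plan is to translate both statements into explicit graph coordinates outside a slab. The asymptotic horizontality from \Cref{lem:direction_horizontal} is the key structural input: if $D[\gamma]<\infty$, then $\partial_s\gamma \to e_1$ and $\langle\gamma,e_1\rangle\to\pm\infty$ at the ends. I fix $X>0$ large enough that $\langle\partial_s\gamma,e_1\rangle>1/2$ for $|x|\geq X$, then pick $R>\sup_{|x|\leq X}|\langle\gamma(x),e_1\rangle|$; on each of the two ends the first coordinate $x\mapsto\langle\gamma(x),e_1\rangle$ is then strictly monotone and surjects onto a half-line past $\pm R$. Inverting it and taking the remaining components yields two smooth functions which glue to a single $u\in\dot{C}^{1,1}(\R;\R^{n-1})$ after any smooth extension across the slab; regularity is inherited from $\gamma$ via the inverse function theorem. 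Conversely, if a graph representation outside $S_R$ is given, the same slab decomposition is trivially available.

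For part~(i), in the graph form $\gamma(y)=(y,u(y))$ one has $|\partial_y\gamma|=\sqrt{1+|u'|^2}$ and $\langle\partial_s\gamma,e_1\rangle=1/\sqrt{1+|u'|^2}$, so the outside-slab contribution to $D[\gamma]$ equals
\begin{equation*}
\int_{|y|>R}\bigl(\sqrt{1+|u'|^2}-1\bigr)\,dy = \int_{|y|>R}\frac{|u'|^2}{\sqrt{1+|u'|^2}+1}\,dy.
\end{equation*}
The integrand is pointwise equivalent to $|u'|^2$ whenever $|u'|$ is bounded---which holds outside the slab by $u'\to 0$ in the $(\Rightarrow)$ direction (a consequence of $\partial_s\gamma\to e_1$), and by $u\in\dot{C}^{1,1}$ in the $(\Leftarrow)$ direction---so finiteness of the integral is equivalent to $u'\in L^2(\{|y|>R\})$. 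The inside-slab contribution is always finite as it is an integral over a bounded smooth arc, and the extension of $u$ across the slab adds only a finite amount to $\|u'\|_{L^2(\R)}$.

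For part~(ii), I combine this with the corresponding graph-coordinate identity for the curvature vector,
\begin{equation*}
|\kappa|^2\,ds=\frac{|u''|^2(1+|u'|^2)-\langle u',u''\rangle^2}{(1+|u'|^2)^{5/2}}\,dy.
\end{equation*}
Cauchy--Schwarz and $|u'|\in L^\infty$ on $\{|y|>R\}$ yield two-sided bounds of the form $c_1|u''|^2\,dy\leq|\kappa|^2\,ds\leq c_2|u''|^2\,dy$. In the $(\Rightarrow)$ direction, part~(i) has already placed the curve in graph form with bounded $u'$, so $B[\gamma]<\infty$ forces $u''\in L^2$; in the $(\Leftarrow)$ direction, the hypothesis $u'\in W^{1,2}$ gives $u'\in L^\infty$ by Sobolev embedding (or directly from $\dot{C}^{1,1}$) and the upper bound shows $B[\gamma]<\infty$. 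Together with part~(i) this is the claimed equivalence with $u'\in W^{1,2}$.

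The main obstacle I anticipate is the bookkeeping around the slab: ensuring that the two monotone ends truly correspond to disjoint graph pieces that together recover $\gamma(\R)\setminus S_R$ (and not merely locally, up to possible re-entries), which requires choosing $R$ large relative to the central compact portion, and then producing a single function $u$ on $\R$ rather than two separate branches. Once the graph representation is securely in place, everything else reduces to the two elementary Jacobian computations above.
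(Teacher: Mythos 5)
Your proposal takes essentially the same route as the paper's: reduce to graph coordinates outside a slab via Lemma~\ref{lem:direction_horizontal}, then observe that, once $u'$ is bounded, the direction and bending densities are two-sided comparable to $|u'|^2$ and $|u''|^2$ respectively. The only cosmetic difference is that you retain the cross term $\langle u',u''\rangle^2$ in the exact curvature identity and derive the two-sided bounds from it, whereas the paper writes down a simplified bending density directly; your version is in fact a bit cleaner for general codimension.
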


\begin{proof}
    We first prove the ``only if'' part of (i).
    Suppose $D[\gamma]<\infty$.
    By \Cref{lem:direction_horizontal}, we deduce that outside a slab $S_R$ the curve $\gamma$ is a graph curve with bounded gradient; more precisely, there are $R>0$ and $u\in \dot{C}^{1,1}(\R;\R^{n-1})$ such that
    \[
    \gamma(\R)\setminus S_R = \{(x,u(x))\in\R^n\mid x\in\R\}\setminus S_R.
    \]
    Since $D[\gamma]<\infty$, and since the energy inside the slab $S_R$ does not affect the finiteness, the graph of $u$ also has finite direction energy $\tilde{D}[u]<\infty$, where 
    \[
    \tilde{D}[u]:=\int_\R \Big(  1 - \frac{1}{\sqrt{1+|u'|^2}} \Big) \sqrt{1+|u'|^2} dx = \int_\R  \frac{|u'|^2}{1+(1+|u'|^2)^{\frac{1}{2}}} dx \geq c\|u'\|_{L^2}^2,
    \]
    with $c>0$ depending only on $\|u'\|_\infty$.
    This completes the ``only if'' part of (i).
    The ``if'' part is easier since $\tilde{D}[u] \leq \frac{1}{2}\|u'\|_{L^2}^2 < \infty$,
    implying that $D[\gamma]<\infty$.
    This completes the proof of (i).

    Now we turn to (ii).
    Suppose $B[\gamma]+D[\gamma]<\infty$.
    Let $u\in \dot{C}^{1,1}$ be as above; note that $u''\in L^\infty$.
    Then in addition to $u'\in L^2$ obtained above, we have the finiteness of the bending energy $\tilde{B}[u]<\infty$, where 
    \begin{align}
        \tilde{B}[u] &:= \int_\R \frac{|u''|^2}{2(1+|u'|^2)^3} \sqrt{1+|u'|^2} dx \geq c\|u''\|_{L^2}^2.
    \end{align}
    This implies the ``only if'' part of (ii).
    The ``if'' part is again easier since $\tilde{B}[u] \leq \frac{1}{2}\|u''\|_{L^2}^2 < \infty$.
    The proof is now complete.
\end{proof}

\section{Gradient flow structures}\label{sec:energy_decay}

In this section we obtain key energy-decay properties for the flows under consideration, through a cut-off argument.

From this section onward, we will primarily use \( \gamma: (t,x) \mapsto \gamma(t,x) \) to denote a flow of curves.
In this case, we sometimes abbreviate \( \gamma(t,\cdot) \) as \( \gamma(t) \).
Furthermore, we write \( \int f ds \) to denote an integral of the following form:
\[
\int fds:= \int_{\gamma(t,\cdot)} f(t,\cdot) ds = \int_{\R} f(t,x) |\partial_x \gamma(t,x)| dx.
\]
Finally, we define the norm
\[
\|f\|_{L^p(ds)} := \left( \int |f|^p ds \right)^{1/p},
\]
which we may also express as \( \|f\|_{L^p(ds(t))} \) to emphasize the time dependence.

Throughout this section we use $\xi$ and $V$ to denote the tangential and normal velocity of a flow $\gamma$, respectively; namely,
\begin{equation}
    \xi\vcentcolon= \langle \partial_t \gamma,\partial_s\gamma \rangle, \quad V\vcentcolon=\partial_t^{\perp}\gamma = \partial_t\gamma - \xi\partial_s\gamma.
\end{equation}

\subsection{Evolution of localized energy functionals}

We first compute general time-derivative formulae for the direction and bending energy.

\begin{lemma}\label{lem:direction_derivative_cutoff}
    Let $T\in(0,\infty)$ and $\gamma:[0,T]\times\R\to\R^n$ be a smooth family of immersed curves.
    Let $\eta\in C_c^\infty(\R)$ and define
	\begin{align}
		D[\gamma|\eta] \vcentcolon = \int (1 -\langle \partial_s\gamma, e_1\rangle) \eta ds.
	\end{align}
    Then
	\begin{align}\label{eq:direction_derivative_cutoff}
		\frac{d}{dt} D[\gamma|\eta] + \int\langle\kappa,V\rangle\eta ds = -\int \xi (1-\langle \partial_s\gamma,e_1\rangle) \partial_s\eta ds + \int\langle V,e_1\rangle \partial_s\eta ds.
	\end{align}
\end{lemma}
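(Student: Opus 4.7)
The plan is to differentiate $D[\gamma|\eta]$ pointwise under the integral---which is justified by the uniform regularity hypotheses, with the compact support of $\eta\in C_c^\infty(\R)$ eliminating any boundary issues---then apply the standard evolution identities and finally integrate by parts in $s$.

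The two identities I will use are
\begin{align*}
\partial_t(ds) = (\partial_s\xi - \langle V,\kappa\rangle)\,ds, \qquad \partial_t\partial_s\gamma = \xi\kappa + \partial_s V + \langle V,\kappa\rangle\,\partial_s\gamma,
\end{align*}
both of which follow from $\partial_t\gamma = \xi\partial_s\gamma + V$, $\langle V,\partial_s\gamma\rangle=0$, and the commutator $[\partial_t,\partial_s] = -(\partial_s\xi - \langle V,\kappa\rangle)\partial_s$. Applying them to $D[\gamma|\eta] = \int(1-\langle\partial_s\gamma,e_1\rangle)\eta\,ds$, the two $\langle V,\kappa\rangle$-contributions combine via
\begin{align*}
-\langle V,\kappa\rangle\langle\partial_s\gamma,e_1\rangle - (1-\langle\partial_s\gamma,e_1\rangle)\langle V,\kappa\rangle = -\langle V,\kappa\rangle,
\end{align*}
and one is left with
\begin{align*}
\frac{d}{dt}D[\gamma|\eta] = -\int\xi\langle\kappa,e_1\rangle\eta\,ds - \int\langle\partial_s V,e_1\rangle\eta\,ds - \int\langle V,\kappa\rangle\eta\,ds + \int(1-\langle\partial_s\gamma,e_1\rangle)\eta\,\partial_s\xi\,ds.
\end{align*}

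The remaining work consists of two integrations by parts. Since $e_1$ is constant, $\langle\partial_s V,e_1\rangle = \partial_s\langle V,e_1\rangle$, so the second integral becomes $\int\langle V,e_1\rangle\partial_s\eta\,ds$. Distributing $\partial_s$ across $(1-\langle\partial_s\gamma,e_1\rangle)\eta$ in the fourth integral yields $\int\xi\langle\kappa,e_1\rangle\eta\,ds - \int(1-\langle\partial_s\gamma,e_1\rangle)\xi\,\partial_s\eta\,ds$, and the first of these cancels the very first integral above. Moving $\int\langle\kappa,V\rangle\eta\,ds$ to the left-hand side then gives exactly \eqref{eq:direction_derivative_cutoff}.

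The computation is otherwise routine and there is no serious obstacle; the only mild technical point is justifying the differentiation under the integral and the pointwise validity of the evolution identities, for which the uniform bounds on $\partial_x\gamma$, $\partial_x^2\gamma$ and $\partial_t\gamma$ in the hypotheses are tailored. The whole argument is a local calculation on $\mathrm{supp}(\eta)$, entirely decoupled from the behavior of $\gamma$ at infinity---which is precisely why this localized identity is the natural starting point for the direction-energy method in the non-compact setting.
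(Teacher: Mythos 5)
Your proof is correct and follows essentially the same strategy as the paper: differentiate under the integral (justified by the compact support of $\eta$ and the uniform bounds), apply the evolution identities for $ds$ and $\partial_s\gamma$, and close with two integrations by parts. The one small difference is a bookkeeping choice: you write $\partial_t\partial_s\gamma$ in terms of the ordinary derivative $\partial_s V$ plus the explicit tangential term $\langle V,\kappa\rangle\partial_s\gamma$, whereas the paper works with $\nabla_s V$ and later invokes the identity $\nabla_s(e_1^\perp)=-\langle\partial_s\gamma,e_1\rangle\kappa$ in the integration by parts. Your route is a touch more elementary because $\langle\partial_s V,e_1\rangle=\partial_s\langle V,e_1\rangle$ makes the second integration by parts immediate and the cross term $\langle V,\kappa\rangle\langle\partial_s\gamma,e_1\rangle$ cancels at the algebraic stage rather than emerging from the normal connection; the paper's version leans on the normal-covariant formalism more heavily but arrives at the same place. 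These two formulations are equivalent via $\nabla_s V=\partial_s V+\langle V,\kappa\rangle\partial_s\gamma$.
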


\begin{proof}
	Standard geometric evolution formulae (cf.\ \cite[Lemma 2.1]{Dziuk-Kuwert-Schatzle_2002}) yield
	\begin{align}
        \begin{split}\label{eq:derivative_formula_1}
            \partial_t(ds) &= (\partial_s\xi-\langle\kappa, V\rangle)ds,\\
		\partial_t (\partial_s\gamma) &= \nabla_s V+\xi\kappa.
        \end{split}
	\end{align}
	We thus compute
	\begin{align}
		\partial_t\Big[ ( 1 -\langle \partial_s\gamma, e_1\rangle ) ds\Big]  = \Big( -\langle \nabla_sV+\xi\kappa,e_1\rangle + (1-\langle \partial_s\gamma,e_1\rangle)(\partial_s\xi-\langle\kappa,V\rangle) \Big)ds.
	\end{align}
    Now we differentiate $D[\gamma|\eta]$.
	By the regularity assumption, the integral is finite, and we may differentiate under the integral.
    Hence we have
	\begin{align}
		\frac{d}{dt} D[\gamma|\eta] &= \int \Big(-\langle \nabla_sV+\xi\kappa,e_1\rangle + (1-\langle \partial_s\gamma,e_1\rangle)(\partial_s\xi-\langle\kappa,V\rangle)\Big)\eta ds.
	\end{align}
    After rearranging,
    \begin{align}
		\frac{d}{dt} D[\gamma|\eta] + \int\langle\kappa,V\rangle\eta ds &= \int \partial_s\xi(1-\langle \partial_s\gamma,e_1\rangle)\eta ds - \int\langle\nabla_sV,e_1\rangle\eta ds\\
        & \qquad +\int \Big(-\xi\langle\kappa,e_1\rangle +\langle \partial_s\gamma,e_1\rangle\langle\kappa,V\rangle\Big)\eta ds.
	\end{align}
	Integrating by parts in the first and second term of the right hand side, and in particular using $\nabla_s (e_1^\perp) = -\langle \partial_s\gamma,e_1\rangle \kappa$ for the second term, we obtain the desired formula.
\end{proof}

\begin{lemma}\label{lem:bending_derivative_cutoff}
    Let $T\in(0,\infty)$ and $\gamma:[0,T]\times\R\to\R^n$ be a smooth family of immersed curves.
    Let $\eta\in C_c^\infty(\R)$ and define
	\begin{align}
		B[\gamma|\eta] \vcentcolon = \frac{1}{2}\int |\kappa|^2 \eta ds.
	\end{align}
    Then
	\begin{align}\label{eq:bending_derivative_cutoff}
		& \frac{d}{dt} B[\gamma|\eta] + \int\langle -\nabla_s^2\kappa-\frac{1}{2}|\kappa|^2\kappa ,V\rangle\eta ds \\
        &\qquad =  -\int \frac{1}{2}|\kappa|^2\xi \partial_s\eta ds + 2\int\langle \nabla_s\kappa,V\rangle\partial_s\eta  ds + \int\langle \kappa,V\rangle \partial_s^2 \eta ds. 
	\end{align}
\end{lemma}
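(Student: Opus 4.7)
The plan mirrors the proof of \Cref{lem:direction_derivative_cutoff}: I will differentiate $B[\gamma|\eta]$ under the integral sign, apply the standard geometric evolution formulas in \eqref{eq:derivative_formula_1} together with the induced formula for $\partial_t\kappa$, and then integrate by parts twice in $s$ to move the derivatives off the velocity $V$ onto the curvature $\kappa$. The compact support of $\eta$ eliminates any genuine boundary terms, while producing the three cutoff contributions involving $\partial_s\eta$ and $\partial_s^2\eta$ on the right-hand side of \eqref{eq:bending_derivative_cutoff}. Differentiating under the integral is justified because $\eta$ has compact support in $x$ and the assumed regularity of $\gamma$ (bounded $\partial_x^k\gamma$ for $k\leq 4$ and bounded $\partial_t\gamma$, together with $\inf|\partial_x\gamma|>0$) makes the integrand and its time derivative globally bounded on the support of $\eta$.

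First I derive the evolution of the curvature. Using the commutator $[\partial_t,\partial_s] = -(\partial_s\xi - \langle\kappa,V\rangle)\partial_s$ (which follows directly from the first identity in \eqref{eq:derivative_formula_1}) together with $\partial_t(\partial_s\gamma) = \nabla_s V + \xi \kappa$, a short calculation gives
\begin{equation*}
\partial_t\kappa = \xi\,\partial_s\kappa + \partial_s(\nabla_s V) + \langle\kappa,V\rangle\kappa.
\end{equation*}
Pairing with $\kappa$ and exploiting $\kappa\perp\partial_s\gamma$ to drop the tangential components of $\partial_s\kappa$ and $\partial_s(\nabla_s V)$, I obtain
\begin{equation*}
\partial_t\!\left(\tfrac{1}{2}|\kappa|^2\right) = \xi\,\partial_s\!\left(\tfrac{1}{2}|\kappa|^2\right) + \langle\kappa,\nabla_s^2 V\rangle + |\kappa|^2\langle\kappa,V\rangle.
\end{equation*}
Combining this with $\partial_t(ds) = (\partial_s\xi - \langle\kappa,V\rangle)ds$ and using that $\eta$ is $t$-independent, Leibniz gives
\begin{equation*}
\frac{d}{dt}B[\gamma|\eta] = \int \partial_s\!\left(\tfrac{1}{2}\xi|\kappa|^2\right)\eta\, ds + \int \langle\kappa,\nabla_s^2 V\rangle\,\eta\, ds + \int \tfrac{1}{2}|\kappa|^2\langle\kappa,V\rangle\,\eta\, ds.
\end{equation*}
The first integrand is a total $\partial_s$-derivative, so integrating by parts once produces $-\int \tfrac{1}{2}\xi|\kappa|^2\,\partial_s\eta\, ds$, the first term of \eqref{eq:bending_derivative_cutoff}. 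The third integral will combine with a piece of the second to reconstruct the $-\tfrac{1}{2}|\kappa|^2\kappa$ contribution on the left-hand side of \eqref{eq:bending_derivative_cutoff}.

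The main step is the identity
\begin{equation*}
\int \langle\kappa,\nabla_s^2 V\rangle\,\eta\, ds = \int \langle\nabla_s^2\kappa,V\rangle\,\eta\, ds + 2\int \langle\nabla_s\kappa,V\rangle\,\partial_s\eta\, ds + \int \langle\kappa,V\rangle\,\partial_s^2\eta\, ds,
\end{equation*}
obtained by two successive integrations by parts in $s$ against $\eta$. The subtle point is that $\partial_s$ and $\nabla_s$ differ by a tangential projection, but in every pairing that appears in the calculation one of the two factors ($\kappa$, $\nabla_s\kappa$, $V$, or $\nabla_s V$) is normal, so $\partial_s$ and $\nabla_s$ may be exchanged without penalty; since $\eta\in C_c^\infty(\R)$, boundary terms at infinity vanish. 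Substituting this identity into the previous display and rearranging yields \eqref{eq:bending_derivative_cutoff}. I expect this careful two-step integration by parts, together with the bookkeeping of tangential versus normal components, to be the main technical obstacle; the rest closely parallels \Cref{lem:direction_derivative_cutoff}.
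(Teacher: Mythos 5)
Your proof is correct and follows essentially the same route as the paper's: both rely on the evolution formulas for $ds$ and $\kappa$, then perform the same two integrations by parts in $s$ to produce the $\partial_s\eta$ and $\partial_s^2\eta$ terms. The only cosmetic difference is that you re-derive the formula $\partial_t\kappa = \xi\,\partial_s\kappa + \partial_s(\nabla_s V) + \langle\kappa,V\rangle\kappa$ from the commutator $[\partial_t,\partial_s]$ rather than citing $\partial_t^\perp\kappa = \nabla_s^2 V+\langle \kappa,V\rangle\kappa +\xi\nabla_s\kappa$ directly from Dziuk--Kuwert--Sch\"atzle as the paper does, but pairing either version with $\kappa$ yields the same evolution of $\tfrac12|\kappa|^2$.
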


\begin{proof}
	In addition to \eqref{eq:derivative_formula_1}, recalling from \cite[Lemma 2.1]{Dziuk-Kuwert-Schatzle_2002} that
	\begin{align}
		\partial_t^{\perp}\kappa &= \nabla_s^2 V+\langle \kappa,V\rangle\kappa +\xi\nabla_s\kappa, \label{eq:dtkappa}
	\end{align}
	we argue along the lines of the proof of Lemma \ref{lem:direction_derivative_cutoff} to obtain
	\begin{align}
		\frac{d}{dt} B[\gamma|\eta] &= \int \Big(\langle \kappa, \nabla_s^2V\rangle + \frac{1}{2}|\kappa|^2 \langle \kappa, V\rangle + \xi\langle \kappa,\nabla_s\kappa\rangle  +\frac{1}{2}|\kappa|^2 \partial_s\xi\Big)\eta ds.
	\end{align}
	Integrating by parts in the first term yields
	\begin{align}
		\int \langle \kappa, \nabla_s^2V\rangle \eta ds &= -\int\langle \nabla_s\kappa, \nabla_sV\rangle \eta ds - \int\langle \kappa,\nabla_sV\rangle \partial_s\eta  ds \\
		&= \int\langle \nabla_s^2 \kappa, V\rangle \eta ds  + 2\int\langle \nabla_s\kappa,V\rangle\partial_s\eta  ds + \int\langle \kappa,V\rangle \partial_s^2 \eta ds,
	\end{align}
	so that, after some rearranging,
	\begin{align}
		& \frac{d}{dt} B[\gamma|\eta] + \int \langle -\nabla_s^2\kappa-\frac{1}{2}|\kappa|^2\kappa, V\rangle \eta ds \\
        & \qquad =  \int \partial_s\Big(\frac{1}{2}|\kappa|^2\xi\Big)\eta ds + 2\int\langle \nabla_s\kappa,V\rangle\partial_s\eta  ds + \int\langle \kappa,V\rangle \partial_s^2 \eta ds.
	\end{align}
    Integration by parts for the first term yields the assertion.
\end{proof}

\subsection{Energy identity}

Now we turn to the proof of the energy identity.
We first address the flows having the structure of an $L^2$-gradient flow, namely \eqref{eq:CSF} and \eqref{eq:lambda-EF}.
Given $\sigma,\lambda\geq0$, we define the functional 
\[
F[\gamma]=F_{\sigma,\lambda}[\gamma]\vcentcolon = \sigma B[\gamma]+\lambda D[\gamma] = \int_\gamma\left( \frac{\sigma}{2}|\kappa|^2+\lambda(1-\langle \partial_s\gamma,e_1\rangle ) \right)ds.
\]

\begin{proposition}\label{prop:energy_decay}
    Let $\sigma,\lambda\geq 0$, $T\in(0,\infty)$, and $\gamma:[0,T]\times\R\to\R^n$ be a smooth solution to
	\begin{align}
		\partial_t^{\perp}\gamma = -\sigma \left( \nabla_s^2\kappa+\frac{1}{2}|\kappa|^2\kappa \right) + \lambda \kappa,
	\end{align}
	such that
    \begin{align}
        \inf_{[0,T]\times\R} |\partial_x\gamma|>0, \quad \sum_{k=1}^M \Vert \partial_x^k\gamma\Vert_{L^\infty([0,T]\times \R)} + \Vert \partial_t\gamma\Vert_{L^\infty([0,T]\times \R)} <\infty,\label{eq:energy_decay_regularity}
    \end{align}
    where $M=4$ if $\sigma>0$, while $M=2$ if $\sigma=0$.
    If $F[\gamma(0)]<\infty$, then
    \begin{align}
        F[\gamma(T)] - F[\gamma(0)] = -\int_0^T \int |\partial_t^\perp\gamma|^2 ds dt.
    \end{align}
\end{proposition}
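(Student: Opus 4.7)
The plan is to combine the localized identities from \Cref{lem:direction_derivative_cutoff,lem:bending_derivative_cutoff}, weighted by $\lambda$ and $\sigma$ respectively, and then pass to the limit via a spatial cutoff exhausting $\R$. Adding the two weighted identities and substituting the flow equation $V = -\sigma(\nabla_s^2\kappa + \tfrac{1}{2}|\kappa|^2\kappa) + \lambda\kappa$, the two $V$-integrals on the left coalesce into the dissipation $\int|V|^2\eta\,ds$, producing the cutoff identity
\begin{align}
    \frac{d}{dt} F[\gamma|\eta] + \int |V|^2 \eta\,ds = \mathcal{B}[\eta],
\end{align}
where $F[\gamma|\eta] \vcentcolon= \sigma B[\gamma|\eta] + \lambda D[\gamma|\eta]$ and $\mathcal{B}[\eta]$ collects the five cutoff error terms from \eqref{eq:direction_derivative_cutoff} and \eqref{eq:bending_derivative_cutoff}, each involving $\partial_s\eta$ or $\partial_s^2\eta$.

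For the cutoff I would take $\eta_R = \phi_R^2$ with $\phi_R \in C_c^\infty(\R)$, $\phi_R \equiv 1$ on $[-R,R]$, $\operatorname{supp}(\phi_R) \subset [-2R,2R]$, and $|\partial_s\phi_R| + |\partial_s^2\phi_R| \leq C/R$ (using $\inf_{[0,T]\times\R}|\partial_x\gamma|>0$ and the uniform $L^\infty$-control of $\partial_x^2\gamma$ to convert $x$-derivatives into arclength derivatives). The key quantitative fact is $\int(\partial_s\phi_R)^2\,ds + \int (\partial_s^2\phi_R)^2\,ds = O(R^{-1})$, since the supports have arclength measure $O(R)$. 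Since $\partial_s\eta_R = 2\phi_R\partial_s\phi_R$ factorizes, each boundary term splits via Cauchy--Schwarz into the product of (i) one of $\sqrt{F[\gamma|\eta_R]}$ or $\sqrt{\int|V|^2\eta_R\,ds}$, and (ii) a factor of order $O(R^{-1/2})$; here I employ the $L^\infty$-bounds on $\kappa,\nabla_s\kappa,\xi,V$ granted by the regularity assumption \eqref{eq:energy_decay_regularity}. Young's inequality then yields
\begin{align}
    |\mathcal{B}[\eta_R]| \leq \tfrac{1}{2}\int |V|^2 \eta_R \,ds + CF[\gamma|\eta_R] + \frac{C}{R}.
\end{align}

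Integrating in time on $[0,t]$, absorbing the dissipation on the right into the left, and applying Gronwall's inequality, I would obtain uniform (in $R$) bounds on $\sup_{t \in [0,T]}F[\gamma(t)|\eta_R]$ and on $\int_0^T\int|V|^2\eta_R\,ds\,dt$. Feeding these bounds back into the Cauchy--Schwarz estimates sharpens the control to $\int_0^T |\mathcal{B}[\eta_R]|\,dt = O(R^{-1/2})$. Since the integrands in $F[\gamma|\eta_R]$ and $|V|^2\eta_R$ are non-negative with finite limits by the uniform Gronwall estimate, monotone convergence gives $F[\gamma(t)|\eta_R]\to F[\gamma(t)]$ and $\int|V|^2\eta_R\,ds \to \int|V|^2\,ds$, so passing to the limit delivers
\begin{align}
    F[\gamma(T)] - F[\gamma(0)] + \int_0^T\int|V|^2\,ds\,dt = 0,
\end{align}
with $F[\gamma(t)]<\infty$ for all $t \in [0,T]$ as a by-product.

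The main obstacle is that $\mathcal{B}[\eta_R]$ involves the \emph{unknowns} $D[\gamma(t)|\eta_R]$ and $B[\gamma(t)|\eta_R]$, which are only a priori finite at $t=0$; we have no direct tail control of $(1-\langle\partial_s\gamma,e_1\rangle)$ or $|\kappa|^2$ at positive times. This is exactly what the Gronwall bootstrap overcomes: the cutoff identity itself, coupled with Young's inequality, self-consistently furnishes the uniform estimate on $F[\gamma(t)|\eta_R]$ that then makes the cutoff errors negligible in the limit.
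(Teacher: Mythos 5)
Your proposal is correct and takes essentially the same route as the paper: derive the localized identity from \Cref{lem:direction_derivative_cutoff,lem:bending_derivative_cutoff} and the flow equation, use a first pass to establish the a priori bound $\sup_{t}F[\gamma(t)]<\infty$ and $\int_0^T\int|V|^2\,ds\,dt<\infty$, then integrate the identity again and show the cutoff errors vanish as $R\to\infty$ before invoking monotone convergence. The only cosmetic differences are that the paper avoids Gronwall by simply bounding all integrands on the right-hand side by the $L^\infty$ norms from \eqref{eq:energy_decay_regularity} (yielding a constant, rather than $CF[\gamma|\eta_R]+C/R$) and works with $\eta=\varphi^4$ instead of $\phi_R^2$, but the two-pass logic and the final limiting argument are the same.
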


\begin{proof}
    Suppose $\sigma>0$.
    Let $\eta\in C_c^\infty(\R)$ and define $D[\gamma|\eta]$ and $B[\gamma|\eta]$ as above.
    Combining \Cref{lem:direction_derivative_cutoff,lem:bending_derivative_cutoff}, we find
    \begin{align}
        &\frac{d}{dt} F[\gamma|\eta] + \int |V|^2 \eta ds \\
        &\qquad = -\sigma\int \frac{1}{2}|\kappa|^2\xi \partial_s\eta ds + 2\sigma\int\langle \nabla_s\kappa,V\rangle\partial_s\eta  ds + \sigma\int\langle \kappa,V\rangle \partial_s^2 \eta ds \\
        &\qquad \quad -\lambda \int \xi (1-\langle \partial_s\gamma,e_1\rangle) \partial_s\eta ds + \lambda \int\langle V,e_1\rangle \partial_s\eta ds.\label{eq:dt_F_1}
    \end{align}
    Take a cutoff function $\varphi_0\in C_c^\infty(\R)$ with $\chi_{[-1/2,1/2]}\leq \varphi_0\leq \chi_{[-1,1]}$, let $R>1$, define $\varphi(x):=\varphi_0(R^{-1}x)$, and set $\eta=\varphi^4$.
    In the following, $C\in (0,\infty)$ is a constant depending on $\lambda, \sigma, \varphi_0$, and the bounds on $\gamma$ in \eqref{eq:energy_decay_regularity}, that may change from line to line.
    Then, by \eqref{eq:energy_decay_regularity},
	\begin{align}\label{eq:Energy_decay_control_eta_derivatives}
		|\partial_s\eta|\leq CR^{-1} \varphi^3, \quad |\partial_s^2\eta|\leq C R^{-1} \varphi^2,
	\end{align}
    and also
    \begin{align}
        \int_{-R}^R ds \leq C R.\label{eq:energy_decay_local_length_control}
    \end{align}
    % Bounding the integrands on the right hand side of \eqref{eq:dt_F_1} using \eqref{eq:energy_decay_regularity} and \eqref{eq:Energy_decay_control_eta_derivatives}, from \eqref{eq:energy_decay_local_length_control} we conclude
    Using \eqref{eq:energy_decay_regularity}, we can bound the integrands on the right hand side of \eqref{eq:dt_F_1} by $C\int(|\partial_s\eta|+|\partial_s^2\eta|)ds$. Since this is bounded by $C$ thanks to \eqref{eq:Energy_decay_control_eta_derivatives} and \eqref{eq:energy_decay_local_length_control}, we conclude
	% By assumption \eqref{eq:energy_decay_regularity}, we have $|\xi|\leq C$. Using \eqref{eq:Energy_decay_control_eta_derivatives}, and applying Young's inequality for each term involving $V$ on the right hand side, we deduce that
 %    \begin{align}
 %        \frac{d}{dt} F[\gamma|\varphi^4] + \frac{1}{2}\int |V|^2 \varphi^4 ds &\leq  C \sigma^2 R^{-2} \int|\nabla_s\kappa|^2 \varphi^2 ds + C \sigma^2R^{-2}\int |\kappa|^2 \varphi^2 ds \\
 %        &\qquad + C\lambda R^{-1} F[\gamma|\varphi^3]  + C\lambda^2 R^{-2} \int_{-R}^R ds.\label{eq:dt_F_2}
 %    \end{align}
 %    The boundedness \eqref{eq:energy_decay_regularity} yields that $F[\gamma|\varphi^3]\leq CR$, 
 %    \begin{align}
 %        \int_{-R}^R ds \leq C R,\label{eq:energy_decay_local_length_control}
 %    \end{align}
 %    and
    \begin{align}
        &\frac{d}{dt} F[\gamma|\varphi^4] + \int |V|^2 \varphi^4 ds \leq C.\label{eq:dt_F_3}
    \end{align}
    % With $F[\gamma;R]\vcentcolon = F[\gamma|\chi_{[-R,R]}]$, by the choice of $\varphi$ we obtain from integrating \eqref{eq:dt_F_3}
    % \begin{align}
    %     F[\gamma;R/2] + \frac{1}{2} \int_0^t \int_{-R/2}^{R/2}|V|^2 ds d\tau \leq F[\gamma(0);R] + (C+C R^{-1})T.
    % \end{align}
    Integrating over $[0,t] \subset [0,T]$ and sending $R\to\infty$, we conclude 
    from the monotone convergence theorem that $F[\gamma] = \lim_{R\to\infty} F[\gamma|\varphi^4]<\infty$ for all $t\in [0,T]$ with
	\begin{align}\label{eq:1118-3}
		F[\gamma(t)] + \int_0^t \int |V|^2ds d\tau \leq F[\gamma(0)] + CT.
	\end{align}
    In particular, it follows that
    \begin{equation}\label{eq:0108-1}
        t\mapsto F[\gamma(t)]\in L^\infty(0,T) \quad \text{and} \quad t\mapsto \|V\|_{L^2(ds(t))}\in L^2(0,T).
    \end{equation}
    We have $\eta\to 1$ a.e.\ in $(0,T)\times \R$ as $R\to\infty$ and, by \eqref{eq:Energy_decay_control_eta_derivatives} and \eqref{eq:energy_decay_local_length_control},
    \begin{align}\label{eq:0108-2}
        \Vert\partial_s\eta\Vert_{L^2(ds(t))}\to 0, \quad \Vert\partial_s^2\eta\Vert_{L^2(ds(t))}\to 0 \quad \text{ in } L^2(0,T).
    \end{align}
    Therefore, integrating \eqref{eq:dt_F_1} in time and sending $R\to\infty$ imply the assertion; indeed, by \eqref{eq:energy_decay_regularity}, 
    the absolute value of the right hand side is bounded by
    \begin{align}
        &C \Vert\xi\Vert_\infty \Vert \partial_s\eta\Vert_\infty \int_0^T \int \Big(\frac{\sigma}{2}|\kappa|^2 + \lambda(1-\langle\partial_s\gamma,e_1\rangle)\Big)ds dt \\
        &\quad + C (\Vert \nabla_s\kappa\Vert_\infty + 1) \int_0^T \int |V| |\partial_s\eta| ds dt + C\Vert \kappa\Vert_\infty \int_0^T \int |V||\partial_s^2\eta| ds dt\\
        &\leq   C\|\partial_s\eta\|_\infty\int_0^{T} F[\gamma] dt + C\int_0^{T}\int|V|(|\partial_s\eta|+|\partial_s^2\eta|)dsdt,
    \end{align}
    which converges to zero thanks to \eqref{eq:Energy_decay_control_eta_derivatives}, \eqref{eq:energy_decay_local_length_control}, \eqref{eq:0108-1}, and \eqref{eq:0108-2}. 

    Finally, if \( \sigma = 0 \), the exact same argument applies after removing all terms involving \( \sigma \), which results in only terms involving derivatives up to order $2$.
    Thus, the statement follows. 
\end{proof}

Our method also works for gradient flows which are not of $L^2$-type.
We now address fourth-order flows of curve shortening type including \eqref{eq:SDF} and \eqref{eq:CF}.

\begin{proposition}\label{prop:SDF_Chen_energy_decay}
    Let $T\in(0,\infty)$ and $\gamma:[0,T]\times\R\to\R^n$ be a smooth solution to
	\begin{align}
		\partial_t^\perp \gamma = -\nabla_s^2\kappa + \mu |\kappa|^2\kappa
	\end{align}
	for some constant $\mu\geq 0$ such that
    \begin{align}
        \inf_{[0,T]\times\R} |\partial_x\gamma|>0, \quad \sum_{k=1}^4 \Vert \partial_x^k\gamma\Vert_{L^\infty([0,T]\times \R)}+ \Vert \partial_t\gamma\Vert_{L^\infty([0,T]\times\R)}<\infty.
    \end{align}
    If $D[\gamma(0)]<\infty$, then
    \begin{align}
        D[\gamma(T)]-D[\gamma(0)] = - \int_0^T\int (|\nabla_s\kappa|^2 +\mu |\kappa|^4) ds dt.
    \end{align}
\end{proposition}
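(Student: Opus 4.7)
The plan is to closely follow the cut-off argument developed in the proof of Proposition \ref{prop:energy_decay}, adapting it to the fact that here we do not have an $L^2$-gradient structure; the dissipation $|\nabla_s\kappa|^2+\mu|\kappa|^4$ will instead emerge after integrating by parts the fourth-order term hidden in $\langle V,\kappa\rangle$.

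First I would start from \Cref{lem:direction_derivative_cutoff} with a cut-off $\eta\in C_c^\infty(\R)$ and insert $V=-\nabla_s^2\kappa+\mu|\kappa|^2\kappa$. Using $\kappa\perp\partial_s\gamma$, a direct computation yields $\partial_s\langle\nabla_s\kappa,\kappa\rangle=\langle\nabla_s^2\kappa,\kappa\rangle+|\nabla_s\kappa|^2$, so integration by parts produces
\[
\int\langle\kappa,V\rangle\eta\, ds = \int(|\nabla_s\kappa|^2+\mu|\kappa|^4)\eta\, ds + \int\langle\nabla_s\kappa,\kappa\rangle\partial_s\eta\, ds.
\]
Substituting into \eqref{eq:direction_derivative_cutoff} gives the localized identity
\[
\frac{d}{dt}D[\gamma|\eta]+\int(|\nabla_s\kappa|^2+\mu|\kappa|^4)\eta\, ds = -\int\langle\nabla_s\kappa,\kappa\rangle\partial_s\eta\, ds -\int\xi(1-\langle\partial_s\gamma,e_1\rangle)\partial_s\eta\, ds+\int\langle V,e_1\rangle\partial_s\eta\, ds.
\]

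Next I would choose $\eta=\varphi^4$ with $\varphi(x)=\varphi_0(R^{-1}x)$ exactly as in Proposition \ref{prop:energy_decay}, so that $|\partial_s\eta|\leq CR^{-1}\varphi^3$ and the local length satisfies $\int_{-R}^{R}ds\leq CR$. The hypothesized $L^\infty$-bounds on $\partial_x^k\gamma$ for $k\leq 4$ and on $\partial_t\gamma$ control all of $\kappa$, $\nabla_s\kappa$, $V$, and $\xi$, so each right-hand side integral is $O(1)$ uniformly in $R$ and $t\in[0,T]$. Integrating in time yields an a priori bound analogous to \eqref{eq:1118-3}, and monotone convergence as $R\to\infty$ gives both $D[\gamma(t)]<\infty$ uniformly in $t\in[0,T]$ and the space-time integrability of $|\nabla_s\kappa|^2+\mu|\kappa|^4$ over $[0,T]\times\R$.

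Finally, integrating the localized identity in time and sending $R\to\infty$, the three cut-off terms on the right vanish since $\|\partial_s\eta\|_\infty=O(R^{-1})$ while the relevant integrands are controlled in $L^2_tL^2(ds)$ by the $L^\infty$-bounds and the integrability just established, in parallel with the concluding step of Proposition \ref{prop:energy_decay}. The left-hand side passes to the limit by monotone convergence, yielding the claimed identity. The main obstacle is the limit bookkeeping—checking that the boundary-type terms carrying $\partial_s\eta$ actually tend to zero—but the situation is cleaner than in the $L^2$-gradient case treated earlier since only first-order derivatives of $\eta$ appear, and no $\partial_s^2\eta$ terms need to be handled.
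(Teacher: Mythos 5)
Your first steps are right: inserting $V=-\nabla_s^2\kappa+\mu|\kappa|^2\kappa$ into \eqref{eq:direction_derivative_cutoff} and integrating the $\langle\kappa,V\rangle\eta$ term by parts does yield the dissipation $\int(|\nabla_s\kappa|^2+\mu|\kappa|^4)\eta\,ds$ plus a boundary term $\int\langle\nabla_s\kappa,\kappa\rangle\partial_s\eta\,ds$, and the $O(1)$ a priori bound obtained by estimating the right-hand side crudely is also correct, giving $\nabla_s\kappa\in L^2_tL^2(ds)$ and (if $\mu>0$) $\kappa\in L^4_tL^4(ds)$.

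The gap is in the final limit $R\to\infty$, specifically in the term $\int\langle V,e_1\rangle\partial_s\eta\,ds$, which contains $-\int\langle\nabla_s^2\kappa,e_1\rangle\partial_s\eta\,ds$. You assert ``the relevant integrands are controlled in $L^2_tL^2(ds)$'', but no a priori $L^2(ds)$ control on $\nabla_s^2\kappa$ is available: the hypotheses give only $\nabla_s^2\kappa\in L^\infty([0,T]\times\R)$, and the a priori estimate from step two controls $\nabla_s\kappa$, not $\nabla_s^2\kappa$, in $L^2_tL^2(ds)$. On an infinite-length curve $L^\infty$ does not embed in $L^2(ds)$, so the crude bound is $\bigl|\int\langle\nabla_s^2\kappa,e_1\rangle\partial_s\eta\,ds\bigr|\leq\|\nabla_s^2\kappa\|_\infty\int|\partial_s\eta|\,ds\leq C R^{-1}\cdot R=O(1)$, which does \emph{not} vanish. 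Your closing remark that the situation is ``cleaner'' because no $\partial_s^2\eta$ terms appear points exactly at the problem: the reason \Cref{prop:energy_decay} can get away with $\partial_s^2\eta$ terms is that they come paired with only first-order curvature quantities. Here you must perform one more integration by parts on $-\int\langle\nabla_s^2\kappa,e_1\rangle\partial_s\eta\,ds$, using $\nabla_s(e_1^\perp)=-\langle\partial_s\gamma,e_1\rangle\kappa$, trading $\nabla_s^2\kappa$ for $\nabla_s\kappa$ at the cost of a $\partial_s^2\eta$ term, namely $\int\langle\nabla_s\kappa,e_1\rangle\partial_s^2\eta\,ds-\int\langle\nabla_s\kappa,\kappa\rangle\langle\partial_s\gamma,e_1\rangle\partial_s\eta\,ds$. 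These terms \emph{are} killable: $\|\partial_s^2\eta\|_{L^2(ds(t))}\to 0$ and $\|\partial_s\eta\|_{L^p(ds(t))}\to 0$ in the appropriate $L^p(0,T)$, while $\nabla_s\kappa\in L^2_tL^2(ds)$ and $\kappa\in L^\infty$. This is exactly the decomposition the paper arrives at; without it, the final limit cannot be justified.
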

\begin{proof}
    We use \Cref{lem:direction_derivative_cutoff} with the same cutoff function $\eta=\varphi^4$ as in the proof of \Cref{prop:energy_decay}.
    Integration by parts for terms involving $\nabla_s^2\kappa$ yields
    \begin{align}
        &\frac{d}{dt}D[\gamma|\eta] + \int (|\nabla_s\kappa|^2+\mu|\kappa|^4)\eta ds \\
        &= - \int \xi (1-\langle\partial_s\gamma,e_1\rangle) \partial_s\eta ds +\mu \int |\kappa|^2\langle \kappa,e_1\rangle \partial_s\eta ds \\
        &\quad + \int \langle \nabla_s\kappa,e_1\rangle \partial_s^2\eta ds - \int \langle \nabla_s\kappa, \kappa\rangle\langle \partial_s\gamma,e_1\rangle \partial_s\eta ds - \int \langle \nabla_s\kappa,\kappa\rangle\partial_s\eta ds,\label{eq:prop5-5-1}
    \end{align}
    where we again used $\nabla_s (e_1^{\perp}) = -\langle \partial_s\gamma,e_1\rangle \kappa$.   
    As before, bounding the right hand side by $C$, integrating in time over $[0,t]\subset[0,T]$, and sending $R\to\infty$, we obtain
    \begin{align}
        D[\gamma(t)] +\int_0^t \int (|\nabla_s\kappa|^2 + \mu|\kappa|^4) ds \leq D[\gamma(0)] + CT,
    \end{align}
    which implies 
    \begin{align}
        t \mapsto D[\gamma(t)] \in L^\infty(0,T), \qquad 
        t \mapsto \|\nabla_s\kappa\|_{L^2(ds(t))}^2+\mu\Vert \kappa\Vert_{L^4(ds(t))}^4 \in L^1(0,T).
    \end{align}
    Integrating \eqref{eq:prop5-5-1} over $[0,T]$, the right hand side may be bounded by
    \begin{align}
        &\Vert \xi\Vert_\infty \Vert \partial_s\eta\Vert_\infty \int_0^T D[\gamma] dt \\
        &\quad + \int_0^T \int \Big(\mu|\kappa|^3 |\partial_s\eta|+ |\nabla_s\kappa||\partial_s^2\eta|+2|\kappa| |\nabla_s\kappa| |\partial_s\eta|\Big) ds dt.
    \end{align}
    Using H\"older's inequality and the fact that, by \eqref{eq:Energy_decay_control_eta_derivatives}, for any $p>1$, we have
    \begin{align}
        \Vert \partial_s\eta\Vert_{L^p(ds(t))} + \Vert \partial_s^2\eta\Vert_{L^p(ds(t))} \to 0\quad \text{ in }L^p(0,T)
    \end{align}
    as $R\to\infty$  yield the claim.
\end{proof}

\section{Localized interpolation estimates}\label{sec:interpolation}

The goal of this section is to prove the fundamental Gagliardo--Nirenberg-type estimate, \Cref{prop:interpolation_P} below.
Throughout this section we fix a smooth immersion $\gamma\colon \R\to\R^{n}$ and a cutoff function
\begin{align}\label{eq:def_zeta}
	\zeta\in C_c^\infty(\R) \text{ with } \zeta\geq0 \text{ and  }|\partial_s\zeta|\leq \Lambda,
\end{align}
where the arclength derivative is along $\gamma$, i.e., $\partial_s\zeta=|\partial_x\gamma|^{-1}\partial_x\zeta$, and $\Lambda>0$.

We begin with establishing $L^2$-type interpolation estimates with the weight $\zeta$. Our approach is inspired by the interpolation estimates in \cite{DallAcqua-Pozzi_2014_Willmore-Helfrich,Dziuk-Kuwert-Schatzle_2002} and the spatial cutoff in \cite{MR1900754}.

\begin{lemma}\label{lem:interpolation_IBP}
	%Let $\gamma\colon \R\to\R^{n}$ be an immersion, l
	Let $\ell\in\N$ with $\ell\geq 2$. There exists $C=C(\Lambda,\ell)>0$ such that for any smooth vector field $X$ normal along $\gamma$ and any $\varepsilon\in (0,1]$ we have
	\begin{align}
		\int|\nabla_sX|^2\zeta^{\ell}ds \leq \varepsilon \int |\nabla_s^2X|^2\zeta^{\ell+2}ds + \frac{C}{\varepsilon}\int_{[\zeta>0]}|X|^2\zeta^{\ell-2}ds.
	\end{align}
\end{lemma}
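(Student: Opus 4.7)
The plan is to use integration by parts, exploiting the compact support of $\zeta$ and the normality of $X$. Since $X$ is normal along $\gamma$, orthogonality $\langle X,\partial_s\gamma\rangle=0$ together with the fact that $\partial_sX - \nabla_sX$ and $\partial_s(\nabla_sX)-\nabla_s^2X$ are both tangential yields the product rule
\begin{equation}
    \partial_s\langle X,\nabla_sX\rangle = |\nabla_sX|^2 + \langle X,\nabla_s^2X\rangle.
\end{equation}
Multiplying by $\zeta^\ell$ and integrating over $\R$, the left-hand side vanishes by compact support of $\zeta$, producing the identity
\begin{equation}
    \int |\nabla_sX|^2\zeta^\ell ds = -\int \langle X,\nabla_s^2X\rangle\zeta^\ell ds - \ell\int \langle X,\nabla_sX\rangle \zeta^{\ell-1}\partial_s\zeta\, ds.
\end{equation}

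Next, I would estimate the two terms on the right by Cauchy--Schwarz with well-chosen weight splittings. For the first term, I write $\zeta^\ell=\zeta^{(\ell+2)/2}\cdot \zeta^{(\ell-2)/2}$ (valid on $\{\zeta>0\}$; the case $\ell=2$ reduces to plain $|X|^2$ on the support of $\zeta$) and apply Young's inequality with parameter $\varepsilon$, obtaining
\begin{equation}
    \Big|\int \langle X,\nabla_s^2X\rangle\zeta^\ell ds\Big| \leq \tfrac{\varepsilon}{2}\int |\nabla_s^2X|^2\zeta^{\ell+2}ds + \tfrac{1}{2\varepsilon}\int_{[\zeta>0]} |X|^2\zeta^{\ell-2}ds.
\end{equation}
For the second term, I use $|\partial_s\zeta|\leq \Lambda$ and the splitting $\zeta^{\ell-1}=\zeta^{\ell/2}\cdot\zeta^{(\ell-2)/2}$, then apply Young's inequality with a \emph{fixed} parameter, say $\tfrac12$, to get
\begin{equation}
    \ell\Big|\int \langle X,\nabla_sX\rangle\zeta^{\ell-1}\partial_s\zeta\, ds\Big| \leq \tfrac{1}{2}\int |\nabla_sX|^2\zeta^\ell ds + \tfrac{\ell^2\Lambda^2}{2}\int_{[\zeta>0]}|X|^2\zeta^{\ell-2}ds.
\end{equation}

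Combining these estimates with the identity and absorbing the $\tfrac12\int|\nabla_sX|^2\zeta^\ell ds$ term into the left-hand side gives the inequality with constant bounded by $1+\ell^2\Lambda^2$, after multiplying through by $2$. Since $\varepsilon\in(0,1]$, the additive $\ell^2\Lambda^2$ contribution is absorbed into $C/\varepsilon$ for $C=C(\Lambda,\ell)$, yielding the claim. No serious obstacle is anticipated; the only point requiring care is interpreting $\zeta^{\ell-2}$ on $\{\zeta>0\}$ when $\ell=2$, which is precisely why the statement restricts that integral to $\{\zeta>0\}$.
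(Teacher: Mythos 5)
Your proof is correct and follows essentially the same route as the paper: the same integration-by-parts identity, Young's inequality with parameter $\varepsilon$ on the $\langle X,\nabla_s^2X\rangle$ term and with a fixed parameter on the boundary term, absorption of the resulting $\tfrac12\int|\nabla_sX|^2\zeta^\ell ds$, and finally $\varepsilon\le 1$ to combine the two lower-order contributions into $C/\varepsilon$. The only difference is cosmetic: you spell out why normality of $X$ yields the product rule for $\nabla_s$, which the paper leaves implicit.
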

\begin{proof}
	By integration by parts and Young's inequality
	\begin{align}
		&\int |\nabla_sX|^2\zeta^{\ell}ds = - \int \langle\nabla_s^2X,X\rangle\zeta^{\ell}ds - \ell  \int \langle \nabla_sX,X\rangle \zeta^{\ell-1}\partial_s\zeta ds \\
		&\quad \leq \frac{\varepsilon}{2} \int|\nabla_s^2X|^2 \zeta^{\ell+2}ds + \frac{C}{\varepsilon} \int_{[\zeta>0]}|X|^2\zeta^{\ell-2}ds\\
		&\quad \qquad  + \frac{1}{2}\int|\nabla_sX|^2 \zeta^{\ell}ds + C(\Lambda,\ell) \int_{[\zeta>0]}|X|^2 \zeta^{\ell-2}ds.
	\end{align}
	The claim follows after absorbing and using $\varepsilon\in (0,1]$.
\end{proof}

\begin{lemma}\label{lem:interpolation_epsilon}
	Let $k\in\N$ with $k\geq 2$, and let $i\in\N$ with $i<k$. There exists $C=C(\Lambda,i,k)>0$ such that for any smooth vector field $X$ normal along $\gamma$ and any $\varepsilon\in (0,1]$ we have
	\begin{align}
		\int|\nabla_s^iX|^2\zeta^{2i}ds \leq \varepsilon\int |\nabla_s^kX|^2\zeta^{2k}ds + C \varepsilon^{\frac{i}{i-k}}\int_{[\zeta>0]}|X|^2 ds.
	\end{align}
\end{lemma}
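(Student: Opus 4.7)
The plan is to iterate the one-step interpolation from \Cref{lem:interpolation_IBP} to bridge the gap between order $i$ and order $k$. Abbreviate
\begin{equation}
A_j \vcentcolon= \int |\nabla_s^j X|^2 \zeta^{2j}\, ds \quad (1 \leq j \leq k), \qquad A_0 \vcentcolon= \int_{[\zeta>0]} |X|^2\, ds,
\end{equation}
so the target inequality reads $A_i \leq \varepsilon A_k + C \varepsilon^{i/(i-k)} A_0$. For $j \geq 2$ the weight $\zeta^{2j-2}$ already vanishes outside $[\zeta > 0]$, so the restriction in \Cref{lem:interpolation_IBP} is automatic. Applying that lemma with $X$ replaced by $\nabla_s^{j-1} X$ and $\ell = 2j$ yields, for $1 \leq j \leq k-1$ and every $\delta \in (0,1]$,
\begin{equation}\label{eq:plan_additive}
A_j \leq \delta A_{j+1} + C(\Lambda, j)\, \delta^{-1} A_{j-1}.
\end{equation}

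Optimizing \eqref{eq:plan_additive} over $\delta \in (0,\infty)$ (with the degenerate cases handled trivially) is equivalent to the multiplicative, log-convexity-type inequality
\begin{equation}
A_j^2 \leq K(\Lambda, j)\, A_{j-1} A_{j+1}, \qquad 1 \leq j \leq k-1.
\end{equation}
A classical discrete Hadamard-type argument---noting that after adding a suitable quadratic correction to $\log A_j$ the resulting sequence is genuinely convex on $\{0,1,\dots,k\}$ and hence lies below the affine interpolant between its endpoint values---then gives
\begin{equation}
A_i \leq C_1(\Lambda, i, k)\, A_0^{(k-i)/k}\, A_k^{i/k}.
\end{equation}

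Finally, Young's inequality with conjugate exponents $p = k/i$ and $p' = k/(k-i)$, in the parametrized form $a^{1/p} b^{1/p'} \leq \tilde\varepsilon\, a + C \tilde\varepsilon^{-p'/p}\, b$, applied to $a = A_k$ and $b = A_0$ converts the multiplicative bound back into the additive form
\begin{equation}
A_i \leq C_1 \bigl( \tilde\varepsilon A_k + C_2\, \tilde\varepsilon^{-i/(k-i)} A_0 \bigr),
\end{equation}
valid for every $\tilde\varepsilon \in (0,1]$. Setting $\varepsilon \vcentcolon= C_1 \tilde\varepsilon$ and using $-i/(k-i) = i/(i-k)$ gives the claim, with the constant absorbed into a new $C = C(\Lambda, i, k)$.

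The main obstacle is the log-convexity step: a naive induction on $i$ is circular because each $A_i$ is sandwiched between $A_{i-1}$ and $A_{i+1}$. One robust workaround is to bypass the multiplicative form entirely and prove the additive target directly by iterating \eqref{eq:plan_additive} upward in $j$, absorbing the lower-order terms it produces into $A_i$ via a chain of Young-type choices for the $\delta$'s; the bookkeeping must be organized so that each time the highest-order term is pushed from $A_{j+1}$ to $A_{j+2}$, the coefficient of $A_k$ picks up an extra small factor while the coefficient of $A_0$ grows in a controlled way, yielding exactly the power $\varepsilon^{i/(i-k)}$ after $k-i$ steps.
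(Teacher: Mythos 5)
Your setup is correct: applying \Cref{lem:interpolation_IBP} to $\nabla_s^{j-1}X$ with $\ell=2j$ does yield $A_j \leq \delta A_{j+1} + C\delta^{-1} A_{j-1}$ for $1\leq j\leq k-1$, with the observation that for $j=1$ the last term reduces to $A_0$. The genuine gap is in your second step. \Cref{lem:interpolation_IBP}, and hence your one-step recursion \eqref{eq:plan_additive}, is only established for $\delta\in(0,1]$ (the paper's proof absorbs a term ``using $\varepsilon\in(0,1]$''). You optimize over $\delta\in(0,\infty)$; without the upper restriction on $\delta$, the argument would indeed give $A_j^2\leq K\,A_{j-1}A_{j+1}$, but with the restriction $\delta\leq1$ it does not. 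When $A_{j+1}$ is very small relative to $A_{j-1}$ (in particular $A_{j+1}=0$ but $A_j, A_{j-1}>0$, which can occur, e.g.\ when $\nabla_s^{j}X$ is a nonzero parallel normal field on the support of $\zeta$), the best the constrained recursion gives is $A_j\leq A_{j+1}+CA_{j-1}$, which does not imply the multiplicative bound. So the log-convexity claim is not established, and the difficulty you flag is not really circularity of an induction on $i$ but the failure of the multiplicative inequality itself.

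Your proposed workaround is also not a proof as stated: iterating \eqref{eq:plan_additive} upward from $A_i$ keeps generating lower-order terms $A_{i-1},A_{i-2},\dots$ with no mechanism to terminate or absorb them, precisely because the recursion is two-sided. The paper avoids this by inducting on $k$ rather than on $i$: having the statement for $k$, one applies it with $i=k-1$ to get $A_{k-1}\leq\delta A_k+C\delta^{1-k}A_0$, substitutes this into \Cref{lem:interpolation_IBP} applied with $\ell=2k$, and chooses $\delta$ proportional to the new small parameter so that the resulting $A_k$-term on the right is \emph{absorbed}. This produces the one-sided inequality $A_k\leq\eta A_{k+1}+C\eta^{-k}A_0$ in which the only lower-order term is $A_0$; composing it once more with the inductive hypothesis $A_i\leq\varepsilon A_k + C\varepsilon^{i/(i-k)}A_0$ and balancing $\eta$ against $\varepsilon$ then yields the statement at level $k+1$. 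Your final Young's-inequality step, converting the (hypothetical) multiplicative bound back to additive form, is correct, but the intermediate multiplicative bound it rests on has not been justified.
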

\begin{proof}
	We proceed by induction on $k$. First, for $k=2$ and $i=1$, this is \Cref{lem:interpolation_IBP} with $\ell=2$.
	
	Suppose the statement is true for $k\geq2$. Let $0<i<k+1$. In the following $C=C(\Lambda,i,k)>1$ is a constant that is allowed to change from line to line.
	
	We first consider the case $i<k$. Then by the induction hypothesis, we obtain
	\begin{align}
		\int|\nabla_s^{k-1}X|^2\zeta^{2k-2}ds &\leq \delta \int|\nabla_s^kX|^2\zeta^{2k}ds + C \delta^{1-k}\int_{[\zeta>0]}|X|^2ds.
	\end{align}
	for all $\delta\in(0,1]$. On the other hand, \Cref{lem:interpolation_IBP} (with $\ell=2k$) yields
	\begin{align}
		&\int|\nabla_s^{k}X|^2\zeta^{2k}ds \leq \frac{\eta}{2} \int|\nabla_s^{k+1}X|^2\zeta^{2k+2}ds + \frac{C}{\eta}\int|\nabla_s^{k-1}X|^2\zeta^{2k-2}ds \\
        &\qquad \leq \frac{\eta}{2} \int|\nabla^{k+1}_sX|^2\zeta^{2k+2}ds + \frac{C \delta}{\eta} \int |\nabla_s^kX|^2\zeta^{2k}ds + \frac{C\delta^{1-k}}{\eta}\int_{[\zeta>0]}|X|^2ds, \label{eq:lem:interpolation_1}
	\end{align}
	for any $\eta\in(0,1]$.
	Choosing $\delta = \frac{\eta}{2C}<1$ in the above equation, we may absorb the second term. Now, again, by the induction hypothesis, for any $\varepsilon\in (0,1]$, we have	
	\begin{align}
		&\int |\nabla_s^{i}X|^2\zeta^{2i}ds \leq \varepsilon \int|\nabla_s^kX|^2\zeta^{2k}ds + C \varepsilon^{\frac{i}{i-k}}\int_{[\zeta>0]}|X|^2ds,\\
		&\qquad \leq \varepsilon\eta \int|\nabla_s^{k+1}X|^2\zeta^{2k+2}ds + C\varepsilon\delta^{-k} \int_{[\zeta>0]}|X|^2ds + C \varepsilon^{\frac{i}{i-k}} \int_{[\zeta>0]}|X|^2ds,
	\end{align}
	plugging in \eqref{eq:lem:interpolation_1} and using the choice of $\delta$. We now take $\eta = \varepsilon^{\frac{1}{k-i}}\in (0,1]$ and $\tilde{\varepsilon} = \varepsilon^{\frac{k+1-i}{k-i}}\in (0,1]$, and observe that $\varepsilon\eta = \tilde{\varepsilon}$, that $\varepsilon\delta^{-k} = C \varepsilon\eta^{-k} = C\varepsilon^{\frac{i}{i-k}}$, and that $\varepsilon^{\frac{i}{i-k}} = \tilde{\varepsilon}^{\frac{i}{i-(k+1)}}$. This yields the statement (for $k$ replaced by $k+1$) in the case $0<i<k$ for $\varepsilon$ replaced by $\tilde{\varepsilon}$.
	
	It remains to consider the case $i=k$. In this case, for any $\varepsilon,\delta\in (0,1]$, by \Cref{lem:interpolation_IBP} (with $\ell=2k$) and the induction hypothesis, we have
	\begin{align}
		&\int|\nabla_s^kX|^2 \zeta^{2k}ds \leq \frac{\varepsilon}{2}\int|\nabla_s^{k+1}X|^2\zeta^{2k+2}ds + \frac{C}{\varepsilon}\int|\nabla_s^{k-1}X|^2\zeta^{2k-2}ds \\
		&\qquad\leq \frac{\varepsilon}{2}\int|\nabla_s^{k+1}X|^2\zeta^{2k+2}ds + \frac{C}{\varepsilon}\Big(\delta\int|\nabla^k_sX|^2\zeta^{2k}ds + C \delta^{1-k}\int_{[\zeta>0]}|X|^2ds \Big).
	\end{align}
	Taking $\delta = \frac{\varepsilon}{2C}<1$ here and absorbing, we thus obtain
	\begin{align}
		\int|\nabla_s^kX|^2 \zeta^{2k}ds \leq  \varepsilon\int|\nabla_s^{k+1}X|^2\zeta^{2k+2}ds + C\varepsilon^{-k}\int_{[\zeta>0]}|X|^2ds. 
	\end{align}
	Since $\varepsilon^{\frac{k}{k-(k+1)}}=\varepsilon^{-k}$, the case $i=k$ is proven and the statement follows.
\end{proof}

We now establish a multiplicative version. For $X$ normal along $\gamma$, we define the seminorms (using the weight $\zeta$)
\begin{align}
	|X|_{0,2}&\vcentcolon= \Vert X\Vert_{L^2([\zeta>0],ds)}\\
	\vert X\vert_{k,2} &\vcentcolon = \Vert\zeta^{k} \nabla^k_sX\Vert_{L^2(ds)} +|X|_{0,2} \quad (k\geq1).
\end{align}

\begin{lemma}\label{lem:interpolation_multiplicative}
	%Let $\gamma\colon\R\to\R^n$ be an immersion, 
	Let $k\in \N$ and $i\in\N_0$ with $i\leq k$. There exists $C=C(\Lambda,i,k)>0$ such that for any smooth vector field $X$ normal along $\gamma$ we have
	\begin{align}
		\Vert \zeta^i\nabla_s^iX \Vert_{L^2(ds)} \leq C |X|_{k,2}^{\frac{i}{k}}|X|_{0,2}^{\frac{k-i}{k}}.
	\end{align}
\end{lemma}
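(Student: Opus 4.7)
The statement to prove is a standard multiplicative (Gagliardo--Nirenberg-type) consequence of the additive $\varepsilon$-inequality in \Cref{lem:interpolation_epsilon}. The boundary cases $i=0$ and $i=k$ are immediate from the definitions: for $i=k$ one has $\Vert\zeta^k\nabla_s^kX\Vert_{L^2(ds)}\leq |X|_{k,2}$ by definition, and for $i=0$ the inequality reduces to $|X|_{0,2}\leq C|X|_{0,2}$. The main content is therefore the intermediate range $0<i<k$, which in particular forces $k\geq 2$ so that \Cref{lem:interpolation_epsilon} is applicable.

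The plan for $0<i<k$ is to optimize the parameter $\varepsilon$ in \Cref{lem:interpolation_epsilon}. Write $A\vcentcolon=\Vert\zeta^k\nabla_s^kX\Vert_{L^2(ds)}$ and $B\vcentcolon=|X|_{0,2}$, so that $|X|_{k,2}=A+B$. Degenerate situations are easy: if $B=0$ then $X$ vanishes on the open set $[\zeta>0]$ and hence so does $\zeta^i\nabla_s^iX$, while if $A=0$ then \Cref{lem:interpolation_epsilon} with $\varepsilon=1$ already gives the bound by $C B=C B^{(k-i)/k}B^{i/k}$. We may thus assume $A,B>0$.

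The key computation is the choice
\begin{align}
\varepsilon \vcentcolon= \min\left\{1,\;\left(\tfrac{B}{A}\right)^{2(k-i)/k}\right\} \in (0,1],
\end{align}
which is exactly the value balancing the two terms on the right-hand side of \Cref{lem:interpolation_epsilon}. Writing $\alpha\vcentcolon=\frac{i}{k-i}$, so that the exponent $\frac{i}{i-k}=-\alpha$ satisfies $\frac{\alpha}{\alpha+1}=\frac{i}{k}$ and $\frac{1}{\alpha+1}=\frac{k-i}{k}$, a direct calculation shows that when $B\leq A$ both $\varepsilon A^2$ and $\varepsilon^{-\alpha}B^2$ equal $A^{2i/k}B^{2(k-i)/k}$, yielding
\begin{align}
\Vert\zeta^i\nabla_s^iX\Vert_{L^2(ds)}^2 \leq C A^{2i/k}B^{2(k-i)/k} \leq C |X|_{k,2}^{2i/k}|X|_{0,2}^{2(k-i)/k}.
\end{align}
When instead $B> A$ the choice $\varepsilon=1$ together with $A\leq B$ gives $\Vert\zeta^i\nabla_s^iX\Vert_{L^2(ds)}^2\leq (1+C)B^2$, and this bound is absorbed into $C(A+B)^{2i/k}B^{2(k-i)/k}$ since $(A+B)^{2i/k}\geq B^{2i/k}$. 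Taking square roots concludes the argument.

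The only delicate point is matching exponents: one must verify that the optimal $\varepsilon$ arising from the additive inequality produces precisely the weights $i/k$ and $(k-i)/k$ on $|X|_{k,2}$ and $|X|_{0,2}$, which is why the specific exponent $\frac{i}{i-k}$ in \Cref{lem:interpolation_epsilon} was carefully tracked in its proof. Beyond this bookkeeping, no new analytic input is needed.
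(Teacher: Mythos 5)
Your proof is correct and follows essentially the same strategy as the paper: optimize $\varepsilon$ in Lemma \ref{lem:interpolation_epsilon}. The only cosmetic difference is that you work with $A=\Vert\zeta^k\nabla_s^kX\Vert_{L^2(ds)}$ and split cases $B\leq A$ versus $B>A$, whereas the paper chooses $\varepsilon=(|X|_{k,2}/|X|_{0,2})^{(i-k)/k}$ directly, which lies in $(0,1]$ automatically because $|X|_{k,2}\geq |X|_{0,2}$, avoiding the case distinction.
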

\begin{proof}
	Without loss of generality, we may assume $k\geq 2$ and $0<i<k$, otherwise there is nothing to show.  \Cref{lem:interpolation_epsilon} implies that for all $\varepsilon\in (0,1]$, we have
	\begin{align}
		\Vert \zeta^{i} \nabla^i_sX \Vert_{L^2(ds)}\leq C\Big(\varepsilon \vert X\vert_{k,2} +\varepsilon^{\frac{i}{i-k}}|X|_{0,2}\Big).
	\end{align}
	If $\vert X\vert_{0,2}=0$, the statement is trivial. Otherwise, choosing	$\varepsilon = \Big(\frac{|X|_{k,2}}{|X|_{0,2}}\Big)^{\frac{i-k}{k}} \in (0,1]$, the statement follows.
\end{proof}

%% Not needed anymore %%
\begin{comment}
	We now give a geometric version of the Gagliardo--Nirenberg inequality.
	\begin{theorem}\label{thm:gagliardo_nirenberg}
		Let $\gamma\colon\R\to\R^{n}$ be an immersion with $L[\gamma]=\infty$. Let $p>2$ and let $X$ be normal along $\gamma$. Then, we have
		\begin{align}
			\Vert X \Vert_{L^p(ds)}\leq C(p) \Vert \nabla_sX\Vert_{L^2(ds)}^{\theta}\Vert X\Vert_{L^2(ds)}^{1-\theta},
		\end{align}
		where $\theta = \frac{p-2}{2p}$.
	\end{theorem}
	\begin{proof}

		Let $X$ be the arclength parametrization of $\gamma$. With $\theta = \frac{p-2}{2p}$, the classical Gagliardo--Nirenberg inequality for $f:\R\to\R$ \FR{TODO: Good reference, dependence on $n$?} reads
		\begin{align}
			\Vert f\Vert_{L^p(dx)}\leq C(p) \Vert f'\Vert_{L^2(dx)}^{\theta} \Vert f\Vert_{L^2(dx)}.
		\end{align}
		Applying this to $|X|\circ X$ yields 
		\begin{align}
			\Vert X\circ X \Vert_{L^p(dx)} \leq C(p) \Vert \partial_x|X\circ X| \Vert_{L^2(dx)}^{\theta}\Vert X\circ X\Vert_{L^2(dx)}^{1-\theta}.
		\end{align}
		Since $X$ is normal we have $|\partial_s|X|| \leq |\nabla_sX|$ a.e.\ in $\R$. Together with the change of variables formula, this gives the statement.
		%	Using the change of variables formula, we find
		%	\begin{align}
			%		\Vert X\Vert_{L^p(ds)}\leq C(p) \Vert\partial_s|X| \Vert_{L^2(ds)}^{\theta}\Vert X\Vert_{L^2(ds)}^{1-\theta} \leq  C(p) \Vert \nabla_sX\Vert_{L^2(ds)}^{\theta}\Vert X\Vert_{L^2(ds)}^{1-\theta}. &\qedhere
			%	\end{align}
	\end{proof}
\end{comment}

This will enable us to estimate nonlinearities arising in higher order energy evolutions. 
In order to apply $L^2$-type estimates for  $L^p$-type nonlinearities, we now prove the following key Gagliardo--Nirenberg-type interpolation inequality. The crucial observation is that this estimate distributes the weight in a way that is compatible with the seminorms $|X|_{k,2}, |X|_{0,2}$ defined above.

\begin{lemma}\label{lem:gagliardo_nirenberg}
	%Let $\gamma\colon \R\to\R^n$ be an immersion and let $\zeta$ be as in \eqref{eq:def_zeta}. 
	Let $r\geq 0$ and $p\in[2,\infty]$.
	There exists $C=C(\Lambda,p,r)>0$ such that for any smooth normal vector field $X$ along $\gamma$ we have
	\begin{align}
		\Big(\int |X|^p \zeta^{p(r+\theta)}ds\Big)^{\frac{1}{p}} &\leq C \Big(\int|\nabla_sX|^2\zeta^{2r+2}ds \Big)^{\frac{\theta}{2}} \Big(\int|X|^2 \zeta^{2r} ds\Big)^{\frac{1-\theta}{2}}\\
		&\qquad + C\Big( \int|X|^2\zeta^{2r} ds\Big)^{\frac{1}{2}}\label{eq:gagliardo-nirenberg}
	\end{align}
    for $p\in[2,\infty)$,
	where $\theta=\frac{p-2}{2p}\in[0,\frac{1}{2})$, and, for $p=\infty$,
	\begin{align}
		\Vert \zeta^{r+\frac{1}{2}}X \Vert_{\infty}\leq C \Big(\int|\nabla_sX|^2\zeta^{2r+2}ds \Big)^{\frac{1}{4}} \Big(\int|X|^2 \zeta^{2r} ds\Big)^{\frac{1}{4}} + C\Big( \int|X|^2\zeta^{2r} ds\Big)^{\frac{1}{2}}.
	\end{align}
\end{lemma}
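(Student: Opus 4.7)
The plan is to first establish the $p=\infty$ case directly via the fundamental theorem of calculus, and then obtain the general $p\in [2,\infty)$ inequality as a one-line Hölder interpolation. In this sense the whole statement is essentially contained in the endpoint case.

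For the $L^\infty$ bound, I would introduce the compactly supported scalar function $w \vcentcolon = \zeta^{2r+1}|X|^2$, which is at least Lipschitz since $r\geq 0$ and $\zeta \in C_c^\infty(\R)$ (so the exponent $2r+1\geq 1$ makes $\zeta^{2r+1}$ continuously differentiable). Because $X$ is normal along $\gamma$, one has $\partial_s|X|^2 = 2\langle X, \nabla_sX\rangle$, so
\[
\partial_s w = (2r+1)\zeta^{2r}(\partial_s\zeta)|X|^2 + 2\zeta^{2r+1}\langle X,\nabla_s X\rangle.
\]
The fundamental theorem of calculus yields $\|w\|_\infty\leq \int|\partial_s w|\,ds$. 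Bounding the first term via $|\partial_s\zeta|\leq \Lambda$ and splitting $\zeta^{2r+1}=\zeta^r\cdot\zeta^{r+1}$ followed by Cauchy--Schwarz on the second then gives
\[
\|\zeta^{r+\frac{1}{2}}X\|_\infty^2 \leq (2r+1)\Lambda\int\zeta^{2r}|X|^2\,ds + 2\Big(\int\zeta^{2r}|X|^2\,ds\Big)^{1/2}\Big(\int\zeta^{2r+2}|\nabla_sX|^2\,ds\Big)^{1/2}.
\]
Taking a square root and using $\sqrt{a+b}\leq \sqrt a+\sqrt b$ reproduces the $p=\infty$ estimate of the statement.

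For $p\in[2,\infty)$, set $\theta = (p-2)/(2p)$, so that $2\theta p = p-2$, $(1-2\theta)p = 2$, and the identity $r+\theta = 2\theta(r+\tfrac{1}{2})+(1-2\theta)r$ holds. I would then rewrite the integrand as
\[
|X|^p \zeta^{p(r+\theta)} = \bigl(\zeta^{r+\frac{1}{2}}|X|\bigr)^{p-2}\cdot \zeta^{2r}|X|^2,
\]
pull out the first factor in $L^\infty$, and extract a $p$-th root to obtain
\[
\Big(\int|X|^p\zeta^{p(r+\theta)}\,ds\Big)^{1/p} \leq \|\zeta^{r+\frac{1}{2}}X\|_\infty^{\,2\theta}\Big(\int\zeta^{2r}|X|^2\,ds\Big)^{(1-2\theta)/2}.
\]
Substituting the endpoint bound and applying $(a+b)^{2\theta}\leq a^{2\theta}+b^{2\theta}$ to the $\|\zeta^{r+\frac{1}{2}}X\|_\infty^{2\theta}$ factor yields two terms whose exponents sum to $\tfrac{1}{2}$ and $\tfrac{1-\theta}{2}+\tfrac{\theta}{2}$ respectively; this is exactly the desired inequality, with the additive $\|\zeta^r X\|_{L^2(ds)}$ contribution tracing back to the first summand in the $L^\infty$ bound.

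The only delicate issue is the regularity of $\zeta^{2r+1}$ near the boundary of $\mathrm{supp}\,\zeta$: this is precisely why I would work with the scalar quantity $w$ at the integer-compatible exponent $2r+1\geq 1$ rather than differentiating fractional powers like $\zeta^{r+\theta}$ directly. Beyond this mild observation the argument is entirely elementary, exploiting only the normality of $X$ (through $\partial_s|X|^2 = 2\langle X,\nabla_sX\rangle$) and the weight derivative bound $|\partial_s\zeta|\leq\Lambda$, with no further structure of $\gamma$ entering.
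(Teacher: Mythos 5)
Your proposal is correct and follows essentially the same route as the paper: both establish the $L^\infty$ bound first by differentiating $\zeta^{2r+1}|X|^2$ and applying Cauchy--Schwarz (using normality via $\partial_s|X|^2=2\langle X,\nabla_sX\rangle$), then handle $p\in(2,\infty)$ by peeling off the $L^\infty$ factor $\|\zeta^{r+1/2}X\|_\infty^{p-2}$ from the integrand, the only notational difference being that you take the $p$-th root before substituting the endpoint bound whereas the paper substitutes first.
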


\begin{remark}
	Defining, for $p\in[2,\infty]$, the seminorm
	\begin{align}
		|X|_{0,p} &\vcentcolon= \Vert \zeta^{\frac{1}{2}-\frac{1}{p}} X \Vert_{L^p([\zeta>0],ds)} \Big( = \Vert \zeta^{\theta} X \Vert_{L^p([\zeta>0],ds)} \Big),
	\end{align}
	with the interpretation $|X|_{0,\infty} \vcentcolon= \Vert \zeta^{\frac{1}{2}} X \Vert_{\infty}$,
	\Cref{lem:gagliardo_nirenberg} resembles the classical Gagliardo--Nirenberg inequality, since \eqref{eq:gagliardo-nirenberg} yields
	\begin{align}
		|\zeta^r X |_{0,p}\leq C |\zeta^r X|_{1,2}^{\theta} |\zeta^r X|_{0,2}^{1-\theta}. 
	\end{align}
\end{remark}

By taking $\gamma$ in \Cref{lem:gagliardo_nirenberg} to be an arclength parametrization of a straight line in $\R^2$,
we also obtain a non-geometric version for functions on the real line.
Although we will not use it here, we include the statement for the reader's convenience.

\begin{corollary}
	Let $\xi\in C^\infty_c(\R)$ with $\xi\geq0$ and $|\xi'|\leq\Lambda$. Let $r\geq 0$ and $p\in[2,\infty]$. 
	There exists $C=C(\Lambda, p,r)>0$ such that for all $u\in W^{1,2}_{\mathrm{loc}}(\R)$ we have
	\begin{align}
		\Big(\int |u|^p \xi^{p(r+\theta)}dx\Big)^{\frac{1}{p}} &\leq C \Big(\int|u'|^2\xi^{2r+2}dx \Big)^{\frac{\theta}{2}} \Big(\int|u|^2 \xi^{2r} dx\Big)^{\frac{1-\theta}{2}}\\
		&\qquad + C\Big( \int|u|^2\xi^{2r} dx\Big)^{\frac{1}{2}}
	\end{align}
    for $p\in [2,\infty)$,
	where $\theta=\frac{p-2}{2p}$, and, for $p=\infty$,
	\begin{align}
		\Vert \xi^{r+\frac{1}{2}}u\Vert_{\infty}&\leq C \Big(\int|u'|^2\xi^{2r+2}dx \Big)^{\frac{1}{4}} \Big(\int|u|^2 \xi^{2r} dx\Big)^{\frac{1}{4}} + C\Big( \int|u|^2\xi^{2r} dx\Big)^{\frac{1}{2}}.
	\end{align}
\end{corollary}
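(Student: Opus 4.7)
The plan is to first establish the $L^\infty$ estimate (the case $p=\infty$) directly from the fundamental theorem of calculus applied to $|X|^2\zeta^{2r+1}$, and then obtain the $L^p$ case via an interpolation trick in the spirit of Dziuk--Kuwert--Sch\"atzle \cite{Dziuk-Kuwert-Schatzle_2002}.

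For the $L^\infty$ estimate, set $f\vcentcolon= |X|^2\zeta^{2r+1}$, which is continuous with compact support since $\zeta\in C_c^\infty(\R)$. Using that $X$ is normal along $\gamma$, the tangential part of $\partial_sX$ is orthogonal to $X$, so $\partial_s|X|^2 = 2\langle\nabla_sX,X\rangle$, and therefore
\begin{align}
    \partial_s f = 2\langle\nabla_sX,X\rangle \zeta^{2r+1} + (2r+1)|X|^2\zeta^{2r}\partial_s\zeta.
\end{align}
Picking any $x_0\notin\mathrm{supp}(\zeta)$, the fundamental theorem gives, for all $x\in\R$,
\begin{align}
    f(x) \leq \int|\partial_sf|ds \leq 2\int |\nabla_sX||X|\zeta^{2r+1}ds + (2r+1)\Lambda \int|X|^2\zeta^{2r}ds,
\end{align}
where I used $|\partial_s\zeta|\leq \Lambda$. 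Splitting $\zeta^{2r+1} = \zeta^{r+1}\cdot\zeta^r$ and applying Cauchy--Schwarz to the first integral, I obtain
\begin{align}
    \|\zeta^{2r+1}|X|^2\|_\infty \leq 2\Big(\!\int\!|\nabla_sX|^2\zeta^{2r+2}ds\Big)^{1/2}\!\Big(\!\int\!|X|^2\zeta^{2r}ds\Big)^{1/2}\! + C\!\int\! |X|^2\zeta^{2r}ds.
\end{align}
Taking square roots and using $\sqrt{a+b}\leq \sqrt{a}+\sqrt{b}$ yields the $p=\infty$ assertion.

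For $p\in[2,\infty)$, the key observation is the exponent identity
\begin{align}
    p(r+\theta) = (p-2)\bigl(r+\tfrac{1}{2}\bigr) + 2r,
\end{align}
which is easily verified using $\theta=\tfrac{1}{2}-\tfrac{1}{p}$. Hence
\begin{align}
    \int|X|^p\zeta^{p(r+\theta)}ds = \int \bigl(|X|\zeta^{r+\frac{1}{2}}\bigr)^{p-2} |X|^2\zeta^{2r}ds \leq \bigl\|\zeta^{r+\frac{1}{2}}|X|\bigr\|_\infty^{p-2}\int |X|^2\zeta^{2r}ds.
\end{align}
Taking $1/p$-th powers, substituting the $L^\infty$ bound from the first step, and using $(a+b)^{2\theta}\leq a^{2\theta}+b^{2\theta}$ (valid since $2\theta\leq 1$) together with the identity $(p-2)/p = 2\theta$ and $1/p = (1-2\theta)/2$, a direct regrouping of exponents gives the stated inequality.

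No serious obstacle is anticipated; the main point is bookkeeping, namely choosing the correct weight $\zeta^{2r+1}$ in Step 1 so that Cauchy--Schwarz distributes the powers of $\zeta$ exactly onto $\zeta^{2r+2}$ and $\zeta^{2r}$, and verifying the exponent identity $p(r+\theta) = (p-2)(r+\frac{1}{2})+2r$ that makes the interpolation in Step 2 work cleanly.
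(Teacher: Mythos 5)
Your proof is correct, and its two steps --- the $L^\infty$ bound from the fundamental theorem of calculus plus Cauchy--Schwarz applied to $|X|^2\zeta^{2r+1}$, then factoring out $\|\zeta^{r+\frac12}|X|\|_\infty^{p-2}$ and regrouping exponents via $p(r+\theta)=(p-2)(r+\frac12)+2r$ --- coincide exactly with the paper's proof of Lemma~\ref{lem:gagliardo_nirenberg}. The paper, however, does not re-run that argument at the Corollary level: it proves the Corollary in one line by taking $\gamma$ in Lemma~\ref{lem:gagliardo_nirenberg} to be an arclength-parametrized straight line in $\R^2$, so that $\partial_s\to\partial_x$, $ds\to dx$, and a scalar $u$ corresponds to the normal field $X=u\,e_2$ with $|X|=|u|$ and $|\nabla_sX|=|u'|$; the Lemma then specializes directly to the stated inequality. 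Your proposal instead re-derives the Lemma (you even write it in the geometric notation $X$, $\zeta$, $\nabla_s$, $ds$ rather than the scalar $u$, $\xi$, $u'$, $dx$ of the Corollary), which is sound and carries no extra risk since it is precisely the Lemma's own proof, but it foregoes the intended economy of citing that already-established result. If you keep the self-contained route, translate the notation to the scalar setting: replace $\partial_s|X|^2=2\langle\nabla_sX,X\rangle$ by $(|u|^2)'=2uu'$, and $\Lambda$-bound $\xi'$ rather than $\partial_s\zeta$.
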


\begin{proof}[Proof of \Cref{lem:gagliardo_nirenberg}]
	If $p=2$, there is nothing to prove. Let $p>2$. We have $|\partial_s|X||\leq |\nabla_s X|$ a.e.\ since $X$ is normal. Using that $\zeta$ has compact support, we have
	\begin{align}
		\Vert |X|^2\zeta^{2r+1}\Vert_{\infty}
        &\leq \int \big(2|X||\nabla_s X| \zeta^{2r+1} + (2r+1)|X|^2\zeta^{2r}|\partial_s\zeta|\big)ds \\
		&\leq 2\Big(\int|X|^2\zeta^{2r}ds\Big)^{\frac{1}{2}} \Big(\int|\nabla_s X|^2\zeta^{2r+2}ds\Big)^{\frac{1}{2}} + (2r+1)\Lambda \int|X|^2 \zeta^{2r}ds. \label{eq:gagliardo-nirenberg_L_infty}
	\end{align}
	This yields the statement for $p=\infty$. For $p\in(2,\infty)$, we further estimate
	\begin{align}
		&\int |X|^p \zeta^{p(r+\frac{1}{2})-1}ds \\ 
		&\quad \leq  \Vert |X|^{p-2} \zeta^{(r+\frac{1}{2})(p-2)}\Vert_{\infty} \int|X|^2 \zeta^{2r}ds \\
		&\quad = \Vert |X|^{2} \zeta^{2(r+\frac{1}{2})}\Vert_{\infty}^{\frac{p-2}{2}} \int|X|^2 \zeta^{2r}ds\\
		&\quad \leq  C \Big(\int|X|^2\zeta^{2r}ds\Big)^{\frac{p+2}{4}} \Big(\int|\nabla_s X|^2\zeta^{2r+2}ds\Big)^{\frac{p-2}{4}} + C\Big(\int|X|^2 \zeta^{2r}ds\Big)^{\frac{p}{2}}.
	\end{align}
	The statement follows.
\end{proof}

Following the notation in \cite{DallAcqua-Pozzi_2014_Willmore-Helfrich} (cf.\ \cite{Dziuk-Kuwert-Schatzle_2002}) for $a,b,c\in \N_0$, we denote by $P^{a,c}_b$ any linear combination (with universal coefficients) of terms of the form
\begin{align}
	\nabla_s^{i_1}\kappa* \dots * \nabla_s^{i_b}\kappa,
\end{align}
that may be scalar or normal vector fields along $\gamma$. Here $i_j\leq c$ for all $j=1,\dots,b$ and $\sum_{j=1}^b i_j=a$. %We also write $P^{a}_b$ if we need not keep track of $\max_{j=1,\dots,b}i_j$.

\begin{proposition}\label{prop:interpolation_P}
	%Let $\gamma\colon\R\to\R^n$ be an immersion and let $\zeta$ be as in \eqref{eq:def_zeta}. 
	Let $k\in \N$, $a,b,c\in \N_0$, $b\geq 2$, and suppose $c\leq k$ and $a+\frac{b}{2}-1<2k$. %Suppose $\int_{[\zeta>0]}|\kappa|^2ds \leq M<\infty$.	
	Then, for any $\varepsilon>0$, there exists $C=C(\varepsilon,\Lambda,a,b, k)$ such that
    \begin{align}
		\int |P^{a,c}_b| \zeta^{a+\frac{b}{2}-1}ds  &\leq \varepsilon \int |\nabla_s^k \kappa|^2 \zeta^{2k}ds + C\Big(\int_{[\zeta>0]}|\kappa|^2 ds\Big)^{\frac{b-\delta}{2-\delta}} + C\Big(\int_{[\zeta>0]}|\kappa|^2 ds\Big)^{\frac{b}{2}}
	\end{align}
    with $\delta = \frac{1}{k}(a+\frac{b}{2}-1)<2$.
	% \begin{align}
	% 	\int |P^{a,c}_b(\kappa)| \zeta^{a+\frac{b}{2}-1}ds \leq \varepsilon \int |\nabla_s^k \kappa|^2 \zeta^{2k}ds + C(\varepsilon,\Lambda,a,b, k,M).
	% \end{align}
\end{proposition}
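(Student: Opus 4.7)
The plan is to bound each monomial $\nabla_s^{i_1}\kappa * \cdots * \nabla_s^{i_b}\kappa$ (with $\sum_j i_j=a$ and $i_j\leq c\leq k$) in the expansion of $P^{a,c}_b$ separately. I would first apply Hölder's inequality with exponents $p_j\in[2,\infty)$ satisfying $\sum_j 1/p_j=1$, distributing the weight $\zeta^{a+b/2-1}$ across the factors as $\zeta^{i_j+\theta_j}$, where $\theta_j\vcentcolon=1/2-1/p_j\in[0,1/2)$. By construction $\sum_j\theta_j=b/2-1$, so the total exponent is $a+b/2-1$. A natural default is $p_j=b$ uniformly, yielding $\theta_j=(b-2)/(2b)$.

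Next, I would apply the weighted Gagliardo--Nirenberg inequality (\Cref{lem:gagliardo_nirenberg}) with $X=\nabla_s^{i_j}\kappa$ (which is normal along $\gamma$) to each factor, reducing $\|\zeta^{i_j+\theta_j}\nabla_s^{i_j}\kappa\|_{L^{p_j}}$ to the product $C\|\zeta^{i_j+1}\nabla_s^{i_j+1}\kappa\|_{L^2}^{\theta_j}\|\zeta^{i_j}\nabla_s^{i_j}\kappa\|_{L^2}^{1-\theta_j}$ plus a lower-order term $C\|\zeta^{i_j}\nabla_s^{i_j}\kappa\|_{L^2}$. Each resulting $L^2$ factor is then estimated via the multiplicative interpolation (\Cref{lem:interpolation_multiplicative}) applied to $X=\kappa$, giving $\|\zeta^m\nabla_s^m\kappa\|_{L^2}\leq C|\kappa|_{k,2}^{m/k}|\kappa|_{0,2}^{1-m/k}$ whenever $m\leq k$. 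Expanding the Höldered product, the purely ``main'' contribution evaluates to $C|\kappa|_{k,2}^{\delta}|\kappa|_{0,2}^{b-\delta}$ because $\sum_j(i_j+\theta_j)/k=(a+b/2-1)/k=\delta$, while the mixed contributions involving the lower-order terms take the form $C|\kappa|_{k,2}^\alpha|\kappa|_{0,2}^{b-\alpha}$ with $\alpha\in[a/k,\delta]\subset[0,2)$.

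Finally, I would use $|\kappa|_{k,2}^\alpha\leq C(\|\zeta^k\nabla_s^k\kappa\|_{L^2}^\alpha+|\kappa|_{0,2}^\alpha)$ together with Young's inequality (applicable since $\alpha<2$): the piece involving $\|\zeta^k\nabla_s^k\kappa\|_{L^2}^2$ gets absorbed as $\varepsilon\int|\nabla_s^k\kappa|^2\zeta^{2k}ds$, while the remainder yields powers of $A\vcentcolon=\int_{[\zeta>0]}|\kappa|^2 ds$ of the form $A^{(b-\alpha)/(2-\alpha)}$ together with $A^{b/2}$. Since $\alpha\mapsto(b-\alpha)/(2-\alpha)$ is monotone increasing on $[0,\delta]$ (using $b\geq2$), these intermediate powers lie in $[b/2,(b-\delta)/(2-\delta)]$, and a case distinction $A\leq1$ versus $A\geq1$ bounds each of them by $A^{b/2}+A^{(b-\delta)/(2-\delta)}$, matching the desired form.

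The main obstacle is the edge case $i_{j_0}=k$ for some $j_0$: then the Gagliardo--Nirenberg step would produce $\|\zeta^{k+1}\nabla_s^{k+1}\kappa\|_{L^2}$, which is \emph{not} controlled by \Cref{lem:interpolation_multiplicative}. The saving observation is that the hypothesis $a+b/2-1<2k$ together with $\sum_j i_j=a$ and $i_j\leq k$ forces $|\{j:i_j=k\}|\leq1$. For this (at most one) index, I would replace the default choice by $p_{j_0}=2$, so that $\theta_{j_0}=0$ and no derivative beyond $\nabla_s^k\kappa$ appears, rebalancing with $p_j=2(b-1)$ for $j\neq j_0$; uniqueness then guarantees $i_j\leq k-1$ for all $j\neq j_0$, keeping every $\nabla_s^{i_j+1}\kappa$ within reach of \Cref{lem:interpolation_multiplicative}. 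The exponent count $\sum_j(i_j+\theta_j)/k=\delta$ is preserved, so the argument proceeds identically.
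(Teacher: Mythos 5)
Your proposal is correct. The core pipeline---H\"older with uniform exponents $p_j=b$ (so $\theta_j=\tfrac{1}{2}-\tfrac{1}{b}$), then the weighted Gagliardo--Nirenberg estimate, then the multiplicative interpolation, then Young's inequality with the case split $A\leq1$ vs.\ $A\geq1$ to collapse the intermediate exponents---is exactly how the paper proves the intermediate multiplicative bound in the case $c\leq k-1$ and closes the argument. Where you genuinely diverge is the edge case $c=k$. The paper writes $P_b^{a,k}=P_{b-1}^{a-k,k-1}*\nabla_s^k\kappa$, applies Cauchy--Schwarz to peel off $\|\zeta^k\nabla_s^k\kappa\|_{L^2}$, observes that the squared remainder is of type $P_{2(b-1)}^{2(a-k),k-1}$ with $2(a-k)+\tfrac{2(b-1)}{2}-1<2k$, and invokes the already-proven case $c\leq k-1$ as a black box. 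You instead do a single H\"older step with non-uniform exponents, placing the unique factor with $i_{j_0}=k$ in $L^2$ (so $\theta_{j_0}=0$ and no GN step is triggered for it) and compensating with $p_j=2(b-1)$, $\theta_j=\tfrac{b-2}{2(b-1)}$ for $j\neq j_0$; the observation that $a+\tfrac{b}{2}-1<2k$ forces at most one index to equal $k$ is correct and is what makes this work, since it guarantees $i_j+1\leq k$ for the remaining factors. Both routes reach the same multiplicative bound; yours is slightly more direct (one H\"older application rather than Cauchy--Schwarz plus a recursive call), whereas the paper's keeps the two regimes cleanly separated and reuses the $c\leq k-1$ lemma verbatim.
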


\begin{proof}
	If suffices to consider the case $P^{a,c}_b=	\nabla_s^{i_1}\kappa* \dots * \nabla_s^{i_b}\kappa$. Again, we denote by $C$ a constant that is allowed to vary from line to line, depending only on 
	$\varepsilon,\Lambda,a,b, k$.  
    
    We first prove
    \begin{align}\label{eq:P_interpolation_multiplicative}
		\int |P^{a,c}_b| \zeta^{a+\frac{b}{2}-1}ds \leq C |\kappa|_{k,2}^{\delta}|\kappa|_{0,2}^{b-\delta} + C |\kappa|_{k,2}^{\frac{a}{k}}|\kappa|_{0,2}^{b-\frac{a}{k}}.
	\end{align}
	We may assume $|\kappa|_{0,2}>0$, otherwise the statement is trivial. 
    
    For proving \eqref{eq:P_interpolation_multiplicative}, we first assume $c\leq k-1$. Using H\"older's inequality and $a+\frac{b}{2}-1 =\sum_{j=1}^b (i_j+\frac{1}{2}-\frac{1}{b})$ we obtain
	\begin{align}
		\int |\nabla_s^{i_1}\kappa *\dots*\nabla_s^{i_b}\kappa|\zeta^{a+\frac{b}{2}-1}ds &\leq \prod_{j=1}^b \Vert  \zeta^{i_j+\frac{1}{2}-\frac{1}{b}}\nabla_s^{i_j}\kappa\Vert_{L^b(ds)}.\label{eq:P_interpolation_1}
	\end{align}
	We now estimate the factors in \eqref{eq:P_interpolation_1} by using the Gagliardo--Nirenberg-type inequality. Indeed,  \Cref{lem:gagliardo_nirenberg} with $r=i_j$ yields
	\begin{align}
		&\Big(\int  |\nabla^{i_j}_s \kappa|^{b}\zeta^{bi_j+\frac{b}{2}-1}ds\Big)^{\frac{1}{b}} \\
		&\leq C \Big(\int|\nabla^{i_j+1}_s\kappa|^2\zeta^{2i_j+2}\Big)^{\frac{\theta}{2}} \Big(\int|\nabla^{i_j}_s\kappa|^2\zeta^{2i_j}ds\Big)^{\frac{1-\theta}{2}} + C \Big(\int|\nabla^{i_j}_s\kappa|^2\zeta^{2i_j}ds\Big)^{\frac{1}{2}},
	\end{align}
	where $\theta = \frac{b-2}{2b}$. Applying \Cref{lem:interpolation_multiplicative} and combining exponents, we find
	\begin{align}
		\Big(\int  |\nabla^{i_j}_s \kappa|^{b}\zeta^{bi_j+\frac{b}{2}-1}ds\Big)^{\frac{1}{b}}
		%		&\leq C |\kappa|_{k,2}^{\frac{i_j+1}{k} \theta} |\kappa|_{0,2}^{\frac{k-(i_j+1)}{k}\theta} |\kappa|_{k,2}^{\frac{i_j}{k}(1-\theta)} |\kappa|_{0,2}^{\frac{k-i_j}{k}(1-\theta)}
		%		+ C |\kappa|_{k,2}^{\frac{i_j}{k}}|\kappa|_{0,2}^{\frac{k-i_j}{k}} \\
		&\leq  C |\kappa|_{k,2}^{\frac{i_j+\theta}{k}} |\kappa|_{0,2}^{\frac{k-i_j-\theta}{k}}
		+ C |\kappa|_{k,2}^{\frac{i_j}{k}}|\kappa|_{0,2}^{\frac{k-i_j}{k}} \\
		&= C|\kappa|_{k,2}^{\frac{i_j}{k}}|\kappa|_{0,2}^{\frac{k-i_j}{k}}\Big( |\kappa|_{k,2}^{\frac{\theta}{k}}|\kappa|_{0,2}^{-\frac{\theta}{k}}+1\Big),
	\end{align}
	using that $|\kappa|_{0,2}>0$.
	Plugging this into \eqref{eq:P_interpolation_1}, we obtain
	\begin{align}
		\int |\nabla_s^{i_1}\kappa *\dots*\nabla_s^{i_b}\kappa|\zeta^{a+b}ds
		&\leq C \Big(\prod_{j=1}^b |\kappa|_{k,2}^{\frac{i_j}{k}}|\kappa|_{0,2}^{\frac{k-i_j}{k}}\Big) \Big(|\kappa|_{k,2}^{\frac{\theta}{k}}|\kappa|_{0,2}^{-\frac{\theta}{k}}+1\Big)^b\\
		& \leq C |\kappa|_{k,2}^{\frac{a}{k}} |\kappa|_{0,2}^{\frac{bk-a}{k}} \Big(|\kappa|_{k,2}^{\frac{b\theta}{k}}|\kappa|_{0,2}^{-\frac{b\theta}{k}}+1\Big)\\
		& = C |\kappa|_{k,2}^{\frac{a+b\theta}{k}} |\kappa|_{0,2}^{\frac{bk-a - b\theta}{k}} +C |\kappa|_{k,2}^{\frac{a}{k}} |\kappa|_{0,2}^{\frac{bk-a}{k}}.
	\end{align}
	Noting that $\frac{1}{k}(a+b\theta) = \frac{1}{k}(a+\frac{b}{2}-1) = \delta$, we obtain \eqref{eq:P_interpolation_1} and thus \eqref{eq:P_interpolation_multiplicative} in the case $c\leq k-1$.
    
    On the other hand, suppose $c=k$. Then $P^{a,c}_b$ contains a term of the form $\nabla_s^k\kappa$, so $k\leq a \leq a+\frac{b}{2}-1 <2k$ (as $b\geq 2$), and we may write $
        P_b^{a,c} = P_{b-1}^{a-k,k-1} * \nabla_s^k\kappa$. 
        The Cauchy--Schwarz inequality yields
        \begin{align}
            \int|P_b^{a,c}\zeta^{a+\frac{b}{2}-1}|ds \leq \Big(\int|\nabla_s^k\kappa|^2 \zeta^{2k}ds\Big)^{\frac{1}{2}} \Big(\int|P_{\tilde{b}}^{\tilde{a}, k-1}| \zeta^{\tilde{a}+ \frac{\tilde{b}}{2}-1} ds \Big)^{\frac{1}{2}},
        \end{align}
        where $\tilde{a} \vcentcolon = 2(a-k), \tilde{b}\vcentcolon =2(b-1)$. Since $\tilde{a}+\frac{\tilde{b}}{2}-1 = 2(a+\frac{b}{2}-1-k)<2k$ and as we have already proven \eqref{eq:P_interpolation_multiplicative} in the case $c\leq k-1$, we may apply \eqref{eq:P_interpolation_multiplicative} to the second factor, yielding
        \begin{align}
            \int|P_b^{a,c}\zeta^{a+\frac{b}{2}-1}|ds \leq C |\kappa|_{k,2} \Big(|\kappa|_{k,2}^{\tilde\delta}|\kappa|_{0,2}^{\tilde b-\tilde \delta} + C |\kappa|_{k,2}^{\frac{\tilde a}{k}}|\kappa|_{0,2}^{\tilde b-\frac{\tilde a}{k}}\Big)^{\frac{1}{2}}.
        \end{align}
        Since $\delta = 1+\frac{\tilde\delta}{2}$, $\frac{1}{2}(\tilde b-\tilde \delta)=b-\delta$, $1+\frac{\tilde a}{2k} =\frac{a}{k}$, and $\frac{1}{2}(\tilde b-\frac{\tilde a}{k})=b-\frac{a}{k}$, we have thus proven \eqref{eq:P_interpolation_multiplicative} also in the case $c=k$.

    Lastly, we estimate the first term of the right hand side of \eqref{eq:P_interpolation_multiplicative} using Young's inequality and $\delta\in [0,2)$, yielding
    \begin{align}
        |\kappa|_{k,2}^{\delta}|\kappa|_{0,2}^{b-\delta} &\leq C \Big(\int |\nabla_s^k \kappa|^2 \zeta^{2k} ds\Big)^{\frac{\delta}{2}} \Big(\int_{[\zeta>0]}|\kappa|^2 ds\Big)^{\frac{b-\delta}{2}} + C\Big(\int_{[\zeta>0]}|\kappa|^2 ds\Big)^{\frac{b}{2}} \\
        &\leq \varepsilon \int |\nabla_s^k \kappa|^2 \zeta^{2k} ds + C \Big(\int_{[\zeta>0]}|\kappa|^2 ds\Big)^{\frac{b-\delta}{2-\delta}} + C\Big(\int_{[\zeta>0]}|\kappa|^2 ds\Big)^{\frac{b}{2}}.
    \end{align}
    Using the same argument with $\delta$ replaced by $\frac{a}{k}$, we additionally estimate the second term in \eqref{eq:P_interpolation_multiplicative} and get the term
    \begin{align}
        \Big(\int_{[\zeta>0]}|\kappa|^2 ds\Big)^{\frac{b-\frac{a}{k}}{2-\frac{a}{k}}} \leq \begin{cases}
             \Big(\int_{[\zeta>0]}|\kappa|^2 ds\Big)^{\frac{b}{2}}, & \text{ if }\int_{[\zeta>0]}|\kappa|^2 ds\leq 1,\\
             \Big(\int_{[\zeta>0]}|\kappa|^2 ds\Big)^{\frac{b-\delta}{2-\delta}}, & \text{ if }\int_{[\zeta>0]}|\kappa|^2 ds\geq 1,
        \end{cases}
    \end{align}
    where we used that $x\mapsto\frac{b-x}{2-x}$ is monotonically increasing and the estimate $0\leq \frac{a}{k}\leq \delta$ in the last step.
    Thus, \eqref{eq:P_interpolation_multiplicative} implies the statement.
	% with $\delta = \frac{1}{k}(a+\frac{b}{2}-1).$ By the assumptions, $\delta<2$ and $\frac{a}{k}<2$, so the lemma then follows from Young's inequality.
\end{proof}

\section{Global curvature control}\label{sec:curvature_control}

In this section we prove global curvature estimates for a flow with uniformly bounded bending energy $B$. Having established the key interpolation estimate, \Cref{prop:interpolation_P}, we now consider a rather general evolution law and present a unified approach to treat all the flows we consider in this work simultaneously by a combination of the strategies in \cite{Dziuk-Kuwert-Schatzle_2002,KSSI}.

\subsection{Evolution of localized curvature integrals}

We consider a general class of geometric flows in this section, including \eqref{eq:CSF}, \eqref{eq:SDF}, \eqref{eq:CF}, \eqref{eq:lambda-EF}, i.e., we assume that $\gamma\colon[0,T)\times\R\to\R^n$ is a smooth family of \emph{proper} immersions, satisfying 
\begin{align}\label{eq:LTE_general_flow}
	\partial_t \gamma = -\sigma\nabla_s^2\kappa + \lambda\kappa + \mu|\kappa|^2\kappa +\vartheta \langle \kappa, \nabla_s\kappa\rangle\partial_s\gamma.
\end{align}
Here $\lambda, \mu,\sigma,\vartheta\in\R$ are fixed parameters. In the following key lemma, in abuse of notation, we write, for $\alpha,\beta\in\R$,
\begin{align}
    (\alpha+\beta)(P_b^{a,c}+P_{b'}^{a',c'}) \vcentcolon = \alpha P_b^{a,c} + \beta P_b^{a,c} + \alpha P_{b'}^{a',c'}+\beta P_{b'}^{a',c'}.
\end{align}

\begin{lemma}\label{lem:LTE_curvature_evolution}
Let $\tilde{\zeta}\in C_c^\infty(\R^n)$ and set $\zeta(t,\cdot)\vcentcolon =\tilde\zeta\circ\gamma(t,\cdot)$. For all $m\in\N_0$ and $r\in\N$ with $r\geq 2$, we have
    \begin{align}
		&\frac{d}{dt}\frac{1}{2}\int|\nabla^{m}_s\kappa|^2 \zeta^{2m+r} ds +\sigma \int|\nabla_s^{m+2}\kappa|^2\zeta^{2m+r}ds +\lambda \int|\nabla_s^{m+1}\kappa|^2\zeta^{2m+r}ds  \\
		&= \int \Big( (\sigma+\mu+\vartheta)(P^{2m+2,m+2}_4+ P_6^{2m,m})+ \lambda P_4^{2m,m}\Big)\zeta^{2m+r}ds \\
        %&\quad + (2m+r)\int \langle \sigma P^{2m+2,m+2}_3+\lambda P_3^{2m,m} + \mu P_5^{2m,m}+\vartheta P_4^{2m+1,m+1}\partial_s\gamma, D\tilde\zeta\circ\gamma\rangle\zeta^{2m+r-1}ds \\
         &\quad + (2m+r)\int \langle \sigma P^{2m+2,m+2}_3+\lambda P_3^{2m,m}, D\tilde\zeta\circ\gamma\rangle\zeta^{2m+r-1}ds \\
         &\quad + (2m+r)\int \langle \mu P_5^{2m,m}+\vartheta P_4^{2m+1,m+1}\partial_s\gamma, D\tilde\zeta\circ\gamma\rangle\zeta^{2m+r-1}ds \\
        % &\quad + \int \Big(\sigma P^{2m+2,m+2}_4+\lambda P_4^{2m,m} + \mu P_6^{2m,m} + \vartheta P_4^{2m+2,m+2}\Big)\zeta^{2m+r}ds\\
		% &\quad + \int\langle \sigma P^{2m+2,\max\{m,2\}}_3(\kappa)+\lambda P^{2m,m}_3(\kappa),D\tilde{\zeta}\circ\gamma\rangle\zeta^{2m+r-1}ds \\
		% &\quad +\mu \int \langle P^{2m,m}_5(\kappa)+ P^{2m+1,\max\{m,1\}}_4(\kappa)\partial_s\gamma,D\tilde{\zeta}\circ\gamma\rangle\zeta^{2m+r-1}ds\\
		&\quad - 2\sigma (2m+r)\int\langle\nabla_s^{m+2}\kappa,\nabla_s^{m+1}\kappa\rangle \zeta^{2m+r-1}\partial_s\zeta ds\\
		&\quad - \sigma (2m+r) \int\langle \nabla_s^{m+2}\kappa, \nabla_s^m\kappa\rangle \zeta^{2m+r-2}\big((2m+r-1)(\partial_s\zeta)^2+\zeta \partial_s^2\zeta\big)ds\\
        &\quad - \lambda (2m+r) \int\langle \nabla_s^{m+1}\kappa,\nabla_s^m\kappa\rangle \zeta^{2m+r-1}\partial_s\zeta ds.
	\end{align}
	% \begin{align}
	% 	&\frac{d}{dt} \int|\nabla_s^m\kappa|^2 \zeta^{2m+4}ds + \int|\nabla_s^m\kappa|^2\zeta^{2m+4}ds \\
 %        &\quad + \sigma \int|\nabla_s^{m+2}\kappa|^2\zeta^{2m+4}ds + \lambda \int|\nabla_s^{m+1}\kappa|^2\zeta^{2m+4} ds
 %        \leq C(\lambda,\mu,\sigma,\Lambda,m,M)
	% \end{align}
	% for all $m\in \N_0$.
\end{lemma}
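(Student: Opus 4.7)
The approach is the standard direct computation: move the time derivative under the integral sign and apply the pointwise geometric evolution formulas for $\nabla_s^m\kappa$, the arclength measure $ds$, and the spatial cutoff $\zeta=\tilde\zeta\circ\gamma$, then integrate by parts to expose the dissipative $\sigma|\nabla_s^{m+2}\kappa|^2$ and $\lambda|\nabla_s^{m+1}\kappa|^2$ terms. First decompose $\partial_t\gamma=V+\xi\,\partial_s\gamma$ with normal velocity $V=-\sigma\nabla_s^2\kappa+\lambda\kappa+\mu|\kappa|^2\kappa$ and tangential coefficient $\xi=\vartheta\langle\kappa,\nabla_s\kappa\rangle$. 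From \cite{Dziuk-Kuwert-Schatzle_2002} recall $\partial_t(ds)=(\partial_s\xi-\langle\kappa,V\rangle)ds$ and $\partial_t^\perp\kappa=\nabla_s^2V+\langle\kappa,V\rangle\kappa+\xi\nabla_s\kappa$. Iterating the commutator $[\partial_t^\perp,\nabla_s]$ (which produces $\kappa{*}\kappa{*}(\cdot)$-type corrections plus a $\xi\nabla_s$-piece), a straightforward induction on $m$ yields
\[
\partial_t^\perp\nabla_s^m\kappa=\nabla_s^{m+2}V+(\text{commutator remainder}),
\]
where the commutator remainder, after distributing $V$, falls into the schematic classes $\sigma P_3^{2m+2,m+1}$, $\lambda P_3^{2m,m}$, $\mu P_5^{2m,m}$, plus $\vartheta P_4^{2m+1,m+1}$ arising from $\xi\nabla_s\kappa$. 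The chain rule for $\zeta=\tilde\zeta\circ\gamma$ gives $\partial_s\zeta=\langle D\tilde\zeta\circ\gamma,\partial_s\gamma\rangle$ and $\partial_t\zeta=\langle D\tilde\zeta\circ\gamma,V+\xi\,\partial_s\gamma\rangle$.

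With these ingredients, expand $\frac{d}{dt}\frac12\int|\nabla_s^m\kappa|^2\zeta^{2m+r}ds$ by the product rule into three contributions. The first, $\int\langle\nabla_s^m\kappa,\partial_t^\perp\nabla_s^m\kappa\rangle\zeta^{2m+r}ds$, contains the principal piece $\int\langle\nabla_s^m\kappa,\nabla_s^{m+2}V\rangle\zeta^{2m+r}ds$; integrating by parts twice against $\zeta^{2m+r}$ on the $-\sigma\nabla_s^2\kappa$ part of $V$ produces exactly $-\sigma\int|\nabla_s^{m+2}\kappa|^2\zeta^{2m+r}ds$ plus the two correction lines involving $2\sigma(2m+r)\zeta^{2m+r-1}\partial_s\zeta$ and $\sigma(2m+r)\zeta^{2m+r-2}((2m+r-1)(\partial_s\zeta)^2+\zeta\partial_s^2\zeta)$ displayed at the end of the statement. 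Integrating by parts once on the $\lambda\kappa$ part of $V$ yields $-\lambda\int|\nabla_s^{m+1}\kappa|^2\zeta^{2m+r}ds$ plus the matching single-derivative boundary correction. The second contribution, $(2m+r)\int\frac12|\nabla_s^m\kappa|^2\zeta^{2m+r-1}\partial_t\zeta\,ds$, when $\partial_t\zeta$ is unpacked as above, immediately produces the two $(2m+r)$-weighted lines involving $\langle\cdot,D\tilde\zeta\circ\gamma\rangle$: the $\sigma$ and $\lambda$ contributions combine with $|\nabla_s^m\kappa|^2$ into $P_3^{2m+2,m+2}$ and $P_3^{2m,m}$ respectively, while the $\mu$ and $\vartheta$ contributions fall into $P_5^{2m,m}$ and $P_4^{2m+1,m+1}\partial_s\gamma$. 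The third contribution, from $\partial_t(ds)$, yields $\int\frac12|\nabla_s^m\kappa|^2(\partial_s\xi-\langle\kappa,V\rangle)\zeta^{2m+r}ds$, which after integrating $\partial_s\xi$ by parts absorbs into the $P$-terms on the first line (the $-\langle\kappa,V\rangle$ piece directly, and $\partial_s\xi$ producing both interior $P$-terms and an additional boundary term of the form $\vartheta P_4^{2m+1,m+1}$ against $\partial_s\zeta$).

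The main obstacle is the combinatorial bookkeeping of the commutator remainders, to certify that every term landing on the right-hand side respects both the factor count $b$ and the maximum individual derivative order $c$ claimed; this is essential because otherwise \Cref{prop:interpolation_P} cannot be applied later with the bound $c\leq k$. A priori, iterating $[\partial_t^\perp,\nabla_s]$ could produce derivatives of $\kappa$ of order higher than $m+2$, so one must carefully verify that each commutator step shifts at most one derivative onto a different curvature factor, keeping the maximum order at $m+2$ for the $\sigma$-class and at $m$ for the $\mu$- and $\lambda$-classes. The scaling dimension (each $\nabla_s$ contributes $1$ and each $\kappa$ contributes $1$) determines $a+b$ for each class and serves as a useful consistency check throughout. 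Once this bookkeeping is in place, every term in the computation fits into one of the schematic slots on the right-hand side, and the identity follows by simply collecting terms.
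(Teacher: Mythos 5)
Your approach matches the paper's: establish the pointwise evolution of $\nabla_s^m\kappa$ by iterating the commutator from Dziuk--Kuwert--Sch\"atzle, split $\frac{d}{dt}\frac12\int|\nabla_s^m\kappa|^2\zeta^{2m+r}ds$ by the product rule into the $\partial_t^\perp\nabla_s^m\kappa$, $\partial_t\zeta$, and $\partial_t(ds)$ contributions, and integrate by parts twice (resp.\ once) on the $\nabla_s^{m+4}\kappa$ (resp.\ $\nabla_s^{m+2}\kappa$) leading terms to surface the dissipation and the $\partial_s\zeta$, $\partial_s^2\zeta$ correction lines. Two minor remarks. First, the indices you attribute to the pointwise commutator remainder are off: for the $\sigma$-part it should be $P_3^{m+2,m+2}$, not $P_3^{2m+2,m+1}$; the exponent $2m+2$ arises only \emph{after} pairing with $\nabla_s^m\kappa$, at which point $b=4$, not $3$. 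Second, the paper does not integrate $\partial_s\xi$ by parts at all --- it simply records $\partial_s\xi=\vartheta P_2^{2,2}$ and absorbs it directly into the first line; your extra integration by parts is harmless (the resulting $\partial_s\zeta$ boundary term can be rewritten via $\partial_s\zeta=\langle D\tilde\zeta\circ\gamma,\partial_s\gamma\rangle$ and falls into the existing $\vartheta P_4^{2m+1,m+1}\partial_s\gamma$ slot) but unnecessary.
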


\begin{proof}
We first show that the derivatives of the curvature satisfy
	\begin{align}
		&\partial_t^{\perp} \nabla_s^m \kappa + \sigma \nabla_s^{m+4}\kappa - \lambda \nabla_s^{m+2}\kappa  \\
        &\qquad = (\sigma+\mu+\vartheta)(P^{m+2,m+2}_3+ P_5^{m,m})+ \lambda P_3^{m,m}.\label{eq:energy_estimates_0}
	\end{align}
Indeed, for $m=0$, this follows by \eqref{eq:dtkappa}, where $V\vcentcolon=\partial_t^{\perp}\gamma=-\sigma\nabla_s^2 \kappa + \lambda \kappa + \mu P_3^{0,0}$ and $\xi\vcentcolon = \langle\partial_t\gamma,\partial_s\gamma\rangle = \vartheta P_2^{1,1}$.  For $m\geq 1$, recall from \cite[(2.8)]{Dziuk-Kuwert-Schatzle_2002} that we have
	\begin{align}
		(\partial_t^{\perp}\nabla_s - \nabla_s\partial_t^{\perp})\nabla_s^{m}\kappa =
        \kappa*V*\nabla_s^{m+1}\kappa + \partial_s\xi \nabla_s^{m+1}\kappa + \kappa*\nabla_s^{m}\kappa*\nabla_sV,
        %\big(\langle \kappa, V\rangle - \partial_s\xi\big)\nabla_s^{m+1}\kappa + \kappa*\nabla_s^{m}\kappa*\nabla_sV,
	\end{align}
    so \eqref{eq:energy_estimates_0} follows by induction on $m\in\N_0$, using $\partial_s\xi =\vartheta P^{2,2}_2$.
    
    We now compute the time derivative of the localized integral. Note that since $\gamma(t)$ is proper by assumption, $\zeta(t,\cdot)$ has compact support for all $t\in[0,T)$.  
    By \eqref{eq:derivative_formula_1}, we have
    \begin{align}
        \frac{d}{dt}\frac{1}{2}\int|\nabla_s^{m}\kappa|^2\zeta^{2m+r}ds
        &= \int \langle \nabla_s^m\kappa,\partial_t^\perp\nabla_s^m\kappa \rangle \zeta^{2m+r}ds \\
        & \quad + \frac{2m+r}{2}\int |\nabla_s^m\kappa|^2\zeta^{2m+r-1}\langle D\tilde{\zeta}\circ\gamma,\partial_t\gamma \rangle ds \\
        & \quad + \frac12 \int |\nabla_s^m\kappa|^2\zeta^{2m+r} (\partial_s\xi-\langle \kappa, V\rangle) ds.\label{eq:0402-01}
    \end{align}
	By \eqref{eq:energy_estimates_0}, the leading order term arising from the first term satisfies
	\begin{align}
		&\int\langle \nabla_s^{m+4}\kappa, \nabla_s^m\kappa\rangle \zeta^{2m+r}ds\\ &= - \int\langle \nabla_s^{m+3}\kappa, \nabla_s^{m+1}\kappa\rangle \zeta^{2m+r}ds - (2m+r) \int\langle \nabla_s^{m+3}\kappa, \nabla_s^m\kappa\rangle \zeta^{2m+r-1}\partial_s\zeta ds \\
		& = \int|\nabla_s^{m+2}\kappa|^2\zeta^{2m+r}ds  + 2(2m+r) \int\langle\nabla_s^{m+2}\kappa,\nabla_s^{m+1}\kappa\rangle \zeta^{2m+r-1}\partial_s\zeta ds\\
		&\quad + (2m+r) \int\langle \nabla_s^{m+2}\kappa, \nabla_s^m\kappa\rangle \zeta^{2m+r-2}\big((2m+r-1)(\partial_s\zeta)^2+\zeta \partial_s^2\zeta\big)ds,\label{eq:energy_estimates_1}
	\end{align}
	using integration by parts twice. Similarly, by integration by parts
    \begin{align}
        \int \langle \nabla^{m+2}_s\kappa,\nabla_s^m\kappa\rangle \zeta^{2m+r}ds & = -\int|\nabla_s^{m+1}\kappa|^2\zeta^{2m+r}ds \\
        &\quad - (2m+r) \int\langle\nabla_s^{m+1}\kappa, \nabla_s^m\kappa\rangle \zeta^{2m+r-1}\partial_s\zeta ds.\label{eq:0402-02}
    \end{align}
	Moreover, for the second term we have
	\begin{align}
		&\int|\nabla_s^m\kappa|^2 \zeta^{2m+r-1} \langle D\tilde{\zeta}\circ \gamma, \partial_t\gamma\rangle ds  \\
		&= \int \langle  \sigma P^{2m+2,\max\{m,2\}}_3+\lambda P^{2m,m}_3,  D\tilde{\zeta}\circ\gamma \rangle \zeta^{2m+r-1}ds\\
		&\quad + \int \langle \mu P^{2m,m}_5 + \vartheta P^{2m+1,\max\{m,1\}}_4\partial_s\gamma , D\tilde{\zeta}\circ\gamma \rangle \zeta^{2m+r-1}ds.\label{eq:energy_estimates_2}
	\end{align}
	Using that $\partial_s\xi = P_2^{2,2}$, the third term is
	\begin{align}
		&\int|\nabla_s^m\kappa|^2\zeta^{2m+r}(\partial_s\xi - \langle \kappa,V\rangle) ds \\
		&\quad = \int \Big(\sigma P^{2m+2,\max\{m,2\}}_4 + \lambda P_4^{2m,m} + \mu  P^{2m,m}_6 + \vartheta P^{2m+2,\max\{m,2\}}_4\Big)\zeta^{2m+r} ds.\label{eq:energy_estimates_3}
	\end{align}
	Inserting \eqref{eq:energy_estimates_0} into \eqref{eq:0402-01}, using \eqref{eq:energy_estimates_1}, \eqref{eq:0402-02}, \eqref{eq:energy_estimates_2}, \eqref{eq:energy_estimates_3}, and noting that the terms of the form $P_b^{a,\max\{c,c'\}}$ can also be regarded as $P_b^{a,c+c'}$, we conclude the desired identity.
\end{proof}

\subsection{Curvature control for fourth order flows}

We now fix $\tilde{\zeta}\in C_c^\infty(\R^n)$ with $0\leq \tilde{\zeta}\leq 1$, $|D\tilde{\zeta}|\leq \Lambda$, and $|D^2\tilde{\zeta}|\leq \Lambda^2$. If $\gamma\in C^\infty(\R;\R^n)$ is a proper immersion, then $\zeta\vcentcolon = \tilde{\zeta}\circ\gamma\in C_c^\infty(\R)$ satisfies 
\begin{align}\label{eq:ds2_zeta}
	\zeta \in C_c^\infty(\R),\ 0\leq\zeta\leq 1,\ |\partial_s\zeta|\leq \Lambda, \text{ and } |\partial_s^2\zeta|\leq \Lambda^2+|\kappa|\Lambda.
\end{align}
In particular, $\zeta$ satisfies the assumptions in \eqref{eq:def_zeta}, so that \Cref{prop:interpolation_P} is applicable.
Then for the flow in \eqref{eq:LTE_general_flow} we obtain the following key energy estimate.

% Since we consider $\lambda,\mu,\sigma$ to be fixed parameters, we will regard them as universal in the sequel. In particular, we are not keeping track how constants in our estimates or coefficients in nonlinear expressions of the curvature (like $P^{a,c}_b(\kappa)$ or the $*$-notation) depend on $\lambda,\mu,\sigma$.

\begin{lemma}\label{lem:LTE_gronwall_4th_order}
    Let 
    %$\gamma$ be a smooth solution to \eqref{eq:LTE_general_flow} with 
    $\sigma>0$. Suppose that $B[\gamma(t)]\leq M$ holds at some time $t\in [0,T)$. Then, at the time $t$,
    \begin{align}
		&\frac{d}{dt} \int|\nabla_s^m\kappa|^2 \zeta^{2m+4}ds + \int|\nabla_s^m\kappa|^2\zeta^{2m+4}ds + \sigma \int|\nabla_s^{m+2}\kappa|^2\zeta^{2m+4}ds
        \leq C
	\end{align}
	holds for all $m\in \N_0$ where $C= C(\lambda,\mu,\sigma,\vartheta,\Lambda,m,M)$.
\end{lemma}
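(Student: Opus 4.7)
The plan is to invoke \Cref{lem:LTE_curvature_evolution} with $r=4$ and systematically absorb every term on the right-hand side into a small multiple of the dissipation $\sigma\int|\nabla_s^{m+2}\kappa|^2\zeta^{2m+4}ds$, leaving only constants depending on $\lambda,\mu,\sigma,\vartheta,\Lambda,m,M$. The hypothesis $B[\gamma(t)]\leq M$ translates into the crucial uniform bound $\int_{[\zeta>0]}|\kappa|^2 ds\leq 2M$, so that the constants produced by the weighted interpolation inequality \Cref{prop:interpolation_P} (applied throughout with $k=m+2$) are indeed under control.

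Three types of terms appear on the right-hand side of \Cref{lem:LTE_curvature_evolution}. The bulk terms are of the form $\int |P^{a,c}_b|\zeta^{2m+4}ds$; in every instance one checks $c\leq m+2$ and $a+\tfrac{b}{2}-1\leq 2m+3<2(m+2)$, and since $0\leq\zeta\leq 1$ yields $\zeta^{2m+4}\leq \zeta^{a+b/2-1}$, \Cref{prop:interpolation_P} applies directly. The boundary terms carrying $D\tilde{\zeta}\circ\gamma$ carry the weight $\zeta^{2m+3}$ together with a factor bounded by $\Lambda$ (and, where $\partial_s\gamma$ appears, by $1$), and are estimated in the same way. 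The terms involving $\partial_s\zeta$ and $\partial_s^2\zeta$ require Young's inequality first: for instance,
\begin{align}
    \int \bigl|\langle \nabla_s^{m+2}\kappa,\nabla_s^{m+1}\kappa\rangle\bigr|\zeta^{2m+3}|\partial_s\zeta|\,ds \leq \varepsilon \int |\nabla_s^{m+2}\kappa|^2\zeta^{2m+4}ds + C_\varepsilon \Lambda^2\int|\nabla_s^{m+1}\kappa|^2 \zeta^{2m+2}ds,
\end{align}
and the last integral equals $\int |P^{2m+2,m+1}_2|\zeta^{2m+2}ds$, which is absorbed by \Cref{prop:interpolation_P}. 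A subtlety is that $|\partial_s^2\zeta|$ is only bounded by $\Lambda^2+\Lambda|\kappa|$ rather than a constant; the $|\kappa|$-contribution, after another Young's inequality, leads to an integrand of the form $P^{2m,m}_4\zeta^{2m+2}$, which still satisfies the hypotheses of \Cref{prop:interpolation_P} since $a+\tfrac{b}{2}-1=2m+1<2(m+2)$.

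Once all the $\varepsilon$'s are chosen small enough that the finitely many absorbed pieces sum to at most, say, $\tfrac{\sigma}{2}\int |\nabla_s^{m+2}\kappa|^2\zeta^{2m+4}ds$, the desired estimate is essentially in hand; the $\lambda$-term $\lambda\int |\nabla_s^{m+1}\kappa|^2\zeta^{2m+4}ds$ arising from the left-hand side of \Cref{lem:LTE_curvature_evolution} is simply dropped if $\lambda\geq 0$ and otherwise moved to the right and absorbed in exactly the same manner. Finally, to produce the extra term $\int|\nabla_s^m\kappa|^2\zeta^{2m+4}ds$ on the left-hand side, one applies \Cref{prop:interpolation_P} once more with $a=2m,\,b=2,\,c=m,\,k=m+2$ (whose hypotheses hold for every $m\in\N_0$), giving
\begin{align}
    \int|\nabla_s^m\kappa|^2\zeta^{2m+4}ds \leq \int|\nabla_s^m\kappa|^2\zeta^{2m}ds \leq \tfrac{\sigma}{4}\int|\nabla_s^{m+2}\kappa|^2\zeta^{2m+4}ds+C(M),
\end{align}
which can then be added to both sides at the cost of enlarging the constant. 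The only real delicacy lies in the bookkeeping: one must ensure that the finitely many small constants $\varepsilon$ introduced along the way, whose number depends on $m$, combine favorably so that a strictly positive multiple of $\sigma\int|\nabla_s^{m+2}\kappa|^2\zeta^{2m+4}ds$ survives on the left. This is a purely quantitative matter in the fixed parameters.
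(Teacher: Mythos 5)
Your proposal is correct and follows essentially the same route as the paper: apply \Cref{lem:LTE_curvature_evolution} with $r=4$, rewrite every right-hand term as $\int|P^{a,c}_b|\zeta^q ds$ with $c\le m+2$, $a+\tfrac{b}{2}-1<2(m+2)$ and $q\ge a+\tfrac{b}{2}-1$ (using $|\partial_s\zeta|\le\Lambda$, $|\partial_s^2\zeta|\le\Lambda^2+\Lambda|\kappa|$, and $\zeta^q\le\zeta^{a+b/2-1}$), apply \Cref{prop:interpolation_P} with $k=m+2$, and finally produce the extra term by the same interpolation applied to $P^{2m,m}_2$. The paper handles all terms uniformly through the $P^{a,c}_b$ notation rather than issuing several explicit Young's inequalities, but that is a cosmetic difference; the arithmetic of exponents and the treatment of the $|\kappa|$-contribution from $\partial_s^2\zeta$ that you carry out is exactly what underlies the paper's abbreviated argument.
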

\begin{proof}
    % We estimate the terms arising in \Cref{lem:LTE_curvature_evolution} for $r=4$.  In the following, we are not keeping track how the coefficients in nonlinear expressions of the curvature (like $P^{a,c}_b$) depend on the parameters $\lambda,\mu,\sigma,\vartheta, \Lambda,m$.
    % %In the following, $C=C(\lambda,\mu,\sigma,\Lambda,m,M)\in(0,\infty)$ denotes a constant that is allowed to change from line to line.

    We estimate the terms on the right hand side of the identity in \Cref{lem:LTE_curvature_evolution} for $r=4$. Using \eqref{eq:ds2_zeta} for the derivatives of $\zeta$, there is $C=C(\lambda,\mu,\sigma,\vartheta,\Lambda,m)$  such that
	\begin{align}
		&\frac{d}{dt} \frac{1}{2}\int|\nabla^{m}_s\kappa|^2 \zeta^{2m+4} ds +\sigma \int|\nabla_s^{m+2}\kappa|^2\zeta^{2m+4}ds \leq C \sum \int |P^{a,c}_b|\zeta^q ds, \label{eq:energy_estimates_5}
		% &\leq \int \big[|P^{2m+2,m+1}_4(\kappa)|+ |P^{2m,m}_4(\kappa)|+|P^{2m,m}_6(\kappa)| +| P^{2m+1,m+1}_4(\kappa)|\big]\zeta^{2m+4}ds\\
		% &\quad +C\int\big[ |P^{2m+2,\max\{m,2\}}_3(\kappa)|+|P^{2m+1, \max\{m,1\}}_4(\kappa)|\big]\zeta^{2m+3}ds\\
		% &\quad +C\int\big[|P^{2m,m}_5(\kappa)|+|P^{2m,m}_3(\kappa)|\big]
		% \zeta^{2m+3}ds\\
		% &\quad +C \int|\nabla_s^{m+1}\kappa|^2\zeta^{2m+2}ds+C \int|\nabla_s^m\kappa|^2\zeta^{2m}ds +C \int|\nabla_s^m\kappa|^2|\kappa|^2\zeta^{2m+2}ds. 
	\end{align}
    where the sum runs over the finitely many terms with $c\leq m+2$ and $a+\frac{b}{2}-1<2(m+2)$ as well as $q\geq a+\frac{b}{2}-1$. Noting also $0\leq\zeta\leq1$, which yields $0\leq \zeta^q\leq \zeta^{a+\frac{b}{2}-1}$, we may thus apply \Cref{prop:interpolation_P} with $k=m+2$ to conclude that
	\begin{align}
		\frac{d}{dt} \frac{1}{2}\int|\nabla^{m}_s\kappa|^2 \zeta^{2m+4} ds +\frac{2\sigma}{3}\int|\nabla_s^{m+2}\kappa|^2\zeta^{2m+4}ds \leq C,
	\end{align}
    where $C= C(\lambda,\mu,\sigma,\vartheta,\Lambda,m,M)$.
	Since \Cref{prop:interpolation_P} also applies to
	\begin{align}
		\int|\nabla_s^m\kappa|^2\zeta^{2m+4}ds = \int P^{2m,m}_2 \zeta^{2m+4}ds,
	\end{align}
	the statement follows.	
	% In the particular case $m=0$, we need to treat the term 
	% \begin{align}\label{eq:energy_estimates_6}
	% 	\int |P^{2m+2,\max\{m,2\}}_3(\kappa)|\zeta^{2m+3}ds = \int |P^{2,2}_3(\kappa)|\zeta^{3}ds
	% \end{align}	
	% in \eqref{eq:energy_estimates_5} separately. If it contains a term of the form $\int |\nabla_s^{2}\kappa * \kappa*\kappa|^2\zeta^{3}ds$, we estimate, as in \eqref{eq:energy_estimates_4.5} above (recalling $m=0$),
	% \begin{align}
	% 	\int |\nabla_s^{2}\kappa * \kappa*\kappa|^2\zeta^{3}ds \leq \varepsilon\int|\nabla_s^{2}\kappa|^2 \zeta^{4}ds + \frac{1}{\varepsilon} \int |\kappa|^4\zeta^2ds.\label{eq:energy_estimates_7}
	% \end{align}
	% The first term may be absorbed into the left hand side of \eqref{eq:energy_estimates_5}, whereas the second term already appears on the last term of the right hand side of \eqref{eq:energy_estimates_5}. Hence, for $m=0$, we may replace $\int P^{2,2}_3(\kappa)\zeta^{3}ds $ by $\int P^{2,1}_3(\kappa)\zeta^3ds$ in \eqref{eq:energy_estimates_5}. After this substitution, one may proceed as in the case $m\geq 1$, verifying that also for $m=0$ all terms on the right hand side of \eqref{eq:energy_estimates_5} satisfy the assumptions of \Cref{prop:interpolation_P}.
\end{proof}

While a smooth initial datum with finite bending energy $B$ only needs to satisfy $\kappa\in L^2(ds)$, following the argument of \cite[Theorem 3.5]{KSSI} we obtain instantaneous higher Sobolev integrability for positive time.

\begin{lemma}\label{lem:LTE_time_cutoff}
    Let %$\gamma$ be a smooth solution to \eqref{eq:LTE_general_flow} with 
    $\sigma>0$ and let $T\leq T^*<\infty$.
    Suppose that $B[\gamma(t)] \leq M$ for all $t\in [0,T)$.
    Then, for all $m\in\N$ and $t\in(0,T)$,
	\begin{align}
		\int |\nabla^m_s\kappa|^2 \zeta^{2m+4} ds \leq \frac{C(\lambda,\mu, \sigma, \vartheta,\Lambda,m,M,T^*)}{t^{\frac{m}{2}}}.
	\end{align}
\end{lemma}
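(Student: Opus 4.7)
My plan is to combine the differential inequality from \Cref{lem:LTE_gronwall_4th_order} with the multiplicative interpolation \Cref{lem:interpolation_multiplicative} to derive a closed scalar ODE inequality of the form $\phi_m' + c\,\phi_m^{(m+2)/m} \leq C$, and then conclude by a supersolution comparison with the self-similar barrier $A t^{-m/2}$, whose blow-up at $t=0^+$ circumvents the lack of any a priori bound on $\phi_m(0)$.

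In detail, I would set $\phi_m(t) \vcentcolon= \int|\nabla_s^m\kappa|^2\zeta^{2m+4}\,ds$ and $\Psi_m(t) \vcentcolon= \int|\nabla_s^{m+2}\kappa|^2\zeta^{2m+4}\,ds$. First I drop the non-negative $\phi_m$-term on the left-hand side of \Cref{lem:LTE_gronwall_4th_order} to obtain $\phi_m' + \sigma\Psi_m \leq C$. Next I apply \Cref{lem:interpolation_multiplicative} with $X=\kappa$, $i=m$, $k=m+2$; using $\zeta\leq 1$ to bound $\phi_m \leq \int|\nabla_s^m\kappa|^2\zeta^{2m}\,ds$ and the hypothesis $B[\gamma(t)]\leq M$ to control $|\kappa|_{0,2}^2\leq 2M$, I would deduce
\begin{align}
    \phi_m \leq C\,\Psi_m^{m/(m+2)} + C, \qquad\text{equivalently,}\qquad \Psi_m \geq c\,\phi_m^{(m+2)/m} - C_0,
\end{align}
for suitable constants. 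Substituting back yields the closed differential inequality $\phi_m'(t) + \sigma c\,\phi_m(t)^{(m+2)/m} \leq C'$.

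For the concluding step I would compare $\phi_m$ to the barrier $\tilde y(t) \vcentcolon= A t^{-m/2}$. A direct calculation gives
\begin{align}
    \tilde y'(t) + \sigma c\,\tilde y(t)^{(m+2)/m} = \left(\sigma c\,A^{(m+2)/m} - \tfrac{m}{2} A\right) t^{-(m+2)/2},
\end{align}
so choosing $A=A(\lambda,\mu,\sigma,\vartheta,\Lambda,m,M,T^*)$ large enough---which is possible since the exponent $(m+2)/m>1$ makes $A^{(m+2)/m}$ dominate $A$---turns $\tilde y$ into a strict supersolution on $(0,T^*]$. Since $\tilde y(t)\to\infty$ as $t\to 0^+$ while smoothness of $\gamma$ on $(0,T)\times\R$ ensures $\phi_m(t)<\infty$ for every $t>0$, a standard first-crossing argument rules out $\phi_m>\tilde y$ on $(0,T)$: at a hypothetical first crossing $t_*>0$, one would have simultaneously $\phi_m'(t_*)\geq \tilde y'(t_*)$ from one-sided touching and $\phi_m'(t_*)<\tilde y'(t_*)$ from the strict supersolution property, a contradiction. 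This gives $\phi_m(t) \leq A t^{-m/2}$ on $(0,T)$.

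The main obstacle is precisely the absence of any a priori bound on $\phi_m(0)$ for $m\geq 1$, since the initial datum is only assumed to satisfy a finite bending energy bound, so no control on higher Sobolev norms is available initially. The self-similar barrier is tailor-made for exactly this issue: its blow-up at $t=0$ automatically dominates the (finite but possibly very large) value of $\phi_m$ for small $t>0$, making the comparison argument insensitive to initial data.
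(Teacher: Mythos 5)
Your argument is correct, and it takes a genuinely different route from the paper. The paper follows the Kuwert--Sch\"atzle strategy (\cite{KSSI}): it introduces a cascade of time cutoffs $\xi_0\leq\xi_1\leq\dots\leq\xi_m$ vanishing at $t=0$, applies \Cref{lem:LTE_gronwall_4th_order} at each level, and proves by induction on $j$ that $\xi_j E_j + \sigma\int_0^t\xi_j E_{j+1}\,d\tau$ is bounded by $C/t_0^j$ where $E_j=\int|\nabla_s^{2j}\kappa|^2\zeta^{4j+4}\,ds$; the time-integrated dissipation produced at step $j$ feeds the ODE at step $j+1$, and odd orders are recovered by integration by parts. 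You instead close a \emph{scalar} ODE inequality for a single fixed $m$: using \Cref{lem:interpolation_multiplicative} and $B\leq M$ to deduce $\Psi_m\gtrsim \phi_m^{(m+2)/m}-C_0$, you convert $\phi_m'+\sigma\Psi_m\leq C$ into $\phi_m'+\sigma c\,\phi_m^{(m+2)/m}\leq C'$ and then compare with the self-similar barrier $At^{-m/2}$. The two methods buy different things: the paper's cascade produces, as a byproduct, useful $L^2_t$-dissipation bounds $\int_0^t\xi_j E_{j+1}\,d\tau$ at every intermediate order, whereas your route avoids the induction, the auxiliary cutoffs in time, and the even/odd case split, handling all $m$ at once. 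Two points to tighten when writing this up: (a) for the first-crossing argument you rely on $\phi_m$ being continuous up to $t=0$ with a finite (though uncontrolled) value, so that $\tilde y>\phi_m$ near $t=0^+$; in the paper this holds because $\gamma$ is smooth on $[0,T)\times\R$ as a family of immersions, but it is worth stating explicitly since ``smoothness on $(0,T)\times\R$'' alone does not preclude $\phi_m\to\infty$ as $t\to0^+$; (b) the strict-supersolution inequality must hold on all of $(0,T^*]$, which requires $\sigma c A^{(m+2)/m}-\frac m2 A\geq C'(T^*)^{(m+2)/2}$, explaining how $T^*$ enters the final constant. With those clarifications your proof is complete and arguably cleaner than the one in the paper.
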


\begin{proof}
	Fix any $t_0\in(0,T)$. Define Lipschitz cutoff functions in time $\xi_j:\R\to[0,1]$ via
	\begin{align}\label{eq:def_xi_j}
		\xi_j(t) \vcentcolon= \left\lbrace\begin{array}{ll}
			0,& \text{for } t\leq (j-1)\frac{t_0}{m}, \\
			\frac{m}{t_0}\left(t-(j-1)\frac{t_0}{m}\right),& \text{for } (j-1)\frac{t_0}{m}\leq t\leq j\frac{t_0}{m}, \\
			1, &\text{for } t\geq j\frac{t_0}{m},
		\end{array}\right.
	\end{align}
    where $0\leq j \leq m$.
    %, and $\xi_0(t)\vcentcolon = 1$ for all $t\in \R$ if $m=0$.
    We also define $\xi_{-1}(t)\vcentcolon= 0$.
    We note that
	\begin{align}\label{eq:xi'bound}
		0\leq  \frac{d}{dt}{\xi}_j \leq \frac{m}{t_0}\xi_{j-1},\quad \text{for all $0\leq j\leq m$ and a.e.\ $t\geq 0$.}
	\end{align}
	We further define
	\begin{align}
		E_j(t)\vcentcolon = \int|\nabla_s^{2j}\kappa|^2\zeta^{4j+4}ds, \quad e_j(t)\vcentcolon= \xi_j(t)E_j(t).
	\end{align}
    Now \Cref{lem:LTE_gronwall_4th_order}, \eqref{eq:xi'bound}, and $0\leq \zeta\leq 1$ imply that for $0\leq j\leq m$ and a.e.\ $t\in [0,T)$, 
	\begin{align}
		\frac{d}{dt} e_j + \sigma \xi_j E_{j+1}\leq \frac{m}{t_0}\xi_{j-1}E_j + C(\lambda, \mu, \sigma,\vartheta,\Lambda,j,M).\label{eq:dt_ej}
	\end{align}
	We now show that for all $0\leq j\leq m$ and $t\in [0,t_0]$ we have
	\begin{align}
		e_j(t) + \sigma \int_0^t \xi_j E_{j+1}d\tau \leq \frac{C(\lambda,\mu, \sigma,\vartheta,\Lambda,j,m,M,T^*)}{t_0^{j}}.\label{eq:LTE_induction}
	\end{align}
	We prove \eqref{eq:LTE_induction} by induction on $j$. For $j=0$, \eqref{eq:dt_ej} yields
	\begin{align}
		e_0(t) + \sigma \int_0^t \xi_0E_1d\tau &\leq C(\lambda,\mu,\sigma,\vartheta,\Lambda,M)t + e_0(0) \\
        &\leq C(\lambda,\mu,\sigma,\vartheta,\Lambda,M,T^*).
	\end{align}
	Let $j\geq 1$. Then, integrating \eqref{eq:dt_ej} and using $\xi_j(0)=0$ (for $j\geq 1$), we obtain
	\begin{align}
		&e_j(t) + \sigma \int_0^t\xi_jE_{j+1}d\tau \\
        &\quad \leq \int_0^t \frac{m}{t_0}\xi_{j-1}E_j d\tau +\frac{C(\lambda,\mu, \sigma,\vartheta,\Lambda,j,M)t_0^{j+1}}{t_0^j} \\
		&\quad \leq \frac{m}{t_0}\frac{C(\lambda,\mu, \sigma,\vartheta,\Lambda,j,m,M,T^*)}{t_0^{j-1}} + \frac{C(\lambda,\mu, \sigma,\vartheta,\Lambda,j,M,T^*)}{t_0^j},
	\end{align}
	where we also used the induction hypothesis in the last step. This proves \eqref{eq:LTE_induction}. Evaluating \eqref{eq:LTE_induction} at $t=t_0$ for $j=m$ and using $\xi_m(t_0)=1$, we conclude
	\begin{align}
		\left.\int|\nabla_s^{2m}\kappa|^2\zeta^{4m+4}ds\right\vert_{t=t_0} \leq \frac{C(\lambda,\mu, \sigma,\vartheta,\Lambda,m,M,T^*)}{t_0^m}.\label{eq:LTE_induction_even}
	\end{align}
	To get the same for $2m$ replaced by $2m+1$, we argue as in \Cref{lem:interpolation_IBP}. By integration by parts, H\"older, and Young's inequality
	\begin{align}
		\int|\nabla_s^{2m+1}\kappa|^2\zeta^{4m+6}ds &\leq \Big(\int|\nabla_s^{2m+2}\kappa|^2\zeta^{4m+8}ds\Big)^{\frac{1}{2}}\Big( |\nabla_s^{2m}\kappa|^2\zeta^{4m+4}ds\Big)^{\frac{1}{2}}  \\
		&\qquad + \frac{1}{2}\int|\nabla_s^{2m+1}\kappa|\zeta^{4m+6}ds \\
        &\qquad\qquad + C(\Lambda,m)\int|\nabla_s^{2m}\kappa|\zeta^{4m+4}ds.
	\end{align}
	Absorbing, evaluating at $t=t_0$, and using \eqref{eq:LTE_induction_even}, we find
	\begin{align}							\left.\int|\nabla_s^{2m+1}\kappa|^2\zeta^{4m+6}ds \right\vert_{t=t_0}&\leq \frac{C(\lambda,\mu, \sigma,\vartheta,\Lambda,m,T^*)}{t_0^{\frac{m+1}{2}}t_0^{\frac{m}{2}}} + \frac{C(\lambda,\mu, \sigma,\vartheta,\Lambda,m,T^*)}{t_0^{m}} \\
    &\leq \frac{C(\lambda,\mu, \sigma,\vartheta,\Lambda,m,T^*)}{t_0^{\frac{2m+1}{2}}}. \label{eq:LTE_induction_odd}
	\end{align}
	Renaming $t_0$ into $t$, the statement now follows from \eqref{eq:LTE_induction_even} and \eqref{eq:LTE_induction_odd}.
\end{proof}

\begin{lemma}\label{lem:LTE_L2_bounds}
      Let %$\gamma$ be a smooth solution to \eqref{eq:LTE_general_flow} with 
      $\sigma>0$.
      Suppose that $B[\gamma(t)] \leq M$ for all $t\in [0,T)$. Then for all $m\in\N_0$ and $t\in (0,T)$,
	\begin{align}
		\Vert\nabla_s^{m}\kappa\Vert_{L^2(ds)} &\leq C(\lambda,\mu,\sigma,\vartheta,m,M)(1+t^{-\frac{m}{4}}),\label{eq:LTE_L2_bounds_global}\\
        \Vert \nabla_s^m\kappa\Vert_{\infty}&\leq C(\lambda,\mu,\sigma,\vartheta,m,M)(1+t^{-\frac{2m+1}{8}}).\label{eq:LTE_Linfty_bounds_1}
	\end{align}
\end{lemma}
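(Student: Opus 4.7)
The plan is to promote the localized estimates of \Cref{lem:LTE_time_cutoff} to global $L^2$ bounds by letting the cutoff tend to the constant function $1$, and then to obtain the $L^\infty$ bounds via a standard one-dimensional interpolation.

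For \eqref{eq:LTE_L2_bounds_global}, the case $m=0$ follows immediately from $B[\gamma(t)]\leq M$. For $m\geq 1$, I fix a non-increasing $\chi\in C^\infty(\R)$ with $\chi\equiv 1$ on $(-\infty,1]$ and $\chi\equiv 0$ on $[2,\infty)$ and set $\tilde\zeta_R(y)\vcentcolon=\chi(|y|/R)$ for $R\geq 1$. Then $\tilde\zeta_R\in C_c^\infty(\R^n)$ satisfies $0\leq\tilde\zeta_R\leq 1$ as well as $|D\tilde\zeta_R|\leq\Lambda$ and $|D^2\tilde\zeta_R|\leq\Lambda^2$ for some $\Lambda>0$ independent of $R$. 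Since $\gamma(t,\cdot)$ is proper, $\zeta_R\vcentcolon=\tilde\zeta_R\circ\gamma(t,\cdot)\in C_c^\infty(\R)$; and since $\chi$ is non-increasing, the family $\zeta_R$ is pointwise non-decreasing in $R$ with $\zeta_R\nearrow 1$ as $R\to\infty$. Applying \Cref{lem:LTE_time_cutoff} with a constant independent of $R$ and invoking monotone convergence yields
\begin{align}
\int|\nabla_s^m\kappa|^2 ds=\lim_{R\to\infty}\int|\nabla_s^m\kappa|^2\zeta_R^{2m+4}ds\leq\frac{C}{t^{m/2}},
\end{align}
which together with the bound $\|\kappa\|_{L^2(ds)}^2\leq 2M$ proves \eqref{eq:LTE_L2_bounds_global}.

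For \eqref{eq:LTE_Linfty_bounds_1}, I use that $\nabla_s^m\kappa$ is normal along $\gamma$, so $\partial_s|\nabla_s^m\kappa|^2=2\langle\nabla_s^m\kappa,\nabla_s^{m+1}\kappa\rangle$ and the Kato-type bound $|\partial_s|\nabla_s^m\kappa||\leq|\nabla_s^{m+1}\kappa|$ holds almost everywhere. By \eqref{eq:LTE_L2_bounds_global}, the non-negative scalar function $f\vcentcolon=|\nabla_s^m\kappa|$ and its weak derivative (with respect to arclength) both lie in $L^2(\R)$, hence $f\in W^{1,2}(\R)\hookrightarrow C_0(\R)$, and the one-dimensional inequality $\|f\|_\infty^2\leq 2\|f\|_{L^2}\|f'\|_{L^2}$ (proven by integrating $(f^2)'$ from $-\infty$ to $x$ and applying Cauchy--Schwarz) gives
\begin{align}
\|\nabla_s^m\kappa\|_\infty^2\leq 2\|\nabla_s^m\kappa\|_{L^2(ds)}\|\nabla_s^{m+1}\kappa\|_{L^2(ds)}.
\end{align}
Inserting \eqref{eq:LTE_L2_bounds_global} and noting that as $t\to 0^+$ the dominant contribution is $t^{-m/4}\cdot t^{-(m+1)/4}=t^{-(2m+1)/4}$ yields \eqref{eq:LTE_Linfty_bounds_1}.

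The only delicate point is the $R\to\infty$ passage, which crucially exploits that the constant in \Cref{lem:LTE_time_cutoff} depends on $\tilde\zeta$ only through the uniform bounds on its first two derivatives, not on the diameter of its support. The remaining steps are essentially a Kato inequality and a textbook $L^\infty$--$L^2$ interpolation on the real line.
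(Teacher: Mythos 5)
Your $L^\infty$ step is correct and takes a cleaner route than the paper: once the $L^2$ bounds for $\nabla_s^m\kappa$ and $\nabla_s^{m+1}\kappa$ are in hand, the Kato inequality $|\partial_s|\nabla_s^m\kappa||\le|\nabla_s^{m+1}\kappa|$ together with the elementary Agmon inequality $\|f\|_\infty^2\le 2\|f\|_{L^2}\|f'\|_{L^2}$ on the line (applied in the arclength variable) immediately gives the stated bound; the paper instead runs its weighted Gagliardo--Nirenberg estimate (\Cref{lem:gagliardo_nirenberg}) with a cutoff and passes $R\to\infty$, which amounts to the same thing but with more scaffolding.

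The $L^2$ step, however, has a real gap. \Cref{lem:LTE_time_cutoff} requires $T\le T^*<\infty$ and its constant is $C(\lambda,\mu,\sigma,\vartheta,\Lambda,m,M,T^*)$ --- it grows with $T^*$, as you can see in its proof where integrating the energy inequality over $[0,t_0]$ produces a factor proportional to $T^*$. Applying that lemma on the whole interval $[0,T)$, as you do, therefore yields a constant that blows up as $T\to\infty$ and is simply not applicable when $T=\infty$, whereas \eqref{eq:LTE_L2_bounds_global} asserts a constant $C(\lambda,\mu,\sigma,\vartheta,m,M)$ that is uniform in $T$ (this is what is needed for the global-in-time asymptotics later). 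You need an additional argument for large $t$: for instance, restrict the flow to $[0,\min(T,1))$ and apply \Cref{lem:LTE_time_cutoff} with $T^*=1$ to handle $t\le 1$, then for $t\ge 1$ either (a) feed the resulting bound at $t=1$ into \Cref{lem:LTE_gronwall_4th_order} and use Gronwall to propagate a time-independent bound forward --- this is what the paper does --- or (b) time-shift and apply the cutoff lemma on the unit-length window $[t-1,t]$ with $T^*=1$. As written, "applying \Cref{lem:LTE_time_cutoff} with a constant independent of $R$" leaves the dependence on $T^*$ unaddressed, and your closing remark that the $R\to\infty$ limit is the only delicate point misses this.
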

\begin{proof}
	We first prove \eqref{eq:LTE_L2_bounds_global}. For any $R>0$, there exists $\tilde{\zeta}\in C_c^\infty(\R^n)$ with $\chi_{B_{R}(0)}\leq \tilde{\zeta}\leq \chi_{B_{2R}(0)}$, and such that $|D\tilde{\zeta}|\leq cR^{-1}$ and $|D^2\tilde{\zeta}|\leq c R^{-2}$ for some universal $c>0$, in particular independent of $n$. Hence, taking $R\geq 1$ sufficiently large (depending on the universal $c>0$), we may thus assume $\Lambda=1$ for $\zeta=\tilde{\zeta}\circ\gamma$ in \eqref{eq:ds2_zeta} in the sequel.
			
	If $T\leq 1$ or $t\leq 1 \leq T$, then the statement follows directly from \Cref{lem:LTE_time_cutoff} choosing $T^*=1$, sending $R\to \infty$, and using the monotone convergence theorem. 
	
	Suppose $T> 1$ and $t\geq 1$.  \Cref{lem:LTE_time_cutoff} with $R\to\infty$ yields
	\begin{align}
		\left.\int |\nabla^{m}_s\kappa|^2ds \right\vert_{t=1} \leq C(\lambda,\mu,\sigma,\vartheta,m,M). \label{eq:L2_bounds_1}
	\end{align}
	For $R$ sufficiently large,  \Cref{lem:LTE_gronwall_4th_order} and Gronwall's inequality imply
	\begin{align}
		\int|\nabla_s^m\kappa|^2\zeta^{2m+4}ds &\leq C(\lambda,\mu,\sigma,\vartheta,m,M)+ \left.\int|\nabla_s^m\kappa|^2\zeta^{2m+4}ds\right\vert_{t=1} e^{1-t}\\
		& \leq C(\lambda,\mu,\sigma,\vartheta,m,M)(1+t^{-\frac{2m}{4}}),
	\end{align}
	using \eqref{eq:L2_bounds_1} and $t\geq 1$. Sending $R\to\infty$ and using $\sqrt{a+b}\leq\sqrt{a}+\sqrt{b}$ for $a,b\geq 0$ give \eqref{eq:LTE_L2_bounds_global}.

    For \eqref{eq:LTE_Linfty_bounds_1}, %the second estimate, 
    by \Cref{lem:gagliardo_nirenberg} (with $\Lambda=1$) we have
	\begin{align}
		\Vert \zeta^{m+\frac{1}{2}}\nabla_s^{m}\kappa \Vert_{\infty} &\leq C(m) \Big(\int|\nabla_s^{m+1}\kappa|^2\zeta^{2m+2}ds \Big)^{\frac{1}{4}} \Big(\int|\nabla_s^m\kappa|^2 \zeta^{2m} ds\Big)^{\frac{1}{4}} \\
		&\quad + C(m)\Big( \int|\nabla_s^m\kappa|^2\zeta^{2m} ds\Big)^{\frac{1}{2}} \\
		&\leq C(\lambda,\mu,\sigma,\vartheta,m,M)\Big[(1+t^{-\frac{m+1}{4}})^{\frac{1}{2}}(1+t^{-\frac{m}{4}})^{\frac{1}{2}} + 1+t^{-\frac{m}{4}}\Big],
	\end{align}
    where we used \eqref{eq:LTE_L2_bounds_global} in the last step.
	Sending $R\to\infty$ yields \eqref{eq:LTE_Linfty_bounds_1}.
\end{proof}

We next establish bounds on the derivatives $\partial_s^m\kappa$ (without taking the normal projection).

\begin{lemma}\label{lem:EF_smooth_bound}
	Under the assumptions of \Cref{lem:LTE_L2_bounds}, for $m\in\N_0$ and $t\in (0,T)$,
	\begin{align}
		\Vert \partial_s^m\kappa\Vert_{\infty}&\leq C(\lambda,\mu,\sigma,\vartheta,m,M)(1+t^{-\frac{2m+1}{8}})\label{eq:ds_kappa_infty},\\
		\Vert \partial_s^m\kappa\Vert_{L^2(ds)}&\leq C(\lambda,\mu,\sigma,\vartheta,m,M)(1+t^{-\frac{m}{4}}).\label{eq:ds_kappa_L2}
	\end{align}
\end{lemma}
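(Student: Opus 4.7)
The plan is to reduce to \Cref{lem:LTE_L2_bounds} by expressing $\partial_s^m\kappa$ in terms of the normal derivatives $\nabla_s^i\kappa$. For any smooth normal vector field $X$ along $\gamma$, differentiating the identity $\langle X,\partial_s\gamma\rangle = 0$ gives
\begin{equation*}
    \partial_s X = \nabla_s X - \langle X, \kappa\rangle \partial_s\gamma.
\end{equation*}
Iterating this together with $\partial_s(\partial_s\gamma) = \kappa$ and the Leibniz rule, I would prove by induction on $m$ the structural decomposition
\begin{equation*}
    \partial_s^m\kappa = \nabla_s^m\kappa + \sum_{\alpha}P^{a_\alpha,c_\alpha}_{b_\alpha}\cdot (\partial_s\gamma)^{\epsilon_\alpha},
\end{equation*}
where $\epsilon_\alpha\in\{0,1\}$ and the indices of every summand satisfy $b_\alpha \geq 2$, $c_\alpha \leq m-1$, and $a_\alpha + b_\alpha \leq m+1$. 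The base case $m=0$ is trivial; in the inductive step, each application of $\partial_s$ either raises the derivative index on some factor $\nabla_s^{i_j}\kappa$ by one (producing a tangential correction which increases $b_\alpha$ by one and $a_\alpha$ by zero), or turns a $\partial_s\gamma$ factor into $\kappa$, in both cases preserving the claimed index constraints.

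Given this decomposition, I would estimate each summand directly from \eqref{eq:LTE_L2_bounds_global} and \eqref{eq:LTE_Linfty_bounds_1}. For \eqref{eq:ds_kappa_infty}, the leading term $\nabla_s^m\kappa$ is already controlled by \eqref{eq:LTE_Linfty_bounds_1}. For each remainder term, applying \eqref{eq:LTE_Linfty_bounds_1} to every factor $\nabla_s^{i_j}\kappa$ produces a time singularity of order
\begin{equation*}
    \sum_{j=1}^{b_\alpha}\frac{2i_j+1}{8} = \frac{2a_\alpha + b_\alpha}{8}.
\end{equation*}
The constraints $b_\alpha \geq 2$ and $a_\alpha + b_\alpha \leq m+1$ give $2a_\alpha + b_\alpha = (a_\alpha + b_\alpha) + a_\alpha \leq (m+1) + (m-1) = 2m$, strictly better than the target exponent $(2m+1)/8$.

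For \eqref{eq:ds_kappa_L2}, the leading term is handled by \eqref{eq:LTE_L2_bounds_global}. For each remainder, I would apply H\"older's inequality, placing the factor with the highest derivative index $i_1 \leq c_\alpha \leq m-1$ in $L^2$ (via \eqref{eq:LTE_L2_bounds_global}) and the remaining $b_\alpha-1$ factors in $L^\infty$ (via \eqref{eq:LTE_Linfty_bounds_1}). The resulting time exponent is
\begin{equation*}
    \frac{i_1}{4} + \sum_{j=2}^{b_\alpha}\frac{2i_j+1}{8} = \frac{2a_\alpha + b_\alpha - 1}{8}\leq \frac{2m-1}{8} < \frac{m}{4},
\end{equation*}
again strictly dominated by the desired rate. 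The main technical obstacle is the bookkeeping in the structural decomposition and the sharpness of the index constraints $b\geq 2$, $c\leq m-1$, and $a+b\leq m+1$; once these are in place, the estimates reduce to routine applications of H\"older's inequality combined with the bounds already established in \Cref{lem:LTE_L2_bounds}.
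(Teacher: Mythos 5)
Your proof is correct, but it follows a genuinely different route from the paper. The paper invokes \cite[Lemma 2.6]{Dziuk-Kuwert-Schatzle_2002}, which writes $\nabla_s^m\kappa-\partial_s^m\kappa$ as a sum of $Q^a_b(\kappa)$-terms, i.e.\ products of \emph{full} arclength derivatives $\partial_s^{i_j}\kappa$ of orders strictly below $m$ (possibly times $\partial_s\gamma$), and then closes the estimate by a second induction on $m$: the leading $\nabla_s^m\kappa$ is handled by \Cref{lem:LTE_L2_bounds}, and the $Q$-terms by the inductive hypothesis. You instead derive from scratch a structural decomposition of $\partial_s^m\kappa-\nabla_s^m\kappa$ as a sum of $P^{a,c}_b$-blocks — products of \emph{normal} derivatives $\nabla_s^{i_j}\kappa$ — possibly times $\partial_s\gamma$, with the index constraints $b\geq 2$, $a+b\leq m+1$. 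This sidesteps the second induction on the estimates entirely, since every factor is directly covered by \Cref{lem:LTE_L2_bounds}; the only induction is for the identity itself, via repeated use of $\partial_s X=\nabla_s X-\langle X,\kappa\rangle\partial_s\gamma$ and $\partial_s(\partial_s\gamma)=\kappa$. Your exponent bookkeeping is sound: from $b\geq 2$ and $a+b\leq m+1$ one gets $a\leq m-1$, hence $2a+b\leq 2m$, which gives $L^\infty$-rate $t^{-(2a+b)/8}\leq t^{-2m/8}$ and, after putting one factor in $L^2$ by H\"older, $L^2$-rate $t^{-(2a+b-1)/8}\leq t^{-(2m-1)/8}$, both dominated by the target rates. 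What the paper's route buys is that the structural identity is a black-box citation; what your route buys is that the estimate step requires no induction and the exponent computation becomes a one-line consequence of the index constraints. Aside from the modest cost of verifying the decomposition, the proposal is complete.
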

\begin{proof}
	% We first establish $L^\infty$-bounds on $\nabla_s^m\kappa$ for all $m\geq 0$. Let $\zeta$ be as in the proof of \Cref{lem:LTE_time_cutoff}.	
	% By \Cref{lem:gagliardo_nirenberg} (with $\Lambda=1$) we have
	% \begin{align}
	% 	\Vert \nabla_s^{m}\kappa \zeta^{m+\frac{1}{2}}\Vert_{\infty} &\leq C \Big(\int|\nabla_s^{m+1}\kappa|^2\zeta^{2m+2}ds \Big)^{\frac{1}{4}} \Big(\int|\nabla_s^m\kappa|^2 \zeta^{2m} ds\Big)^{\frac{1}{4}} \\
	% 	&\quad + C\Big( \int|\nabla_s^m\kappa|^2\zeta^{2m} ds\Big)^{\frac{1}{2}} \\
	% 	&\leq C(m,M) (1+t^{-\frac{m+1}{2}})^{\frac{1}{2}}(1+t^{-\frac{m}{2}})^{\frac{1}{2}} + C(m,M)(1+t^{-\frac{m}{2}})^{\frac{1}{2}}.
	% \end{align}
	% Sending $R\to\infty$, we obtain
	% \begin{align}
	% 	\Vert \nabla_s^m\kappa\Vert_{\infty}\leq C(m,M)(1+t^{-\frac{m+1}{4}}).\label{eq:LTE_Linfty_bounds_1}
	% \end{align}
	We recall from \cite[Lemma 2.6]{Dziuk-Kuwert-Schatzle_2002} that
	\begin{align}
		% \nabla_s\kappa-\partial_s\kappa &= |\kappa|^2\partial_s\gamma,\label{eq:LTE_Linfty_bounds_1.5}\\
		\nabla_s^m\kappa-\partial_s^m\kappa&= \sum_{i=1}^{[\frac{m}{2}]}Q^{m-2i}_{2i+1}(\kappa) + \sum_{i=1}^{[\frac{m+1}{2}]}Q^{m+1-2i}_{2i}(\kappa)\partial_s\gamma.\label{eq:LTE_Linfty_bounds_2}
	\end{align}
    Here $[\cdot]$ is the floor function, and
	$Q^a_b(\kappa)$ is a linear combination of terms of the form
	\begin{align}
		\partial_{s}^{i_1}\kappa * \dots * \partial_s^{i_b}\kappa,
	\end{align}
	with $\sum_{j=1}^b i_j=a$. 
    Estimate \eqref{eq:ds_kappa_infty} thus follows by induction from \eqref{eq:LTE_Linfty_bounds_1} and \eqref{eq:LTE_Linfty_bounds_2}. For \eqref{eq:ds_kappa_L2}, one may also proceed by induction using \eqref{eq:LTE_L2_bounds_global}, \eqref{eq:ds_kappa_infty}, and \eqref{eq:LTE_Linfty_bounds_2}, as well as  $\int|\kappa|^2 ds\leq M$.
\end{proof}

\subsection{Curvature control for the curve shortening flow}
We now consider the case $\sigma=\mu=\vartheta=0$ and $\lambda>0$ in \eqref{eq:LTE_general_flow}, yielding essentially the curve shortening flow.
\begin{lemma}\label{lem:LTE_gronwall_2nd_order}
    Let 
    %$\gamma$ be a smooth solution to \eqref{eq:LTE_general_flow} with 
    $\sigma=\mu=\vartheta=0$ and $\lambda>0$. Suppose that $B[\gamma(t)]\leq M$ holds at some time $t\in [0,T)$.  Then, at the time $t$,
    \begin{align}
        \frac{d}{dt}\int|\nabla_s^m\kappa|^2 \zeta^{2m+2} ds + \int|\nabla_s^m\kappa|^2\zeta^{2m+2}ds + \lambda \int|\nabla_s^{m+1}\kappa|^2\zeta^{2m+2}ds \leq C
    \end{align}
    holds for all $m\in\N_0$ where $C=C(\lambda,\Lambda,m,M)$.
\end{lemma}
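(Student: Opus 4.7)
The strategy is to follow the same pattern as Lemma \ref{lem:LTE_gronwall_4th_order}, but specialised to $\sigma=\mu=\vartheta=0$, $\lambda>0$, and with the weight exponent $r=2$ (rather than $r=4$) in the general identity of Lemma \ref{lem:LTE_curvature_evolution}. The corresponding interpolation level in Proposition \ref{prop:interpolation_P} will then be $k=m+1$ (rather than $k=m+2$). Specialising the identity, all terms involving $\sigma,\mu,\vartheta$ drop out---in particular, the entire $\partial_s^2\zeta$-line disappears---leaving
\begin{align}
&\frac{d}{dt}\frac{1}{2}\int|\nabla_s^m\kappa|^2\zeta^{2m+2}ds + \lambda\int|\nabla_s^{m+1}\kappa|^2\zeta^{2m+2}ds \\
&\quad = \lambda\int P_4^{2m,m}\zeta^{2m+2}ds + \lambda(2m+2)\int\langle P_3^{2m,m}, D\tilde\zeta\circ\gamma\rangle\zeta^{2m+1}ds \\
&\qquad  - \lambda(2m+2)\int\langle\nabla_s^{m+1}\kappa,\nabla_s^m\kappa\rangle\zeta^{2m+1}\partial_s\zeta\, ds.
\end{align}

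Next I will estimate the right-hand side. Using $|D\tilde\zeta|\leq\Lambda$, the boundary-type integral involving $P_3^{2m,m}$ is bounded by a constant (depending on $\lambda,\Lambda,m$) times $\int|P_3^{2m,m}|\zeta^{2m+1}ds$. For the cross term, I will apply Cauchy--Schwarz and Young's inequality to produce, for any $\varepsilon>0$, a contribution of the form $\varepsilon\int|\nabla_s^{m+1}\kappa|^2\zeta^{2m+2}ds+C(\varepsilon,\lambda,\Lambda,m)\int P_2^{2m,m}\zeta^{2m}ds$. Every $P$-term thereby produced satisfies $c\leq m\leq m+1$ and $a+\tfrac{b}{2}-1\leq 2m+1<2(m+1)$, so Proposition \ref{prop:interpolation_P} applies with $k=m+1$. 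After fixing $\varepsilon$ sufficiently small, applying the proposition term-by-term, and invoking $B[\gamma(t)]\leq M$ to control $\int_{[\zeta>0]}|\kappa|^2 ds$, I can absorb a total of $\tfrac{\lambda}{2}\int|\nabla_s^{m+1}\kappa|^2\zeta^{2m+2}ds$ on the left to obtain the intermediate estimate
\begin{align}
\frac{d}{dt}\int|\nabla_s^m\kappa|^2\zeta^{2m+2}ds + \tfrac{3\lambda}{2}\int|\nabla_s^{m+1}\kappa|^2\zeta^{2m+2}ds \leq C(\lambda,\Lambda,m,M).
\end{align}

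Finally, the zeroth-order term on the left-hand side of the claim is produced by one more interpolation: since $0\leq\zeta\leq 1$, we have $\int|\nabla_s^m\kappa|^2\zeta^{2m+2}ds\leq \int P_2^{2m,m}\zeta^{2m}ds$, and Proposition \ref{prop:interpolation_P} with $k=m+1$ and $\varepsilon=\tfrac{\lambda}{2}$ yields $\int|\nabla_s^m\kappa|^2\zeta^{2m+2}ds\leq \tfrac{\lambda}{2}\int|\nabla_s^{m+1}\kappa|^2\zeta^{2m+2}ds+C'(\lambda,\Lambda,m,M)$. Adding this to the intermediate estimate gives the desired inequality with coefficient $\lambda$ in front of the gradient term. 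The only real bookkeeping issue is the verification that $k=m+1$ is an admissible interpolation level for every $P$-term produced; this is immediate because the surviving right-hand side involves only $\kappa$-derivatives up to order $m$ (as a consequence of the flow being second-order, so no $\nabla_s^{m+2}\kappa$ appears on the left of the evolution identity).
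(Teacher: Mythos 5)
Your proposal is correct and follows essentially the same route as the paper: specialise Lemma \ref{lem:LTE_curvature_evolution} with $r=2$, observe that every surviving right-hand-side term can be placed under Proposition \ref{prop:interpolation_P} with $k=m+1$, absorb, and use the proposition once more for the zeroth-order term. The only cosmetic deviation is that you pre-process the cross term $\int\langle\nabla_s^{m+1}\kappa,\nabla_s^m\kappa\rangle\zeta^{2m+1}\partial_s\zeta\,ds$ with Young's inequality before interpolating, while the paper bounds $|\partial_s\zeta|\le\Lambda$ and feeds the resulting $P^{2m+1,m+1}_2$ directly into Proposition \ref{prop:interpolation_P} (whose hypothesis $c\le k$ covers the case $c=k=m+1$); both work.
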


\begin{proof}
    For this choice of parameters, \Cref{lem:LTE_curvature_evolution} with $r=2$ reads
    \begin{align}
        &\frac{d}{dt} \frac{1}{2}\int|\nabla^m_s\kappa|^2\zeta^{2m+2}ds +\lambda \int|\nabla_s^{m+1}\kappa|^2 \zeta^{2m+2}ds \\
        &=  \lambda \int P^{2m,m}_4\zeta^{2m+2}ds + (2m+2) \lambda \int \langle P^{2m,m}_3,D\tilde\zeta\circ\gamma\rangle \zeta^{2m+1} ds\\
        &\quad - \lambda (2m+2)\int\langle \nabla_s^m\kappa, \nabla_s^{m+1}\kappa\rangle\zeta^{2m+1}\partial_s\zeta ds,
    \end{align}
    where $C=C(\lambda, \Lambda,m)$.
    As in the proof of \Cref{lem:LTE_gronwall_4th_order}, the right hand side consists of terms of the form $\int|P^{a,c}_b|\zeta^q ds$, where the parameters satisfy $a+\frac{b}{2}-1<2(m+1)$, $q\geq a+\frac{b}{2}-1$. Hence, \Cref{prop:interpolation_P} with $k=m+1$ applies and, after absorbing, we have
    \begin{align}
        &\frac{d}{dt}\frac12 \int|\nabla_s^m\kappa|^2\zeta^{2m+2}ds + \frac{2\lambda}{3} \int|\nabla_s^{m+1}\kappa|^2\zeta^{2m+2}ds %+\frac{1}{2}\int|\nabla_s^m\kappa|^2 \zeta^{2m+2}|\kappa|^2 ds 
        \leq C.
    \end{align}
    Applying \Cref{prop:interpolation_P} (also with $k=m+1$) to
    \begin{align}
        \int|\nabla_s^m\kappa|^2\zeta^{2m+2}ds = \int P_2^{2m,m}\zeta^{2m+2} ds
    \end{align}
    yields the statement.
\end{proof}

We have the following analogue of \Cref{lem:LTE_time_cutoff}.

\begin{lemma}\label{lem:LTE_time_cutoff_CSF}
    Let %$\gamma$ be a smooth solution to \eqref{eq:LTE_general_flow} with 
    $\sigma=\mu=\vartheta=0$ and $\lambda>0$, and let $0<T\leq T^*<\infty$.
    Suppose that $B[\gamma(t)]\leq M$ for all $t\in [0,T)$. Then for all $m\in\N$ and $t\in (0,T)$,
	\begin{align}
		\int |\nabla^m_s\kappa|^2 \zeta^{2m+2} ds \leq \frac{C(\lambda,\Lambda,m,M,T^*)}{t^{m}}.
	\end{align}
\end{lemma}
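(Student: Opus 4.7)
The plan is to mirror the proof of \Cref{lem:LTE_time_cutoff} almost verbatim, but with the parabolic gain of two derivatives replaced by the gain of only one derivative supplied by \Cref{lem:LTE_gronwall_2nd_order}. Accordingly the induction chain will have length $m$ (rather than $\lfloor m/2\rfloor$), which accounts for the degeneration rate $t^{-m}$ rather than $t^{-m/2}$ obtained in the fourth-order case.

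First I would fix $t_0\in(0,T)$ and, for $0\leq j\leq m$, define the Lipschitz time cutoffs $\xi_j\colon \R\to[0,1]$ exactly as in \eqref{eq:def_xi_j} (with $\xi_{-1}\equiv 0$), so that $0\leq \tfrac{d}{dt}\xi_j\leq \tfrac{m}{t_0}\xi_{j-1}$. Then I set
\begin{align*}
    E_j(t)\vcentcolon= \int|\nabla_s^j\kappa|^2 \zeta^{2j+2}\, ds, \qquad e_j(t)\vcentcolon= \xi_j(t)\,E_j(t).
\end{align*}
Since $0\leq \zeta\leq 1$ yields $\zeta^{2(j+1)+2}\leq \zeta^{2j+2}$, one has $E_{j+1}\leq \int |\nabla_s^{j+1}\kappa|^2\zeta^{2j+2}\,ds$, and \Cref{lem:LTE_gronwall_2nd_order} applied at order $j$ (discarding its nonnegative zero-order absorption term) delivers $\tfrac{d}{dt}E_j+\lambda E_{j+1}\leq C(\lambda,\Lambda,j,M)$. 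Combining with the product rule, the bounds on $\xi_j'$, and $0\leq \xi_j\leq 1$, I arrive at
\begin{align*}
    \frac{d}{dt}e_j + \lambda\, \xi_j\, E_{j+1} \leq \frac{m}{t_0}\,\xi_{j-1}\,E_j + C(\lambda,\Lambda,j,M).
\end{align*}

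The core of the proof is then to establish by induction on $j\in\{0,1,\dots,m\}$ the inequality
\begin{align*}
    e_j(t) + \lambda\int_0^t \xi_j\, E_{j+1}\,d\tau \;\leq\; \frac{C(\lambda,\Lambda,j,m,M,T^*)}{t_0^{j}}, \qquad t\in[0,t_0].
\end{align*}
The base case $j=0$ follows from $\xi_0\equiv 1$ on $[0,\infty)$ and $e_0(0)\leq 2M$. For $j\geq 1$, $\xi_j(0)=0$ forces $e_j(0)=0$, so that integrating in time over $[0,t]$ reduces the right hand side to $\tfrac{m}{t_0}\int_0^t \xi_{j-1} E_j\,d\tau + Ct$; the first summand is controlled by the inductive hypothesis at level $j-1$ (after dividing by $\lambda>0$), yielding $\tfrac{m}{t_0}\cdot C/t_0^{j-1}$, while the second fits into the claimed bound via $Ct\leq CT^*\leq CT^{*\,j+1}/t_0^j$ thanks to $t_0\leq T^*$. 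Evaluating at $t=t_0$ with $j=m$, using $\xi_m(t_0)=1$, and renaming $t_0\to t$ then gives the stated estimate.

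The main ``obstacle'' is purely bookkeeping: I must apply the weight-drop inclusion $\zeta^{2(j+1)+2}\leq \zeta^{2j+2}$ consistently at each step so that the correct $E_{j+1}$ appears as the parabolic gain, and track how the $T^*$-dependence of the constants compounds through $m$ iterations without further qualitative loss. In contrast to \Cref{lem:LTE_time_cutoff}, no auxiliary estimate for odd-order derivatives is needed here, since each induction step already advances the derivative count by exactly one.
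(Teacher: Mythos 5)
Your proposal is correct and follows essentially the same approach as the paper, which also redefines $E_j(t)=\int|\nabla_s^j\kappa|^2\zeta^{2j+2}\,ds$, invokes \Cref{lem:LTE_gronwall_2nd_order} in place of its fourth-order counterpart, rederives \eqref{eq:dt_ej} with $\sigma$ replaced by $\lambda$, and runs the same induction to level $j=m$. The only (deliberate) simplification you note—that no odd/even parity bootstrap is needed since each inductive step already gains one derivative—is exactly why the paper's own proof is a one-line reduction to the fourth-order argument.
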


\begin{proof}
    We proceed exactly as in the proof of \Cref{lem:LTE_time_cutoff}, redefining $E_j$ by
    \begin{align}
        E_j(t)\vcentcolon = \int |\nabla_s^j\kappa|^2\zeta^{2j+2}ds,\quad e_j(t)\vcentcolon = \xi_j(t)E_j(t),
    \end{align}
    where $\xi_j$ is as in \eqref{eq:def_xi_j}. Now, we may use \Cref{lem:LTE_gronwall_2nd_order} to conclude that \eqref{eq:dt_ej} holds with $\sigma>0$ replaced by $\lambda>0$. We may thus prove \eqref{eq:LTE_induction} by induction and evaluate at $t=t_0$ and for $j=m$ to conclude the statement.
\end{proof}

Arguing exactly as in \Cref{lem:LTE_L2_bounds,lem:EF_smooth_bound}, we obtain the following.

\begin{lemma}\label{lem:CSF_smooth_bound}
 Let %$\gamma$ be a smooth solution to \eqref{eq:LTE_general_flow} with 
 $\sigma=\mu=\vartheta=0$ and $\lambda>0$.
 Suppose that $B[\gamma(t)]\leq M$ for all $t\in [0,T)$. Then for all $m\in\N_0$ and $t\in (0,T)$,
    \begin{align}
        \Vert \nabla_s^m\kappa\Vert_{L^2(ds)} + \Vert\partial_s^m\kappa\Vert_{L^2(ds)} &\leq C(\lambda,\Lambda,m,M)(1+t^{-\frac{m}{2}}),\\
        \Vert \nabla_s^m\kappa\Vert_\infty+\Vert\partial_s^m\kappa\Vert_\infty&\leq C(\lambda,\Lambda,m,M)(1+t^{-\frac{2m+1}{4}}).
    \end{align}
\end{lemma}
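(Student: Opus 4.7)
The plan is to mirror the proofs of \Cref{lem:LTE_L2_bounds,lem:EF_smooth_bound}, substituting the second-order time-cutoff lemma \Cref{lem:LTE_time_cutoff_CSF} and the second-order Gronwall estimate \Cref{lem:LTE_gronwall_2nd_order} for their fourth-order counterparts \Cref{lem:LTE_time_cutoff,lem:LTE_gronwall_4th_order}. Throughout I would fix a radial cutoff $\tilde\zeta\in C_c^\infty(\R^n)$ with $\chi_{B_R(0)}\leq\tilde\zeta\leq\chi_{B_{2R}(0)}$ and $|D\tilde\zeta|+|D^2\tilde\zeta|^{1/2}\leq cR^{-1}$; for $R$ large the induced function $\zeta=\tilde\zeta\circ\gamma$ satisfies \eqref{eq:ds2_zeta} with $\Lambda=1$, and the limit $R\to\infty$ would be taken at the end of each step via monotone convergence.

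First I would prove the $L^2$ estimate $\|\nabla_s^m\kappa\|_{L^2(ds)}\leq C(1+t^{-m/2})$ by splitting into two cases. For $t\in(0,1]\cap(0,T)$, \Cref{lem:LTE_time_cutoff_CSF} applied with $T^*=1$ (or $T^*=t$ when $T\leq 1$) yields $\int|\nabla_s^m\kappa|^2\zeta^{2m+2}ds\leq Ct^{-m}$; taking square roots and sending $R\to\infty$ gives the desired bound on $(0,1]$. For $t>1$, the previous step supplies a uniform bound at $t=1$, and applying Gronwall's inequality to the differential estimate of \Cref{lem:LTE_gronwall_2nd_order} on $[1,t]$ propagates this to a uniform-in-time bound independent of $R$. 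The $L^\infty$ bound on $\nabla_s^m\kappa$ then follows from the weighted Gagliardo--Nirenberg inequality \Cref{lem:gagliardo_nirenberg} with $r=m$ and $p=\infty$ applied to $X=\nabla_s^m\kappa$: inserting the $L^2$ bounds just established and sending $R\to\infty$ produces
\begin{align*}
\|\nabla_s^m\kappa\|_\infty \leq C(1+t^{-(m+1)/4})^{1/2}(1+t^{-m/4})^{1/2} + C(1+t^{-m/4}),
\end{align*}
which simplifies to $C(1+t^{-(2m+1)/4})$ since the cross term dominates for small $t$.

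To pass from $\nabla_s^m\kappa$ to $\partial_s^m\kappa$, I would argue by induction on $m$ using the identity \eqref{eq:LTE_Linfty_bounds_2} recalled in the proof of \Cref{lem:EF_smooth_bound}. Every term $Q^a_b(\kappa)$ appearing in the decomposition of $\nabla_s^m\kappa-\partial_s^m\kappa$ is a product of $b\geq 2$ arclength derivatives of $\kappa$ of order at most $m-1$, so the induction hypothesis combined with a H\"older estimate (one factor in $L^2$, the remaining in $L^\infty$, and invoking $\int|\kappa|^2\,ds\leq 2M$ when convenient) yields both the $L^2$ and $L^\infty$ claims. The main (mild) obstacle is exponent bookkeeping: one must verify that the power of $t^{-1}$ produced by each $Q$-term stays below the target rates $(2m+1)/4$ and $m/2$. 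This is in fact the easy direction, since in the first sum the product yields exponent $(2m-2i+1)/4$ and in the second sum exponent $(m+1-i)/2$, both of which are strictly smaller than the target for every $i\geq 1$; a parallel calculation on the $L^2$ side (with one factor taken in $L^2$) leaves a margin of at least $1/4$ below $m/2$, and the induction closes.
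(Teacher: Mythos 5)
Your proposal follows the paper's argument exactly: the paper's proof of this lemma is a one-liner instructing the reader to ``argue exactly as in'' \Cref{lem:LTE_L2_bounds,lem:EF_smooth_bound}, substituting \Cref{lem:LTE_gronwall_2nd_order,lem:LTE_time_cutoff_CSF} for the fourth-order Gronwall and time-cutoff lemmas, which is precisely what you do. One small transcription slip: in your displayed Gagliardo--Nirenberg step the inner exponents should read $-(m+1)/2$ and $-m/2$ (not $-(m+1)/4$ and $-m/4$), since the second-order $L^2$ rate from \Cref{lem:LTE_time_cutoff_CSF} is $t^{-m/2}$, not $t^{-m/4}$; as written your display would yield the fourth-order rate $t^{-(2m+1)/8}$ rather than the $t^{-(2m+1)/4}$ you correctly report as the conclusion.
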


\section{Long-time behavior}\label{sec:long-time_behavior}

In this section we discuss the long-time behavior, proving the main theorems in \Cref{sec:main_results}.
Throughout this section we will repeatedly use, without explicitly mentioning, the fact that under the finiteness of the direction energy we always have properness (\Cref{lem:direction_horizontal}) and hence we can use the results from \Cref{sec:curvature_control}.

\subsection{Blow-up rates}
Consider a general flow \eqref{eq:LTE_general_flow} with $\sigma>0$.
Given $\gamma_0\in \dot{C}^{\infty}(\R)$ with $\inf_\R |\partial_x\gamma_0|>0$, we define the \emph{maximal existence time} $T\in(0,\infty]$ by the supremum of $T'\in(0,\infty)$ such that there is a unique smooth solution $\gamma:[0,T']\times\R\to\R^n$ to \eqref{eq:LTE_general_flow} starting from $\gamma_0$ such that $\gamma-\gamma_0\in C^\infty([0,T']\times\R)$ and $\inf_{[0,T']\times\R}|\partial_x\gamma|>0$. The maximal existence time is well-defined thanks to \Cref{thm:GF_well_posed}.
We call a solution with maximal existence time $T$ \emph{maximal solution}.

Of course, not all flows as in \eqref{eq:LTE_general_flow} will develop a singularity. However, our curvature estimates enable us to characterize the behavior of the bending energy whenever this happens in finite time.

\begin{theorem}\label{thm:blow-up_rate_4th_order}
	Suppose $\sigma>0$.
	Let $\gamma_0\in \dot{C}^{\infty}(\R)$ be a properly immersed curve with $\inf_\R |\partial_x\gamma_0|>0$ and $B[\gamma_0]<\infty$. Suppose that the maximal solution $\gamma:[0,T)\times\R\to\R^n$ to \eqref{eq:LTE_general_flow} with initial datum $\gamma_0$ exists for a finite time $T<\infty$. 
    Then, we have
	\begin{align}\label{eq:blow-up-rate}
		\liminf_{t\to T} (T-t)^{1/4}B[\gamma(t)]>0.
	\end{align}
\end{theorem}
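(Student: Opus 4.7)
I would argue by contradiction, in the spirit of Kuwert--Sch\"atzle's blow-up analysis for the Willmore flow \cite{KSSI}.

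The central ingredient is the global differential inequality
\begin{equation}\label{eq:B5_inequality_sketch}
    \tfrac{d}{dt}\, B[\gamma(t)] \le C\,\bigl(B[\gamma(t)]^{5}+1\bigr), \qquad t\in[0,T),
\end{equation}
for a constant $C=C(\lambda,\mu,\sigma,\vartheta)>0$. To establish this, I would rework the computation in the proof of \Cref{lem:LTE_gronwall_4th_order} with $m=0$ and $r=4$, using \Cref{lem:LTE_curvature_evolution} and a cutoff $\tilde\zeta\in C_c^\infty(\R^n)$ satisfying $\chi_{B_R(0)}\le \tilde\zeta\le \chi_{B_{2R}(0)}$ and $|D^j\tilde\zeta|\le CR^{-j}$. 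Applying \Cref{prop:interpolation_P} with $k=2$, one checks that for each admissible term $P^{a,c}_b$ occurring in the resulting identity the exponents satisfy $(b-\delta)/(2-\delta)\le 5$ and $b/2\le 3$; after absorbing the leading $\sigma\int|\nabla_s^2\kappa|^2\zeta^4\,ds$ into the dissipation, the right hand side is thus bounded by $C(B[\gamma(t)]^5+1)$ plus cutoff error terms. These errors vanish as $R\to\infty$ thanks to properness of $\gamma(t)$ and $|D^j\tilde\zeta|\le CR^{-j}$, which yields \eqref{eq:B5_inequality_sketch} after passing to the limit in the integrated form.

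Now suppose towards contradiction that $\liminf_{t\to T}(T-t)^{1/4}B[\gamma(t)]=0$. Then there is a sequence $t_j\nearrow T$ with $\varepsilon_j\vcentcolon=(T-t_j)^{1/4}B[\gamma(t_j)]\to 0$. Setting $M_j\vcentcolon= B[\gamma(t_j)]$, a standard comparison with the ODE $f'=C(f^5+1)$ starting from $f(t_j)=M_j$ shows that $B[\gamma(t)]\le 2(M_j+1)$ on $[t_j,\, t_j+c\min\{M_j^{-4},1\}]$ for a universal $c=c(C)>0$. Since either $M_j\ge 1$ and $M_j^{-4}=\varepsilon_j^{-4}(T-t_j)\gg T-t_j$, or $M_j<1$ and $T-t_j\to 0 < c$, in either case $B[\gamma(\cdot)]$ is bounded on $[t_j,T)$ for all large $j$. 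Then \Cref{lem:LTE_L2_bounds} and \Cref{lem:EF_smooth_bound} give uniform bounds on $\|\partial_s^m\kappa(t)\|_\infty$ for all $m\in\N_0$ and $t\in[(t_j+T)/2,T)$, and the local well-posedness theory from \Cref{sec:local_wellposedness} extends the flow smoothly past $T$, contradicting the maximality of $T$.

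The main technical obstacle will be the rigorous derivation of \eqref{eq:B5_inequality_sketch}: the cutoff passage must be performed with care, since $B$ is not a priori known to be monotone along \eqref{eq:SDF} or \eqref{eq:CF}, and $\tfrac{d}{dt}B$ needs to be interpreted in a weak (absolutely continuous) sense. Following the strategy of \Cref{prop:energy_decay,prop:SDF_Chen_energy_decay}, I would first work with the localized quantity $B[\gamma|\zeta^4]$, establish the corresponding integrated inequality with a uniform-in-$R$ cutoff error, and then conclude via monotone convergence together with the fact that $B[\gamma(t)]$ is finite for a.e.\ $t$, finally upgrading the resulting bound to the classical ODE comparison used above.
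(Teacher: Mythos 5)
Your overall strategy is essentially the same as the paper's, just repackaged: both hinge on the nonlinear differential/integral inequality $B'\lesssim B+B^5$, obtained from \Cref{lem:LTE_curvature_evolution} (with $m=0$, $r=4$) and \Cref{prop:interpolation_P} with $k=2$, together with the continuation argument (bounded $B$ $\Rightarrow$ uniform curvature bounds $\Rightarrow$ extension past $T$). The paper first proves $\limsup_{t\to T}B[\gamma(t)]=\infty$ via continuation and then applies an abstract blow-up rate lemma (\Cref{lem:blow-up_convergence_inequality}~\eqref{item:blow_up}); you run the equivalent ODE comparison directly inside a contradiction. The bookkeeping differs but the mathematics is the same.

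The genuine gap is in the preparatory step. Before any nonlinear inequality can run, you need $B[\gamma(t)]<\infty$ for all $t\in[0,T)$ and continuity of $t\mapsto B[\gamma(t)]$, both so that the sequence $t_j$ with $\varepsilon_j\to0$ exists and so that the integral in \eqref{eq:B5_inequality_sketch} is meaningful. Since $B$ is not monotone along the general flow \eqref{eq:LTE_general_flow}, this is not automatic, and your proposed route---establish the integrated $B^5$ inequality via monotone convergence and then use that ``$B$ is finite for a.e.\ $t$''---is circular: if $B(t)$ were infinite on a set of positive measure, the $B^5$ term is not integrable and the inequality says nothing. The paper closes this in \Cref{lem:blow-up-rate-finite-bending} via a different, \emph{linear} Gronwall step: on any compact $[0,T']\subset[0,T)$, smoothness of the flow gives uniform bounds on $\gamma$ and its derivatives, so the localized $B[\gamma|\eta]$ satisfies
\begin{equation}
  \frac{d}{dt}B[\gamma|\eta] \le C(\gamma,T')\,B[\gamma|\eta]+C(\gamma,T'),
\end{equation}
which after linear Gronwall and $R\to\infty$ yields finiteness and---together with \Cref{lem:LTE_L2_bounds}---continuity of $B$. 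Only then does the nonlinear ODE comparison make sense. A second, smaller omission: your ODE comparison bounds $B$ only on $[t_j,T)$, but \Cref{lem:LTE_L2_bounds} and \Cref{lem:EF_smooth_bound} require a bound on all of $[0,T)$; once continuity of $B$ is in hand this is immediate (boundedness on the compact $[0,t_j]$), but it should be stated.
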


To prove this, we first show the continuity of the bending energy along the flow.

\begin{lemma}\label{lem:blow-up-rate-finite-bending}
    Let $\gamma:[0,T)\times\R\to\R^n$ be a properly immersed maximal solution to \eqref{eq:LTE_general_flow} with $\sigma>0$ such that $B[\gamma_0]<\infty$. Then the map $t\mapsto B[\gamma(t)]$ is finite-valued, continuous on $[0,T)$, and locally Lipschitz continuous on $(0,T)$.
\end{lemma}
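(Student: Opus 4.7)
The idea is to combine a cut-off--Gronwall argument with the global curvature bounds of \Cref{sec:curvature_control}. Fix any $T'\in(0,T)$. By the local well-posedness theory (\Cref{sec:local_wellposedness}), the regularity assumption \eqref{eq:energy_decay_regularity} with $M=4$ holds on $[0,T']\times\R$, so the hypotheses of \Cref{lem:bending_derivative_cutoff} are satisfied and in particular $\Vert\kappa\Vert_\infty, \Vert\nabla_s\kappa\Vert_\infty, \Vert V\Vert_\infty, \Vert\xi\Vert_\infty$ are controlled on $[0,T']\times\R$ by some constant $C_0$.

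\textbf{Finiteness.} I take $\eta=\varphi^4$ with $\varphi(x)=\varphi_0(R^{-1}x)$ as in the proof of \Cref{prop:energy_decay} and apply \Cref{lem:bending_derivative_cutoff}. Substituting $V=-\sigma\nabla_s^2\kappa+\lambda\kappa+\mu|\kappa|^2\kappa$ and using Young's inequality to absorb all $|\nabla_s^2\kappa|$-cross-terms into the good principal part $\tfrac{\sigma}{2}\int|\nabla_s^2\kappa|^2\eta\,ds$, the remainder has the form $C\int(|\kappa|^2+|\kappa|^4+|\kappa|^6)\eta\,ds\le C(1+C_0^2+C_0^4)\int|\kappa|^2\eta\,ds$, using the $L^\infty$-bound on $\kappa$. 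For the boundary terms of \Cref{lem:bending_derivative_cutoff}, the pointwise bound $C_0$ on $\kappa,\nabla_s\kappa,V,\xi$ together with \eqref{eq:Energy_decay_control_eta_derivatives} and \eqref{eq:energy_decay_local_length_control} yields a uniform-in-$R$ estimate by some constant $C$. Combining gives the Gronwall inequality
\begin{align}
    \frac{d}{dt}B[\gamma|\eta](t)\le C_1 B[\gamma|\eta](t)+C_2
\end{align}
on $[0,T']$ with $C_1,C_2$ independent of $R\ge 1$. Since $B[\gamma|\eta](0)\le B[\gamma_0]<\infty$, Gronwall's lemma gives $B[\gamma|\eta](t)\le M$ uniformly in $R$, and monotone convergence in $R\to\infty$ yields $B[\gamma(t)]\le M$ on $[0,T']$.

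\textbf{Lipschitz continuity on $(0,T)$.} The bound $B[\gamma(t)]\le M$ on $[0,T']$ now enables \Cref{lem:LTE_L2_bounds}, which supplies global-in-space $L^2$- and $L^\infty$-bounds for $\nabla_s^m\kappa$ uniformly on $[t_0,T']$ for every $t_0>0$. Hence the integrand $\int\langle-\nabla_s^2\kappa-\tfrac{1}{2}|\kappa|^2\kappa,V\rangle\,ds$ is uniformly bounded on $[t_0,T']$. Passing to the limit $R\to\infty$ in the time-integrated form of \Cref{lem:bending_derivative_cutoff}---using monotone convergence for the principal energy, dominated convergence for the bulk term, and the vanishing of the boundary terms by Cauchy--Schwarz together with $\Vert\partial_s^k\eta\Vert_{L^2(ds)}\to0$ as in the proof of \Cref{prop:energy_decay}---yields the global identity
\begin{align}
    B[\gamma(t)]-B[\gamma(s)] = -\int_s^t\int\Big\langle-\nabla_s^2\kappa-\tfrac{1}{2}|\kappa|^2\kappa,\, V\Big\rangle\,ds\,d\tau
\end{align}
for $t_0\le s\le t\le T'$, which directly gives local Lipschitz continuity on $(0,T)$.

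\textbf{Continuity at $t=0$ and obstacle.} The Gronwall step above gives $\limsup_{t\to0^+}B[\gamma(t)]\le B[\gamma_0]$, while the pointwise smooth convergence $\kappa(t,x)\to\kappa(0,x)$ combined with Fatou's lemma (applied to $|\kappa(t,x)|^2|\partial_x\gamma(t,x)|$) gives $B[\gamma_0]\le\liminf_{t\to0^+}B[\gamma(t)]$, completing the proof of continuity on $[0,T)$. The main technical obstacle will be the Gronwall step: without an a priori bound on $B$, dominating the $|\kappa|^6$-contribution arising from $V=\cdots+\mu|\kappa|^2\kappa$ by the Dirichlet energy requires exploiting the uniform $L^\infty$-bound on $\kappa$ furnished by local well-posedness; otherwise one would have to invoke the finer interpolation machinery of \Cref{prop:interpolation_P} coupled to a bootstrap on $R$.
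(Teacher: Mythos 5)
Your proof is correct and follows essentially the same overall strategy as the paper's: a cut-off plus Gronwall argument for finiteness and a uniform bound on $[0,T']$, followed by the interpolation-backed $L^2$-estimates of \Cref{lem:LTE_L2_bounds} to convert the time-integrated identity from \Cref{lem:bending_derivative_cutoff} into a Lipschitz estimate on $[t_0,T']$. The one place you genuinely diverge is the continuity at $t=0$. The paper integrates the (rearranged) differential inequality over $[0,T']$ a second time to show $\int_0^{T'}\int|\nabla_s^2\kappa|^2\,ds\,dt<\infty$, and then invokes absolute continuity of the Lebesgue integral together with the Lipschitz-type bound $|B[\gamma(t_2)]-B[\gamma(t_1)]|\le\frac{3\sigma}{2}\int_{t_1}^{t_2}\int|\nabla_s^2\kappa|^2\,ds\,dt+C|t_2-t_1|$ with $t_1=0$. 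You instead combine the upper bound $\limsup_{t\to0^+}B[\gamma(t)]\le B[\gamma_0]$, read off directly from the Gronwall constant as $t\to0$, with a lower bound via Fatou's lemma applied to the pointwise-convergent nonnegative densities $\tfrac12|\kappa(t,x)|^2|\partial_x\gamma(t,x)|$. This is a cleaner route to continuity at the initial time since it sidesteps the need to first establish a space--time bound on $\nabla_s^2\kappa$; both arguments are valid under the hypotheses. Your remark that the Gronwall step relies essentially on the global $L^\infty$-bounds of $\kappa$, $\nabla_s\kappa$, $\nabla_s^2\kappa$ coming from the maximal-solution regularity class $\gamma-\gamma_0\in C^\infty([0,T']\times\R)$, $\inf|\partial_x\gamma|>0$ is exactly right and matches how the paper controls the bulk and boundary terms uniformly in $R$.
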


\begin{proof}
    Let $T'\in [0,T)$. Let $R>1$ and let $\eta = \varphi^4$ with $\varphi \in C_c^\infty(\R)$ as in the proof of \Cref{prop:energy_decay}.
    For all $t\in [0,T']$, \Cref{lem:bending_derivative_cutoff} yields
    \begin{align}
        &\frac{d}{dt} B[\gamma|\eta] + \sigma \int|\nabla_s^2\kappa|^2\eta ds \\
        & \quad = \lambda\int\langle\nabla_s^2\kappa,\kappa\rangle \eta ds + \mu \int |\kappa|^2\langle \nabla_s^2\kappa, \kappa\rangle\eta ds + \frac{1}{2}\int|\kappa|^2\langle\kappa, V\rangle \eta ds \\
        & \quad\quad - \frac{1}{2}\int|\kappa|^2\xi\partial_s\eta ds + 2 \int\langle \nabla_s\kappa,V\rangle \partial_s\eta ds + \int\langle \kappa, V\rangle \partial_s^2\eta ds. \label{eq:lemma8-1-1}
    \end{align}
    Estimating
    \begin{align}\label{eq:lemma8-1-2}
    \Big|\lambda\int\langle\nabla_s^2\kappa,\kappa\rangle \eta ds\Big| \leq \frac{\sigma}{2}\int|\nabla_s^2\kappa|^2\eta ds + \frac{\lambda^2}{2\sigma} \int|\kappa|^2\eta ds
    \end{align}
    and rearranging, we deduce
    \begin{align}
        &\frac{d}{dt} B[\gamma|\eta] + \frac{\sigma}{2} \int|\nabla_s^2\kappa|^2\eta ds \\
        & \quad \leq \frac{1}{2}\int|\kappa|^2 \Big(\lambda^2\sigma^{-1} + 2\mu\langle \nabla_s^2\kappa, \kappa\rangle + \langle\kappa, V\rangle \Big)\eta ds \\
        & \quad\quad - \frac{1}{2}\int|\kappa|^2\xi\partial_s\eta ds + 2 \int\langle \nabla_s\kappa,V\rangle \partial_s\eta ds + \int\langle \kappa, V\rangle \partial_s^2\eta ds.
    \end{align} 
    Using that the derivatives of $\gamma$ are uniformly bounded by $C=C(\gamma,T')$ (independently of $R$) on $[0,T']\times\R$, and that by \eqref{eq:Energy_decay_control_eta_derivatives} and \eqref{eq:energy_decay_local_length_control} we have $\int(|\partial_s\eta|+|\partial_s^2\eta|)ds\leq C$, we estimate
    \begin{align}\label{eq:0212-01}
         &\frac{d}{dt} B[\gamma|\eta] + \frac{\sigma}{2} \int|\nabla_s^2\kappa|^2\eta ds  \leq C(\gamma,\lambda,\mu,\sigma,T')B[\gamma|\eta] + C(\gamma,T').
    \end{align}
    Dropping the term involving $\sigma>0$, applying Gronwall's lemma to the smooth function $t\mapsto B[\gamma(t)|\eta]$, and sending $R\to\infty$ yield $\sup_{[0,T']}B[\gamma]<\infty$.     
    We now integrate \eqref{eq:lemma8-1-1} in time over $[t_1,t_2]\subset[0,T']$, rearrange so that the left hand side is only 
    $B[\gamma(t_2)|\eta]-B[\gamma(t_1)|\eta]$, take absolute values of both sides, use \eqref{eq:lemma8-1-2}, and send $R\to\infty$
    to get
    \begin{align}
        |B[\gamma(t_2)]-B[\gamma(t_1)]| &\leq \int_{t_1}^{t_2}\Big( \frac{3\sigma}{2} \int|\nabla_s^2\kappa|^2ds+ C B[\gamma(t)] + C\Big) dt \\
        &\leq \frac{3\sigma}{2} \int_{t_1}^{t_2}\int|\nabla_s^2\kappa|^2ds dt +C|t_2-t_1|,\label{eq:lemma8-1-3}
    \end{align}
    where $C=C(\gamma,\lambda,\mu,\sigma,T')$.
    % , move the $\sigma$-term to the right hand side, send $R\to\infty$, and take absolute values to conclude
    %    % taking absolute values in \eqref{eq:lemma8-1-1}, integrating over $[t_1,t_2]\subset [0,T']$, and sending $R\to\infty$ now yields
    % \begin{align}\label{eq:lemma8-1-2}
    %     |B[\gamma(t_2)] - B[\gamma(t_1)]|\leq C |t_2-t_1|  + \frac{\sigma}{2}\int_{t_1}^{t_2} \int |\nabla_s^2\kappa|^2 ds dt .
    % \end{align}
    The $L^2$-bounds in \Cref{lem:LTE_L2_bounds} imply that the first term on the right is controlled if $[t_1,t_2]\subset [t_0,T']\subset(0,T']$, in which case we conclude the Lipschitz estimate 
    \begin{align}
        |B[\gamma(t_2)]-B[\gamma(t_1)]|\leq C(\gamma,\lambda,\mu,\sigma,t_0,T') |t_2-t_1|.
    \end{align}
    On the other hand, integrating \eqref{eq:0212-01} on $[0,T']$, we have
    % \eqref{eq:lemma8-1-1} in $[0,T']$, rearranging, using \eqref{eq:lemma8-1-2}, and taking $R\to\infty$, we find
    \begin{align}
        B[\gamma(T')] - B[\gamma(0)] + \frac{\sigma}{2}\int_0^{T'} \int |\nabla_s^2\kappa|^2 ds dt \leq C T',
    \end{align}
    which implies that $\int_0^{T'} \int |\nabla_s^2\kappa|^2 ds d t<\infty$. 
    % \TM{For this it is sufficient to integrate \eqref{eq:0212-01}?}
    Hence, taking $t_1=0$ and $t_2=t \to 0$ in \eqref{eq:lemma8-1-3}, we conclude continuity at $t=0$.
\end{proof}

\begin{proof}[Proof of \Cref{thm:blow-up_rate_4th_order}]
    We first note that (although the finiteness of the bending energy does not imply properness), since here we assume the properness of $\gamma_0$, the maximal solution $\gamma$ is also proper on $[0,T)$ by \Cref{thm:GF_well_posed}.
    
    By \Cref{lem:blow-up-rate-finite-bending}  we have $B[\gamma(t)]<\infty$ for all $t\in [0,T)$.
	We first prove that $\limsup_{t\to T}B[\gamma(t)]=\infty$. Indeed, if this was not the case, then there is $M\in(0,\infty)$ such that $B[\gamma(t)]\leq M$ for all $t\in [0,T)$. By \Cref{lem:EF_smooth_bound}, for all $m\in\N_0$, we have
    \[
    \sup_{t\in[T/2,T)}\|\partial_s^m\kappa\|_\infty \leq C(\lambda,\mu,\sigma,\vartheta,m,M,T).
    \]
	%Letting $V:=\partial_t\gamma$, we
    From \eqref{eq:LTE_general_flow}, we conclude that for $t\in [T/2,T)$ we have
	\begin{equation}
		\|\partial_s^m \partial_t \gamma\|_\infty \leq C(\lambda,\mu,\sigma,\vartheta,m,M,T),
	\end{equation}
	and further deduce %from $|\gamma-\gamma_0|\leq\sup_{t}|\partial_t\gamma|$ that
	\begin{equation}
		\sup_{t\in[T/2,T)} \|\gamma(t)-\gamma(T/2)\|_{L^\infty(\R)} \leq \frac{T}{2}\sup_{t\in[T/2,T)}\Vert \partial_t\gamma\Vert_{L^\infty(\R)}
		\leq C(\lambda,\mu,\sigma,\vartheta,M,T).
	\end{equation}
	In addition, by the differential equation $\partial_t|\partial_x\gamma|=-\langle \kappa, V \rangle |\partial_x\gamma|$ and the boundedness of $\langle \kappa,V\rangle$, on $[T/2,T)$ we also have the uniform estimate
	\begin{equation}\label{eq:control_arclength}
		C^{-1}\leq |\partial_x\gamma| \leq C, \quad \text{where $C=C(\partial_x\gamma_0,\lambda,\mu,\sigma,\vartheta,M,T)>0$}.
	\end{equation}
	Differentiation and induction as in the proof of \cite[Theorem 3.1]{Dziuk-Kuwert-Schatzle_2002} yield that for $t\in[T/2,T)$ and all $m\in\N_0$, we have
	\begin{equation}
		\|\partial_x^m|\partial_x\gamma|\|_\infty \leq C(\partial_x\gamma_0,\lambda,\mu,\sigma,\vartheta,m,M,T). %\quad\text{ for all }t\in [0,T),\ m\in\N_0.
	\end{equation}
	Combining the above estimates, we finally deduce that for all $t\in[T/2,T)$ and $m\in\N_0$,
	\begin{equation}
		\|\partial_x^m(\gamma-\gamma(T/2))\|_\infty \leq C(\partial_x\gamma_0,\lambda,\mu,\sigma,\vartheta,m,M,T).
	\end{equation}
	For any $T/2\leq t_1\leq t_2<T$, we have
	\begin{align}
		\sup_{x\in\R}|\partial_x^m\gamma(t_1,x)-\partial_x^m\gamma(t_2,x)|&\leq  \sup_{x\in\R} \max_{t\in[t_1,t_2]}|\partial_t\partial_x^m\gamma(t,x)||t_1-t_2| \\
		&\leq \sup_{t\in[0,T)}\Vert \partial_x^m \partial_t\gamma\Vert_{\infty}|t_1-t_2| \\
        &\leq C(\partial_x\gamma_0,\lambda,\mu,\sigma,\vartheta,m,M,T)|t_1-t_2|.
	\end{align}
	This together with \eqref{eq:control_arclength} implies that $\gamma(t)-\gamma(T/2)$ is Cauchy in $C^m(\R)$ as $t\nearrow T$ and thus there exists a smooth immersion $\gamma(T) \in \dot{C}^\infty(\R)$ with $\inf_\R|\partial_x\gamma(T)|>0$ such that $\gamma(t)-\gamma(T/2)\to \gamma(T)-\gamma(T/2)$ as $t\nearrow T$ in $C^m(\R)$ for all $m\in \N_0$. 
	Now we may apply \Cref{thm:GF_well_posed} with initial datum $\gamma(T)$ and glue the resulting solution together with $\gamma\colon[0,T]\times\R\to\R^n$ to extend $\gamma$ beyond $T$, contradicting the maximality of $T$.

    Hence, there exist $t_j\to T$ with $B[\gamma(t_j)]\to\infty$. By \Cref{lem:LTE_curvature_evolution} with $m=0$ and $r=4$, we have
	\begin{align}
		\frac{d}{dt} \frac{1}{2}\int|\kappa|^2 \zeta^4 ds + \sigma \int|\nabla_s^2\kappa|^2 \zeta^4 ds \leq \sum \int |P^{a,c}_b| \zeta^{q} ds,
	\end{align}
	where the sum is finite. The indices satisfy $c\leq 2$ and the triple  
    $(a,b,q)$ runs over the set of $(2,4,4)$, $(0,6,4)$, $(0,4,4)$, $(2,3,3)$, $(0,3,3)$, $(0,5,3)$, $(1,4,3)$, $(3,2,3)$, $(2,2,2)$, $(2,3,3)$, $(1,2,3)$.
    Applying \Cref{prop:interpolation_P} (with $k=2$), we may estimate
	\begin{align}\label{eq:blow-up-rate-1}
		\frac{d}{dt} \frac{1}{2}\int|\kappa|^2 \zeta^4 ds + \frac{\sigma}{2} \int|\nabla_s^2\kappa|^2 \zeta^4 ds \leq C \sum (B[\gamma]^{\frac{b-\delta}{2-\delta}} + B[\gamma]^{\frac{b}{2}}),
	\end{align}
	where $\delta= \frac{1}{2}(a+\frac{b}{2}-1)$. Using the inequality $x^p \leq C(p,p_0,p_1)(x^{p_0}+x^{p_1})$ for all $x\geq 0$ and $0<p_0\leq p \leq p_1<\infty$, we may estimate the right hand side of \eqref{eq:blow-up-rate-1} by finding the smallest and largest exponent, respectively. We thus obtain
	\begin{align}
		\frac{d}{dt} \frac{1}{2}\int|\kappa|^2 \zeta^4 ds + \frac{\sigma}{2} \int|\nabla_s^2\kappa|^2 \zeta^4 ds \leq C (B[\gamma] + B[\gamma]^5).
	\end{align}
	Integrating and taking $\eta \nearrow 1$, we obtain for all $0\leq t_1\leq t_2<T$
	\begin{align}\label{eq:blow-up-rate-2}
		B[\gamma(t_2)] \leq B[\gamma(t_1)]+ C\int_{t_1}^{t_2} (B[\gamma(\tau)]+B[\gamma(\tau)]^5) d\tau.
	\end{align}
    The statement then follows from \Cref{lem:blow-up_convergence_inequality}~\eqref{item:blow_up}.
\end{proof}

In the case of the curve shortening flow ($\sigma=\mu=\vartheta=0$ and $\lambda=1$), the situation is similar but algebraically less involved.
The maximal existence time is defined analogously as in the case $\sigma>0$.
The following result is a part of \Cref{thm:main_CSF_dichotomy}.

\begin{lemma}\label{lem:CSF_blow-up}
    Let $\gamma_0\in\dot{C}^\infty(\R)$ with $\inf_\R|\partial_x\gamma_0|>0$ and $D[\gamma_0]<\infty$.
    Suppose that the maximal solution $\gamma\colon[0,T)\times\R\to\R^n$ to \eqref{eq:CSF} with initial datum $\gamma_0$ exists for a finite time $T<\infty$.
    Then
    \begin{equation}
        \liminf_{t\to T}(T-t)^{1/2}B[\gamma(t)]>0.
    \end{equation}
\end{lemma}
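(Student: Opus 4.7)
The plan is to mirror the proof of \Cref{thm:blow-up_rate_4th_order}, using the second-order curvature estimates \Cref{lem:CSF_smooth_bound} in place of \Cref{lem:EF_smooth_bound} and applying \Cref{prop:interpolation_P} at the level $k=1$ instead of $k=2$. The exponent $1/2$ in the blow-up rate reflects the parabolic scaling of \eqref{eq:CSF}: under $\gamma\mapsto c\gamma(c^2 t)$ both $B[\gamma]$ and $(T-t)^{1/2}$ scale as $c^{-1}$ and $c^{-1}$, respectively. By \Cref{lem:direction_horizontal}, $D[\gamma_0]<\infty$ ensures that $\gamma(t)$ is proper for each $t\in[0,T)$, so the cutoff framework of \Cref{sec:curvature_control} applies.

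First, I establish the CSF analog of \Cref{lem:blow-up-rate-finite-bending}: the map $t\mapsto B[\gamma(t)]$ is finite-valued and continuous on $[0,T)$. On any closed subinterval $[0,T']\subset[0,T)$, smoothness of $\gamma$ yields uniform bounds on $\kappa$ and $\nabla_s\kappa$; applying \Cref{lem:bending_derivative_cutoff} with $V=\kappa$, $\xi=0$ and a compactly supported cutoff $\eta$, the nonlinear term $\tfrac{1}{2}\int|\kappa|^4\eta\,ds$ is dominated by $CB[\gamma|\eta]$, and Gronwall plus monotone convergence as $\eta\nearrow 1$ produces $B[\gamma(t)]<\infty$ on $[0,T')$.

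Next, I argue by contradiction that $\limsup_{t\to T}B[\gamma(t)]=\infty$. If $B[\gamma]\leq M$ on $[0,T)$, then \Cref{lem:CSF_smooth_bound} gives uniform bounds $\|\partial_s^m\kappa\|_\infty\leq C(m,M,T)$ on $[T/2,T)$. Using $\partial_t\gamma=\kappa$ to bound all $\partial_t$-derivatives, and $\partial_t|\partial_x\gamma|=-|\kappa|^2|\partial_x\gamma|$ to keep $|\partial_x\gamma|$ uniformly bounded above and below, one extracts a $C^\infty$-limit $\gamma(T)\in\dot{C}^\infty(\R;\R^n)$ with $\inf_{\R}|\partial_x\gamma(T)|>0$ exactly as in \Cref{thm:blow-up_rate_4th_order}; then \Cref{thm:GF_well_posed} extends the solution past $T$, contradicting maximality.

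Finally, applying \Cref{lem:LTE_curvature_evolution} with $m=0$, $r=2$, $\sigma=\mu=\vartheta=0$, $\lambda=1$, and bounding the $\partial_s\zeta$- and $D\tilde\zeta\circ\gamma$-factors by $\Lambda$ (as in the proof of \Cref{lem:LTE_gronwall_2nd_order}), one obtains
\begin{equation*}
\frac{d}{dt}\tfrac{1}{2}\int|\kappa|^2\zeta^2\,ds + \int|\nabla_s\kappa|^2\zeta^2\,ds \leq C\sum\int|P_b^{a,c}|\zeta^q\,ds,
\end{equation*}
where the finite sum consists of triples with $c\leq 1$, $a+\tfrac{b}{2}-1<2$, and $q\geq a+\tfrac{b}{2}-1$; explicitly $(a,b)\in\{(0,4),(0,3),(1,2)\}$. \Cref{prop:interpolation_P} with $k=1$ absorbs the $|\nabla_s\kappa|^2$ term on the left, leaving powers of $B[\gamma]$ bounded by $\max\{\tfrac{b-\delta}{2-\delta},\tfrac{b}{2}\}=3$, attained at $(a,b)=(0,4)$ with $\delta=a+\tfrac{b}{2}-1=1$. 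Integrating in time and sending $\zeta\nearrow 1$ by monotone convergence yields
\begin{equation*}
B[\gamma(t_2)]\leq B[\gamma(t_1)] + C\int_{t_1}^{t_2}\big(B[\gamma(\tau)]+B[\gamma(\tau)]^3\big)\,d\tau
\end{equation*}
for all $0\leq t_1\leq t_2<T$. Combined with $\limsup_{t\to T}B[\gamma(t)]=\infty$, this is precisely the hypothesis of \Cref{lem:blow-up_convergence_inequality}\,(i), giving the desired rate $\liminf_{t\to T}(T-t)^{1/2}B[\gamma(t)]>0$.

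The main obstacle is bookkeeping: one must verify that every term on the right-hand side of the evolution identity in \Cref{lem:LTE_curvature_evolution} really does fit the framework of \Cref{prop:interpolation_P} with $k=1$ (i.e., $c\leq 1$ and $a+\tfrac{b}{2}-1<2$), and that the dominant interpolation exponent is exactly $3$, since this is what ultimately forces the exponent $1/2$ in the blow-up rate.
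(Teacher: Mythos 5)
Your overall strategy matches the paper's: derive an integral inequality $B[\gamma(t_2)]\le B[\gamma(t_1)]+C\int_{t_1}^{t_2}(B^p+B^3)\,d\tau$ via the localized interpolation estimate, rule out $\limsup_{t\to T}B[\gamma(t)]<\infty$ by extending the flow (exactly as in \Cref{thm:blow-up_rate_4th_order}), and then invoke \Cref{lem:blow-up_convergence_inequality}\,(\ref{item:blow_up}) with $q=3$ to get the exponent $\tfrac{1}{q-1}=\tfrac12$. The bookkeeping of the $P^{a,c}_b$-triples is correct and $3$ is indeed the dominant exponent.

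However, your first step --- ``$B[\gamma(t)]$ is finite-valued and continuous on $[0,T)$'' --- has a genuine gap. Under the hypothesis $D[\gamma_0]<\infty$ you do \emph{not} know that $B[\gamma_0]<\infty$: membership in $\dot C^\infty$ gives uniform pointwise bounds on $\kappa$ but says nothing about $\int|\kappa|^2\,ds$ over an infinite-length curve. Your Gronwall argument propagates forward from $t=0$ and therefore produces the estimate $B[\gamma(t)]\le \big(B[\gamma_0]+Ct\big)e^{Ct}$, which is vacuous if $B[\gamma_0]=\infty$. Continuity at $t=0$ is in fact false in general, and the paper correspondingly proves finiteness and local Lipschitz continuity only on the open interval $(0,T)$ (\Cref{lem:CSF_bending_conti}), explicitly noting that $t=0$ must be excluded. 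The missing ingredient is the $L^1$-in-time integrability of $B$, which is \emph{not} a consequence of smoothness on compact time slabs but rather comes from the direction-energy identity: \Cref{prop:energy_decay} with $\sigma=0$, $\lambda=1$ gives $\int_0^{T'}\int|\kappa|^2\,ds\,dt\le D[\gamma_0]<\infty$, hence $B[\gamma(t_1)]<\infty$ for a.e.\ $t_1$; one then Gronwalls forward from one such $t_1$ arbitrarily close to $0$. Since \Cref{lem:blow-up_convergence_inequality} only needs continuity near $T$, this repair does not disturb the rest of your argument, but as written the first paragraph does not establish the continuity you rely on.
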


As in the fourth-order case, we first show the continuity of $B[\gamma(t)]$.
Here the initial time $t=0$ has to be excluded since the initial bending energy may be infinite.

\begin{lemma}\label{lem:CSF_bending_conti}
    Let $\gamma\colon[0,T)\times\R\to\R^n$ be a maximal solution to \eqref{eq:CSF} with $D[\gamma_0]<\infty$. Then the map $(0,T)\ni t\mapsto B[\gamma(t)]$ is finite-valued and locally Lipschitz continuous.
\end{lemma}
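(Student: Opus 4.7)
The essential new feature compared to the fourth-order case treated in \Cref{lem:blow-up-rate-finite-bending} is that here $B[\gamma_0]$ may a priori be infinite (we only assume $D[\gamma_0]<\infty$), so that any Gronwall argument must be started at a positive time $t^*>0$ with finite bending energy. This starting point is secured from the gradient-flow structure: since the normal velocity along \eqref{eq:CSF} is $V=\kappa$ and $\xi=0$, \Cref{prop:energy_decay} applied on any $[0,T']\subset[0,T)$ (whose regularity assumptions are ensured by \Cref{thm:GF_well_posed}) yields
\[
\int_0^{T'}\!\int |\kappa|^2 ds\,dt \,=\, D[\gamma_0] - D[\gamma(T')] \,\leq\, D[\gamma_0] \,<\, \infty.
\]
Hence $t\mapsto B[\gamma(t)]\in L^1(0,T')$, and for every $t_0\in(0,T)$ there exists $t^*\in(0,t_0)$ with $B[\gamma(t^*)]<\infty$.

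Next, I derive the evolution of $B[\gamma|\eta]$ with the standard cutoff $\eta=\varphi^4$, $\varphi(x)=\varphi_0(x/R)$, from the proof of \Cref{prop:energy_decay}. Substituting $V=\kappa$ and $\xi=0$ into \Cref{lem:bending_derivative_cutoff} and integrating the leading term $-\int\langle\nabla_s^2\kappa,\kappa\rangle\eta ds$ by parts, I obtain
\[
\frac{d}{dt}B[\gamma|\eta] + \int|\nabla_s\kappa|^2\eta\, ds \,=\, \tfrac{1}{2}\int|\kappa|^4\eta\, ds + \int\langle\nabla_s\kappa,\kappa\rangle\partial_s\eta\, ds + \int|\kappa|^2\partial_s^2\eta\, ds.
\]
Local well-posedness provides uniform $L^\infty$ bounds for all derivatives of $\gamma$ on $[0,T']\times\R$; in particular $\|\kappa\|_\infty+\|\nabla_s\kappa\|_\infty\leq C(\gamma,T')$. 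Combined with the cutoff estimates $|\partial_s\eta|,|\partial_s^2\eta|\leq C(\gamma,T')/R$ and $\int(|\partial_s\eta|+|\partial_s^2\eta|)ds\leq C(\gamma,T')$ (as in the proof of \Cref{prop:energy_decay}), this yields the differential inequality
\[
\frac{d}{dt}B[\gamma|\eta] \,\leq\, C(\gamma,T')\bigl(B[\gamma|\eta]+1\bigr).
\]
Gronwall's lemma on $[t^*,T']$, together with monotone convergence as $R\to\infty$, then gives $\sup_{[t^*,T']}B[\gamma]<\infty$, proving finite-valuedness of $B$ on $(0,T)$.

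Finally, with $M\vcentcolon=\sup_{[t_0,T']}B[\gamma]<\infty$ at hand, \Cref{lem:CSF_smooth_bound} furnishes the uniform bound $\|\nabla_s\kappa\|_{L^2(ds)}\leq C(\gamma,t_0,T')$ on $[t_0,T']$. Integrating the above evolution identity over $[t_1,t_2]\subset [t_0,T']$ and sending $R\to\infty$---the boundary correction terms vanish thanks to $\|\partial_s\eta\|_\infty+\|\partial_s^2\eta\|_\infty\to 0$ together with $\kappa,\nabla_s\kappa\in L^2(ds)$ (e.g.\ $\int|\langle\nabla_s\kappa,\kappa\rangle\partial_s\eta|ds\leq\|\partial_s\eta\|_\infty\|\nabla_s\kappa\|_{L^2(ds)}\|\kappa\|_{L^2(ds)}$)---I arrive at
\[
|B[\gamma(t_2)]-B[\gamma(t_1)]| \,\leq\, \int_{t_1}^{t_2}\!\int|\nabla_s\kappa|^2 ds\,dt + \tfrac{1}{2}\int_{t_1}^{t_2}\!\int|\kappa|^4 ds\,dt.
\]
The first integrand is bounded by $C(\gamma,t_0,T')^2$ from the $L^2$ estimate, and the second by $\|\kappa\|_\infty^2\cdot 2M$, so local Lipschitz continuity on $(0,T)$ follows. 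The main obstacle is thus exactly bridging the potential initial-time blow-up of $B$; this is resolved not by a parabolic smoothing argument but by exploiting the $L^1$-in-time integrability furnished by the direction-energy gradient flow.
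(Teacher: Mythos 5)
Your proof is correct and follows essentially the same route as the paper's: invoke \Cref{prop:energy_decay} for the $\lambda=1$, $\sigma=0$ case to obtain $B\in L^1(0,T')$, use this to locate a starting time with finite bending energy, run a Gronwall argument on the localized quantity $B[\gamma|\varphi^4]$, pass $R\to\infty$, and then integrate the evolution identity using the $L^2$- and $L^\infty$-bounds from \Cref{lem:CSF_smooth_bound} to obtain the Lipschitz estimate. The only cosmetic differences are that you write the evolution identity before the final integration by parts of $\int|\kappa|^2\partial_s^2\eta\,ds$ (the paper's version with a single $\partial_s\eta$ term is equivalent), and you streamline the paper's two-step argument (first finiteness for all $t$, then a Gronwall bound) into a single Gronwall estimate from a chosen $t^*$ with $B[\gamma(t^*)]<\infty$.
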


\begin{proof}
    Using \Cref{lem:bending_derivative_cutoff} with $V=\kappa$ and $\xi=0$, and integrating by parts, we obtain
    \begin{align}
		\frac{d}{dt} B[\gamma|\eta] + \int |\nabla_s\kappa|^2 \eta ds %&=  \frac{1}{2}\int|\kappa|^4\eta ds + \int\langle \nabla_s\kappa,\kappa\rangle\partial_s\eta  ds + \int|\kappa|^2 \partial_s^2 \eta ds - \int \frac{1}{2}|\kappa|^2\xi \partial_s\eta ds\\
        &= \frac{1}{2}\int|\kappa|^4\eta ds -\int\langle \nabla_s\kappa,\kappa\rangle\partial_s\eta  ds.\label{eq:0118-11-2}
    \end{align}
    Let $T'\in(0,T)$.
    Let $R>1$ and $\eta = \varphi^4$ with $\varphi \in C_c^\infty(\R)$ as in the proof of \Cref{prop:energy_decay}.
    % Consider $\eta=\varphi^2$ where, for $R>1$, we take $\varphi(x)=\varphi_0(R^{-1}x)$ with $\chi_{[-\frac{1}{2},\frac{1}{2}]}\leq \varphi_0 \leq \chi_{[-1,1]}$.
    
   Dropping the second term on the left hand side of \eqref{eq:0118-11-2} and using that the derivatives of $\gamma$ are bounded by 
   $C=C(\gamma,T')$ on $[0,T']\times\R$, and that by \eqref{eq:Energy_decay_control_eta_derivatives} and \eqref{eq:energy_decay_local_length_control} we have
   $\int|\partial_s\eta|ds\leq C$, we deduce that for all $t\in[0,T']$,
    \begin{align}
		\frac{d}{dt} B[\gamma|\eta] \leq C B[\gamma|\eta] + C. \label{eq:0118-11}
    \end{align}
    Integrating over $[t_1,t_2]\subset[0,T']$ and sending $R\to\infty$ yield for all $0\leq t_1\leq t_2\leq T'$,
    \begin{align}\label{eq:CSF_bending_inequality}
        B[\gamma(t_2)] \leq B[\gamma(t_1)] + C\int_{t_1}^{t_2}\big( B[\gamma(t)] + 1 \big) dt.
    \end{align}
    At this moment $B[\gamma(t_2)]$ and $B[\gamma(t_1)]$ are possibly infinite, but by \Cref{prop:energy_decay} the last integral term is always finite.
    By this integrability, we have $B[\gamma(t_1)]<\infty$ for a.e.\ $t_1\in[0,T']$, and for any such $t_1$ we deduce from \eqref{eq:CSF_bending_inequality} that $B[\gamma(t_2)]<\infty$ for all $t_2\in [t_1,T']$.
    This implies that $B[\gamma(t)]<\infty$ holds for all $t\in(0,T']$.
    
    Now going back to \eqref{eq:0118-11}, using Gronwall's lemma for the smooth function $t\mapsto B[\gamma(t)|\eta]$ and sending $R\to\infty$ yield, for any $t_0\in(0,T']$,
    \begin{align}
        \sup_{t\in[t_0,T']} B[\gamma(t)] \leq C(\gamma,T',t_0).
    \end{align}
    This allows us to use the $L^2$- and $L^\infty$-bounds in \Cref{lem:CSF_smooth_bound} on the time interval $[t_0,T']$.
    Thanks to this, integrating \eqref{eq:0118-11-2} over any interval $[t_1,t_2]\subset[t_0,T']$, sending $R\to\infty$, and using \eqref{eq:Energy_decay_control_eta_derivatives}, we obtain 
    \begin{align}\label{eq:0128-1}
        B[\gamma(t_1)]-B[\gamma(t_2)] + \int_{t_1}^{t_2}\int|\nabla_s\kappa|^2dsdt = \frac{1}{2}\int_{t_1}^{t_2}\int|\kappa|^4dsdt,
    \end{align}
    and in particular the desired Lipschitz continuity
    \begin{align}
        |B[\gamma(t_1)]-B[\gamma(t_2)]| \leq C(\gamma,t_0,T')|t_1-t_2|. &\qedhere
    \end{align}
\end{proof}

\begin{proof}[Proof of \Cref{lem:CSF_blow-up}]
    We first establish an integral inequality for the bending energy (independent of the finiteness of $T$). With $R>1$ and $\varphi\in C_c^\infty(\R)$ as in the proof of \Cref{prop:energy_decay},
    \Cref{prop:interpolation_P} implies
    \begin{align}
        \frac{1}{2}\int|\kappa|^4\varphi^2ds &\leq \int|\nabla_s\kappa|^2\varphi^4ds + C_0\left( \Big(\int_{-R}^R|\kappa|^2ds\Big)^3+\Big(\int_{-R}^R|\kappa|^2ds\Big)^2 \right),
    \end{align}
    for some universal $C_0>0$.
    Sending $R\to\infty$ and inserting the resulting estimate into \eqref{eq:0128-1}, we conclude that that for a.e.\ $0<t_1\leq t_2<T$ we have
    \begin{equation}\label{eq:0119-01}
        B[\gamma(t_2,\cdot)] \leq B[\gamma(t_1,\cdot)] + C_0 \int_{t_1}^{t_2} \big( B[\gamma(t)]^3+B[\gamma(t)]^2 \big)dt.
    \end{equation}
    
    Now, suppose that $T<\infty$.
    If $\limsup_{t\to T}B[\gamma(t)]<\infty$, then, using \Cref{lem:CSF_smooth_bound} and proceeding exactly as in \Cref{thm:blow-up_rate_4th_order}, we can extend the flow past $T$, contradicting maximality. 
    Hence $\limsup_{t\to T}B[\gamma(t)]=\infty$. 
    Then the assertion follows from \eqref{eq:0119-01} and \Cref{lem:blow-up_convergence_inequality}~\eqref{item:blow_up}.
\end{proof}

Hereafter, we examine precise convergence properties for the flows discussed in Section 2. 
Since in general the parametrization speed $|\partial_x\gamma|$ of the solution is hard to control as $t\to\infty$, we work with the arclength reparametrization as usual.
Note carefully that, in our non-compact case, the choice of the base point of the reparametrization is important, because depending on the choice the limit may change. 

For a smooth family of immersions $\gamma\colon[0,T)\times\R\to\R^n$ with $L[\gamma(t)]=\infty$ for all $t\in [0,T)$, we define the \emph{canonical reparametrization} $\tilde{\gamma}:[0,T)\times\R\to\R^n$ of $\gamma$ by
\begin{equation}
    \tilde{\gamma}(t,s) := \gamma(t, \Phi_t(s)),
\end{equation}
where $\Phi_t:\R\to\R$ denotes the inverse map of $x\mapsto \int_0^x|\partial_x\gamma(t,x')|dx'$, meaning that the base point of the reparametrization is $x=0$.
By construction, $\Phi_t$ gives an orientation-preserving unit-speed reparametrization of $\gamma$.
In particular, both the direction energy and the bending energy are invariant under this reparametrization.

\subsection{Convergence for the curve shortening flow}

We first discuss convergence for the curve shortening flow, completing the proof of \Cref{thm:main_CSF_dichotomy}.

\begin{proof}[Proof of \Cref{thm:main_CSF_dichotomy}]
    We have already shown the blow-up rate (\Cref{lem:CSF_blow-up}) and the decreasing property of the direction energy (\Cref{prop:energy_decay}).
    Hereafter we show the remaining convergence properties, so suppose that $T=\infty$.

    We first prove that $B[\gamma(t)]\to0$.
    By \Cref{prop:energy_decay} we have $B[\gamma]\in L^1(0,\infty)$. Hence,
    \begin{align}\label{eq:DCT_sequence}
        \limsup_{j\to\infty} \inf_{t\in[j,j+1]} B[\gamma(t)] \leq \lim_{j\to\infty} \int_j^{j+1} B[\gamma(t)]dt = 0,
    \end{align}
    and thus there exists $t_j\in[j,j+1]$ such that $B[\gamma(t_j)]\to 0$ as $j\to\infty$. Moreover, by \Cref{lem:CSF_bending_conti}, $B[\gamma]$ is locally Lipschitz continuous on $(0,\infty)$. The desired convergence now follows from \eqref{eq:0119-01} and \Cref{lem:blow-up_convergence_inequality}~\eqref{item:convergence}.
    
    For smooth convergence, we observe that \Cref{lem:CSF_bending_conti} and the above convergence $B[\gamma(t)]\to0$ yield $\sup_{t\in[1,\infty)}B[\gamma(t)]<\infty$, and hence we can use the bounds in \Cref{lem:CSF_smooth_bound} as $t\to\infty$.
    Fix any family of base points $\{s_t\}_{t\geq0}\subset\R$ and take any time sequence $t_j\to\infty$.
    By \Cref{lem:CSF_smooth_bound} and a standard compactness argument---such as the Arzel\`a--Ascoli theorem applied in $\{|s|\leq R\}$ for all \( m \geq 0 \) and a diagonal argument---we deduce that there is a time subsequence $t_{j'}\to\infty$ such that the sequence of translated arclength reparametrizations  
    \[
    \tilde{\gamma}(t_{j'}, \cdot + s_{t_{j'}} ) - \tilde{\gamma}(t_{j'},s_{t_{j'}})
    \]
    locally smoothly converges to an arclength parametrized curve \( \gamma_\infty \in \dot{C}^\infty(\R; \R^n) \) with \( \gamma_\infty(0) = 0 \).
    By Fatou's lemma and $B[\gamma(t_{j'})]\to0$, we have $B[\gamma_\infty]=0$, i.e., $\gamma_\infty$ is a straight line.
    Also by Fatou's lemma and \Cref{prop:energy_decay}, we deduce that
    \[
    D[\gamma_\infty] \leq \liminf_{j'\to\infty} D[\gamma(t_{j'})] \leq D[\gamma_0]<\infty,
    \]
    so the line is uniquely given by $\gamma_\infty(s)=se_1$.
    The uniqueness of the limit implies that
    \begin{align}\label{eq:main_CSF_convergence}
    \lim_{t\to\infty} \big( \tilde\gamma (t,\cdot + s_t) - \tilde{\gamma}(t,s_t) \big) = \gamma_\infty\quad \text{ in $C^m_{\mathrm{loc}}(\R)$ for all $m\in\N_0$.}
    \end{align}
    Moreover, since $\{s_t\}$ is arbitrary, we have shown \eqref{eq:0220-1} for any $m\in\N_0$.
\end{proof}

In order to complement our result, we mention two nontrivial examples.
Both are given in the framework of planar graphs, and thus we use the fact that a planar graph remains so under the curve shortening flow.

\begin{example}[Oscillation]\label{ex:oscillating_CSF}
    There exists a graphical planar initial curve $\gamma_0(x)=(x,u_0(x))$ with $u_0\in C^\infty(\R)$ and $D[\gamma_0]<\infty$ from which the curve shortening flow $\gamma$ keeps oscillating.
    More precisely, if $x_t\in\R$ denotes the unique point such that $\langle\gamma(t,x_t),e_1\rangle=0$, then
    \begin{equation}
        \liminf_{t\to\infty}\langle\gamma(t,x_t),e_2\rangle=-1, \quad \limsup_{t\to\infty}\langle\gamma(t,x_t),e_2\rangle=1.
    \end{equation}
    
    This follows by Nara--Taniguchi's result \cite[Theorem 1.4]{Nara_Taniguchi_2007_convergence_line} on a certain equivalence of the asymptotic behaviors of solutions to the heat equation and to the graphical curve shortening equation, combined with oscillating solutions to the heat equation by Collet--Eckmann \cite[Lemma 8.6]{Collet_Eckmann_1992_case_study}.
    The function $u_0$ can be taken to have first derivative in $L^p(\R)$ for all $p>1$, thus having finite direction energy by \Cref{lem:direction_graphical}.
    
    In this case it is also easy to show that $\gamma(t)$ cannot uniformly converge to a line, even after translation.
    % Indeed, if it converges, then at some time the solution must be contained in a slab of small width, say $1$, which contradicts the above oscillation of width $2$.
\end{example}

\begin{example}[Escaping to infinity]\label{ex:escaping_CSF}
    There exists an unbounded graphical planar initial curve $\gamma_0$ with $D[\gamma_0]<\infty$ from which the curve shortening flow $\gamma$ remains unbounded at each time, and escapes as time goes to infinity, in the sense that 
    \begin{equation}
        \lim_{x\to\pm\infty}\langle\gamma(t,x),e_2\rangle = \infty \quad \text{for any}\ t\geq0, \qquad \lim_{t\to\infty}\inf_{x\in\R}\langle\gamma(t,x),e_2\rangle = \infty.
    \end{equation}
    
    In fact, we may take any embedded curve $\gamma_0\in\dot{C}^\infty(\R;\R^2)$ with graphical ends given by $|x|^\alpha$ as $x\to\pm\infty$, where $\alpha\in(0,\frac{1}{2})$, so that $D[\gamma_0]<\infty$ by \Cref{lem:direction_graphical}.
    Then we can argue by comparing the solution with suitable barriers.
    
    For the first assertion, given any $t_0>0$ and any $M>0$, we site a right-going grim reaper in a slab $\{ p\in \R^2 \mid M-1 \leq \langle p,e_2\rangle \leq M \}$, sufficiently far from the $e_2$-axis (depending on $\gamma_0,t_0,M$) so that for all $t\geq0$ the grim reaper lies below $\gamma_0$.
    Then the comparison between those two curves implies that $\liminf_{x\to\infty}\langle\gamma(t_0,x),e_2\rangle \geq M$.
    By arbitrariness of $M$ and $t_0$, we deduce the first assertion as $x\to\infty$ (the case $x\to-\infty$ is parallel).
    
    For the second assertion, let $h_*>0$ be an arbitrary height.
    Then we can prepare a smooth graphical curve $\gamma_*$ lying below the initial curve $\gamma_0$ and agreeing with the horizontal line $\ell_*:=\{\langle p,e_2\rangle =h_*\}$ outside a compact set.
    Classical heat kernel estimates imply that the solution to the heat equation from $\gamma_*$ uniformly converges to the line $\ell_*$, which together with \cite[Theorem 1.4]{Nara_Taniguchi_2007_convergence_line} implies the same convergence of the curve shortening flow from $\gamma_*$.
    By the comparison principle, we have $\liminf_{t\to\infty}\inf_{x\in\R}\langle\gamma(t,x),e_2\rangle \geq h_*$.
    The arbitrariness of $h_*$ implies the assertion.
\end{example}

\begin{remark}[Convergence proof using maximum principles]\label{rem:convergence_to_line_comparison}
    Here, for comparison purposes, we sketch how maximum principles ensure convergence of the planar curve shortening flow to a line.
    Consider an initial planar embedded curve with ends $C^1$-asymptotic to horizontal semi-lines with opposite directions.
    Then by the comparison principle the solution remains contained in a horizontal slab.
    Also, outside a vertical slab the initial curve is graphical.
    Applying Angenent's intersection number principle \cite{Angenent_1991_intersection} to the solution and vertical lines implies that the graphical part remains graphical, so the non-graphical part stays in a compact set.
    Then an extension of Grayson's argument \cite{Grayson_1987_round} (cf.\ \cite{Polden_1991_evolving_curve}) implies that at some positive time the solution becomes entirely graphical.
    Note also that the $C^1$-asymptotic property of the ends is preserved along the flow, by simple comparisons with grim reapers and interior gradient estimates \cite{Ecker_Huisken_1991_interior} (see also \cite[Lemma 0.1]{Chou_Zhu_1998_complete}).
    Hence, by Ecker--Huisken's convergence result \cite[Theorem 5.1]{Ecker_Huisken_1989_entiregraph}, the solution locally smoothly converges to a self-similar solution (in a rescaled sense).
    Therefore, this limit must be a horizontal line, which is the only graphical self-similar solution contained in the slab. 
    In particular, identifying the limit in a suitable sense using maximum principles seems substantially hard, especially without the slab property or planarity.
\end{remark}

\subsection{Convergence for the surface diffusion and Chen's flow}

We now turn to fourth-order flows of curve shortening type including \eqref{eq:SDF} and \eqref{eq:CF}, and obtain an analogous result to \Cref{thm:main_CSF_dichotomy}.

\begin{theorem}\label{thm:main_SDF_Chen}
    Let $\mu\geq0$, $\vartheta\in\R$, and $\gamma\colon[0,T)\times \R\to\R^n$ be a maximal solution to
    \begin{equation}\label{eq:SDF_Chen_equation}
        \partial_t\gamma=-\nabla_s^2\kappa+\mu|\kappa|^2\kappa+ \vartheta\langle \kappa,\nabla_s\kappa\rangle \partial_s\gamma,
    \end{equation}
    with initial datum $\gamma_0\in \dot{C}^\infty(\R;\R^n)$ such that $\inf_{\R}|\partial_x\gamma_0|>0$.
    Suppose that $D[\gamma_0]<\infty$.
    Then the energy $D[\gamma(t,\cdot)]$ continuously decreases in $t\geq0$.
    In addition, the following dichotomy holds:
    \begin{enumerate}
        \item\label{item:SDF_Chen_blow-up} If $T<\infty$, then
        \begin{equation}
            \liminf_{t\to T}(T-t)^{1/4}\int_{\gamma(t,\cdot)}|\kappa|^2ds>0.
        \end{equation}
        \item\label{item:SDF_Chen_convergence} If $T=\infty$, then
        \begin{equation}
            \lim_{t\to\infty}\int_{\gamma(t,\cdot)}|\kappa|^2ds =0.
        \end{equation} 
        and moreover,
        \begin{equation}
            \lim_{t\to\infty}\sup_{\R}|\partial_s\gamma(t,\cdot)-e_1|=0, \quad \lim_{t\to\infty}\sup_{\R}|\partial_s^m\kappa(t,\cdot)|=0,
        \end{equation}
        for all $m\in\N_0$.
        In particular, after reparametrization by arclength and translation, the solution locally smoothly converges to a horizontal line.
    \end{enumerate}
\end{theorem}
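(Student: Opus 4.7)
The plan is to run a fourth-order analogue of the proof of \Cref{thm:main_CSF_dichotomy}, keeping the direction energy in the role of the monotone quantity while controlling $B[\gamma(t)]$ through the curvature estimates of \Cref{sec:curvature_control}. Monotonicity and continuity of $D[\gamma(t)]$ on $[0,T)$ are immediate from \Cref{prop:SDF_Chen_energy_decay}, which also yields the crucial space-time integrability
\[
\int_0^T\!\int\bigl(|\nabla_s\kappa|^2+\mu|\kappa|^4\bigr)\,ds\,dt\leq D[\gamma_0].
\]

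The first technical step is an integral inequality for $B$. Starting from \Cref{lem:bending_derivative_cutoff} with $V=-\nabla_s^2\kappa+\mu|\kappa|^2\kappa$ and $\xi=\vartheta\langle\kappa,\nabla_s\kappa\rangle$, integration by parts produces a good term $\int|\nabla_s^2\kappa|^2\eta\,ds$ together with nonlinear remainders of the schematic form $\int|P^{a,c}_b|\zeta^q\,ds$ with $b\geq 2$, $c\leq 2$, and $a+\tfrac{b}{2}-1<4$. Controlling these via \Cref{prop:interpolation_P} with $k=2$, absorbing the leading term, and letting the cut-off tend to $1$ should yield
\[
B[\gamma(t_2)]\leq B[\gamma(t_1)]+C\int_{t_1}^{t_2}P\bigl(B[\gamma(t)]\bigr)\,dt,\qquad 0\leq t_1\leq t_2<T,
\]
for a fixed polynomial $P$. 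A refinement on sub-intervals $[t_0,T']\subset(0,T)$, combined with the $L^2$-bounds of \Cref{lem:LTE_L2_bounds}, should make $t\mapsto B[\gamma(t)]$ locally Lipschitz on $(0,T)$ in the spirit of \Cref{lem:CSF_bending_conti}.

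For part~\eqref{item:SDF_Chen_blow-up}, I would then argue as in \Cref{thm:blow-up_rate_4th_order}: if $\limsup_{t\nearrow T}B[\gamma(t)]<\infty$, the bounds of \Cref{lem:LTE_L2_bounds} and \Cref{lem:EF_smooth_bound} together with the flow equation yield uniform $\dot{C}^\infty$-control on $[\tfrac{T}{2},T)$, hence a smooth limit $\gamma(T)$ with $\inf_\R|\partial_x\gamma(T)|>0$, which via \Cref{thm:GF_well_posed} would extend the flow past $T$ and contradict maximality. Thus $\limsup_{t\nearrow T}B[\gamma(t)]=\infty$, and the claimed blow-up rate follows from the integral inequality together with \Cref{lem:blow-up_convergence_inequality}\eqref{item:blow_up}. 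For part~\eqref{item:SDF_Chen_convergence}, the time integrability above produces $t_j\to\infty$ along which $\int|\nabla_s\kappa(t_j)|^2\,ds\to 0$ (and $\int|\kappa(t_j)|^4\,ds\to 0$ when $\mu>0$). Combined with $D[\gamma(t_j)]\leq D[\gamma_0]$, the sharp one-dimensional Gagliardo--Nirenberg inequality applied to $\partial_s\gamma-e_1$ -- noting $\partial_s(\partial_s\gamma-e_1)=\kappa$ and $|\partial_s\kappa|^2=|\nabla_s\kappa|^2+|\kappa|^4$ -- should yield $B[\gamma(t_j)]\to 0$. Feeding this into the integral inequality and invoking \Cref{lem:blow-up_convergence_inequality}\eqref{item:convergence} should promote this null subsequential limit to $B[\gamma(t)]\to 0$ as $t\to\infty$. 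Once $B$ is uniformly bounded, \Cref{lem:EF_smooth_bound} then delivers the pointwise decay of $\partial_s^m\kappa$, while an Arzel\`a--Ascoli/Fatou argument applied to the translated arclength reparametrizations identifies the locally smooth limit (with $B=0$ and $D\leq D[\gamma_0]$) as the horizontal line $s\mapsto se_1$, exactly as at the end of the proof of \Cref{thm:main_CSF_dichotomy}.

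The main obstacle is the bootstrap from the time-averaged control of $\nabla_s\kappa$ (and $|\kappa|^4$ when $\mu>0$) back to the pointwise-in-time smallness of $B[\gamma]$. Unlike for \eqref{eq:CSF}, where \Cref{prop:energy_decay} gives $B\in L^1(0,\infty)$ directly, here the bending energy must be recovered from the derivative energy through an interpolation against $D$. This is cleaner when $\mu>0$ (Chen) than when $\mu=0$ (SDF), the latter requiring care to close the loop without assuming $B$ uniformly bounded a priori.
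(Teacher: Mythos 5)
Your overall architecture coincides with the paper's: monotonicity and continuity of $D$ from \Cref{prop:SDF_Chen_energy_decay}; blow-up part~(\ref{item:SDF_Chen_blow-up}) via \Cref{thm:blow-up_rate_4th_order} (continuity/Lipschitz of $B$ from \Cref{lem:blow-up-rate-finite-bending}, a closed integral inequality of Gronwall type via \Cref{lem:LTE_curvature_evolution} and \Cref{prop:interpolation_P}, extension past $T$ via \Cref{thm:GF_well_posed}); convergence part~(\ref{item:SDF_Chen_convergence}) reduced to showing $B[\gamma(t)]\to 0$, after which \Cref{lem:EF_smooth_bound}, Arzel\`a--Ascoli, and Fatou finish exactly as in \Cref{thm:main_CSF_dichotomy}~(\ref{item:main_CSF_convergence}).

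The one place you diverge and then stop short is precisely the step you flag as the obstacle: recovering $B[\gamma(t_j)]\to 0$ from the $L^1(0,\infty)$-integrability of $\int|\nabla_s\kappa|^2\,ds$ and $D\leq D[\gamma_0]$. Your route, $B\leq (2D)^{1/2}\|\partial_s\kappa\|_{L^2(ds)}$ with $|\partial_s\kappa|^2 = |\nabla_s\kappa|^2+|\kappa|^4$, reintroduces $\int|\kappa|^4\,ds$, which the dissipation does not control when $\mu=0$; you leave this loop open. It is in fact closable -- from \eqref{eq:gagliardo-nirenberg_L_infty} with $\zeta\nearrow 1$ one gets $\int|\kappa|^4\,ds\leq 2B^{3/2}(\int|\nabla_s\kappa|^2\,ds)^{1/2}$, which after squaring the GN bound and a Young inequality with exponents $(4/3,4)$ absorbs the $B^{3/2}$ factor -- but you did not carry out this absorption, so as written the $\mu=0$ case is not proved. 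The paper sidesteps $\partial_s\kappa$ altogether: using $\nabla_s\kappa = \partial_s\kappa + |\kappa|^2\partial_s\gamma$ and integration by parts one obtains
\begin{align}
  \int\langle \partial_s\gamma-e_1,\nabla_s\kappa\rangle\,ds = -\int|\kappa|^2\langle\partial_s\gamma,e_1\rangle\,ds,
\end{align}
and together with the pointwise identity $2 = |\partial_s\gamma-e_1|^2 + 2\langle\partial_s\gamma,e_1\rangle$ this gives $2B = \int|\kappa|^2|\partial_s\gamma-e_1|^2\,ds - 2\int\langle\partial_s\gamma-e_1,\nabla_s\kappa\rangle\,ds$. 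Bounding the first term by $\||\kappa|^2\|_\infty\cdot 2D$ via \eqref{eq:gagliardo-nirenberg_L_infty} and the second by Cauchy--Schwarz, then absorbing $\int|\kappa|^2\,ds$ by Young, yields $B\leq 4D_0^2\int|\nabla_s\kappa|^2\,ds + 2(2D_0)^{1/2}\bigl(\int|\nabla_s\kappa|^2\,ds\bigr)^{1/2}$, which handles $\mu=0$ and $\mu>0$ simultaneously with only $\nabla_s\kappa$ on the right.
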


\begin{proof}
    The decay property for $D$ is already shown in \Cref{prop:SDF_Chen_energy_decay}.
    Case \eqref{item:SDF_Chen_blow-up} is a direct consequence of \Cref{thm:blow-up_rate_4th_order}.
    We prove case \eqref{item:SDF_Chen_convergence}, so suppose that $T=\infty$.
    Once we establish the convergence
    \begin{equation}\label{eq:0213-02}
        \lim_{t\to\infty} B[\gamma(t)] = 0,
    \end{equation}
    then the remaining argument is completely parallel to the proof of \Cref{thm:main_CSF_dichotomy} \eqref{item:main_CSF_convergence}, where we use \Cref{lem:blow-up-rate-finite-bending} instead of \Cref{lem:CSF_bending_conti}, as well as \Cref{lem:EF_smooth_bound} instead of \Cref{lem:CSF_smooth_bound}.

    We prove \eqref{eq:0213-02}.  By \Cref{prop:SDF_Chen_energy_decay}, we have
    $D[\gamma(t)]\leq D[\gamma_0]$ for all $t\in [0,\infty)$ and $\int|\nabla_s\kappa|^2ds \in L^1(0,\infty)$.
    Let $R>1$ and let $\eta\in C_c^\infty(\R)$ be as in the proof of \Cref{prop:energy_decay}. Using $\langle \partial_s\kappa, \partial_s\gamma\rangle = -|\kappa|^2$ and integration by parts, we find
    \begin{align}
        \int \langle \partial_s\gamma-e_1,\nabla_s\kappa\rangle \eta ds &= \int \langle \partial_s\gamma-e_1, \partial_s\kappa + |\kappa|^2\partial_s\gamma\rangle \eta ds \\
        &= - \int |\kappa|^2 \langle \partial_s\gamma,e_1\rangle \eta ds - \int \langle \partial_s\gamma-e_1, \kappa\rangle \partial_s\eta ds. \label{eq:SDF_conv_1}
    \end{align}
    Moreover, since $\eta$ has compact support, we have
    \begin{align}
    &\int |\kappa|^2 |\partial_s\gamma-e_1|^2 \eta ds\leq \int|\partial_s\gamma-e_1|^2 ds\,\Vert |\kappa|^2\eta\Vert_\infty \\
    &\qquad\leq 2D[\gamma_0]\Big(2\int |\kappa||\nabla_s\kappa|\eta ds + \int|\kappa|^2 \partial_s\eta ds \Big)  \\
    &\qquad\leq 4D[\gamma_0]^2 \int |\nabla_s\kappa|^2\eta ds + \int|\kappa|^2\eta ds + 2D[\gamma_0]\int|\kappa|^2 \partial_s\eta ds,\label{eq:SDF_conv_2}
\end{align}
Combining \eqref{eq:SDF_conv_1} and \eqref{eq:SDF_conv_2}, and using H\"older, we find
\begin{align}
    &2\int|\kappa|^2\eta ds = \int |\kappa|^2 \left(|\partial_s\gamma-e_1|^2 + 2\langle \partial_s\gamma,e_1\rangle\right) \eta ds \\
    &\leq 4D[\gamma_0]^2 \int|\nabla_s\kappa|^2 \eta ds + \int|\kappa|^2 \eta ds + 2 D[\gamma_0]\int|\kappa|^2\partial_s\eta ds\\
    & +  2\left(2 D[\gamma_0]\right)^{1/2}\Big(\int|\nabla_s\kappa|^2\eta ds\Big)^{1/2}
    +2(2D[\gamma_0])^{1/2}\Big(\int|\kappa|^2|\partial_s\eta|^2 ds\Big)^{1/2}.    
    \label{eq:lemma8-8-1}
\end{align}
For almost all $t_0>0$, we have $\kappa(t_0)\in L^\infty(\R)$ and $\nabla_s\kappa(t_0)\in L^2(ds(t_0))$. Using \eqref{eq:Energy_decay_control_eta_derivatives} and \eqref{eq:energy_decay_local_length_control}, after absorbing it thus follows from sending $R\to\infty$, $\eta\to 1$ in \eqref{eq:lemma8-8-1} that $B[\gamma(t_0)]<\infty$. \Cref{lem:blow-up-rate-finite-bending} implies that $B[\gamma(t)]<\infty$ for all $t\geq t_0$, and hence for all $t>0$. Therefore, for $t>0$, sending $R\to\infty$ in \eqref{eq:lemma8-8-1} yields
\begin{align}
    \int|\kappa|^2 \leq 4D[\gamma_0]^2 \int|\nabla_s\kappa|^2 + 2\left(2 D[\gamma_0]\right)^{1/2}\left(\int|\nabla_s\kappa|^2ds\right)^{1/2}.\label{eq:bending-11-04}
\end{align}
    
%     For any finite time $t>0$, \Cref{lem:LTE_L2_bounds} yields that $\kappa,\nabla_s\kappa\in L^2(ds(t))$. Using $\langle \partial_s\kappa, \partial_s\gamma\rangle = -|\kappa|^2$ and integration by parts, we find
%     \begin{align}
%         \int \langle \partial_s\gamma-e_1,\nabla_s\kappa\rangle ds &= \int \langle \partial_s\gamma-e_1, \partial_s\kappa + |\kappa|^2\partial_s\gamma\rangle ds \\
%         &= - \int |\kappa|^2 \langle \partial_s\gamma,e_1\rangle ds. \label{eq:SDF_conv_1}
%     \end{align}
% Moreover, since $\kappa,\nabla_s\kappa \in L^2(ds)$, we may use \eqref{eq:gagliardo-nirenberg_L_infty} with $\zeta\nearrow 1$ and $|\partial_s\zeta| \leq \Lambda \to 0$, which yields $\Vert |\kappa|^2\Vert_\infty \leq 2 \Vert \kappa\Vert_{L^2(ds)} \Vert \nabla_s\kappa\Vert_{L^2(ds)}$ %\int 2 |\kappa| |\nabla_s\kappa|ds$, 
% so that
% \begin{align}
%     \int |\kappa|^2 |\partial_s\gamma-e_1|^2 ds&\leq \int|\partial_s\gamma-e_1|^2 ds\,\Vert |\kappa|^2\Vert_\infty \leq 4D[\gamma_0] \Vert \kappa\Vert_{L^2(ds)} \Vert \nabla_s\kappa\Vert_{L^2(ds)} \\
%     &\leq 4D[\gamma_0]^2 \int |\nabla_s\kappa|^2 ds + \int|\kappa|^2ds,\label{eq:SDF_conv_2}
% \end{align}
% Combining \eqref{eq:SDF_conv_1} and \eqref{eq:SDF_conv_2}, and using H\"older, we find
% \begin{align}
%     2\int|\kappa|^2 &= \int |\kappa|^2 \left(|\partial_s\gamma-e_1|^2 + 2\langle \partial_s\gamma,e_1\rangle\right) \\
%     &\leq 4D[\gamma_0]^2 \int|\nabla_s\kappa|^2 + \int|\kappa|^2+ 2\left(2 D[\gamma_0]\right)^{1/2}\left(\int|\nabla_s\kappa|^2ds\right)^{1/2}.\label{eq:lemma8-8-1}
% \end{align}
Since $\int|\nabla_s\kappa|^2ds \in L^1(0,\infty)$, we may argue as in \eqref{eq:DCT_sequence} to find a sequence $t_j\in [j,j+1]$ with $\|\nabla_s\kappa\|_{L^2(ds(t_j))}\to 0$. It follows from \eqref{eq:bending-11-04} that also $B[\gamma(t_j)]\to 0$. By \eqref{eq:blow-up-rate-2} (which is valid even if $T=\infty$) and \Cref{lem:blow-up-rate-finite-bending}, we may use \Cref{lem:blow-up_convergence_inequality}~\eqref{item:convergence} to get $B[\gamma(t)]\to 0$ as $t\to\infty$.
\end{proof}

Analogously to \Cref{cor:CSF_blowup}, we obtain the following result by the same proof.

\begin{corollary}\label{cor:SDF_Chen_blowup}
    Let $\gamma_0\in\dot{C}^\infty(\R;\R^2)$ with $\inf_{\R}|\partial_x\gamma|>0$ and $D[\gamma_0]<\infty$.
    Suppose that $\gamma_0$ has nonzero rotation number.
    Then the unique solution to \eqref{eq:SDF_Chen_equation} with $\mu\geq0$ starting from $\gamma_0$ blows up in finite time as in \Cref{thm:main_SDF_Chen} \eqref{item:SDF_Chen_blow-up}.
\end{corollary}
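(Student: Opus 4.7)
The plan is to argue by contradiction, exactly mimicking the proof of \Cref{cor:CSF_blowup}. The only ingredients needed are (a) preservation of the rotation number under the flow \eqref{eq:SDF_Chen_equation}, which is available from \Cref{lem:rotation_number_preservation} (this lemma is formulated for a general class of evolutions, so it applies equally to \eqref{eq:SDF}, \eqref{eq:CF}, and their one-parameter family above), and (b) the uniform tangent convergence contained in \Cref{thm:main_SDF_Chen}~\eqref{item:SDF_Chen_convergence}.

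First I would let $\gamma\colon[0,T)\times\R\to\R^n$ be the maximal solution to \eqref{eq:SDF_Chen_equation} starting from $\gamma_0$, whose existence and uniqueness is guaranteed by the local well-posedness discussed in the paper. By \Cref{lem:rotation_number_preservation}, the rotation number of $\gamma(t,\cdot)$ equals that of $\gamma_0$, hence is nonzero for every $t\in[0,T)$.

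Next I would observe that since $\gamma$ is planar and the total turning of its tangent along $\R$ is nonzero, the map $\partial_s\gamma(t,\cdot)\colon\R\to S^1$ is surjective for every $t\in[0,T)$. In particular, for each $t$ there exists $x_t\in\R$ with $\partial_s\gamma(t,x_t)=-e_1$, so that
\begin{equation}
    \sup_{\R}\bigl|\partial_s\gamma(t,\cdot)-e_1\bigr|\geq 2 \qquad \text{for all } t\in[0,T).
\end{equation}

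Finally, I would suppose toward a contradiction that $T=\infty$. Then \Cref{thm:main_SDF_Chen}~\eqref{item:SDF_Chen_convergence} yields $\lim_{t\to\infty}\sup_{\R}|\partial_s\gamma(t,\cdot)-e_1|=0$, which is incompatible with the lower bound above. Therefore $T<\infty$, and the dichotomy in \Cref{thm:main_SDF_Chen} forces the blow-up rate in \eqref{item:SDF_Chen_blow-up}. The main (and only) conceptual point is that the rotation-number preservation lemma applies uniformly across the whole family of fourth-order flows under consideration; everything else is a direct transcription of the \eqref{eq:CSF} argument.
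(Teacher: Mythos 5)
Your proof is correct and follows exactly the argument the paper invokes, namely that of \Cref{cor:CSF_blowup} transported to the fourth-order setting: preservation of the rotation number via \Cref{lem:rotation_number_preservation} forces a leftward tangent at every time, which contradicts the uniform tangent convergence in \Cref{thm:main_SDF_Chen}~\eqref{item:SDF_Chen_convergence} if $T=\infty$. The only cosmetic slip is writing $\gamma\colon[0,T)\times\R\to\R^n$ where $n=2$ is required (and indeed used when you invoke planarity and the rotation number).
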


\subsection{Global existence and convergence for the $\lambda$-elastic flow}\label{subsec:EF_convergence}

Now we consider the $\lambda$-elastic flow \eqref{eq:lambda-EF}, which may be regarded as a gradient flow of
\[
E_\lambda[\gamma] := B[\gamma] + \lambda D[\gamma].
\]
We first establish a general global existence theorem.

\begin{theorem}\label{thm:global_lambda_elastic_flow}
    Let $\lambda\geq0$ and $\gamma_0\in \dot{C}^\infty(\R;\R^n)$ with $\inf_{\R}|\partial_x\gamma_0|>0$ and $E_\lambda[\gamma_0]<\infty$.
    If $\lambda=0$, suppose in addition that $\gamma_0$ is proper.
    Then there exists a unique, properly immersed, global-in-time solution $\gamma:[0,\infty)\times\R\to\R^n$ to \eqref{eq:lambda-EF} with initial datum $\gamma_0$ such that $\gamma-\gamma_0\in C^\infty([0,T']\times\R)$ and $\inf_{[0,T']\times\R}|\partial_x\gamma|>0$ for any $T'\in(0,\infty)$.
\end{theorem}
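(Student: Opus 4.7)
The plan is to combine the local well-posedness theory from the appendix with the monotonicity of $E_\lambda \vcentcolon= B + \lambda D$ from \Cref{prop:energy_decay} and the blow-up rate from \Cref{thm:blow-up_rate_4th_order} to rule out finite-time singularities. First I would invoke the local well-posedness statement to produce a unique maximal solution $\gamma\colon[0,T)\times\R\to\R^n$ with $\gamma-\gamma_0\in C^\infty([0,T']\times\R)$ and $\inf_{[0,T']\times\R}|\partial_x\gamma|>0$ for every $T'<T$. Initial properness of $\gamma_0$ is automatic for $\lambda>0$, since $D[\gamma_0]\leq \lambda^{-1}E_\lambda[\gamma_0]<\infty$ together with \Cref{lem:direction_horizontal} forces $\langle\gamma_0(x),e_1\rangle\to\pm\infty$ as $x\to\pm\infty$; for $\lambda=0$ it is part of the hypothesis. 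The task then reduces to showing $T=\infty$ together with propagation of properness.

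For every $T'\in(0,T)$, the regularity produced by local well-posedness supplies the uniform bounds on $\inf|\partial_x\gamma|$, on $\partial_x^k\gamma$ for $1\leq k\leq 4$, and on $\partial_t\gamma$ over $[0,T']\times\R$ required in \Cref{prop:energy_decay} (with $\sigma=1$). Applying that proposition gives the identity
\begin{equation}
E_\lambda[\gamma(T')] + \int_0^{T'}\!\!\int |\partial_t^\perp\gamma|^2\, ds\, dt = E_\lambda[\gamma_0],
\end{equation}
and in particular the uniform bound $B[\gamma(T')]\leq E_\lambda[\gamma_0]<\infty$ on $[0,T)$. For $\lambda>0$ this simultaneously yields $D[\gamma(T')]\leq\lambda^{-1}E_\lambda[\gamma_0]<\infty$, and \Cref{lem:direction_horizontal} then preserves properness at every $t\in[0,T)$; for $\lambda=0$, the $L^\infty$-bound on $\partial_t\gamma$ over $[0,T']\times\R$ gives $|\gamma(t,x)-\gamma_0(x)|\leq T'\|\partial_t\gamma\|_{L^\infty([0,T']\times\R)}$, so properness is inherited from $\gamma_0$.

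Finally, suppose for contradiction $T<\infty$. Since $\gamma_0\in\dot C^\infty$ is properly immersed with $\inf|\partial_x\gamma_0|>0$ and $B[\gamma_0]\leq E_\lambda[\gamma_0]<\infty$, \Cref{thm:blow-up_rate_4th_order} applies and gives $\liminf_{t\to T}(T-t)^{1/4}B[\gamma(t)]>0$, so $B[\gamma(t)]\to\infty$. This contradicts the uniform bound $B[\gamma(t)]\leq E_\lambda[\gamma_0]$ obtained above, hence $T=\infty$. Uniqueness on each compact time interval is already part of the local well-posedness statement, and thus propagates to the global solution. The main technical point is checking that the regularity hypothesis of \Cref{prop:energy_decay} is indeed furnished by the appendix's well-posedness framework on every $[0,T']$; once this is granted, the result is essentially the dichotomy "bounded bending energy versus finite-time blow-up" packaged by \Cref{thm:blow-up_rate_4th_order}, with the bound supplied by the gradient flow structure of $E_\lambda$.
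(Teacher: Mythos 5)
Your proposal is correct and follows essentially the same route as the paper: the paper's proof invokes \Cref{thm:blow-up_rate_4th_order} to reduce the problem to showing $\sup_{t\in[0,T)}B[\gamma(t)]<\infty$, and then observes that $B[\gamma(t)]\leq E_\lambda[\gamma(t)]\leq E_\lambda[\gamma_0]$ by \Cref{prop:energy_decay}. Your additional details — verifying the hypotheses of \Cref{prop:energy_decay} on each compact time interval, and tracking properness via \Cref{lem:direction_horizontal} for $\lambda>0$ and via bounded displacement for $\lambda=0$ — are sound, though the paper delegates properness to \Cref{thm:GF_well_posed} directly.
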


\begin{proof}
    By \Cref{thm:blow-up_rate_4th_order} it suffices to show that the unique maximal solution $\gamma:[0,T)\times\R\to\R^n$ from $\gamma_0$ satisfies $\sup_{t\in[0,T)}B[\gamma(t)]<\infty$.
    This follows since by \Cref{prop:energy_decay} we have $B[\gamma(t)]\leq E_\lambda[\gamma(t)]\leq E_\lambda[\gamma_0]<\infty$ for all $t\in[0,T)$.
\end{proof}

We now prove that any global solution to the $\lambda$-elastic flow converges to a stationary solution, thus being an elastica.
Recall that a curve is called an \emph{elastica} if it solves the equation $\nabla_s^2\kappa +\frac{1}{2}|\kappa|^2\kappa-\lambda\kappa=0$ for some $\lambda\in\R$.
We also call it a \emph{$\lambda$-elastica} to specify the value of $\lambda$.
A $\lambda$-elastica corresponds to a stationary solution to the $\lambda$-elastic flow.
The $\lambda=0$ case is also called a \emph{free elastica}.

\begin{theorem}\label{thm:convergence_lambda_elastic_flow}
    Let $\lambda\geq0$ and $\gamma$ be the global-in-time solution to \eqref{eq:lambda-EF} obtained in Theorem \ref{thm:global_lambda_elastic_flow}.
    Then $\gamma$ satisfies the following properties:
    \begin{enumerate}
    % \item\label{item:convergence_lambdaEF_1} The energy $E_\lambda[\gamma(t)]$ is continuous and non-increasing in $t\geq0$, and moreover
    %     \begin{equation}
    %         E_\lambda[\gamma(t)]-E_\lambda[\gamma_0] = - \int_0^t\int_{\gamma(t',\cdot)}\Big| \nabla_s^2\kappa +\frac{1}{2}|\kappa|^2\kappa-\lambda\kappa \Big|^2dsdt'.
    %     \end{equation}
        % \item\label{item:convergence_lambdaEF_2} For any $t\in(0,\infty)$ and $m\in\N_0$, we have
        % \begin{align}
        %     \|\nabla_s^m\kappa\|_\infty\leq C(1+t^{-\frac{m+1}{4}}), \quad \|\nabla_s^m\kappa\|_{L^2(ds)}\leq C(1+t^{-\frac{m}{4}}),
        % \end{align}
        % and
        % \begin{align}
        %     \|\partial_s^m\kappa\|_\infty\leq C(1+t^{-\frac{m+1}{4}}), \quad \|\partial_s^m\kappa\|_{L^2(ds)}\leq C(1+t^{-\frac{m}{4}}),
        % \end{align}
        % where $C>0$ depends only on $m$ and $E_\lambda[\gamma_0]$.
        \item\label{item:convergence_lambdaEF_3} For any sequence of times $t_j\to\infty$ and of base points $\{s_j\}_j\subset\R$ there exist subsequences $\{t_{j'}\}\subset\{t_j\}$ and $\{s_{j'}\}\subset\{s_j\}$ such that the sequence of translated reparametrizations $\tilde{\gamma}(t_{j'},\cdot+s_{j'})-\tilde{\gamma}(t_{j'},s_{j'})$ locally smoothly converges as $j'\to\infty$ to a $\lambda$-elastica $\gamma_\infty\in\dot{C}^\infty(\R;\R^n)$ with $\gamma_\infty(0)=0$.
        \item\label{item:convergence_lambdaEF_4} The above limit satisfies the following lower semi-continuity:
        \[
        \lim_{t\to\infty }E_\lambda[\tilde{\gamma}(t)] \geq E_\lambda[\gamma_\infty].
        \]
    \end{enumerate}
\end{theorem}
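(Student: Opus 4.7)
The plan is to combine the uniform higher-order curvature estimates of \Cref{lem:EF_smooth_bound} with the Lyapunov structure from \Cref{prop:energy_decay} to first extract a smooth subsequential limit, and then to force that limit to be stationary along the flow.

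\textbf{Step 1 (pre-compactness).} By \Cref{prop:energy_decay}, $B[\gamma(t)]\leq E_\lambda[\gamma(t)]\leq E_\lambda[\gamma_0]$ uniformly in $t\geq 0$, so \Cref{lem:EF_smooth_bound} yields uniform bounds $\|\partial_s^m\kappa(t,\cdot)\|_\infty\leq C_m$ for every $m\in\N_0$ and $t\geq 1$. Since $\tilde\gamma$ has unit speed, these bounds translate into uniform $C^m$-bounds on $\tilde\gamma(t,\cdot+s)-\tilde\gamma(t,s)$. A standard Arzel\`a--Ascoli plus diagonal argument then extracts subsequences $t_{j'}\to\infty$ and $s_{j'}$ along which
\[
\tilde\gamma(t_{j'},\cdot+s_{j'})-\tilde\gamma(t_{j'},s_{j'})\longrightarrow \gamma_\infty\quad\text{in }C^m_{\mathrm{loc}}(\R)\ \text{for every }m\in\N_0,
\]
where $\gamma_\infty\in\dot C^\infty(\R;\R^n)$ is arclength parametrized with $\gamma_\infty(0)=0$.

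\textbf{Step 2 (the limit is a $\lambda$-elastica).} This is the main step. I would upgrade Step 1 to a parabolic slab: on $[t_{j'}-1,t_{j'}+1]\times\R$, the $C^m$-bounds of \Cref{lem:EF_smooth_bound} hold uniformly in $j'$, and combining them with \eqref{eq:lambda-EF} yields uniform $C^m$-bounds in both time and space. Along a further subsequence, the corresponding space-time curves then converge locally smoothly to a smooth solution $\gamma_\infty(\tau,\cdot)$ of \eqref{eq:lambda-EF} on $[-1,1]\times\R$ whose $\tau=0$ slice coincides with the Step 1 limit. On the other hand, \Cref{prop:energy_decay} gives $\int_0^\infty\!\int|\partial_t^\perp\gamma|^2\,ds\,dt\leq E_\lambda[\gamma_0]<\infty$, so the tail satisfies $\int_{t_{j'}-1}^{t_{j'}+1}\!\int|\partial_t^\perp\gamma|^2\,ds\,dt\to 0$. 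Because $\int|\partial_t^\perp\gamma|^2\,ds$ is invariant under reparametrization and translation, Fatou's lemma combined with the local smooth convergence forces the normal velocity of the limit flow to vanish identically on $[-1,1]\times\R$. Hence $\gamma_\infty(0,\cdot)$ is a $\lambda$-elastica.

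\textbf{Step 3 (lower semi-continuity).} The integrands $\tfrac12|\kappa|^2$ and $1-\langle\partial_s\gamma,e_1\rangle$ are non-negative, so Fatou applied to the local smooth convergence gives
\[
E_\lambda[\gamma_\infty]\leq\liminf_{j'\to\infty}E_\lambda[\tilde\gamma(t_{j'},\cdot+s_{j'})-\tilde\gamma(t_{j'},s_{j'})].
\]
By translation invariance and orientation-preserving-reparametrization invariance of $E_\lambda$, together with the monotonicity in \Cref{prop:energy_decay}, the quantity $E_\lambda[\tilde\gamma(t)]$ is non-increasing in $t$, so the right hand side equals $\lim_{t\to\infty}E_\lambda[\tilde\gamma(t)]$, yielding (ii).

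\textbf{Main obstacle.} The principal difficulty is Step 2: for an arbitrary given $t_j\to\infty$ there is a priori no reason that $\|\partial_t^\perp\gamma(t_j)\|_{L^2(ds)}\to 0$, so the instantaneous limit from Step 1 alone need not satisfy the elastica equation. The remedy above bypasses this by upgrading pointwise-in-time compactness to slab-in-time compactness, thereby extracting stationarity from the \emph{integrated} quantity $\int|\partial_t^\perp\gamma|^2\,ds\,dt$ on the slab rather than from individual time slices.
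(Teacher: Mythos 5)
Your Step 1 and Step 3 coincide with the paper's argument. Step 2 is where you diverge, and both routes are legitimate. The paper avoids space--time compactness entirely: it shows that $t\mapsto\|V\|_{L^2(ds)}^2(t)$ (with $V=\nabla_s^2\kappa+\tfrac12|\kappa|^2\kappa-\lambda\kappa$) is \emph{globally Lipschitz} on $[1,\infty)$, by differentiating $\int|V|^2\eta\,ds$, using the evolution identity \eqref{eq:energy_estimates_0} and the uniform bounds of \Cref{lem:EF_smooth_bound} to bound $\partial_t^\perp V$, and sending $\eta\nearrow 1$. Since $\|V\|_{L^2(ds)}\in L^2(0,\infty)$ by the energy identity, Lipschitz continuity forces $\|V\|_{L^2(ds)}(t)\to 0$ for \emph{every} $t\to\infty$, and then spatial Fatou on the already-extracted subsequence gives the elastica equation. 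Your replacement via parabolic slab compactness and integrated dissipation is a standard alternative and morally correct, but it carries a hidden cost you don't address: to extract a $C^m_{\mathrm{loc}}$ space--time limit of the reparametrized flow you need a fixed gauge (e.g., arclength at time $t_{j'}$ plus a De~Turck tangential velocity, or control of the time-dependent reparametrization $\Phi_t$), since the original parametrization speed $|\partial_x\gamma|$ is not a priori controlled as $t\to\infty$; \Cref{lem:EF_smooth_bound} controls geometric quantities only, not the time-regularity of $\tilde\gamma$ as a map. Once that gauge is fixed (as in \Cref{thm:GF_well_posed}), your argument goes through. By comparison, the paper's route yields the genuinely stronger statement $\|V\|_{L^2(ds)}(t)\to 0$ along the full flow rather than merely along a slab subsequence, at the cost of one extra evolution identity for $\int|V|^2\,ds$; your route is more mechanical but requires the gauge-fixing step to be made precise.
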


\begin{proof}
    By \Cref{prop:energy_decay}, we have
     \begin{equation}
            E_\lambda[\gamma(t)]-E_\lambda[\gamma_0] = - \int_0^t\int \Big| \nabla_s^2\kappa +\frac{1}{2}|\kappa|^2\kappa-\lambda\kappa \Big|^2dsdt'.\label{eq:convergence_lambdaEF_1}
    \end{equation}
    In particular, $\sup_{t\in[0,\infty)}B[\gamma(t)]\leq E_\lambda[\gamma_0]$, and thus, \Cref{lem:LTE_L2_bounds,lem:EF_smooth_bound} yield
    \begin{align}
           \|\nabla_s^m\kappa\|_\infty+ \|\partial_s^m\kappa\|_\infty&\leq C(\lambda, m, E_\lambda[\gamma_0])(1+t^{-\frac{2m+1}{8}}),\label{eq:convergence_lambdaEF-04-04}\\ \|\nabla_s^m\kappa\|_{L^2(ds)}+ \|\partial_s^m\kappa\|_{L^2(ds)}&\leq C(\lambda, m, E_\lambda[\gamma_0])(1+t^{-\frac{m}{4}}).\label{eq:convergence_lambdaEF_2}
        \end{align}
        %where $C>0$ depends only on $m$, $\lambda$, and $E_\lambda[\gamma_0]$.

    We now prove property \eqref{item:convergence_lambdaEF_3}.  
    Fix any sequences $t_j\to\infty$ and $\{s_j\}\subset\R$.  
    By \eqref{eq:convergence_lambdaEF-04-04} and a standard compactness argument (as in the proof of \Cref{thm:main_CSF_dichotomy}~\eqref{item:main_CSF_convergence}), up to taking a further subsequence (without relabeling), the sequence of curves
    \[
    \tilde{\gamma}(t_j, \cdot + s_j ) - \tilde{\gamma}(t_j,s_j)
    \]
    locally smoothly converges to an arclength parametrized curve \( \gamma_\infty \in \dot{C}^\infty(\R; \R^n) \) with \( \gamma_\infty(0) = 0 \).
    To establish that \( \gamma_\infty \) is indeed a $\lambda$-elastica, it suffices to show that
    \begin{equation}
        \text{\( \lim_{t\to \infty}\|V\|_{L^2(ds)}(t) = 0 \), \quad where $V=\nabla_s^2\kappa +\frac{1}{2}|\kappa|^2\kappa-\lambda\kappa$.}
    \end{equation}
    Indeed, by Fatou's lemma, this would then imply \( \|V\|_{L^2(ds)} = 0 \) for \( \gamma_\infty \), which confirms that \( \gamma_\infty \) is a $\lambda$-elastica.
    In order to show that $\|V\|_{L^2(ds)}(t) \to 0$, it suffices to prove that $t \mapsto \|V\|_{L^2(ds)}^2(t)$ is globally Lipschitz on $[1,\infty)$,    
    since $\|V\|_{L^2(ds)}\in L^2(0,\infty)$ by property \eqref{eq:convergence_lambdaEF_1}.
    To that end, let $t\geq 1$. 
    Using the definition of $V$ and also \eqref{eq:energy_estimates_0} (with $m=0,2$), properties \eqref{eq:convergence_lambdaEF-04-04} and \eqref{eq:convergence_lambdaEF_2} above imply, for $t\geq1$,
    \begin{align}\label{eq:conv_lambdaEF_4}
       \Vert V\Vert_\infty +\Vert V\Vert_{L^2(ds)} +  \Vert \partial_t^\perp V\Vert_{L^2(ds)} \leq C(\lambda, E_\lambda[\gamma_0]).
    \end{align}
    Taking any cutoff function $\eta\in C_c^\infty(\R)$, we find that
    \begin{align}\label{eq:conv_lambdaEF_5}
        \frac{d}{dt} \int|V|^2\eta ds &= 2 \int\langle V, \partial_t^\perp V\rangle \eta ds - \int|V|^2 \langle \kappa, V\rangle \eta ds.
    \end{align}
    Integrating \eqref{eq:conv_lambdaEF_5}, taking absolute values, sending $\eta\nearrow 1$ while using \eqref{eq:conv_lambdaEF_4} yield
    \begin{align}
        \Big|\|V\|_{L^2(ds)}^2(t) - \|V\|_{L^2(ds)}^2(t')\Big|
        %\Vert V(t)\Vert_{L^2(ds(t)}^2 -\Vert V(t')\Vert_{L^2(ds(t'))}^2 \Big| 
        \leq C |t-t'|
    \end{align}
    for all $t,t'\geq 1$, which is the desired global Lipschitz continuity.

    Property \eqref{item:convergence_lambdaEF_4} then easily follows from \eqref{eq:convergence_lambdaEF_1}, the invariance of the energy $E_\lambda$ under orientation-preserving reparametrizations, and  Fatou's lemma.
\end{proof}

We now examine the precise convergence behavior of the elastic flow with $\lambda=1$, and the free elastic flow ($\lambda=0$).

We first address the free elastic flow, proving \Cref{thm:main_FEF}.

\begin{proof}[Proof of \Cref{thm:main_FEF}]  
    Thanks to \Cref{thm:global_lambda_elastic_flow,thm:convergence_lambda_elastic_flow}, we only need to discuss the asymptotic behavior.
    By \Cref{thm:convergence_lambda_elastic_flow} \eqref{item:convergence_lambdaEF_4}, the free elastica $\gamma_\infty$ in \Cref{thm:convergence_lambda_elastic_flow} \eqref{item:convergence_lambdaEF_3} satisfies $B[\gamma_\infty]<\infty$.
    The well-known classification of elasticae (see e.g.\ \cite{MR772124,Miura_elastica_survey}) implies that any free elastica is either a straight line with $B=0$, or a periodic elastica with $B=\infty$.
    Hence $\gamma_\infty$ must be a line.
    Therefore, by \Cref{thm:convergence_lambda_elastic_flow} \eqref{item:convergence_lambdaEF_3}, $|\partial_s^m\kappa(t_j,s_j)|$ always converges to zero for any $t_j\to\infty$, $\{s_j\}\subset\R$, and $m\in\N_0$.
    As in the proof of \Cref{thm:main_CSF_dichotomy}, the arbitrariness of $t_j,s_j$ implies the assertion.
\end{proof}

From now on we discuss the case $\lambda>0$.
As mentioned in the introduction, up to rescaling, we may assume that $\lambda=1$,
and only consider $E=E_1=B+D$.

In this case the stationary solution may not be a line even in the finite energy regime---it may be a so-called borderline elastica.
We recall basic facts on the (planar) borderline elastica.
A prototypical arclength parametrization is given by
\begin{equation}\label{eq:borderline}
    \gamma_b(s)\vcentcolon=
    \begin{pmatrix}
        s-2\tanh{s}\\
        2\sech{s}
    \end{pmatrix},
\end{equation}
cf.\ \Cref{fig:borderline}.
The tangential angle of $\gamma_b$, satisfying $\partial_s\gamma_b=(\cos\theta_b,\sin\theta_b)$, is given by
$\theta_b(s)=4\arctan(e^{s})$, which increases from $0$ to $2\pi$ as $s$ varies from $-\infty$ to $\infty$.
The signed curvature is then given by
$k_b(s)=\partial_s\theta_b(s)=2\sech{s}=4e^{s}(1+e^{2s})^{-1}$.
This satisfies $k_{ss}+\frac{1}{2}k^3-k=0$.
In particular, we can explicitly compute:
\begin{align}
    E[\gamma_b] &=\int_\R \left( \frac{1}{2}k_b^2 + 1-\cos\theta_b \right) ds = \int_\R \left( \frac{1}{2} k_b^2 + \frac{8\tan^2\frac{\theta_b}{4}}{(1+\tan^2\frac{\theta_b}{4})^2} \right) ds \\
    &= \int_\R \frac{16e^{2s}}{(1+e^{2s})^2} ds = \left[ -\frac{8}{1+e^{2s}} \right]_{-\infty}^\infty = 8. \label{eq:energy_8_borderline}
\end{align}

\begin{theorem}\label{thm:convergence_elastic_flow}
    Let $\lambda=1$ and $\gamma$ be the global-in-time solution to \eqref{eq:EF} obtained in \Cref{thm:global_lambda_elastic_flow}.
    Then the sequence of curves and the limit curve in \Cref{thm:convergence_lambda_elastic_flow} \eqref{item:convergence_lambdaEF_3} further satisfy the following properties:
    \begin{enumerate}
        \item\label{item:convergence_EF_1} The limit curve $\gamma_\infty$ is given by either the straight line
        \[
        \gamma_\infty(s) = se_1,
        \]
        or the (planar) borderline elastica of the form
        \[
        \gamma_\infty(s)=(s-2\tanh{(s+s_0)})e_1-(2\sech{(s+s_0)})\omega + v_0,
        \]
        for some $s_0\in\R$ and $\omega \in \Span\{e_2,\dots,e_2\}$ with $|\omega|=1$, and $v_0:=(2\tanh{s_0})e_1-(2\sech{s_0})\omega$ so that $\gamma_\infty(0)=0$.
        \item\label{item:convergence_EF_2} If $\partial_s\tilde{\gamma}(t_{j'},s_{j'})\to e_1$ as $j'\to\infty$, then $\gamma_\infty$ is a straight line.
        Otherwise, $\gamma_\infty$ is a borderline elastica.
        \item\label{item:convergence_EF_3} If $\gamma_\infty$ is a straight line, then $E[\gamma_\infty]=0$, while if $\gamma_\infty$ is a borderline elastica, then $|\kappa(s)|=2\sech{(s+s_0)}$ and $E[\gamma_\infty]=8$.
    \end{enumerate}
\end{theorem}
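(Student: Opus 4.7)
First I would establish finiteness of the total energy of the limit curve: by \Cref{thm:convergence_lambda_elastic_flow}~\eqref{item:convergence_lambdaEF_4} combined with the monotonicity from \Cref{prop:energy_decay}, we have $E[\gamma_\infty] \leq E[\gamma_0] < \infty$, so both $B[\gamma_\infty] < \infty$ and $D[\gamma_\infty] < \infty$ hold. Combined with $\gamma_\infty$ being a $1$-elastica by \Cref{thm:convergence_lambda_elastic_flow}~\eqref{item:convergence_lambdaEF_3}, the central task is to invoke the classification of $1$-elasticae with finite bending energy: up to ambient isometry and reparametrization, the only such complete curves in $\R^n$ are straight lines and the planar borderline elastica $\gamma_b$ of \eqref{eq:borderline}. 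In the planar case this is the classical Euler classification (circular, wavelike, and orbitlike elasticae all carry infinite bending on $\R$). In codimension $n - 1 \geq 2$, the conservation law
\begin{equation}
    \tfrac{1}{2}|\nabla_s \kappa|^2 + \tfrac{1}{8}|\kappa|^4 - \tfrac{1}{2}|\kappa|^2 \equiv c_1
\end{equation}
obtained by pairing the elastica equation with $\nabla_s\kappa$, together with the normal-bundle ``angular-momentum'' conservation $|\kappa|^2|\nabla_s N| \equiv c_2$ (where $N=\kappa/|\kappa|$ where $\kappa \neq 0$), can be shown to force $c_1 = c_2 = 0$. Indeed, the smooth bounds of \Cref{lem:LTE_L2_bounds,lem:EF_smooth_bound} combined with $\int |\kappa|^2 ds < \infty$ produce a sequence $s_k \to \infty$ along which $|\kappa|, |\nabla_s\kappa| \to 0$. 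Vanishing of $c_2$ then rules out truly non-planar solutions.

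Next I would use the finiteness of the direction energy to locate $\gamma_\infty$ in $\R^n$. Since $\gamma_\infty$ is arclength parametrized and lies in $\dot{C}^{1,1}$, \Cref{lem:direction_horizontal} gives $\partial_s\gamma_\infty(s) \to e_1$ as $s \to \pm\infty$. In the line case, this horizontal tangent together with $\gamma_\infty(0) = 0$ forces $\gamma_\infty(s) = se_1$. In the borderline case, the classification yields $\gamma_\infty = \Phi \circ \gamma_b(\cdot + s_0)$ for some Euclidean isometry $\Phi$ and shift $s_0 \in \R$; since $\partial_s \gamma_b \to e_1$ as $s \to \pm \infty$ in the standard parametrization, the rotational part of $\Phi$ must fix the $e_1$ direction, and the remaining freedom reduces to a choice of unit vector $\omega \in \Span\{e_2,\dots,e_n\}$ determining the plane of $\gamma_\infty$, together with a translation $v_0$ uniquely determined by the normalization $\gamma_\infty(0) = 0$. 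This yields the stated parametrization in~\eqref{item:convergence_EF_1}.

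For part~\eqref{item:convergence_EF_2}, the local smooth convergence of \Cref{thm:convergence_lambda_elastic_flow}~\eqref{item:convergence_lambdaEF_3} gives $\partial_s \tilde\gamma(t_{j'},s_{j'}) \to \partial_s \gamma_\infty(0)$. The line gives $\partial_s\gamma_\infty \equiv e_1$, whereas for the borderline parametrization a direct differentiation yields
\begin{equation}
    \partial_s\gamma_\infty(0) = (1 - 2\sech^2 s_0)\, e_1 + 2\sech s_0 \tanh s_0 \, \omega,
\end{equation}
whose $e_1$-component equals $1 - 2\sech^2 s_0 < 1$ for every finite $s_0 \in \R$, so $\partial_s\gamma_\infty(0) \neq e_1$. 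Thus $\partial_s\tilde\gamma(t_{j'},s_{j'}) \to e_1$ is equivalent to $\gamma_\infty$ being a straight line, establishing the dichotomy. Part~\eqref{item:convergence_EF_3} is then a direct computation: $E[\gamma_\infty] = 0$ is immediate for the line, while $|\kappa(s)| = 2\sech(s+s_0)$ follows from the borderline parametrization and $E[\gamma_\infty] = 8$ is exactly the calculation already performed in \eqref{eq:energy_8_borderline}, which is invariant under ambient isometries and reparametrizations.

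The main obstacle is the higher-codimension part of the classification step: showing that a $1$-elastica with finite bending energy in $\R^n$ is planar requires going beyond the classical Euler classification and relies on the conservation laws above combined with normal-bundle analysis (similar arguments appear in the first author's prior work on elasticae) to exclude genuinely non-planar soliton-type solutions. Once planarity is secured, the rest of the theorem follows either from the direction-energy asymptotics of \Cref{lem:direction_horizontal} or from explicit computation with the borderline parametrization.
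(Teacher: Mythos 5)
Your overall structure mirrors the paper's: deduce $E[\gamma_\infty]<\infty$ from the lower semi-continuity of \Cref{thm:convergence_lambda_elastic_flow}~\eqref{item:convergence_lambdaEF_4}, invoke the classification of finite-bending-energy elasticae to reduce to a line or a borderline elastica, use finiteness of $D$ to pin the asymptotic tangent to $e_1$ (and hence pin the ambient isometry), and finish (ii) and (iii) by direct computation. The only genuine difference is in the classification step. The paper simply cites the known result (\cite{MR772124,Miura_elastica_survey}) together with the observation that non-line, non-borderline elasticae have a nonzero periodic $|\kappa|^2$ and hence infinite bending energy, whereas you set out to \emph{reprove} the higher-codimension classification from the conservation laws $\tfrac12|\nabla_s\kappa|^2+\tfrac18|\kappa|^4-\tfrac12|\kappa|^2\equiv c_1$ and a normal-bundle angular-momentum identity. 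Your sketch correctly identifies that $c_1=c_2=0$ follows from the decay of $|\kappa|,|\nabla_s\kappa|$ along a sequence $s_k\to\infty$, but the sketch is not really a proof: it does not establish the angular-momentum identity, does not show that $c_2=0$ forces planarity at points where $\kappa$ vanishes or changes direction, and does not show that $c_1=0$ plus finite bending energy excludes everything except the line and the borderline profile. You acknowledge this as the ``main obstacle,'' and since the paper handles it by citation rather than by reproof, your approach is not wrong---just more ambitious than needed and left incomplete. For part~\eqref{item:convergence_EF_2} you compute $\partial_s\gamma_\infty(0)$ explicitly and note its $e_1$-component is $1-2\sech^2 s_0<1$; the paper instead observes globally that the borderline tangent is never $e_1$. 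Both are correct, since the quantity at stake is exactly $\partial_s\gamma_\infty(0)$. Your handling of part~\eqref{item:convergence_EF_1} and~\eqref{item:convergence_EF_3} is the same as the paper's.
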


\begin{proof}
    Property \eqref{item:convergence_EF_1} is shown by using the known classification of elasticae (\cite{MR772124,Miura_elastica_survey}) as follows.
    The only infinite-length elasticae with bounded bending energy are a straight line and a borderline elastica, because otherwise the squared curvature $|\kappa|^2$ is a nonzero periodic function.
    We also note that, since $\gamma_\infty$ satisfies the elastica equation $\nabla_s^2\kappa+\frac{1}{2}|\kappa|^2\kappa-\kappa=0$, in the borderline case the size of the loop (or equivalently, the curvature maximum) is uniquely determined; in particular, the curvature must be of the form $|\kappa|=2\sech(s+s_0)$, and thus the image $\gamma_\infty$ is congruent to $\gamma_b$.
    Finally, since $E[\gamma_\infty]<\infty$ follows by \Cref{thm:convergence_lambda_elastic_flow}~\eqref{item:convergence_lambdaEF_4}, both for $\gamma_\infty$ being a line and a borderline elastica, the asymptotic tangent direction must be $e_1$.
    This with $\gamma_\infty(0)=0$ implies the desired form of $\gamma_\infty$.

    Property \eqref{item:convergence_EF_2} follows from smooth convergence and the fact that the tangent direction of our borderline elastica is never rightward, i.e., $\partial_s\gamma_\infty\neq e_1$ everywhere.
    
    Finally, property \eqref{item:convergence_EF_3} follows by direct computation, cf.\ \eqref{eq:energy_8_borderline}.
\end{proof}

\begin{remark}\label{rem:EF_conv_1}
    In the elastic flow case $\lambda>0$, the choice of the base points $\{s_j\}$ in \Cref{thm:convergence_lambda_elastic_flow}~\eqref{item:convergence_lambdaEF_3} is particularly important because the limit does depend on this choice.
    Indeed, since we always have $\partial_s\tilde{\gamma}(t,s)\to e_1$ as $s\to\pm\infty$ (see \Cref{lem:direction_horizontal}), for any solution $\gamma$ we can choose $s_t\in\R$ large enough depending on $t\geq0$ so that $\tilde{\gamma}(t,\cdot+s_t)-\tilde{\gamma}(t,s_t)$ locally smoothly converges to the $e_1$-axis as $t\to\infty$.
    This even occurs when $\gamma$ is a stationary solution given by a borderline elastica.
\end{remark}

\begin{remark}
    On the other hand, it is not clear whether the limit also depends on the choice of the time sequence $t_j\to\infty$ even if we choose a constant sequence of base points.
    It is an interesting problem if the limit in \Cref{thm:convergence_lambda_elastic_flow}~\eqref{item:convergence_lambdaEF_3} is always unique, i.e., independent of $t_j\to\infty$, provided that $s_j = 0$ for all $j\in\N$.
\end{remark}

Next we give several conditions on initial data for full convergence as $t\to\infty$ such that we can identify the type of the limit elastica.
We first complete the proof of global-in-space convergence results in \Cref{cor:EF_convergence_line_<8}.

\begin{proof}[Proof of \Cref{cor:EF_convergence_line_<8}]
    \Cref{thm:convergence_lambda_elastic_flow} \eqref{item:convergence_lambdaEF_4} and \Cref{thm:convergence_elastic_flow} \eqref{item:convergence_EF_1}, \eqref{item:convergence_EF_3} imply that the only limit $\gamma_\infty$ in \Cref{thm:convergence_lambda_elastic_flow} \eqref{item:convergence_lambdaEF_3} with $E[\gamma_\infty]<8$ is the line $\gamma_\infty(s)=se_1$.
    Therefore, the arbitrariness of $t_j,s_j$ yields the assertion.
\end{proof}

We also exhibit some conditions on initial data such that for specific choices of base points, we obtain full convergence results, locally in space.

\begin{corollary}\label{cor:EF_full_convergence}
    Let $\lambda=1$ and $\gamma$ be the global-in-time solution to \eqref{eq:EF} obtained in \Cref{thm:global_lambda_elastic_flow}.
    Let $\tilde{\gamma}$ be the canonical reparametrization of $\gamma$.
    \begin{enumerate}
        \item\label{item:EF_full_1} If $\gamma_0$ has point symmetry such that $\gamma_0(-x)=-\gamma_0(x)$ for $x\in\R$, 
        then the curve $\tilde{\gamma}(t,\cdot)$ locally smoothly converges to a straight line as $t\to\infty$.
        \item\label{item:EF_full_2} Suppose that $\gamma_0$ has reflection symmetry such that $\gamma_0(-x)=\gamma_0(x)-2\langle \gamma_0(x),e_1 \rangle e_1$ for $x\in\R$.
        If $\partial_s\gamma_0(0)=e_1$, then the curve $\tilde{\gamma}(t,\cdot)-\tilde{\gamma}(t,0)$ locally smoothly converges to a straight line as $t\to\infty$.
        If otherwise $\partial_s\gamma_0(0)=-e_1$, then for some $\{R_t\}_{t\geq0}\subset O(n)$ the curve $R_t[\tilde{\gamma}(t,\cdot)-\tilde{\gamma}(t,0)]$ locally smoothly converges to a borderline elastica $\gamma_\infty$ with $\partial_s\gamma_\infty(0)=-e_1$ as $t\to\infty$.
        \item\label{item:EF_full_3} Suppose that $n=2$ (planar).
        If $N[\gamma_0]>0$ (resp. $N[\gamma_0]<0$), then there are sequences of base points $\{s_t^i\}_{t\geq0}\subset\R$, where $i=1,\dots,|N[\gamma_0]|$, such that for each $i$ the curve $\tilde{\gamma}(t,\cdot+s_t^i)-\tilde{\gamma}(t,s_t^i)$ locally smoothly converges as $t\to\infty$ to a borderline elastica $\gamma_\infty$ with $\partial_s\gamma_\infty(0)=-e_1$ and $N[\gamma_\infty]=1$ (resp.\ $N[\gamma_\infty]=-1$).
        If in addition $|N[\gamma_0]|\geq2$, then the base points can be chosen so that $\lim_{t\to\infty}(s_t^{i+1}-s_t^{i})=\infty$ for each $i=1,\dots,|N[\gamma_0]|-1$.
    \end{enumerate}
\end{corollary}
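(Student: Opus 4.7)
\emph{Plan.}
Each part follows by propagating a symmetry of $\gamma_0$ to all times via uniqueness of the elastic flow (\Cref{thm:global_lambda_elastic_flow}), then combining with the subsequential-limit description from \Cref{thm:convergence_lambda_elastic_flow,thm:convergence_elastic_flow} to force uniqueness of the limit, thereby upgrading subsequential convergence to full convergence.

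For \eqref{item:EF_full_1}, equation \eqref{eq:EF} is invariant under the transformation $(x,\gamma)\mapsto(-x,-\gamma)$. Since $\gamma_0$ is fixed by it, uniqueness gives $\gamma(t,-x)=-\gamma(t,x)$ for all $t$, so $\tilde\gamma(t,0)=0$ and $\tilde\gamma(t,-s)=-\tilde\gamma(t,s)$. Applying \Cref{thm:convergence_lambda_elastic_flow}~\eqref{item:convergence_lambdaEF_3} with $s_j=0$, any subsequential limit $\gamma_\infty$ satisfies $\gamma_\infty(-s)=-\gamma_\infty(s)$ with $\gamma_\infty(0)=0$. A direct substitution into the explicit parametrization in \Cref{thm:convergence_elastic_flow}~\eqref{item:convergence_EF_1} shows that no borderline elastica admits this point symmetry at the origin (the $\omega$-components cannot cancel). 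Hence $\gamma_\infty=se_1$ is the only possible limit, and the full sequence converges to it.

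For \eqref{item:EF_full_2}, with $R$ denoting reflection across $\{e_1\}^\perp$, \eqref{eq:EF} is invariant under $(x,\gamma)\mapsto(-x,R\gamma)$. Uniqueness gives $\gamma(t,-x)=R\gamma(t,x)$, which forces $\gamma(t,0)\in\{e_1\}^\perp$ (being $R$-fixed) and $\partial_x\gamma(t,0)\in\R e_1$ (being $R$-anti-fixed). Continuity then pins $\partial_s\tilde\gamma(t,0)\in\{\pm e_1\}$ at its initial value. If $\partial_s\gamma_0(0)=e_1$, \Cref{thm:convergence_elastic_flow}~\eqref{item:convergence_EF_2} immediately identifies every subsequential limit (with $s_j=0$) as the line $se_1$. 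If $\partial_s\gamma_0(0)=-e_1$, every limit is a borderline elastica; passing the identity $\tilde\gamma(t,-s)-\tilde\gamma(t,0)=R[\tilde\gamma(t,s)-\tilde\gamma(t,0)]$ to the limit yields $\gamma_\infty(-s)=R\gamma_\infty(s)$, which in the parametrization of \Cref{thm:convergence_elastic_flow}~\eqref{item:convergence_EF_1} forces $s_0=0$ and leaves only the unit direction $\omega\in\Span\{e_2,\ldots,e_n\}$ free, with $\kappa_{\gamma_\infty}(0)=2\omega$. Setting $\omega(t)\vcentcolon=\kappa(t,0)/|\kappa(t,0)|$ (well defined for large $t$ since $|\kappa(t,0)|\to 2$ along every subsequence) and choosing $R_t\in O(n)$ fixing $e_1$ with $R_t\omega(t)=\omega_0$ for a reference $\omega_0\in\Span\{e_2,\ldots,e_n\}$ (such $R_t$ commutes with $R$), every subsequential limit of the rotated flow is the unique borderline elastica with $s_0=0$ and direction $\omega_0$, yielding full convergence.

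For \eqref{item:EF_full_3}, let $N\vcentcolon=N[\gamma_0]>0$. By \Cref{lem:rotation_number_preservation} combined with $\partial_s\tilde\gamma(t,\pm\infty)=e_1$ from \Cref{lem:direction_horizontal}, a continuous lift $\theta(t,\cdot)$ of the tangent angle satisfies $\theta(t,-\infty)=0$ and $\theta(t,+\infty)=2\pi N$; the intermediate value theorem then yields $s_t^1<\cdots<s_t^N$ with $\theta(t,s_t^i)=(2i-1)\pi$, so $\partial_s\tilde\gamma(t,s_t^i)=-e_1$. Applying \Cref{thm:convergence_lambda_elastic_flow,thm:convergence_elastic_flow}~\eqref{item:convergence_EF_2} at base $s_t^i$, every subsequential limit $\gamma_\infty^i$ is a borderline elastica with $\partial_s\gamma_\infty^i(0)=-e_1$, and the standard form in \Cref{thm:convergence_elastic_flow}~\eqref{item:convergence_EF_1} gives $N[\gamma_\infty^i]=1$. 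For the separation $s_t^{i+1}-s_t^i\to\infty$, suppose the opposite and extract $t_j\to\infty$ with $s_{t_j}^{i+1}-s_{t_j}^i\to\ell\in[0,\infty)$ and $\tilde\gamma(t_j,\cdot+s_{t_j}^i)-\tilde\gamma(t_j,s_{t_j}^i)\to\gamma_\infty^i$ smoothly; passing to the limit gives $\partial_s\gamma_\infty^i(\ell)=-e_1$. But the standard borderline tangent angle $4\arctan(e^{s+\mathrm{const}})$ is strictly monotone and attains $\pi$ at exactly one point, so $\ell=0$, which would require $\theta(t_j,\cdot)$ to gain $2\pi$ over a vanishing arclength interval, contradicting the uniform bound on $\Vert\kappa\Vert_\infty$ from \Cref{lem:EF_smooth_bound}.

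\emph{Main obstacle.} The most delicate step is the separation argument in \eqref{item:EF_full_3}: combining the strict monotonicity of the borderline tangent angle with the uniform curvature bound is precisely what rules out loop collision. In \eqref{item:EF_full_2}, verifying that the reflection symmetry passes to the limit to force $s_0=0$, and constructing a rotation $R_t\in O(n)$ compatible with $R$ (in dimension $n\geq 3$, where there is rotational freedom within $\{e_1\}^\perp$), are the main bookkeeping hurdles.
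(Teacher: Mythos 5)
Your proofs of parts (i) and (ii) follow essentially the same strategy as the paper (symmetry is preserved by uniqueness of the flow, which rigidifies the subsequential limit so that full convergence follows); the explicit construction of $R_t$ via the normalized curvature $\kappa(t,0)/|\kappa(t,0)|$ in part (ii) is a concrete implementation of what the paper states more briefly, and it is correct.

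Part (iii), however, has a genuine gap in the selection of the base points. You invoke the intermediate value theorem to find $s_t^1<\cdots<s_t^N$ with $\theta(t,s_t^i)=(2i-1)\pi$, but you do not impose any sign condition on $\partial_s\theta(t,s_t^i)$. The resulting limit is a borderline elastica with $\partial_s\gamma_\infty^i(0)=-e_1$, but the ``standard form'' from \Cref{thm:convergence_elastic_flow}~(\ref{item:convergence_EF_1}) does \emph{not} force $N[\gamma_\infty^i]=1$: in the planar case $\omega\in\{\pm e_2\}$, and the two choices give $N[\gamma_\infty^i]=\pm 1$. If the crossing point $s_t^i$ has negative signed curvature for a subsequence of times, that subsequence converges to the $N=-1$ borderline elastica, so the claimed full convergence breaks. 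The paper avoids this by explicitly choosing crossing points with $\partial_s\theta(t,s_t^i)\geq 0$ (e.g., the first upward crossing); smooth convergence then transports the curvature sign to the limit, which together with $|k_\infty(0)|=2$ pins down the orientation and hence the unique limit. Your separation argument (extracting $\ell=\lim(s_{t_j}^{i+1}-s_{t_j}^i)$, using injectivity of the borderline tangent angle to force $\ell=0$, and contradicting the curvature bound) is sound and gives a more explicit version of the paper's compactness remark, but the gap above must be closed before the full-convergence claim in (iii) is justified.
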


\begin{proof}  
    We first prove case \eqref{item:EF_full_1}.
    By unique solvability, the point symmetry of the initial curve is preserved under the flow.
    Hence in particular $\tilde{\gamma}(t,0)$ is fixed at the origin, and thus by \Cref{thm:convergence_lambda_elastic_flow} \eqref{item:convergence_lambdaEF_3} (with $s_j\equiv0$) the curve $\tilde{\gamma}(t,\cdot)$ sub-converges (without translation) to an elastica $\gamma_\infty$ with point symmetry.
    No borderline elastica in \Cref{thm:convergence_elastic_flow} \eqref{item:convergence_EF_1} has such symmetry, so the limit is uniquely determined as the straight line $\gamma_\infty(s)=se_1$.
    Thanks to the uniqueness of the limit, the convergence is now upgraded to the full limit $t\to\infty$.

    We prove case \eqref{item:EF_full_2}.
    By symmetry, $\partial_s\gamma_0(0)\in\{e_1,-e_1\}$.
    As above, the reflection symmetry is preserved under the flow.
    This with smoothness of the flow implies that the initial condition on $\partial_s\gamma_0(0)$ is preserved under the flow, and hence $\partial_s\gamma_0(0)=\partial_s\gamma_\infty(0)\in\{e_1,-e_1\}$ holds for the limit $\gamma_\infty$ in \Cref{thm:convergence_lambda_elastic_flow} \eqref{item:convergence_lambdaEF_3} with $s_j\equiv0$.
    If $\partial_s\gamma_\infty(0)=e_1$, then by \Cref{thm:convergence_elastic_flow} \eqref{item:convergence_EF_2} the limit is again the unique straight line as above and thus the proof is complete.
    If $\partial_s\gamma_\infty(0)=-e_1$, then by \Cref{thm:convergence_elastic_flow} \eqref{item:convergence_EF_2} the limit is a borderline elastica.
    Since $\partial_s\gamma_\infty$ is then injective, the point $\gamma_\infty(0)$ must be the tip of the loop, so that the image of the limit curve is unique up to taking a suitable sequence of orthogonal matrices $\{R_t\}\subset O(n)$---more precisely, in view of the parametrization in \Cref{thm:convergence_elastic_flow} \eqref{item:convergence_EF_1}, if $n=2$ then each matrix $R_t$ is either the identity or reflection across the $e_1$-axis, while if $n\geq3$ then $R_t$ is rotation around the $e_1$-axis.
    Then again the uniqueness of the limit curve implies full convergence.

    We finally prove case \eqref{item:EF_full_3}.
    We only discuss the case $N:=N[\gamma_0]>0$; the other is parallel.
    Since the rotation number $N[\gamma(t,\cdot)]$ is preserved along the flow (\Cref{lem:rotation_number_preservation}), there exists a smooth tangential angle $\theta:[0,\infty)\times\R\to\R$, i.e., $\partial_s\tilde{\gamma}(t,s)=(\cos\theta(t,s),\sin\theta(t,s))$, such that \begin{equation}
        \lim_{s\to-\infty}\theta(t,s)=0, \quad \lim_{s\to\infty}\theta(t,s)=2\pi N.
    \end{equation}
    Then for each $t\geq0$ there are $N$ points $s_t^1<\dots<s_t^N$ such that $\theta(t,s_t^i)=2\pi(i-\frac{1}{2})$ and $\partial_s\theta(t,s_t^i)\geq0$ for $i=1,\dots,N$.
    Since $\partial_s\tilde{\gamma}(t,s_t^i)=-e_1$ and the signed curvature is nonnegative there, for each $i$ the curve $\tilde{\gamma}(t,\cdot+s_t^i)-\tilde{\gamma}(t,s_t^i)$ locally converges to the desired unique borderline elastica, namely $\gamma_\infty=\gamma_b-\gamma_b(0)$ (resp.\ $\gamma_\infty$ is the vertical reflection of $\gamma_b-\gamma_b(0)$).
    If $N\geq2$, then in view of the compatibility between those local convergences, the distance between any adjacent base points needs to diverge as $t\to\infty$.
\end{proof}

\begin{remark}
    Novaga--Okabe \cite[Theorem 3.3]{Novaga_Okabe_2014_infinite} also claimed convergence to a borderline elastica assuming nonzero rotation number, but their argument seems not ruling out the possibility that the limit curve is a non-rightward straight line.
    Our argument not only improves their result, but also clearly rules out non-rightward lines using the finiteness of the direction energy.
\end{remark}

The above results already reveal interesting dynamical aspects of the non-compact elastic flow.
In particular, we have the following peculiar example.

\begin{example}[Escaping loops]\label{ex:two_loop}
Consider the planar elastic flow \( \gamma \) starting from a planar initial curve \( \gamma_0:\R\to\R^2 \) with \( E[\gamma_0] < \infty \) and reflection symmetry in the sense that
\begin{itemize}
    \item \( \gamma_0(-x) = \gamma_0(x) - 2\langle \gamma_0(x), e_1 \rangle e_1 \) for \( x \in \R \), and \( \partial_s \gamma_0(0) = e_1 \).
\end{itemize}
Then \Cref{cor:EF_full_convergence}~\eqref{item:EF_full_2} implies that, around the origin, the curve \( \tilde{\gamma}(t,\cdot) \) locally converges to a line (after translation).

In addition, note that in this case \( N[\gamma_0] \in 2\Z \). We further assume that
\begin{itemize}
    \item \( N[\gamma_0] = 2 \).
\end{itemize}
A typical situation is that two loops are positioned symmetrically, as in \Cref{fig:twoloop}.
Then \Cref{cor:EF_full_convergence}~\eqref{item:EF_full_3}, combined with symmetry, implies the existence of a sequence \( \{s_t\}_{t\geq0} \subset (0,\infty) \) such that $\tilde{\gamma}(t,\cdot \pm s_t)-\tilde{\gamma}(t, \pm s_t)$ locally converges to a borderline elastica $\gamma_\infty$ with $N[\gamma_\infty]=1$.

In view of the compatibility of the above local convergences, we need to have $s_t\to\infty$ as $t\to\infty$.
This roughly describes the dynamics in which the two loops escape to infinity.

We also observe that the above solution directly shows that we cannot have global-in-space convergence.
Indeed, now we have $\|\partial_s\tilde{\gamma}(t,\cdot)-e_1\|_\infty=2$ since $|\partial_s\tilde{\gamma}(t,s_t)-e_1| = 2$,
although $\partial_s\tilde{\gamma}(t,\cdot)-e_1$ locally uniformly converges to zero.
A similar argument works for any order of derivative.
\end{example}

Motivated by the above observations, we finally pose a conjecture on the general behavior of planar elastic flows.
For a general planar curve $\gamma:\R\to\R^2$ with $\inf_\R|\partial_x\gamma|>0$ and $E[\gamma]<\infty$, we can obtain the general lower bound of the energy $E$ in terms of the absolute rotation number (in the spirit of the phase transition aspect \cite{Miura20}):
\begin{align}
    E[\gamma]&=\int_\R\left( \frac{1}{2}\partial_s\theta^2+1-\cos\theta \right)ds \geq \left|\int_\R \sqrt{2}\partial_s\theta\sqrt{1-\cos\theta} ds \right| \\
    &= \sqrt{2}\left|\int_0^{2\pi N[\gamma]} \sqrt{1-\cos\theta} d\theta \right| = 8|N[\gamma]|.\label{eq:EvsN}
\end{align}
Therefore, any planar elastic flow with $E[\gamma_0]<\infty$ satisfies
\begin{align}\label{eq:0327-01}
    \lim_{t\to\infty}E[\gamma(t)] \geq 8|N[\gamma_0]|.
\end{align}
In fact, we conjecture that the energy of a general solution will be asymptotically ``quantized'' by the number of loops captured in \Cref{cor:EF_full_convergence}~(\ref{item:EF_full_3}) in the sense that equality is attained in \eqref{eq:0327-01}.

\begin{conjecture}
    If $n=2$, then the elastic flow in \Cref{thm:main_EF_global} satisfies
    \begin{align}
        \lim_{t\to\infty}E[\gamma(t)] = 8|N[\gamma_0]|.
    \end{align}
\end{conjecture}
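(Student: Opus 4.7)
The plan is to match the lower bound \eqref{eq:0327-01} with the reverse inequality $\limsup_{t\to\infty} E[\gamma(t)] \leq 8|N[\gamma_0]|$, so that equality forces the limit to take the stated quantized value. The natural approach is a concentration-compactness analysis for the flow: identify all asymptotic ``bubbles'' (locally smooth limits of suitably translated arclength reparametrizations), classify each bubble via \Cref{thm:convergence_elastic_flow}~\eqref{item:convergence_EF_1} as either a straight line or a borderline elastica, and show that in the limit the energy is entirely accounted for by these bubbles, each borderline elastica contributing exactly $8$ and each line $0$.

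More concretely, I would first fix a small $\varepsilon > 0$ and partition $\R$ at each large time $t$ into the ``near-line'' set $\{s : |\partial_s \tilde\gamma(t,s) - e_1| < \varepsilon\}$ and its complement. The global curvature bounds of \Cref{lem:EF_smooth_bound} give a uniform arclength lower bound on each connected component of the complement, while the bound $\sup_t E[\gamma(t)] \leq E[\gamma_0]$ caps the number of such components. A diagonal subsequence argument, together with \Cref{thm:convergence_lambda_elastic_flow}~\eqref{item:convergence_lambdaEF_3}, then produces finitely many bubble centers $s_t^1, \ldots, s_t^K$ with the curve translated at each $s_t^i$ sub-converging to either a line or a borderline elastica, and with $\partial_s\tilde\gamma(t,\cdot)\to e_1$ locally smoothly outside $\varepsilon$-neighborhoods of these centers. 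Let $K_b\leq K$ be the number of borderline bubbles; by preservation of the rotation number (\Cref{lem:rotation_number_preservation}) and the fact that the near-line region contributes vanishing rotation asymptotically, the signed sum of $\pm 1$ rotations of the borderline bubbles equals $N[\gamma_0]$, hence $K_b \geq |N[\gamma_0]|$, and localizing $E$ yields $\lim_{t\to\infty} E[\gamma(t)] \geq 8K_b$.

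The hard part is exactly the reverse: ruling out ``cancelling bubbles'', i.e., borderline loops of opposite orientation that persist in the limit without being counted by $N[\gamma_0]$. A priori such pairs would inflate the asymptotic energy by multiples of $16$, and nothing in the monotonicity of $E$ immediately excludes them. The most promising route is to combine the dissipation identity $\int_0^\infty\int |V|^2\, ds\, dt < \infty$ with a quantitative $L^2$-instability estimate for two adjacent borderline loops of opposite sign, showing that such a configuration is not a critical point and dissipates energy at a rate bounded below, so cannot survive for all times. Equivalently, one would need to show that any cancelling pair is driven either to merger (annihilating two units of rotation and saving energy $16$) or to mutual escape to infinity, in either case eventually emptying the pair from the bubble inventory. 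Making this mechanism quantitative, uniformly in the relative position and scale of the pair, is where we expect the real obstacle, and it is likely where genuinely new ideas beyond our current energy method will be required.
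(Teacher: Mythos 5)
The statement you are trying to prove is \emph{labelled as a conjecture} in the paper, and the authors offer no proof of it; it is posed precisely because they could not establish it with the tools developed in the paper. So there is no reference proof to compare against, and your proposal should be evaluated on its own merits as an attack on an open problem.

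On those merits, your sketch is a sensible concentration-compactness ansatz, and you correctly reproduce the easy half: the lower bound \eqref{eq:0327-01} is already in the paper, and your bubble-counting argument essentially re-derives it (identify connected components of $\{|\partial_s\tilde\gamma(t,\cdot)-e_1|\geq\varepsilon\}$, use the uniform curvature bound from \Cref{lem:EF_smooth_bound} for a universal arclength lower bound per component, use $\sup_t E[\gamma(t)]\leq E[\gamma_0]$ to cap the number of components, extract bubble limits via \Cref{thm:convergence_lambda_elastic_flow}~\eqref{item:convergence_lambdaEF_3} and classify them via \Cref{thm:convergence_elastic_flow}~\eqref{item:convergence_EF_1}). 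The genuine content of the conjecture is the reverse inequality $\limsup_{t\to\infty}E[\gamma(t)]\leq 8|N[\gamma_0]|$, and here your proposal contains a gap which you yourself name: nothing in the argument rules out persistent cancelling pairs of oppositely oriented borderline loops, which would carry rotation $0$ but asymptotic energy $16$. Your suggested remedy---a quantitative $L^2$-instability estimate showing that such pairs cannot be asymptotically stationary, built on the dissipation identity $\int_0^\infty\int|V|^2\,ds\,dt<\infty$---is an idea, not an argument: it is far from clear that two well-separated, oppositely oriented borderline loops (each of which is \emph{individually} stationary) carry any controllable interaction, nor that the dissipation inequality can be made quantitative uniformly in the separation scale of the pair. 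There is also the subsidiary issue, which you gloss over, of verifying that your decomposition accounts for all the energy, i.e.\ that no energy escapes into regions where $|\partial_s\tilde\gamma-e_1|<\varepsilon$ without forming a detectable bubble at every scale; this is the usual ``no energy loss in the neck regions'' step of a bubbling argument and is not automatic. In short: the approach is reasonable, the hard step is correctly identified and is exactly what the authors also could not close, and the proposal is therefore not a proof.
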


In view of \eqref{eq:energy_8_borderline}, equality in \eqref{eq:EvsN} is attained for the borderline elastica $\gamma_b$ (with $N=1$), hence
 the limit could be interpreted as $N[\gamma_0]$ copies of $\gamma_b$.

\section{Rigidity theorems for solitons}\label{sec:soliton}

In this final section, we observe that simple applications of our convergence as well as energy decay results yield new rigidity theorems for solitons with finite direction energy.

Let $\gamma:[0,T)\times\R\to\R^n$ be a solution to a flow with maximal existence time $T\in(0,\infty]$ and initial datum $\gamma_0$.
We call $\gamma$ an \emph{expanding soliton} (resp.\ \emph{shrinking soliton}) if there is an increasing (resp.\ decreasing) function $\alpha:[0,T)\to(0,\infty)$ with $\alpha(0)=1$ and $\lim_{t\to T}\alpha(t)=\infty$ (resp.\ $\lim_{t\to T}\alpha(t)=0$) such that $\gamma(t)$ is isometric to $\alpha(t)\gamma_0$ for all $t\in[0,T)$.
We also call $\gamma$ a \emph{non-scaling soliton} if $T=\infty$ and $\gamma(t)$ is isometric to $\gamma_0$ for all $t\geq0$; typical examples are translators and rotators.
In all cases, we call $\gamma_0$ the \emph{profile curve} of the soliton $\gamma$.

We first address the flows of curve shortening type.

\begin{corollary}\label{cor:CSF_SDF_CF_soliton}
    Let $\gamma_0\in\dot{C}^\infty(\R;\R^n)$ be the profile curve of either a non-scaling or expanding soliton to one of \eqref{eq:CSF}, \eqref{eq:SDF}, \eqref{eq:CF}.
    Suppose that $D[\gamma_0]<\infty$.
    Then $\gamma_0$ is a straight line.
\end{corollary}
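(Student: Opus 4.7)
The plan is to treat the non-scaling and expanding cases separately, leveraging the isometry invariance of $B$ in the first and the scaling behavior of $D$ in the second, then combining each with the main convergence theorems.

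For a non-scaling soliton $T=\infty$ is built into the definition and each slice $\gamma(t)$ is Euclidean-isometric to $\gamma_0$. Since $|\kappa|^2$ and $ds$ are pointwise invariant under isometries of $\R^n$, this yields $B[\gamma(t)]=B[\gamma_0]$ for all $t\geq 0$. I would then invoke the convergence statement \Cref{thm:main_CSF_dichotomy}~\eqref{item:main_CSF_convergence} for \eqref{eq:CSF}, respectively \Cref{thm:main_SDF_Chen}~\eqref{item:SDF_Chen_convergence} for \eqref{eq:SDF} and \eqref{eq:CF}, both of which apply because $D[\gamma_0]<\infty$ and $T=\infty$, to get $B[\gamma(t)]\to 0$ as $t\to\infty$. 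Together these force $B[\gamma_0]=0$, so the smooth immersion $\gamma_0$ is a straight line.

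For an expanding soliton I would write $\gamma(t)=R_t(\alpha(t)\gamma_0)+c_t$ for some $R_t\in O(n)$, $c_t\in\R^n$, and $\alpha(t)\nearrow\infty$ as $t\to T$. A direct change of variables (based on the fact that positive dilation preserves the unit tangent while scaling arclength linearly) yields the identity
\[
D[\gamma(t)] \;=\; \alpha(t)\, D_{v_t}[\gamma_0], \qquad v_t:=R_t^{-1}e_1,
\]
where $D_v[\gamma_0]:=\int_{\gamma_0}(1-\langle\partial_s\gamma_0,v\rangle)\,ds$ denotes the direction energy with respect to the unit direction $v$. The energy monotonicity $D[\gamma(t)]\leq D[\gamma_0]$ supplied by \Cref{prop:energy_decay} and \Cref{prop:SDF_Chen_energy_decay}, valid on the whole maximal existence interval $[0,T)$, then forces $D_{v_t}[\gamma_0]\to 0$ as $t\to T$. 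Extracting a subsequence with $v_{t_j}\to v_*$ in the unit sphere $S^{n-1}$ and applying Fatou's lemma to the nonnegative integrand $1-\langle\partial_s\gamma_0,v\rangle$ gives $D_{v_*}[\gamma_0]=0$, so $\partial_s\gamma_0\equiv v_*$ a.e., and again $\gamma_0$ is a straight line.

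The only non-routine point is the scaling/isometry identity for $D$ displayed above, which reduces to a one-line reparametrization; beyond that, the non-scaling case is essentially a direct application of the main convergence theorem combined with the isometry invariance of $B$, and the expanding case is closed by a sphere-compactness and Fatou argument. I therefore do not foresee any serious obstacle: the monotonicity of $D$ and the decay of $B$ established in the main dichotomies do all the heavy lifting.
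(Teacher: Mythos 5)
Your proof is correct and follows essentially the same approach as the paper: for the non-scaling case, combine isometry-invariance of $B$ with $B[\gamma(t)]\to 0$ from the convergence part of the dichotomies; for the expanding case, combine the monotone decrease of $D$ with the fact that dilation scales $D$ by $\alpha(t)\to\infty$. The only difference is that the paper asserts the expanding case in one line without spelling out the possible rotational part of the isometry, whereas your Fatou/sphere-compactness step handles it explicitly (an equivalent quick way to close this: since $D_{e_1}[\gamma_0]<\infty$ forces $\partial_s\gamma_0\to e_1$ at both ends by \Cref{lem:direction_horizontal}, you have $D_v[\gamma_0]=\infty$ for every unit $v\neq e_1$ unless $\gamma_0$ is a line, so the finiteness of $D[\gamma(t)]=\alpha(t)D_{v_t}[\gamma_0]$ already pins down $v_t=e_1$).
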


\begin{proof}
    The expanding case is easily dealt with thanks to the energy decay results in \Cref{thm:main_CSF_dichotomy,thm:main_SDF_Chen}, since an expanding dilation increases the direction energy.
    In the non-scaling case, the convergence results there imply that $\gamma_0$ must be a line.
\end{proof}

For \eqref{eq:SDF}, several non-stationary solitons are known in compact and non-compact cases; for example, shrinking Bernoulli's figure-eight \cite{EGMWW15} and a non-compact complete translator \cite{Ogden-Warren_2024_raindrop}.
Also in \cite{Koch-Lamm_2012} an abstract existence theorem is obtained for expanding solitons of entire graphs with conical ends.
See also \cite{Asai-Giga_2014_SDFself,Kagaya-Kohsaka_2020_SDF} for solitons with free boundary.
For \eqref{eq:CF}, the only known non-stationary solitons would be the circles and the figure-eight \cite{Cooper_Wheeler_Wheeler_23}.
It is not known whether there are shrinking solitons with finite direction energy for these flows.

As for the free elastic flow, we obtain a similar type of new rigidity result.

\begin{corollary}\label{cor:FEF_soliton}
    Let $\gamma_0\in\dot{C}^\infty(\R;\R^n)$ be the profile curve of either a non-scaling or shrinking soliton to \eqref{eq:FEF}.
    Suppose that $B[\gamma_0]<\infty$.
    Then $\gamma_0$ is a straight line.
\end{corollary}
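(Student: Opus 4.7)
My plan is to mirror the strategy of \Cref{cor:CSF_SDF_CF_soliton}, playing the scaling behavior of the bending energy off its monotonicity along \eqref{eq:FEF}, and invoking the convergence statement \Cref{thm:main_FEF} to handle the non-scaling case. Recall that $B$ is invariant under Euclidean isometries and under orientation-preserving reparametrizations, and scales as $B[\alpha\gamma]=\alpha^{-1}B[\gamma]$ for $\alpha>0$.

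For the shrinking case, the soliton relation $\gamma(t)\cong\alpha(t)\gamma_0$ would yield $B[\gamma(t)]=\alpha(t)^{-1}B[\gamma_0]$ for all $t\in[0,T)$. Since $\alpha$ is strictly decreasing from $\alpha(0)=1$, we have $\alpha(t)<1$ and hence $B[\gamma(t)]\geq B[\gamma_0]$ for $t\in(0,T)$. On the other hand, \Cref{prop:energy_decay} (with $\sigma=1$, $\lambda=0$) ensures that $B$ is non-increasing along the flow, so $B[\gamma(t)]\leq B[\gamma_0]$. Combining, $\alpha(t)^{-1}B[\gamma_0]=B[\gamma_0]$, and since $\alpha(t)^{-1}>1$, this forces $B[\gamma_0]=0$, so $\gamma_0$ is a straight line.

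For the non-scaling case, I would observe that since $\gamma(t)$ is Euclidean-isometric to $\gamma_0$ (modulo reparametrization), the pointwise norm of the curvature vector is preserved, and in particular $\sup_\R|\kappa(t,\cdot)|=\sup_\R|\kappa_{\gamma_0}|$ for all $t\geq0$. Applying \Cref{thm:main_FEF} to the global-in-time soliton flow gives $\lim_{t\to\infty}\sup_\R|\kappa(t,\cdot)|=0$. Consequently $\kappa_{\gamma_0}\equiv0$ and $\gamma_0$ is a straight line.

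The main technical point to verify is that the hypotheses of \Cref{prop:energy_decay} and \Cref{thm:main_FEF} hold in the soliton setting, most notably the properness of $\gamma_0$, which is not a priori implied by $B[\gamma_0]<\infty$ alone---in contrast to \Cref{cor:CSF_SDF_CF_soliton}, where properness follows from the finiteness of the direction energy via \Cref{lem:direction_horizontal}. Since a non-scaling or shrinking soliton evolves by Euclidean motions composed with scaling, properness of $\gamma_0$ propagates to $\gamma(t)$ for all $t$, and the regularity and non-degeneracy bounds required to run the energy identity follow from the smooth time-dependence of these isometries. Thus, once properness of $\gamma_0$ is secured (either from the soliton structure or as an implicit assumption), both arguments above go through verbatim.
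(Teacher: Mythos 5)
Your proof is correct and follows essentially the same strategy as the paper's one-line argument: the scaling of the bending energy forces $B[\gamma_0]=0$ in the shrinking case, and the uniform curvature convergence $\sup_\R|\kappa(t,\cdot)|\to 0$ of \Cref{thm:main_FEF} forces $\kappa_{\gamma_0}\equiv 0$ in the non-scaling case. You spell out the scaling relation $B[\alpha\gamma]=\alpha^{-1}B[\gamma]$ explicitly, and you cite \Cref{prop:energy_decay} (on each $[0,T']$ with $T'<T$) rather than \Cref{thm:main_FEF} for the decay of $B$; this is in fact a slightly sharper invocation in the shrinking case, since \Cref{prop:energy_decay} does not require properness, whereas \Cref{thm:main_FEF} does. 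Your concern about properness is well placed: $B[\gamma_0]<\infty$ together with $\inf_\R|\partial_x\gamma_0|>0$ does \emph{not} imply properness (in contrast to the direction-energy situation of \Cref{lem:direction_horizontal}), and \Cref{thm:main_FEF}, which is needed in the non-scaling case, does assume a properly immersed initial curve. The paper leaves this assumption implicit in the corollary, presumably understanding ``the profile curve of a soliton'' to lie within the class of curves for which the flow theory of the paper is developed; your proof shares this implicit hypothesis, so the arguments are on equal footing.
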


\begin{proof}
    The shrinking case can be treated by the energy decay in \Cref{thm:main_FEF}, while in the non-scaling case the convergence result implies the assertion.
\end{proof}

Concerning expanding solitons to \eqref{eq:FEF}, the circles and the figure-eight are again explicit examples \cite{Miura_Wheeler_2024_free}, and also the result in \cite{Koch-Lamm_2012} implies existence of expanding entire graphs.
It is unknown if the latter examples have finite bending energy.

Finally we also obtain a new rigidity result for \eqref{eq:EF}, where we can rule out both expanding and shrinking solitons.

\begin{corollary}\label{cor:EF_soliton}
    Let $\gamma_0\in \dot{C}^\infty(\R;\R^n)$ be the profile curve of either a non-scaling, expanding, or shrinking soliton to \eqref{eq:EF}.
    Suppose that $E[\gamma_0]<\infty$.
    Then $\gamma_0$ is stationary, i.e., either a straight line or a borderline elastica.
\end{corollary}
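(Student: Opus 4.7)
The plan is to combine the energy monotonicity $E[\gamma(t)] \leq E[\gamma_0] < \infty$ from \Cref{prop:energy_decay} with the elementary scaling identities for $B$ and $D$ under the soliton ansatz, and then argue case by case.

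First, I would write the soliton as $\gamma(t,\cdot) = \alpha(t) R(t) \gamma_0(\phi_t(\cdot)) + b(t)$ for some $R(t) \in O(n)$, translation $b(t) \in \R^n$, and reparametrization $\phi_t$. (The reparametrization must be orientation-preserving, for otherwise $D[\gamma(t)] = \infty$ since $L[\gamma_0] = \infty$, contradicting the energy decay.) Using invariance of $B$ under rigid motions and reparametrizations, together with the scaling behavior $\kappa \mapsto \alpha^{-1} \kappa$ and $ds \mapsto \alpha\, ds$, one finds
\begin{align}
B[\gamma(t)] = \alpha(t)^{-1} B[\gamma_0], \qquad D[\gamma(t)] = \alpha(t) D_{R(t)^{-1} e_1}[\gamma_0],
\end{align}
where $D_v$ denotes the direction energy with respect to direction $v$. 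Now by \Cref{lem:direction_horizontal} applied both to $\gamma_0$ and to $\gamma(t)$, the finiteness $D[\gamma(t)] < \infty$ forces $\partial_s \gamma(t,x) \to e_1$ as $x \to \pm \infty$; since the asymptotic tangent of $R(t) \gamma_0(\phi_t(\cdot))$ is $R(t) e_1$, we must have $R(t) e_1 = e_1$, which reduces the scaling identity to $D[\gamma(t)] = \alpha(t) D[\gamma_0]$.

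The three cases then follow by dichotomy. In the expanding case, $\alpha(t) \to \infty$ together with $D[\gamma(t)] \leq E[\gamma_0]$ forces $D[\gamma_0] = 0$, so $\gamma_0$ is a straight line. In the shrinking case, $\alpha(t) \to 0$ together with $B[\gamma(t)] \leq E[\gamma_0]$ forces $B[\gamma_0] = 0$, again yielding a line. In the non-scaling case $\alpha \equiv 1$, the above identities give $E[\gamma(t)] \equiv E[\gamma_0]$, so the identity in \Cref{prop:energy_decay} gives $\int_0^t \int |\partial_t^\perp\gamma|^2 ds\, d\tau = 0$ for all $t$. Hence $\partial_t^\perp\gamma \equiv 0$, and since the right-hand side of \eqref{eq:EF} is purely normal, $\gamma_0$ solves $\nabla_s^2 \kappa + \frac{1}{2} |\kappa|^2 \kappa - \kappa = 0$. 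Finally, the classification of $1$-elasticae used in the proof of \Cref{thm:convergence_elastic_flow}~\eqref{item:convergence_EF_1}---only straight lines and borderline elasticae have finite bending energy among infinite-length $1$-elasticae---identifies $\gamma_0$ as either a line or a borderline elastica.

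The main technical point is the reduction of the rotational part $R(t)$ to an $e_1$-preserving rotation; this follows cleanly from the uniqueness of the asymptotic tangent direction implied by \Cref{lem:direction_horizontal}. The remainder consists of elementary scaling calculations and an invocation of the elastica classification.
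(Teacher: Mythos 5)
Your argument is correct and, for the expanding and shrinking cases, essentially identical to the paper's: the paper rules out expanding solitons with $D[\gamma_0]>0$ because $\alpha(t)\to\infty$ would force $D[\gamma(t)]\to\infty$, contradicting the monotonicity $E[\gamma(t)]\le E[\gamma_0]<\infty$, and symmetrically rules out shrinking solitons with $B[\gamma_0]>0$ via $\alpha(t)\to 0$ and $B[\gamma(t)]=\alpha(t)^{-1}B[\gamma_0]\to\infty$. You carry out the same scaling computation, made explicit. Where you genuinely diverge is the non-scaling case: the paper appeals to ``the convergence results'' (i.e.\ \Cref{thm:convergence_lambda_elastic_flow,thm:convergence_elastic_flow}), which requires identifying the sub-convergence limit $\gamma_\infty$ with $\gamma_0$ up to isometry and reparametrization --- a step that, taken at face value, needs some care about the drift of base points (cf.\ \Cref{rem:EF_conv_1}). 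Your route is more direct and avoids this entirely: once you know $R(t)e_1=e_1$ (forced by \Cref{lem:direction_horizontal} and $D[\gamma(t)]<\infty$), you get $E[\gamma(t)]\equiv E[\gamma_0]$, so the energy identity of \Cref{prop:energy_decay} collapses to $\int_0^t\int |\partial_t^\perp\gamma|^2\,ds\,d\tau=0$, hence $\partial_t^\perp\gamma\equiv 0$, and since the right-hand side of \eqref{eq:EF} is purely normal the curve is stationary. This is a cleaner argument and it buys you an independent derivation of the elastica equation rather than an appeal to the long-time asymptotics.

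One small imprecision: the justification ``the reparametrization must be orientation-preserving, for otherwise $D[\gamma(t)]=\infty$'' is not quite right as stated, because an orientation-reversing reparametrization combined with a rotation sending $e_1\mapsto -e_1$ can still yield finite direction energy. The conclusion is nonetheless correct: since the soliton is a smooth family with $\phi_0=\mathrm{id}$, the sign of $\partial_x\phi_t$ cannot change, so $\phi_t$ stays orientation-preserving by continuity (and similarly $R(t)\in SO(n)$). This continuity argument is the one to cite; the $L=\infty$ argument should be dropped.
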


\begin{proof}
    The expanding case (with $D[\gamma_0]>0$) is ruled out since if $\gamma:[0,\infty)\times\R\to\R^n$ is an expanding soliton, then the limit property $\alpha(t)\to\infty$ implies that $D[\gamma(t)]\to\infty$, which contradicts $E[\gamma(t)]\leq E[\gamma_0]<\infty$.
    A similar argument rules out the shrinking case (with $B[\gamma_0]>0$), where $\alpha(t)\to0$ implies $B[\gamma(t)]\to\infty$.
    In the non-scaling case, again the convergence results imply the assertion.
\end{proof}

For \eqref{eq:CF}, \eqref{eq:FEF}, and \eqref{eq:EF}, non-stationary non-scaling solitons (with infinite energy) are not known.

\appendix

\section{Local well-posedness}\label{sec:local_wellposedness}

In this section, we discuss local well-posedness for a class of arbitrary order geometric flows of complete curves. The analogous results in the case of compact curves are standard. The usual parabolic H\"older space approach is however doomed to fail in the non-compact situation. For the convenience of the reader, we provide some details here.

A similar result for arbitrary order flows on \emph{compact} manifolds has been proven by Mantegazza--Martinazzi \cite{MR3060703}.
For $m\in\N_0$, $T>0$, and a suitable initial datum $\gamma_0\colon \R\to\R^n$, we consider the general initial value problem
\begin{align}\label{eq:STE_IVP}
	\left\lbrace \begin{array}{lll}
		\partial_t\gamma &= (-1)^m\nabla_s^{2m} \kappa + F(\partial_s\gamma, \dots, \partial_s^{2m+1}\gamma) & \text{ in }(0,T)\times \R,  \\
		\gamma(0,\cdot)&=\gamma_0, & \text{ in }\R.
	\end{array}
	\right.
\end{align}
Here $F\colon (\R^n)^{2m+1}\to\R^n$ is smooth.
We normalize \eqref{eq:STE_IVP} by assuming the coefficient of the leading order term to be $(-1)^m$. Of course, all the results in this section remain valid if the leading order term takes the form $(-1)^m\sigma$ for some fixed $\sigma>0$ by transforming time via $t\mapsto \sigma t$. Examples include the flows discussed in this paper, i.e., \eqref{eq:CSF}, \eqref{eq:SDF}, \eqref{eq:CF}, and \eqref{eq:lambda-EF}.

\subsection{Transformation into a parabolic problem}

Equation \eqref{eq:STE_IVP} is not parabolic due to its geometric invariance. %As in De Turck's work \cite{MR697987}, 
In the spirit of the ``De Turck trick'', this issue can be overcome by adding a suitable tangential velocity to the evolution law. % see also \cite{MR4768651,MR4278396,MR3906239} for related works on the elastic flow. 
We first make the following observation which is easily proven by induction on $m\in\N_0$.
\begin{lemma}\label{lem:STE_1}
   For each $m\in\N_0$, there exists a polynomial $\mathcal{P}$ such that
    \begin{align}
        (-1)^m\nabla_s^{2m}\kappa = \Big[(-1)^m\frac{\partial_x^{2m+2} \gamma}{|\partial_x\gamma|^{2m+2}} + \mathcal{P}(|\partial_x\gamma|^{-1},\partial_x\gamma, \dots, \partial_x^{2m+1}\gamma)\Big]^\perp.
    \end{align}
\end{lemma}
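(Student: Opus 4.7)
The proof can be carried out directly, without induction on $m$, from two ingredients: a decomposition of $\partial_s^k\gamma$ into a leading $x$-derivative term plus lower-order and tangential remainders, and the formula \eqref{eq:LTE_Linfty_bounds_2}.

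The first ingredient is the claim that, for each $k\geq 1$,
\[
\partial_s^k\gamma = \frac{\partial_x^k\gamma}{|\partial_x\gamma|^k} + \mathcal{R}_k + \mathcal{T}_k,
\]
where $\mathcal{R}_k$ is a vector-valued polynomial in $|\partial_x\gamma|^{-1}, \partial_x\gamma, \dots, \partial_x^{k-1}\gamma$ and $\mathcal{T}_k = s_k\,\partial_x\gamma$ is tangential (with $s_k$ a scalar polynomial in $|\partial_x\gamma|^{-1}$ and derivatives of $\gamma$ up to order $k$). I would prove this by a short induction on $k$ using $\partial_s = |\partial_x\gamma|^{-1}\partial_x$: at each step, the only ways a genuinely new derivative $\partial_x^k\gamma$ can appear are (a) through $\partial_s$ hitting $\partial_x^{k-1}\gamma$ in the leading term at order $k-1$, which produces exactly $|\partial_x\gamma|^{-k}\partial_x^k\gamma$, or (b) through differentiating the scalar $s_{k-1}$ inside $\mathcal{T}_{k-1}$, which keeps the factor $\partial_x\gamma$ and hence remains tangential. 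All other contributions have derivatives of order at most $k-1$.

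For the main lemma, I would apply \eqref{eq:LTE_Linfty_bounds_2} with its $m$ replaced by $2m$, namely
\[
\nabla_s^{2m}\kappa - \partial_s^{2m}\kappa = \sum_{i=1}^{m} Q_{2i+1}^{2m-2i}(\kappa) + \sum_{i=1}^{m} Q_{2i}^{2m+1-2i}(\kappa)\,\partial_s\gamma.
\]
Since $\kappa = \partial_s^2\gamma$, every factor $\partial_s^{i_\ell}\kappa$ equals $\partial_s^{i_\ell+2}\gamma$ and can be rewritten in $x$-derivatives via the first ingredient. In the first sum each $i_\ell \leq 2m-2$, so $\partial_s^{i_\ell+2}\gamma$ contributes only derivatives $\partial_x^\ell\gamma$ with $\ell \leq 2m$ (from the leading term or the $\mathcal{R}$-remainder), plus tangential $\mathcal{T}$-pieces. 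Every term in the second sum is multiplied by $\partial_s\gamma$ and is therefore tangential. Finally, $\partial_s^{2m}\kappa = \partial_s^{2m+2}\gamma$ supplies the required leading term $|\partial_x\gamma|^{-(2m+2)}\partial_x^{2m+2}\gamma$ together with $\mathcal{R}_{2m+2}$ (polynomial in $|\partial_x\gamma|^{-1}, \partial_x\gamma, \dots, \partial_x^{2m+1}\gamma$) and a tangential $\mathcal{T}_{2m+2}$. Collecting yields
\[
\nabla_s^{2m}\kappa = \frac{\partial_x^{2m+2}\gamma}{|\partial_x\gamma|^{2m+2}} + \mathcal{P}' + \mathcal{T}',
\]
with $\mathcal{P}'$ polynomial in $|\partial_x\gamma|^{-1}, \partial_x\gamma, \dots, \partial_x^{2m+1}\gamma$ and $\mathcal{T}'$ tangential. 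Taking $[\,\cdot\,]^\perp$ annihilates $\mathcal{T}'$, and absorbing the (irrelevant) sign $(-1)^m$ into the polynomial gives the claim.

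The principal obstacle will be the bookkeeping of derivative orders, namely verifying that the highest $x$-derivative appearing in $\mathcal{P}'$ is indeed $\partial_x^{2m+1}\gamma$ rather than $\partial_x^{2m+2}\gamma$. This rests on feature (a)--(b) of the first ingredient, which confines $\partial_x^\ell\gamma$ in $\partial_s^\ell\gamma$ to either the leading term or to a tangential remainder, combined with the crucial constraint $i_\ell \leq 2m-2$ in the first sum of \eqref{eq:LTE_Linfty_bounds_2}.
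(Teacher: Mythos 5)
Your argument is correct, and it reaches the conclusion by a genuinely different route than the one-line hint the paper gives (``easily proven by induction on $m$''). The paper's intended argument, unwinding, is a direct induction on $m$ via $\nabla_s^{2m+2}\kappa=\nabla_s^2(\nabla_s^{2m}\kappa)$: one writes $\nabla_s^2 V=[\partial_s^2 V+\langle V,\kappa\rangle\kappa]^\perp$ for $V$ normal, applies the inductive hypothesis $\nabla_s^{2m}\kappa=[\mathcal{A}_m]^\perp$, and checks that $\partial_s^2$ raises the highest $x$-derivative by exactly two while everything else stays below $2m+3$. Your proof replaces that $m$-induction by two imported pieces: a $k$-induction for the decomposition $\partial_s^k\gamma=\tfrac{\partial_x^k\gamma}{|\partial_x\gamma|^k}+\mathcal{R}_k+\mathcal{T}_k$, and the Dziuk--Kuwert--Sch\"atzle commutator formula \eqref{eq:LTE_Linfty_bounds_2}, which already packages the combinatorics of $\nabla_s^j$ versus $\partial_s^j$. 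I have checked the two supporting claims carefully: the leading-order bookkeeping in your first ingredient is right (the $\partial_x^k\gamma$-term can only be produced from $\partial_s$ acting on the leading term at order $k-1$ or from differentiating the tangential scalar, as you list in (a)--(b)), and the index constraint $i_\ell\le 2m-2$ in the first sum of \eqref{eq:LTE_Linfty_bounds_2} does cap the non-tangential derivatives there at order $2m$, so the only place $\partial_x^{2m+2}\gamma$ survives the normal projection is the displayed leading term and the only place $\partial_x^{2m+1}\gamma$ appears in $\mathcal{P}$ is via $\mathcal{R}_{2m+2}$. One minor stylistic caveat: the $Q$-formula lives in Section~5 of this paper, which logically depends on the appendix; since it is quoted from \cite[Lemma 2.6]{Dziuk-Kuwert-Schatzle_2002}, this is not circular, but in the interest of making the appendix self-contained you should cite DKS directly rather than the displayed equation from the body. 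Also, when you expand the first sum you should spell out that the tangential $\mathcal{T}$-pieces only cause trouble in the \emph{uncontracted} slot of the $*$-product (where they are annihilated by $[\,\cdot\,]^\perp$); in a contracted slot they merely produce scalars involving derivatives of order at most $2m$, which is harmless. With those two remarks incorporated, the proof is complete.
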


We now consider the operator of order $2m+2$ given by
\begin{align}
	\mathcal{A}[\gamma] &\vcentcolon=(-1)^m\frac{\partial_x^{2m+2} \gamma}{|\partial_x\gamma|^{2m+2}} + \mathcal{P}(|\partial_x\gamma|^{-1},\partial_x\gamma, \dots, \partial_x^{2m+1}\gamma),
\end{align}
so that $(-1)^m\nabla_{s}^{2m} \kappa = \mathcal{A}[\gamma]^\perp$ by \Cref{lem:STE_1}.  Writing $F[\gamma]\vcentcolon =  F(\partial_s\gamma, \dots, \partial_s^{2m+1}\gamma),$ we study the so-called \emph{analytic problem}
\begin{align}\label{eq:STE_IVP_analytic}
	\left\lbrace \begin{array}{lll}
		\partial_t\gamma &= \mathcal{A}[\gamma] +  F[\gamma] & \text{ in }(0,T)\times \R,  \\
		\gamma(0,\cdot)&=\gamma_0, & \text{ in }\R,
	\end{array}
	\right.
\end{align}
We will see that sufficiently regular solutions to \eqref{eq:STE_IVP_analytic} can be reparametrized to give solutions of \eqref{eq:STE_IVP}, see the proof of \Cref{thm:GF_well_posed} below.
The leading order term on the right hand side of \eqref{eq:STE_IVP_analytic} is now of the form 
$$(-1)^m|\partial_x\gamma|^{-(2m+2)}\partial_x^{2m+2},$$ 
making \eqref{eq:STE_IVP_analytic} a quasilinear \emph{parabolic} system of order $2m+2$.

We now formulate our short time well-posedness result using the notation of parabolic H\"older spaces $C^{\frac{l+\alpha}{l}, l+\alpha}([0,T]\times \R)$, see \cite{solonnikov1965boundary}. We also denote by $C^\infty([0,T]\times\R;\R^n)$ the space of $u\colon[0,T]\times\R\to\R^n$ such that all derivatives $\partial_t^\alpha\partial_x^\beta u$ are bounded in $[0,T]\times\R$ for any $\alpha,\beta\in\N_0$.

% Our short time well-posedness result can now be formulated as follows, see \cite{solonnikov1965boundary} for the definition of the parabolic H\"older spaces $C^{\frac{k+\alpha}{4}, k+\alpha}([0,T]\times \R)$.
\begin{theorem}\label{thm:STE_analytic}
	Let $\alpha\in (0,1)$, $m\in\N_0$, and $l\vcentcolon = 2m+2$. Let $\gamma_0\in \dot{C}^{l,\alpha}(\R)$ with $\inf_{\R}|\partial_x\gamma_0|>0$. Then there exist $T>0$ and a unique $\gamma\colon [0,T]\times \R\to\R^n$ with
    \begin{align}\label{eq:STE_IVP_analytic_uniform_immersion}
        \inf_{[0,T]\times\R} |\partial_x\gamma| >0
    \end{align}
    and
	\begin{align}\label{eq:STE_IVP_analytic_regularity}
		\gamma - \gamma_0\in C^{\frac{l+\alpha}{l}, l+\alpha}([0,T]\times\R)
	\end{align}
	which solves \eqref{eq:STE_IVP_analytic}. Moreover, there exist $\eta>0$ and $C>0$ such that for any $\tilde{\gamma}_0$ with $\Vert\gamma_0-\tilde{\gamma}_0\Vert_{C^{l,\alpha}(\R)}\leq \eta$, the corresponding solution $\tilde{\gamma}$ of \eqref{eq:STE_IVP_analytic} exists on $[0, T]$ and satisfies
	\begin{align}\label{eq:cont_dependence}
		\Vert\gamma-\tilde{\gamma}\Vert_{C^{\frac{l+\alpha}{l}, l+\alpha}([0,T]\times\R)}\leq C\Vert \gamma_0-\tilde{\gamma}_0\Vert_{C^{l,\alpha}(\R)}.
	\end{align}
	Further, if even $\gamma_0\in \dot{C}^{\infty}(\R)$, then $\gamma-\gamma_0\in C^{\infty}([0,T]\times \R)$.
\end{theorem}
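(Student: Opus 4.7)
The plan is to recast \eqref{eq:STE_IVP_analytic} as a fixed-point problem for the difference $u \vcentcolon= \gamma-\gamma_0$, which satisfies zero initial data and, after freezing the quasilinear operator $\mathcal{A}$ at $\gamma_0$ to extract the leading linear term
\[
Lv \vcentcolon= (-1)^m |\partial_x\gamma_0|^{-(2m+2)} \partial_x^{2m+2}v,
\]
solves $\partial_t u - Lu = G[\gamma_0] + R[\gamma_0, u]$ on $(0,T)\times\R$, where $G[\gamma_0] \vcentcolon= \mathcal{A}[\gamma_0]+F[\gamma_0]$ and $R[\gamma_0,u] \vcentcolon= (\mathcal{A}[\gamma_0+u]-\mathcal{A}[\gamma_0]-Lu) + (F[\gamma_0+u]-F[\gamma_0])$ collects the genuinely nonlinear remainder.

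The first step is the linear theory. Since $\gamma_0 \in \dot{C}^{l,\alpha}(\R)$ with $\inf_\R|\partial_x\gamma_0|>0$, the coefficient $|\partial_x\gamma_0|^{-(2m+2)}$ belongs to $C^{\alpha}(\R)$ and is uniformly bounded away from $0$ and $\infty$, making $L$ uniformly parabolic on all of $\R$. Solonnikov-type parabolic Schauder theory \cite{solonnikov1965boundary} on the unbounded domain then yields a bounded linear solution operator $\mathcal{S}_T$ for $\partial_t v - Lv=f$, $v|_{t=0}=0$, sending $C^{\alpha/l,\alpha}([0,T]\times\R)$ into the closed subspace
\[
X_T \vcentcolon= \bigl\{ v \in C^{\frac{l+\alpha}{l},l+\alpha}([0,T]\times\R) \,:\, v|_{t=0}=0 \bigr\}.
\]
Since $\gamma_0 \in \dot{C}^{l,\alpha}$ and $F,\mathcal{P}$ are smooth with $F(0,\dots,0)$ irrelevant (only derivatives of $\gamma_0$ enter), the source $G[\gamma_0]$ lies in $C^\alpha(\R)$.

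The second step is a Banach fixed-point argument for $u \mapsto \Phi(u) \vcentcolon= \mathcal{S}_T(G[\gamma_0]+R[\gamma_0,u])$ on a small closed ball $B_\rho\subset X_T$. The key nonlinear estimate uses that members of $X_T$ vanish at $t=0$, so that interpolation yields $\|u\|_{L^\infty([0,T]\times\R)}\le CT^{(l+\alpha)/l}\|u\|_{X_T}$ and analogous gains for intermediate derivatives; together with the algebra structure of the space of functions with bounded parabolic H\"older derivatives this gives
\[
\|R[\gamma_0,u] - R[\gamma_0,\tilde u]\|_{C^{\alpha/l,\alpha}} \le C\bigl(\|u\|_{X_T}+\|\tilde u\|_{X_T}+T^{\delta}\bigr)\|u-\tilde u\|_{X_T}
\]
for some $\delta>0$. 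Choosing $\rho$ small and then $T>0$ small, $\Phi$ maps $B_\rho$ into itself and is a contraction, producing a unique $u\in X_T$ and hence a unique $\gamma=\gamma_0+u$ satisfying \eqref{eq:STE_IVP_analytic_regularity}. The non-degeneracy \eqref{eq:STE_IVP_analytic_uniform_immersion} follows from continuity of $\partial_x\gamma$ on $[0,T]\times\R$, possibly after shrinking $T$. The continuous dependence estimate \eqref{eq:cont_dependence} is obtained by applying the same argument to $\tilde\gamma_0$ with $\|\gamma_0-\tilde\gamma_0\|_{C^{l,\alpha}}\le\eta$ small enough that the contraction radius and time $T$ remain valid, and then comparing the two fixed-point identities. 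Finally, smoothness for $\gamma_0\in\dot{C}^\infty$ is obtained by a bootstrap: differentiating the equation in $x$ yields a linear parabolic equation for $\partial_x^k u$ with forcing in successively higher parabolic H\"older classes, to which the same linear theory applies, and time derivatives are recovered algebraically from the equation.

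The main obstacle is executing all these estimates \emph{uniformly in $x\in\R$} on a non-compact domain: $\gamma_0$ itself need not be bounded, only its derivatives are. Consequently the fixed-point argument must be carried out in the seminormed space $X_T$ involving only $\partial_x u$ and higher derivatives (with the $C^0$-norm of $u$ controlled by time integration from $u|_{t=0}=0$), and one must verify that both $G[\gamma_0]$ and the nonlinear remainder $R[\gamma_0,u]$ express themselves entirely in terms of these bounded quantities. This is precisely why the initial datum is assumed in the dotted space $\dot{C}^{l,\alpha}$ rather than $C^{l,\alpha}$.
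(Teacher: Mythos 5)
Your proposal follows essentially the same route the paper sketches: both freeze the quasilinear operator to obtain a uniformly parabolic linear problem, invoke Solonnikov's parabolic Schauder maximal regularity, run a contraction on a small ball of the affine class $\gamma_0 + C^{\frac{l+\alpha}{l},l+\alpha}$ (exploiting that only \emph{derivatives} of $\gamma$ enter the equation so that non-compactness does not obstruct working in seminormed spaces), handle continuous dependence by comparing two fixed-point iterations, and obtain smoothness by bootstrap. The paper leaves this as a sketch with references (Solonnikov, Chou--Zhu for the affine-space trick, and a citation for continuous dependence), and your outline matches it step for step.
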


This result can be established with classical tools from the theory of quasilinear parabolic systems.
Since it is not the main focus of this work, we will use \Cref{thm:STE_analytic} without proof, but instead we give a rough sketch of the main idea. It can be proven by applying the contraction principle and parabolic maximal regularity in H\"older spaces, in particular \cite[Theorem 4.10]{solonnikov1965boundary}. %, similarly as in previous works on the elastic flow for curves of finite length, see, for instance, \cite{MR3906239,MR4278396}. 
A subtlety is of course that the curves we consider are \emph{unbounded}, thus naturally not in the parabolic H\"older spaces $C^{\frac{l+\alpha}{l}, l+\alpha}([0,T]\times \R)$. Instead, as in \cite{Chou_Zhu_1998_complete}, we consider the affine space given by \eqref{eq:STE_IVP_analytic_regularity} and use that the quantities involved in the equation \eqref{eq:STE_IVP} do not depend on zeroth order quantities of $\gamma$, but only on its derivatives which are bounded in time and space in the regularity class \eqref{eq:STE_IVP_analytic_regularity}. To obtain the  continuous dependence \eqref{eq:cont_dependence}, one may proceed as  in \cite[Theorem 5.1]{MR3524106}.

\subsection{Local solutions to the geometric problem}

By construction, \Cref{thm:STE_analytic} gives a flow with a particular tangential velocity. We will now construct a solution to \eqref{eq:STE_IVP}. It is worth pointing out that a subtlety of our construction based on an ODE argument is that it does not preserve a finite degree of smoothness, see \cite{MR3695810,Adrian_erratum}. This is why we focus on the smooth category in the sequel.

\begin{theorem}\label{thm:GF_well_posed}
    Let $\gamma_0\in \dot{C}^\infty(\R)$ with $\inf_\R |\partial_x\gamma_0|>0$. Then there exist $T>0$ and a solution $\gamma\colon[0,T]\times \R\to\R^n$ of \eqref{eq:STE_IVP} with 
    \begin{align}\label{eq:GF_well_posed_immersion}
        \inf_{[0,T]\times\R} |\partial_x\gamma|>0
    \end{align}
    and
    \begin{align}\label{eq:GF_wel_posedness_regularity}
        \gamma-\gamma_0\in C^\infty([0,T]\times\R).    
    \end{align}
    The solution is given by a time-dependent reparametrization of the solution constructed in \Cref{thm:STE_analytic}, and it is unique in the class of solutions satisfying \eqref{eq:GF_well_posed_immersion} and \eqref{eq:GF_wel_posedness_regularity}.  Moreover, if $\gamma_0$ is proper, then $\gamma(t,\cdot)$ is proper for all $t\in [0,T]$.
\end{theorem}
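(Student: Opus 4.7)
The strategy is to reparametrize the solution $\tilde\gamma$ of the analytic problem \eqref{eq:STE_IVP_analytic} provided by \Cref{thm:STE_analytic} so as to kill its tangential velocity. By \Cref{lem:STE_1}, we may write $\mathcal{A}[\gamma] = (-1)^m\nabla_s^{2m}\kappa + \tau[\gamma]\,\partial_s\gamma$, where the scalar $\tau[\gamma] \vcentcolon= \langle \mathcal{A}[\gamma],\partial_s\gamma\rangle$ depends smoothly on derivatives of $\gamma$ up to order $2m+1$. A direct chain-rule computation shows that if $\varphi\colon[0,T]\times\R\to\R$ satisfies the ODE
\begin{equation*}
    \partial_t\varphi(t,x) = -\frac{\tau[\tilde\gamma](t,\varphi(t,x))}{|\partial_y\tilde\gamma(t,\varphi(t,x))|}, \qquad \varphi(0,x)=x,
\end{equation*}
then $\gamma(t,x) \vcentcolon= \tilde\gamma(t,\varphi(t,x))$ solves the geometric equation \eqref{eq:STE_IVP}: the tangential contribution coming from $\mathcal{A}$ is absorbed into the reparametrization, and the normal quantity $\nabla_s^{2m}\kappa$ together with $F[\gamma]$ are invariant under orientation-preserving reparametrization.

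The right hand side of this ODE is smooth in $(t,y)$ and, thanks to \eqref{eq:STE_IVP_analytic_uniform_immersion}, \eqref{eq:STE_IVP_analytic_regularity}, and $\gamma_0\in\dot C^\infty(\R)$, it is uniformly bounded together with all its $(t,y)$-derivatives on $[0,T]\times\R$. Classical ODE theory in the parameter $x$ thus yields a unique smooth solution $\varphi$; differentiating the ODE in $x$ and applying Gronwall's inequality one then obtains, after possibly shrinking $T$, uniform bounds on all $\partial_t^j\partial_x^k(\varphi-\mathrm{id})$ over $[0,T]\times\R$ as well as $\inf\partial_x\varphi>0$. Hence $\varphi(t,\cdot)\colon\R\to\R$ is a smooth diffeomorphism for each $t\in[0,T]$. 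The uniform immersion property \eqref{eq:GF_well_posed_immersion} then follows from $|\partial_x\gamma|=|\partial_y\tilde\gamma|\cdot|\partial_x\varphi|$, while the regularity \eqref{eq:GF_wel_posedness_regularity} is obtained by splitting
\begin{equation*}
    \gamma - \gamma_0 = (\tilde\gamma-\gamma_0)\circ\varphi + \big(\gamma_0\circ\varphi - \gamma_0\big),
\end{equation*}
applying Faà di Bruno to the first summand and writing the second as $\int_0^1 \gamma_0'(x+s(\varphi-x))\,(\varphi-x)\,ds$, both of which have uniformly bounded mixed derivatives of every order.

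For uniqueness, given any $\gamma$ solving \eqref{eq:STE_IVP} in the stated class, the plan is to reverse the above construction: one solves a first-order ODE for a family of diffeomorphisms $\psi(t,\cdot)$ of $\R$ such that $\hat\gamma(t,x)\vcentcolon=\gamma(t,\psi(t,x))$ satisfies the analytic problem \eqref{eq:STE_IVP_analytic}. Since $\hat\gamma-\gamma_0\in C^\infty([0,T]\times\R)\subset C^{(l+\alpha)/l,\, l+\alpha}([0,T]\times\R)$ for any $\alpha\in(0,1)$, the uniqueness statement in \Cref{thm:STE_analytic} forces $\hat\gamma=\tilde\gamma$, and inverting $\psi$ recovers $\gamma$ as the reparametrization built above. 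Finally, propagation of properness is immediate from \eqref{eq:GF_wel_posedness_regularity}: since $\gamma(t,\cdot)-\gamma_0$ is bounded on $\R$ uniformly in $t\in[0,T]$, properness of $\gamma_0$ yields $|\gamma(t,x)|\to\infty$ as $|x|\to\infty$ for every $t$.

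The main obstacle is the \emph{global-in-space} control of the reparametrization. On a compact base the ODE for $\varphi$ is handled by standard short-time existence; here one must upgrade pointwise solvability to uniform $C^\infty([0,T]\times\R)$ bounds on $\varphi-\mathrm{id}$, which relies delicately on the uniform constants afforded by the regularity class \eqref{eq:STE_IVP_analytic_regularity}, the uniform lower bound on $|\partial_y\tilde\gamma|$, and Gronwall-type estimates applied systematically to the variational equations satisfied by $\partial_t^j\partial_x^k\varphi$.
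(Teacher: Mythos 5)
Your existence argument is essentially the one in the paper: you peel off the tangential part of $\mathcal{A}$, integrate a first-order ODE for the reparametrization, and verify immersion and regularity via variational equations and Gronwall. That part is sound, and indeed the global-in-space control of $\varphi-\mathrm{id}$ is what requires care in the non-compact setting, as you note.

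The uniqueness part has a genuine gap. You assert that, given a geometric solution $\gamma$, one can solve a \emph{first-order ODE} for a family of diffeomorphisms $\psi(t,\cdot)$ so that $\hat\gamma=\gamma\circ\psi$ solves the analytic problem \eqref{eq:STE_IVP_analytic}. This is not the case. Writing out $\partial_t\hat\gamma = \partial_t\gamma\circ\psi+(\partial_x\gamma\circ\psi)\partial_t\psi$ and matching with $\mathcal{A}[\hat\gamma]+F[\hat\gamma]$, the geometric normal quantities cancel, but the tangential piece forces
\begin{align}
    \partial_t\psi = \frac{\langle\mathcal{A}[\hat\gamma],\partial_s\hat\gamma\rangle}{|\partial_x\gamma\circ\psi|},
\end{align}
and $\mathcal{A}[\hat\gamma]$ contains $\partial_x^{2m+2}\hat\gamma$, which by Fa\`a di Bruno contributes $(\partial_x\gamma\circ\psi)\,\partial_x^{2m+2}\psi$ to leading order. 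So $\psi$ satisfies a quasilinear \emph{parabolic PDE of order $2m+2$}, not an ODE; this is exactly the system \eqref{eq:PDE_1} in the paper and it is what encodes the DeTurck-type correction in reverse. The asymmetry with the existence direction is crucial: there, the analytic solution $\tilde\gamma$ is a given right-hand-side datum, so $\tau[\tilde\gamma]$ is a known smooth function of $(t,y)$ and the reparametrization equation is a first-order ODE; here, $\hat\gamma$ depends on the unknown $\psi$, so the highest derivatives of $\psi$ appear.

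There is a second, smaller gap. Even granting that some reparametrization converts each geometric solution into the unique analytic solution, what you obtain is $\gamma_1=\gamma_2\circ\Phi_t$ for some diffeomorphism family $\Phi_t$ with $\Phi_0=\mathrm{id}$. You still need to show $\Phi_t\equiv\mathrm{id}$. ``Inverting $\psi$ recovers $\gamma$ as the reparametrization built above'' does not close this: one cannot assume $\gamma$ arose from the existence construction. In the paper, one plugs $\gamma_1=\gamma_2\circ\Phi_t$ into the geometric equation satisfied by both, matches normal and tangential components, and deduces $\partial_t\Phi_t=0$; this local result is then propagated to $[0,T]$ by a continuity argument. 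That chain of steps is needed for a complete proof.
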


\begin{proof}
    Let $\gamma$ be the solution of \eqref{eq:STE_IVP_analytic} constructed in \Cref{thm:STE_analytic}. Then, setting $\hat{\xi} \vcentcolon = |\partial_x\gamma|^{-2} \langle \mathcal{A}[\gamma], \partial_x\gamma\rangle$, we have
    %there exists $\hat\xi\in C^\infty([0,T]\times\R)$ such that
    \begin{align}
        \partial_t \gamma = \mathcal{A}[\gamma]^\perp +F[\gamma]+\hat\xi \partial_x\gamma.
    \end{align}
    From \eqref{eq:STE_IVP_analytic_uniform_immersion} and since $\gamma-\gamma_0\in C^\infty([0,T]\times\R)$, we deduce $\hat{\xi}\in C^\infty([0,T]\times\R)$.
	For all $x\in\R$, we now consider the ODE problem
	\begin{align}\label{eq:ODE_1}
		\left\lbrace \begin{array}{lll}
			\partial_t \Phi_t(x) &=- \hat{\xi}(t, \Phi_t(x)), & t>0, \\
			\Phi_0(x)&=x.&
		\end{array}
		\right.
	\end{align}
	Local existence and uniqueness for fixed $x\in\R$ follows from classical ODE theory. Since $\hat{\xi}$ is uniformly bounded, the solution exists on $[0, {T}]$ for all $x\in\R$. Moreover, since $\hat{\xi}\in C^\infty([0,T]\times\R)$ we have $(t,x)\mapsto \Phi_t(x)-x\in C^\infty([0,T]\times\R)$.
	Differentiating \eqref{eq:ODE_1} with respect to $x$ yields
    \begin{align}\label{eq:ODE_1_x}
		\partial_x \Phi_t(x) = \exp\Big(-\int_0^t\partial_x\hat{\xi}(t',\Phi_{t'}(x))dt'\Big),
	\end{align}
	so that $\Phi_t\colon\R\to\R$ is an orientation-preserving diffeomorphism for all $t\in [0,{T}]$.
%	Together with \eqref{eq:reg_xi}, \eqref{eq:ODE_1_x} implies that $\Phi$ is Lipschitz in $[0,T_0]\times\R$, \textcolor{red}{No!} and that $\partial_t\Phi, \partial_x\Phi \in C^{\frac{\alpha'}{4}, \alpha'}([0,T_0]\times \R)$.
    In abuse of notation, in the following we denote by $X\circ\Phi_t$ the composition of a time-dependent function $X$ with $\Phi_t$ in the spatial variable, i.e., $(X\circ\Phi_t)(t,x) = X(t,\Phi_t(x))$. We set $\tilde \gamma\vcentcolon = \gamma\circ\Phi_t$. Then $\tilde{\gamma}(0)=\gamma_0$, $\tilde{\gamma}-\gamma_0\in C^{\infty}([0,{T}]\times \R;\R^{n})$, and 
	\begin{align}
		\partial_t\tilde{\gamma} &= \partial_t\gamma\circ \Phi_t +(\partial_x\gamma\circ \Phi_t)\partial_t \Phi_t \\
		&=\Big(\mathcal{A}[\gamma]^\perp+ F[\gamma]\Big)\circ\Phi_t +\hat{\xi}\circ \Phi_t (\partial_x\gamma\circ\Phi_t) +(\partial_x\gamma\circ\Phi_t)\partial_t \Phi_t \\
        &= \mathcal{A}[\tilde{\gamma}]^\perp + F[\tilde\gamma]
	\end{align}
	using \eqref{eq:ODE_1} and the transformation of the geometric objects. We conclude that $\tilde{\gamma}$ is a solution to \eqref{eq:STE_IVP} with \eqref{eq:GF_wel_posedness_regularity}. Also, \eqref{eq:STE_IVP_analytic_uniform_immersion} and \eqref{eq:ODE_1_x} yield \eqref{eq:GF_well_posed_immersion}.

    For the uniqueness part, let $\gamma$ be any solution to \eqref{eq:STE_IVP} satisfying \eqref{eq:GF_well_posed_immersion} and \eqref{eq:GF_wel_posedness_regularity}. By the definition of $\mathcal{A}$, if $\Phi\colon\R\to\R$ satisfies $\inf_{\R}\partial_x\Phi>0$, then
    \begin{align}
        & \mathcal{A}[\gamma\circ\Phi] \\
        & \ = (-1)^m \frac{\partial_x^{2m+2}(\gamma\circ\Phi)}{|\partial_x(\gamma\circ\Phi)|^{2m+2}} + \mathcal{P}\Big(|\partial_x(\gamma\circ\Phi)|^{-1}, \partial_x(\gamma\circ\Phi),\dots,\partial_x^{2m+1}(\gamma\circ\Phi)\Big) \\
        & \ = (-1)^m\frac{(\partial_x\gamma\circ\Phi)\partial_x^{2m+2}\Phi}{|(\partial_x \gamma\circ\Phi) \partial_x\Phi|^{2m+2}} + H[\gamma,\Phi].
    \end{align}
    As before, the composition with $\Phi$ is only in the spatial variable. By Fa\`a di Bruno's formula
    \begin{align}
        H[\gamma,\Phi]
         = H\Big(|\partial
        _x\gamma \circ\Phi|^{-1}, |\partial_x\Phi|^{-1}, \partial_x\gamma\circ\Phi, \dots, \partial_x^{2m+2}\gamma\circ\Phi, \partial_x\Phi, \dots,\partial_x^{2m+1}\Phi\Big)
    \end{align}
    is a polynomial in its arguments. We now consider the PDE
    \begin{align}\label{eq:PDE_1}
		\left\lbrace \begin{array}{lll}
			\partial_t \Phi &= \frac{(-1)^m\partial_x^{2m+2}\Phi}{|\partial_x\gamma\circ\Phi|^{2m+2} (\partial_x\Phi)^{2m+2}} + \frac{\langle H[\gamma,\Phi], \partial_x\gamma\circ\Phi\rangle}{|\partial_x\gamma\circ\Phi|^2} & \text{ in }(0,T)\times \R,\\
			\Phi(0,x)&=x &\text{ for }x\in\R.
		\end{array}
		\right.
	\end{align}
    The structure of \eqref{eq:PDE_1} is similar to \eqref{eq:STE_IVP_analytic}, and thus the same techniques as for proving \Cref{thm:STE_analytic} can be used to show that there exists $\hat{T}\leq T$ and a unique solution $\Phi$ of \eqref{eq:PDE_1}  on $[0,\hat{T}]$ with
    \begin{align}
        (t,x)\mapsto \Phi(t,x) - x \in C^\infty([0,\hat{T}]\times \R)
    \end{align}
    satisfying $\inf_{[0,\hat{T}]\times\R} \partial_x\Phi>0$. Writing $\Phi_t(x)\vcentcolon = \Phi(t,x)$, by construction
    \begin{align}
        \partial_t\Phi_t = \frac{\langle \mathcal{A}[\gamma\circ\Phi_t],\partial_x \gamma \circ\Phi_t\rangle}{|\partial_x\gamma\circ\Phi_t|^2},
    \end{align}
    so that defining $\tilde{\gamma}\vcentcolon = \gamma\circ\Phi_t$,
    we compute
    \begin{align}
        \partial_t\tilde{\gamma}= \partial_t\gamma\circ\Phi_t +(\partial_x\gamma\circ\Phi_t) \partial_t \Phi_t &= \Big(\mathcal{A}[\gamma]^{\perp_\gamma}+ F[\gamma]\Big)\circ\Phi_t + (\partial_x\gamma\circ\Phi_t)\partial_t \Phi_t  \\
        & = \mathcal{A}[\tilde\gamma]^{\perp_{\tilde\gamma}} + F[\tilde\gamma] +
        \mathcal{A}[\tilde{\gamma}]^{\top_{\tilde{\gamma}}} = \mathcal{A}[\tilde
        {\gamma}] +F[\tilde\gamma],
    \end{align}
    where $\mathcal{A}[\tilde{\gamma}]^{\top_{\tilde{\gamma}}}$ is the tangential projection of $\mathcal{A}[\tilde{\gamma}]$ along $\tilde\gamma$.
    Thus, $\tilde{\gamma}$ solves \eqref{eq:STE_IVP_analytic} with $\tilde\gamma-\gamma_0\in C^\infty([0,\hat{T}]\times \R)$ and therefore coincides (on $[0,\hat{T}]$) with the unique solution from \Cref{thm:STE_analytic} with initial datum $\gamma_0$. 
    
    Hence, if $\gamma_1, \gamma_2$ are two solutions to \eqref{eq:STE_IVP} on $[0,T]$ with initial datum $\gamma_0$ satisfying \eqref{eq:GF_well_posed_immersion} and \eqref{eq:GF_wel_posedness_regularity}, there exists $0<\hat T\leq T$ and a smooth family of reparametrizations $\Phi_t$ such that $\Phi_0(x)=x$ and $\gamma_1=\gamma_2\circ\Phi_t$. Since both satisfy \eqref{eq:STE_IVP}, this implies
    \begin{align}
        \mathcal{A}[\gamma_1]^{\perp_{\gamma_1}}+ F[\gamma_1] = \partial_t\gamma_1 & = \partial_t\gamma_2 \circ\Phi_t + (\partial_x\gamma_2\circ\Phi_t) \partial_t\Phi_t \\
        &= \big(\mathcal{A}[\gamma_2]^{\perp_{\gamma_2}}+ F[\gamma_2]\big)\circ\Phi_t+ (\partial_x\gamma_2\circ\Phi_t) \partial_t\Phi_t \\
        &=\mathcal{A}[\gamma_1]^{\perp_{\gamma_1}}+ F[\gamma_1] + (\partial_x\gamma_2\circ\Phi_t)\partial_t\Phi_t,
    \end{align}
    where again we used the transformation of the geometric objects in the last step. It follows $\partial_t\Phi_t =0$, and thus $\gamma_1=\gamma_2$ on $[0,\hat{T}]$. Hence
    \begin{align}
        T_0 \vcentcolon = \sup\{ \tau \in [0,T]\mid \gamma_1(t)=\gamma_2(t) \text{ for }t\in [0,\tau]\}>0.
    \end{align}
    If $T_0<T$, one may repeat the same argument for the flow with initial datum $\gamma_1(T_0)=\gamma_2(T_0)$, contradicting maximality of $T_0$. We conclude $\gamma_1=\gamma_2$ on $[0,T]$.

    Lastly, the properness of $\gamma(t,\cdot)$ follows by observing that \eqref{eq:GF_wel_posedness_regularity} and properness of $\gamma_0$ imply that $|\gamma(t,x)| \geq |\gamma_0(x)| - t\Vert \partial_t\gamma\Vert_{L^\infty([0,T]\times\R)} \to \infty$ 
    as $|x|\to\infty$.
\end{proof}

\begin{remark}
    The finite degree of smoothness in \Cref{thm:STE_analytic}  is natural in view of the order of the PDE \eqref{eq:STE_IVP_analytic} and is enough for short time existence of \eqref{eq:STE_IVP} if one allows for a tangential velocity in the flow. The class $\dot{C}^\infty$ in \Cref{thm:GF_well_posed} appears to prevent a loss of regularity in the ODE argument. However, it is not entirely clear if restricting to this class is necessary to have local well-posedness for \eqref{eq:STE_IVP}. 
    
    Moreover, it is worth pointing out that even if an arclength parametrized initial curve has derivatives of all orders, it may not be possible to have a short-time existence result.
    An instructive example for \eqref{eq:CSF} is a locally convex curve with infinitely many loops such that the enclosed area converges to zero at infinity, which would however have unbounded curvature.
    This is in stark contrast to the compact case, where pointwise smoothness automatically yields $\dot{C}^\infty$-regularity (or even $C^\infty$).     
\end{remark}

\section{Technical lemmas}

\subsection{A general blow-up and convergence lemma}

\begin{lemma}\label{lem:blow-up_convergence_inequality}
    Let $T\in(0,\infty]$ and $f\colon [0,T)\to [0,\infty)$ be a continuous function.
    Suppose that there are $C>0$ and $1 \leq p<q<\infty$ such that for all $0\leq t_1\leq t_2<T$,
    \begin{align}\label{eq:gen_blow_up_diff_ineq}
        f(t_2)-f(t_1) \leq C \int_{t_1}^{t_2}(f(t)^p+f(t)^q)dt.
    \end{align}
    
    \begin{enumerate}
        \item\label{item:blow_up} If $T<\infty$ and $\limsup_{t\to T}f(t)=\infty$, then
        \begin{align}\label{eq:blow_up_rate_lemma}
            \liminf_{t\to T} (T-t)^{\frac{1}{q-1}}f(t) > 0.
        \end{align}
        \item\label{item:convergence} If $T=\infty$ and there exist $t_j\nearrow \infty$ with $\limsup_{j\to\infty}(t_{j+1}-t_j)<\infty$ and $f(t_j)\to 0$, then
        %and $f^r\in L^1(0,\infty)$ for some $r>0$, then
        \begin{align}
            \lim_{t\to\infty} f(t) = 0.
        \end{align}
    \end{enumerate}
\end{lemma}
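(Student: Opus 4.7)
The plan is to recast \eqref{eq:gen_blow_up_diff_ineq} as an upper Dini derivative bound and then compare $f$ to carefully chosen ODE solutions. Dividing \eqref{eq:gen_blow_up_diff_ineq} by $t_2 - t_1$ and letting $t_2 \searrow t_1$ yields $D^+ f(t) \leq C(f(t)^p + f(t)^q)$ on $[0, T)$; since $p \leq q$ gives $f^p \leq 1 + f^q$ for $f \geq 0$, this is further majorized by $2C(1 + f^q)$. The standard Dini comparison lemma---perturb to a strict supersolution, study the first crossing time, and pass to the limit---then furnishes $f \leq g$ on $[t_0, t^*)$ whenever $g$ solves $g' = \phi(g)$ with $g(t_0) = f(t_0)$, for any monotone $\phi$ majorizing the Dini bound.

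For \eqref{item:blow_up}, I argue by contradiction. Suppose $\liminf_{t \to T}(T-t)^{1/(q-1)}f(t) = 0$, and choose $t_k \nearrow T$ with $f(t_k)^{q-1}(T - t_k) \to 0$. Let $g_k$ solve $g_k' = 2C(1 + g_k^q)$ with $g_k(t_k) = f(t_k)$; separating variables shows the blow-up time satisfies $t_k^* - t_k = (2C)^{-1}\int_{f(t_k)}^\infty (1 + g^q)^{-1}dg$. Passing to a subsequence, either $f(t_k)$ stays bounded---in which case this integral has a positive lower bound while $T - t_k \to 0$---or $f(t_k) \to \infty$, in which case the integral is asymptotic to $f(t_k)^{1-q}/(q-1)$ and the assumption $f(t_k)^{q-1}(T-t_k) \to 0$ directly gives $(T - t_k)/(t_k^* - t_k) \to 0$. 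Thus $t_k^* > T$ for all $k$ large, so $f \leq g_k$ on the entire interval $[t_k, T)$. But $\sup_{[t_k, T]} g_k < \infty$ then contradicts $\limsup_{t \to T} f(t) = \infty$.

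For \eqref{item:convergence}, I apply the comparison on each interval $[t_j, t_{j+1}]$ to the solution $h_j$ of $h_j' = C(h_j^p + h_j^q)$, $h_j(t_j) = f(t_j)$. The key observation is that $p \geq 1$ implies $h^p + h^q \leq 2h$ for $h \in [0,1]$, so $h_j' \leq 2Ch_j$ whenever $h_j \leq 1$. Fix $D$ so that $t_{j+1} - t_j \leq D$ for $j$ large, and restrict to $j$ so large that $f(t_j) e^{2CD} < 1$. A bootstrap shows $h_j(t) \leq f(t_j) e^{2C(t - t_j)}$ throughout $[t_j, t_{j+1}]$: up to the first time $h_j$ would exit $[0,1]$, the exponential bound holds, but the bound itself stays strictly below $1$, so no such exit occurs. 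Hence $\sup_{[t_j, t_{j+1}]} f \leq f(t_j) e^{2CD} \to 0$, giving $f(t) \to 0$ globally. The main technical obstacle is the Dini comparison lemma, which requires the perturbation argument sketched above rather than a straightforward differential comparison; the bootstrap in \eqref{item:convergence} additionally exploits $p \geq 1$ to obtain the linear-in-$h$ bound on $h^p + h^q$ over $[0,1]$.
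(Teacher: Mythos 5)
Your proof is correct. Part~\eqref{item:convergence} is essentially the same Gronwall bootstrap as the paper's: both compare $f$ on $[t_j,t_{j+1}]$ to the exponential supersolution in the sub--threshold regime where $f^p+f^q\leq 2f$, and both use a continuity/first-crossing argument to confirm the threshold is never reached. The only cosmetic difference is that you interpose the exact ODE solution $h_j$, while the paper compares $f$ directly to the explicit function $V(t)=(f(t_j)+\tfrac1j)e^{2Ct}$.

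For part~\eqref{item:blow_up}, your route is genuinely different. The paper works directly with the absolutely continuous majorant $F(t)=f(t_0)+C\int_{t_0}^t(f+f^q)\,d\tau$, first integrates $F'/(F+F^q)$ to establish $f(t)\to\infty$ as $t\to T$, and then integrates $G'\leq 2CG^q$ to obtain the quantitative bound $(T-t_0)f(t_0)^{q-1}\geq\tfrac{1}{2C(q-1)}$, from which the $\liminf$ statement follows directly. You instead argue by contradiction via the Dini-derivative comparison principle: assuming $\liminf_{t\to T}(T-t)^{1/(q-1)}f(t)=0$, you compare $f$ to solutions of $g'=2C(1+g^q)$ starting at $f(t_k)$ and show, in each of the two subsequential cases (bounded or divergent $f(t_k)$), that the blow-up time $t_k^*$ exceeds $T$, so that $f$ is eventually bounded near $T$, contradicting $\limsup_{t\to T}f=\infty$. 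Both are sound; the paper's is more quantitative (it yields an explicit lower bound for the $\liminf$, not just positivity), whereas yours avoids the intermediate step of proving $f(t)\to\infty$ and compresses the argument into a single dichotomy at the cost of requiring the Dini-comparison machinery.
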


\begin{proof}
 Using the inequality $x^p\leq C(p,q)(x+x^q)$, $x\geq 0$, we may without loss of generality assume that \eqref{eq:gen_blow_up_diff_ineq} is satisfied with $p=1$.

 To prove \eqref{item:blow_up}, fix $t_0\in [0,T)$ and define $F(t)\vcentcolon = f(t_0)+C\int_{t_0}^t (f(\tau)+f(\tau)^q)d\tau$. 
 Then $F$ is locally absolutely continuous on $[0,T)$ and $f(t)\leq F(t)$ by \eqref{eq:gen_blow_up_diff_ineq}. Hence
    \begin{align}
        F'(t)= C(f(t)+f^q(t)) \leq C(F(t)+F(t)^q).
    \end{align}
    For $t$ sufficiently close to $T$, we have $F(t)>0$. Indeed, otherwise there exist $t_j\to T$ with $F(t_j)=0$, implying $f\equiv 0$ on $[t_0,t_j]$ and contradicting $\limsup_{t\to T} f(t)=\infty$. 
    Since
    \begin{align}
        \frac{1}{F(t)+F(t)^q} = \frac{1}{F(t)} - \frac{F^{q-2}(t)}{1+F^{q-1}(t)},
    \end{align}
    choosing $t_j\to T$ with $f(t_j)\to\infty$, we find that 
    \begin{align}
        C(T-t_0)&\geq \lim_{j\to\infty} \int_{t_0}^{t_j} \frac{F'(t)}{F(t) +F(t)^q}dt \\
        &= \lim_{j\to\infty} \int_{t_0}^{t_j} \frac{d}{dt}\Big(\log F(t) - \frac{1}{q-1}\log(1+F(t)^{q-1})\Big)dt \\
        &= \frac{1}{q-1}\log\Big(\frac{1+F(t_0)^{q-1}}{F(t_0)^{q-1}}\Big)=\frac{1}{q-1}\log\Big(\frac{1+f(t_0)^{q-1}}{f(t_0)^{q-1}}\Big)
    \end{align}
    Taking $t_0\to T$, we conclude that $\lim_{t\to T}f(t)=\infty$. For $t$ close to $T$, we thus have $f(t)\geq 1$ and \eqref{eq:gen_blow_up_diff_ineq} implies
	\begin{align}
		f(t_2)-f(t_1)\leq 2C\int_{t_1}^{t_2} f(t)^q dt.
	\end{align}
	As above, we conclude that $G(t)\vcentcolon  = f(t_0)+2C\int_{t_0}^t f(\tau)^q d\tau$ satisfies $f(t)\leq G(t)$ and
	\begin{align}
		G'(t)\leq 2C G(t)^q.
	\end{align}
	Hence, taking $t_0$ close to $T$ and integrating from $t_0$ to $T$, we have
	\begin{align}
		2C(T-t_0)\geq \int_{t_0}^{T} \frac{d}{d t}\left(\frac{1}{1-q} G(t)^{1-q}\right)d t = \frac{1}{q-1}G(t_0)^{1-q} = \frac{1}{q-1} f(t_0)^{1-q}.
	\end{align}
	Renaming $t_0$ into $t$ and rearranging, \eqref{eq:blow_up_rate_lemma} follows.

    We prove part \eqref{item:convergence}.
    By assumption there are $j_0\in\N$ and $r>0$ such that $t_{j+1}-t_j\leq r <\infty$ for all $j\geq j_0$ and such that
    \begin{align}\label{eq:gronwall_conv_1}
        \sup_{j\geq j_0}  \Big(f(t_j)+\frac1j\Big) \leq e^{-2Cr}.
    \end{align}
    Fix any $j\geq j_0$.
    Let $U(t)\vcentcolon = f(t+t_j), V(t)\vcentcolon = (f(t_j)+\frac{1}{j}) e^{2C t}$, and $W\vcentcolon = U-V$. Then $W(0)=-\frac{1}{j}<0$ and, since $f$ is continuous, the number
    \begin{align}
        t_* \vcentcolon = \sup\{ t\in[0,r]\mid W(\tau)\leq 0 \text{ for all }\tau\in [0,t]\}
    \end{align}
    is strictly positive, i.e., $t_*\in(0,r]$. For $0\leq t\leq t_*\leq r$, we have $0\leq V(t)\leq 1$ by \eqref{eq:gronwall_conv_1}. It follows that
    \begin{align}
        V(t) = V(0) + 2 C \int_0^t V(\tau)d\tau \geq V(0) + C \int_0^t (V(\tau)+V^q(\tau))d\tau.
    \end{align}
    On the other hand, by \eqref{eq:gen_blow_up_diff_ineq} we have
    \begin{align}
        U(t)\leq U(0)+ C\int_0^t (U(\tau)+U^q(\tau))d\tau.
    \end{align}
    Combining the above estimates gives $W(t)<0$ for all $t\in [0,t_*]$, so by continuity $t_*=r$. Hence, we conclude $f(t)\leq (f(t_j)+\frac{1}{j}) e^{2 C(t-t_j)}$ for  $t\in [t_j,t_j+r]$,
    which implies
    \begin{align}
       \sup_{t\in [t_j,t_{j+1}]}f(t)\leq         
        \sup_{t\in [t_j,t_j+r]} f(t) \leq  e^{2Cr} \Big(f(t_j)+\frac1j\Big).
    \end{align}
    This yields that $f(t)\to 0$ as $t\to\infty$.
    % For part \eqref{item:convergence}, by the assumption, we have
    % \begin{align}
    %     \limsup_{j\to\infty} \inf_{t\in[j,j+1]} f^r(t) \leq \lim_{j\to\infty} \int_j^{j+1} f^r(t)dt = 0.
    % \end{align}
    % Hence, there exists $t_j\in[j,j+1]$ such that $f(t_j)\to 0$ as $j\to\infty$. We fix $j\geq j_0$ large enough such that 
    % \begin{align}\label{eq:gronwall_conv_1}
    %     \sup_{j\geq j_0}f(t_j)\leq e^{-4 C}
    % \end{align}
    % Let $U(t)\vcentcolon = f(t+t_j), V(t)\vcentcolon = \frac{1}{2} f(t_j) e^{2C t}$, and $W\vcentcolon = U-V$. Then $W(0)<0$ and, since $f$ is continuous, we have that
    % \begin{align}
    %     t_* \vcentcolon = \sup\{ t\in[0,2]\mid W(\tau)\leq 0 \text{ for all }\tau\in [0,t]\}
    % \end{align}
    % is strictly positive, i.e., $t_*\in[0,2]$. For $0\leq t\leq t_*\leq 2$, we have $0\leq V(t)\leq 1$ by \eqref{eq:gronwall_conv_1}. It follows that
    % \begin{align}
    %     V(t) = V(0) + 2 C \int_0^t V(\tau)d\tau \geq V(0) + C \int_0^t (V(\tau)+V^q(\tau))d\tau.
    % \end{align}
    % On the other hand, by \eqref{eq:gen_blow_up_diff_ineq} we have
    % \begin{align}
    %     U(t)\leq U(0)+ C\int_0^t (U(\tau)+U^q(\tau))d\tau.
    % \end{align}
    % Combining the above estimates gives $W(t)<0$ for all $t\in [0,t_*]$, so by continuity $t_*=2$. Hence, we conclude $f(t)\leq \frac{1}{2}f(t_j) e^{2 C(t-t_j)}$ for all $t\in [t_j,t_j+2]$ which implies
    % \begin{align}
    %     \sup_{t\in[j+1,j+2]} f(t) \leq \sup_{t\in [t_j,t_j+2]} f(t) \leq \frac{e^{4C}}{2} f(t_j).
    % \end{align}
    % Sending $j\to\infty$, we find $f(t)\to 0$ as $t\to\infty$.
\end{proof}

\subsection{Rotation number}

We define the rotation number for planar complete curves and prove its preservation along smooth flows.

For a planar curve $\gamma\in\dot{C}^2(\R;\R^2)$ such that $\inf_{\R}|\partial_x\gamma|>0$ and $D[\gamma]<\infty$, since $\partial_s\gamma(x)\to e_1$ as $x\to\pm\infty$ by \Cref{lem:direction_horizontal}, we can define the \emph{rotation number} by
\begin{equation}
    N[\gamma] \vcentcolon= \lim_{R\to\infty}\frac{1}{2\pi}\int_{-R}^R k ds = \lim_{R\to\infty}(\theta(R)-\theta(-R)),
\end{equation}
where $k\colon\R\to\R$ denotes the signed curvature and $\theta\colon\R\to\R$ the tangential angle of $\gamma$.
Note that by the boundary condition at infinity, as in the closed curve case, $N[\gamma]$ is always an integer.

\begin{lemma}\label{lem:rotation_number_preservation}
    Let $\gamma:[0,T]\times\R\to\R^2$ be a family of smooth curves with \linebreak $\inf_{[0,T]\times\R}|\partial_x\gamma|>0$ such that $\gamma(t)\in \dot{C}^2(\R)$, $\lim_{t' \to t}\|\gamma(t')-\gamma(t)\|_{C^1(\R)}=0$, and $\lim_{x\to\pm\infty}\partial_s\gamma(t,x)=e_1$ hold for each $t\in[0,T]$.
    Then for all $t\in [0,T]$ we have $N[\gamma(t,\cdot)]=N[\gamma(0,\cdot)]$.
\end{lemma}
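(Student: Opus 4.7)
The plan is to show that $t\mapsto N[\gamma(t,\cdot)]$ is continuous on $[0,T]$; since it takes integer values and $[0,T]$ is connected, this will immediately yield the claim. To establish continuity, I would fix $t_0\in[0,T]$ and prove that $N[\gamma(t,\cdot)]$ is close to $N[\gamma(t_0,\cdot)]$ for $t$ near $t_0$ by comparing both with a truncated, manifestly continuous quantity
\[
    N_R(t) \vcentcolon= \frac{1}{2\pi}\int_{-R}^R k(t,\cdot)\,ds.
\]

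For fixed $R$, the quantity $N_R(t)$ equals $(\theta(t,R)-\theta(t,-R))/(2\pi)$ for any continuous-in-$x$ lift $\theta(t,\cdot)$ of the tangential angle, and such differences depend continuously on $(t,x)\mapsto \partial_s\gamma(t,x)$ by a standard lifting argument; this map is jointly continuous in $(t,x)$ since $\inf|\partial_x\gamma|>0$ and since $\|\gamma(t',\cdot)-\gamma(t,\cdot)\|_{C^1(\R)}\to 0$ as $t'\to t$. Hence $t\mapsto N_R(t)$ is continuous on $[0,T]$ for each fixed $R$.

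The key step is then to show that $N_R(t)\to N[\gamma(t,\cdot)]$ as $R\to\infty$ uniformly for $t$ in a neighborhood of $t_0$. Given $\varepsilon\in(0,1)$, I would first choose $R_0$ so that $|\partial_s\gamma(t_0,x)-e_1|<\varepsilon/2$ for all $|x|\geq R_0$, using the pointwise convergence $\partial_s\gamma(t_0,x)\to e_1$ at $\pm\infty$. Then, by $C^1$-continuity of the flow in $t$ combined with $\inf|\partial_x\gamma|>0$, there is a neighborhood $U$ of $t_0$ such that $\|\partial_s\gamma(t,\cdot)-\partial_s\gamma(t_0,\cdot)\|_\infty<\varepsilon/2$ for all $t\in U$. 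On $\{|x|\geq R_0\}\times U$ the tangent therefore stays in the right half-plane, and has a unique continuous branch $\phi(t,x)\in(-\arcsin\varepsilon,\arcsin\varepsilon)$ of the tangential angle with $\phi(t,x)\to 0$ as $x\to\pm\infty$. Comparing $\phi$ with a continuous-in-$x$ global lift $\theta(t,\cdot)$ forces the constant offsets $2\pi n_\pm(t)\vcentcolon=\lim_{x\to\pm\infty}\theta(t,x)$ to satisfy $N[\gamma(t,\cdot)]=n_+(t)-n_-(t)$, whence
\[
    N_R(t)-N[\gamma(t,\cdot)] = \frac{\phi(t,R)-\phi(t,-R)}{2\pi}
\]
for all $R\geq R_0$ and $t\in U$. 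This difference is bounded by $\arcsin\varepsilon/\pi$, giving the desired local uniform convergence.

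Combining the three ingredients, $N[\gamma(\cdot,\cdot)]$ is a uniform limit on $U$ of continuous functions $N_R$, hence continuous on $U$; being $\Z$-valued it is constant on $U$, and by connectedness constant on $[0,T]$. The main delicate point is the uniform-in-$t$ control of the asymptotic behavior of $\partial_s\gamma(t,x)$ as $|x|\to\infty$; this is precisely what the $C^1$-continuity hypothesis buys, and it is what allows the ``tail variation'' $\phi(t,R)-\phi(t,-R)$ to be bounded uniformly in $t\in U$ instead of merely pointwise.
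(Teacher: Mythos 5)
Your proposal is correct in its essential structure and follows the same high-level strategy as the paper's proof: reduce to showing that $t \mapsto N[\gamma(t,\cdot)]$ is continuous (whence it is locally constant as an integer-valued function on a connected interval), and control the contribution from $|x|\geq R_0$ uniformly for $t$ near $t_0$ by combining the pointwise decay $\partial_s\gamma(t_0,x)\to e_1$ with the $C^1$-in-time continuity and the lower speed bound. Where you diverge is in the final reduction: the paper performs a topological surgery, replacing $\gamma(t,\cdot)$ outside a large interval by a graphical cut-off curve $\hat\gamma = (\gamma^1,\zeta\gamma^2)$ that agrees with $\gamma$ on $[-R_0,R_0]$ and is rectilinear outside $[-2R_0,2R_0]$, and then appeals to the standard compact-curve continuity of the rotation number; you instead work directly with truncated winding integrals $N_R(t)$ and bound the tail contribution via a continuous branch of the tangential angle. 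Both are fine, and yours has the advantage of being self-contained rather than deferring to the compact case.

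One step as written is imprecise. You assert that $N_R(t)\to N[\gamma(t,\cdot)]$ as $R\to\infty$ \emph{uniformly} on a neighborhood $U$ of $t_0$, and then conclude continuity of $N[\gamma(\cdot)]$ as a uniform limit of the continuous $N_R$. But what your estimate actually yields is that, having fixed $\varepsilon\in(0,1)$ and the corresponding $R_0(\varepsilon)$ and $U(\varepsilon)$, the difference $|N_R(t)-N[\gamma(t,\cdot)]|\leq \arcsin\varepsilon/\pi$ for \emph{all} $R\geq R_0$ and $t\in U$; this bound does not go to zero as $R\to\infty$ with $U$ fixed, since for $t\neq t_0$ near $t_0$ you only control $\sup_{t\in U}|\partial_s\gamma(t,\pm R)-e_1|$ up to an error of order $\varepsilon$ as $R\to\infty$. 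So uniform convergence is not established. The argument is salvageable at no extra cost: choose $\varepsilon$ at the outset small enough that $\arcsin\varepsilon/\pi < 1/2$; then for the fixed $R_0$, the continuous function $N_{R_0}$ satisfies $|N_{R_0}(t)-N[\gamma(t,\cdot)]|<1/2$ for all $t\in U$, which forces $N[\gamma(t,\cdot)]$ to equal the nearest integer to $N_{R_0}(t)$, and since $N_{R_0}$ never approaches a half-integer on $U$ this nearest-integer function is locally constant, hence constant on $U$. This minor rephrasing closes the gap and leaves the rest of your argument intact.
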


\begin{proof}
    Since the range of $N$ is discrete, it is sufficient to show that at each $t_0\in[0,T]$ the map $t\mapsto N[\gamma(t,\cdot)]$ is continuous.
    Since $\partial_s\gamma(t_0,x)\to e_1$, there is $R_0>0$ such that for all $|x|\geq R_0$ we have $\langle \partial_s\gamma(t_0,x),e_1\rangle \geq \frac{1}{2}$.
    By $C^1$-continuity in time and the assumption on $|\partial_x\gamma|$, there is $\varepsilon_0>0$ such that if $|t-t_0|\leq\varepsilon_0$ and $|x|\geq R_0$, then $\langle \partial_s\gamma(t,x),e_1\rangle \geq \frac{1}{3}$.
    This implies that the curve $\gamma(t,\cdot)$ is graphical outside $[-R_0,R_0]$.
    Now we define $\hat{\gamma}(t,\cdot)$ by cutting off the second component of $\gamma$, namely $\hat{\gamma}\vcentcolon=(\gamma^1,\zeta\gamma^2)$ with $\zeta\in C^\infty_c(\R)$ such that $\chi_{[-R_0,R_0]}\leq \zeta \leq \chi_{[-2R_0,2R_0]}$.
    In view of its topological nature, the rotation number is preserved by this procedure whenever $|t-t_0|\leq\varepsilon_0$, because it only replaces graphical parts with other graphical curves having the same tangent directions at $x=\pm R_0$ and $x\to\pm\infty$.
    Therefore, if $|t-t_0|\leq\varepsilon_0$, then the curves $\gamma(t,\cdot)$ and $\hat{\gamma}(t,\cdot)$, and hence even $\hat{\gamma}(t,\cdot)|_{[-2R_0,2R_0]}$, have the same rotation number $N$.
    Thus the problem is now reduced to showing that $N[\hat{\gamma}(t,\cdot)|_{[-2R_0,2R_0]}]$ is continuous at $t=t_0$, but this is a standard matter since the tangents are always rightward at the endpoints, $\partial_s\hat{\gamma}(t,\pm2R_0)=e_1$, for $t\in[0,T]\cap(t_0-\varepsilon_0,t_0+\varepsilon_0)$.
\end{proof}

\bibliography{Lib}

\end{document}